\numberwithin{equation}{section}
\newcommand\s{\sigma}
\renewcommand\d{\partial}
\def\t{\tau}
\def\eps{\varepsilon }
\renewcommand\d{\partial}
\newcommand\C{\mathbb C}
\def\t{\tau}
\def\eps{\varepsilon}
\newcommand\br{\begin{remark}}
\newcommand\er{\end{remark}}
\newcommand\bp{\begin{pmatrix}}
\newcommand\ep{\end{pmatrix}}
\newcommand{\be}{\begin{equation}}
\newcommand{\ee}{\end{equation}}
\newcommand\ba{\begin{equation}\begin{aligned}}
\newcommand\ea{\end{aligned}\end{equation}}
\newcommand{\bap}{\begin{app}}
\newcommand{\eap}{\end{app}}
\newcommand{\begs}{\begin{exams}}
\newcommand{\eegs}{\end{exams}}
\newcommand{\beg}{\begin{example}}
\newcommand{\eeg}{\end{exaplem}}
\newcommand{\bpr}{\begin{proposition}}
\newcommand{\epr}{\end{proposition}}
\newcommand{\bt}{\begin{theorem}}
\newcommand{\et}{\end{theorem}}
\newcommand{\bc}{\begin{corollary}}
\newcommand{\ec}{\end{corollary}}
\newcommand{\bl}{\begin{lemma}}
\newcommand{\el}{\end{lemma}}
\newcommand{\bd}{\begin{definition}}
\newcommand{\ed}{\end{definition}}
\newcommand{\brs}{\begin{remarks}}
\newcommand{\ers}{\end{remarks}}
\newtheorem{theorem}{Theorem}[section]
\newtheorem{proposition}[theorem]{Proposition}
\newtheorem{corollary}[theorem]{Corollary}
\newtheorem{lemma}[theorem]{Lemma}
\theoremstyle{remark}
\newtheorem{remark}[theorem]{Remark}
\theoremstyle{definition}
\newtheorem{definition}[theorem]{Definition}
\newtheorem{example}[theorem]{Example}
\newcommand\cA{{\mathcal { A}}}
\newcommand\cB{{\mathcal  B}}
\newcommand\cK{{\mathcal  K}}
\newcommand\cN{{\mathcal  N}}
\newcommand\cS{{\mathcal S}}
\newcommand{\beq}{\begin{equation}}
\newcommand{\eeq}{\end{equation}}
\newcommand{\vp}{\varphi}
\newtheorem{thm}{Theorem}[section]
\newtheorem{prop}[thm]{Proposition}
\newtheorem{cor}[thm]{Corollary}
\newtheorem{lem}[thm]{Lemma}
\newtheorem{defn}[thm]{Definition}
\newtheorem{ass}[thm]{Assumption}
\newtheorem{rem}[thm]{Remark}
\newtheorem{exams}[thm]{Examples}
\newtheorem{notation}[thm]{Notation}
\numberwithin{equation}{section}
\def\({\left(\begin{array}{cccccc}}
\def\){\end{array}\right)}
\newcommand{\pf}{\begin{proof}}
\newcommand{\foorp}{\end{proof}}
\newcommand{\ud}{{\underline d}}
\newcommand{\uh}{{\underline h}}
\newcommand{\uc}{{\underline c}}
\newcommand{\uw}{{\underline w}}
\newcommand{\ua}{{\underline a}}
\newcommand{\bR}{\mathbb{R}}
\newcommand{\bQ}{\mathbb{Q}}
\newcommand{\bW}{\mathbb{W}}
\newcommand{\bC}{\mathbb{C}}
\newcommand{\uzeta}{\underline{\zeta}}
\newcommand{\ub}{\underline{b}}
\newcommand{\cZ}{\mathcal{Z}}
\title
{High-frequency stability of multidimensional
ZND detonations}
\author{Olivier Lafitte}
\address{ Universit\'e de Paris 13, LAGA and CEA Saclay, DM2S}
\email{lafitte@math.univ-paris13.fr}
\author{Mark Williams}
\address{University of North Carolina, Chapel Hill}
\email{ williams@email.unc.edu}
\thanks{Research of M.W. was partially supported by
NSF grants number DMS-0701201 and DMS-1001616}
\author{Kevin Zumbrun}
\address{Indiana University, Bloomington, IN 47405}
\email{kzumbrun@indiana.edu}
\thanks{Research of K.Z. was partially supported
under NSF grants no. DMS-0300487 and DMS-0801745.}
\begin{document}

\begin{abstract}
The rigorous study of spectral stability
of
strong detonations was begun by Erpenbeck in the 1960s.  Working with the Zeldovitch-von Neumann-D\"oring (ZND) model, he identified two fundamental classes of detonation profiles, referred to as those of decreasing (D) and increasing (I) type, which appeared to exhibit very different behavior with respect to high-frequency perturbations.  Using a combination of rigorous and non-rigorous arguments, Erpenbeck concluded that type I detonations were unstable to some oscillatory perturbations for which the (vector) frequency was of arbitrarily large magnitude, while type D detonations were stable provided the frequency magnitude was sufficiently high.  For type D detonations Erpenbeck's methods did not allow him to obtain a cutoff magnitude for stability that was \emph{uniform} with respect to frequency direction.  Thus, he left open the question whether the cutoff magnitude for stability might approach $+\infty$  as certain frequency directions were approached.  In this paper we show by quite different methods that for type D detonations there exists a uniform cutoff magnitude for stability independent of frequency direction.
By reducing the search for unstable frequencies to a bounded frequency set, the uniform cutoff  obtained here is a key step
toward the rigorous validation of a number of results in the computational detonation literature.
%

The detonation profile $P(x)$ is a stationary solution of the ZND system depending on the single spatial variable $x\in[0,+\infty)$, the reaction zone. The spectral stability of the profile is governed by a nonautonomous $5\times 5$ system of linear ODEs in $x$ depending on the perturbation frequency as a vector parameter.  Difficulties in the analysis are caused by the existence of frequency directions $\zeta$ for which two of the eigenvalues of this system cross at a particular point $x=x(\zeta)$ in the reaction zone. Such points $x(\zeta)$ are called \emph{turning points}.  A  necessary step in obtaining a uniform stability cutoff is to obtain explicit representations of the decaying solutions of the system that are uniformly valid for frequencies near turning point frequencies.  The main mathematical difficulty addressed here is to produce such uniform representations for frequencies near the particular turning point frequency $\zeta_\infty$ for which the associated turning point $x(\zeta_\infty)$ is $+\infty$.
Our uniform estimates are potentially
useful for general turning-point problems on unbounded spatial intervals,
both for spectral stability analysis as here and
for resolvent estimates toward linearized and
nonlinear stability or instability.
In particular, they are
needed
for the study of the related
multi-D inviscid limit problem.
\end{abstract}

\date{\today}
\maketitle

\textbf{This paper is dedicated to J. J. Erpenbeck, pioneer in the study of shock and detonation stability.}

\tableofcontents

\part{Introduction}

 The most commonly studied model of combustion is the Zeldovitch-von Neumann-D\"oring (ZND) system \eqref{ZND}, which couples the compressible Euler equations for a reacting gas (in which pressure and internal energy are allowed to depend on the mass fraction $\lambda$ of reactant) to a reaction equation that governs the finite rate at which $\lambda$ changes. In three space dimensions with coordinates $(x,y,z)$ the ZND equations for the unknowns $(v,\mathbf{u},S,\lambda)$ (specific volume, particle velocity $\mathbf{u}=(u_x,u_y,u_z)$, entropy, and mass fraction of reactant) are given by the $6\times 6$ system:

\begin{align}\label{ZND}
\begin{split}
&\partial_t v+\mathbf{u}\cdot \nabla v-v\nabla \cdot \mathbf{u}=0\\
&\partial_t \mathbf{u}+\mathbf{u}\cdot\nabla \mathbf{u}+v\nabla p =0\\
&\partial_t S+\mathbf{u}\cdot\nabla S=-r \Delta F /T:=\Phi\\
&\partial_t\lambda+\mathbf{u}\cdot\nabla \lambda =r,
\end{split}
\end{align}
where $p=p(v,S,\lambda)$ is pressure, $T$ is temperature, $\Delta F$ is the free energy increment, and $r(v,S,\lambda)$ is the reaction rate function.  A steady planar \emph{strong detonation profile} is a weak solution of this system depending only on $x$ with a jump (the stationary von Neumann shock) at $x=0$.
Without loss of generality we study profiles of the form $P(x)=(v,u,0,0,S,\lambda)$, where $u>0$ is the $x$-component of particle velocity.  The solution is constant and supersonic ($u>c_0$, where $c_0$ is the sound speed at $x$) in $x<0$, the quiescent zone, and satisfies a nonlinear system of ODEs in the subsonic reaction zone $x>0$.  In order to be a weak solution of \eqref{ZND} in a neighborhood of $x=0$, $P(x)$ must satisfy an appropriate Rankine-Hugoniot condition at $x=0$.  There is a well-defined limiting state $P_\infty=\lim_{x\to +\infty}P(x)$  with $\lambda(\infty)=0$, and the range of $u$ on $[0,\infty)$ is a compact subinterval of $(0,\infty)$.

The rigorous study of spectral stability for strong detonations was begun by Erpenbeck \cite{E1} in the 1960s.  Working with the ZND model,  in \cite{E2,E3} he identified two fundamental classes of detonation profiles, referred to as those of decreasing (D) and increasing (I) type, which appeared to exhibit very different behavior with respect to high-frequency perturbations.  Using a combination of rigorous and non-rigorous arguments, Erpenbeck concluded that type I detonations were unstable to some oscillatory perturbations for which the (vector) frequency was of arbitrarily large magnitude, while type D detonations were stable provided the frequency magnitude was sufficiently high.  For type D detonations Erpenbeck's methods did not allow him to obtain a cutoff magnitude for stability that was \emph{uniform} with respect to frequency direction.  Thus, he left open the question whether the cutoff magnitude for stability might approach $+\infty$  as certain frequency directions were approached.  In \cite{LWZ1} we identified the mathematical issues left unresolved in \cite{E2,E3} and provided proofs, together with certain simplifications and extensions, of the main conclusions
of \cite{E2,E3}, in particular, high-frequency
instability of type I detonations.
However, the paper \cite{LWZ1} also failed to resolve the above question for type D detonations.
In this paper we show by quite different methods that for type D detonations there exists a uniform cutoff magnitude for stability
independent of frequency direction, thus establishing
{\it high-frequency stability of type D detonations}.

 The spectral stability of
of a ZND profile
is governed by a nonautonomous $5\times 5$ system of linear ODEs in $x$ depending on the perturbation frequency $(\tau,\eps)$ as a parameter
\cite{E2,E3,LWZ1}:
\begin{align}\label{mainsys}
\frac{d\theta}{dx}=-\mathcal{G}^t(x,\tau,\eps)\theta\text{ on }x\geq 0.
\end{align}
Here the system defined by the matrix $\mathcal{G}(x,\tau,\eps)$ is obtained by linearizing the ZND system about the profile $P(x)$, and taking the Laplace transform in time and the Fourier transform in the transverse spatial variables $(y,z)$.  The matrix $\mathcal G^t$ is the transpose of $\mathcal G$, the variable $\tau\in\mathbb{C}$ is dual to time, and $\eps=\sqrt{\alpha^2+\beta^2}$, where $(\alpha,\beta)\in\bR^2$ is dual to $(y,z)$.\footnote{Thus, $\eps/2\pi$ is the transverse wavenumber.}  We have
\begin{align}\label{t1}
\mathcal G(x,\tau,\eps)=-A_x^{-1}(x)[\tau I+i\eps A_y(x)+B(x)],
\end{align}
for matrices $A_x$, $A_y$ and $B$ given in section \ref{coefficients}.  The $x$-dependence of these matrices enters entirely through the profile $P(x)$.\footnote{In fact, we show in section \ref{infinite}  that $P(x)$ can be expressed as $P(x)=\mathcal{P}(\lambda(x))$. } The reduction from a linearized system of dimension 6 to one of dimension 5 and from $(\alpha,\beta)$ to $\eps$ uses the rotational symmetry of $\mathcal{G}$ with respect to the transverse velocity components.\footnote{We refer to \cite{E1} and to the introduction of \cite{LWZ1} for the details of the derivation of \eqref{mainsys} and for additional background on the ZND system.}

 In  [E1] Erpenbeck defined a stability function $V(\tau,\eps)$  whose zeros in the right half plane $\Re\tau>0$ (``unstable zeros") correspond to perturbations of the steady profile $P(x)$ that grow exponentially in time.
 The computation of $V$ requires the evaluation within the reaction zone of the solution $\theta(x,\tau,\eps)$ of the equation \eqref{mainsys}
which satisfies the condition that $\theta$ remains bounded for fixed $(\tau,\eps)$ with $\Re\tau\geq 0$ as $x\to \infty$, and   $\theta$ decays exponentially to zero for $\Re\tau>0$ as $x\to\infty$.  As we will see, this condition determines $\theta$ uniquely up to a constant multiple.   We shall refer to  this solution $\theta$ as  \emph{the decaying solution}, even though it is merely bounded for certain purely imaginary $\tau$ values.

Following the notation of \cite{E2,E3}, we write $\tau$ as
\begin{align}\label{t2}
\tau=\zeta\eps
\end{align}
where $\zeta\in\{z\in\bC:\Re z\geq 0\}$ and $\eps>0$ is large.\footnote{In \cite{E2,E3} Erpenbeck used a decomposition $\tau=\eps\zeta+\nu$, but $\nu$ played no role in his treatment of type-D profiles and can be set equal to zero.}  Thus we can rewrite
equation \eqref{mainsys} as
\begin{align}\label{t3}
\begin{split}
&\frac{d\theta}{dx}=(\eps\Phi_0+\Phi_1)\theta\text{ where }\\
&\Phi_0(x,\zeta)=\{A_x^{-1}(x)\cdot (\zeta I+iA_y(x))\}^t\\
&\Phi_1(x)= \{ A_x^{-1}(x)B(x)\}^t.
\end{split}
\end{align}

The eigenvalues $\mu_j(x,\zeta)$ of the matrix $\Phi_0$, which is given in \eqref{phi0}, play a crucial role in all that follows.  They are
\begin{align}\label{t4}
\mu_1=-\kappa(\kappa\zeta+s)/\eta u,\;\;\;\mu_2=-\kappa(\kappa\zeta-s)/\eta u,\;\;\; \mu_3=\mu_4=\mu_5=\zeta/u,
\end{align}
where with $c_0^2=-v^2p_v(v,S,\lambda)$
\begin{align}\label{t5}
s(x,\zeta)=\sqrt{\zeta^2+c_0^2\eta}, \;\;\;\kappa(x)=\sqrt{1-\eta}=u/c_0.
\end{align}
The square root defining $s$, regarded as a function of $\zeta$, is taken to be the positive branch with branch cut the segment  $[-ic_0\sqrt{\eta},ic_0\sqrt{\eta}]$ on the imaginary axis. Thus, in particular, we have
\begin{align}\label{t5a}
\begin{split}
&s=|s| \text{ when } \zeta^2+c_0^2\eta>0\\
&s=i|s|\text{ when } \zeta^2+c_0^2\eta< 0\text{ and }\zeta=i|\zeta|\\
&s=-i|s|\text{ when } \zeta^2+c_0^2\eta< 0\text{ and }\zeta=-i|\zeta|.
\end{split}
\end{align}
One checks that only $\mu_1$ has, for $\Re\zeta>0$, negative real part; consequently, for a fixed $\zeta$ with $\Re\zeta>0$ the system \eqref{t3} has a one dimensional space of solutions that decay to zero as $x\to +\infty$.    The  eigenvectors corresponding to the $\mu_j$, $j=1,\dots,5$, are the respective columns of the matrix
\begin{align}\label{t6}
T(x,\zeta)=\begin{pmatrix}T_1&T_2&T_3&T_4&T_5\end{pmatrix}=\begin{pmatrix}\frac{ms}{\kappa u}&-\frac{ms}{\kappa u}&-\frac{im}{1-\eta}&0&0\\\frac{\zeta}{u}&\frac{\zeta}{u}&i&0&0\\-i&-i&\frac{\zeta}{u}&0&0\\\frac{-\kappa p_S s}{um}&\frac{\kappa p_S s}{um}&0&1&0\\\frac{-\kappa p_\lambda s}{um}&\frac{\kappa p_\lambda s}{um}&0&0&1\end{pmatrix}
\end{align}
where $m=\frac{u}{v}$ is the mass flux.

From the formulas \eqref{t4}, \eqref{t5} we see that for any fixed value of $\zeta$, the eigenvalues $\mu_1$ and $\mu_2$ are distinct except at values $x=x(\zeta)$ where $s^2(x,\zeta)=\zeta^2+c_0^2\eta(x)=0$; at such values the first and second columns of $T$ are parallel. The eigenvalues $\mu_2$ and $\mu_3$ are distinct except at $x$ values where $\zeta=u$, and then the second and third rows of $T$ are clearly parallel.   For all other values of $x$ the matrix $T(x,\zeta)$ is invertible.\footnote{For certain types of profiles and choices of $\zeta$, there may be more than one $x-$value where $T(x,\zeta)$ is singular.}


A complex number $\zeta$ with $\Re\zeta\geq 0$ is defined in \cite{E2} to be of Class III or Class II respectively, when there exists $x_*\in [0,\infty]$ such that
$s(x_*,\zeta)=0$ or $\zeta=u(x_*)$.
All other $\zeta$  are said to be of Class I.  Thus we have
\begin{align}\label{t7}
\begin{split}
&\text{Class III}  = \{\zeta:\Re\zeta=0\text{ and }\min_x (c_0\eta^{\frac{1}{2}})\leq |\zeta|\leq \max_x (c_0\eta^{\frac{1}{2}})\}\\
&\text{Class II}= \{\zeta:\Im \zeta=0\text{ and }\min_x u\leq \zeta\leq \max_x u\}\\
&\text{Class I}=\{ \text{all remaining }\zeta\in\bC\text{ with }\Re\zeta\geq 0\}.
\end{split}
\end{align}
Class III (resp. II) consists of two (resp. one) bounded closed interval(s), and the minima appearing in \eqref{t7} are positive.  In contrast to \cite{E2,E3} we are able to treat Class I and Class II frequencies by a single argument.  The argument is based on the
observation that for such
$\zeta$ the eigenvalue $\mu_1$ remains well separated from the others.  So for us the important partition of $\zeta-$ space is
\begin{align}
\{\zeta\in\bC:\Re\zeta\geq 0\}=\mathrm{III}\cup \mathrm{III}^c.
\end{align}

\begin{notation}
1) When working with Class III values of $\zeta$ we will usually suppose $\zeta=i|\zeta|$.  The same results hold with the same proofs when $\zeta=-i|\zeta|$, but certain formulas change slightly.  Thus, for some statements it is helpful to define
\begin{align}\label{plus}
\text{Class III}_+=\text{Class III}\cap \{\zeta=i|\zeta|\}.
\end{align}
We let III$^o_+$ denote the interior of the closed interval III$_+$.

2) Henceforth, we shall use the parameter $h=1/\eps$ instead of $\eps$ and, in view of \eqref{t2}, we shall denote the stability function by $V(\zeta,h)$ instead of $V(\tau,\eps)$ and the decaying solution of \eqref{t3} by $\theta(x,\zeta,h)$ instead of $\theta(x,\tau,\eps)$.

\end{notation}

\begin{defn}\label{t7a}
A detonation profile $P(x)$ is said to be of type D (respectively, type I) if the function $c_0^2\eta=c_0^2-u^2$ satisfies
\begin{align}
d_x(c^2_0\eta)
= d_x(c^2_0- u^2)
<0 \text{ (respectively, }>0) \text{ on }[0,+\infty).
\end{align}
\end{defn}
 For profiles of type I it was  shown in \cite{E2,E3,LWZ1}
that the stability function $V(\tau,\eps)$ generally has zeros in $\Re\tau>0$ (unstable zeros) for all transverse wavenumbers $\eps$ above a certain cutoff.\footnote{Theorem 5.2 of \cite{LWZ1} gives a precise statement.}   In this paper we are concerned only with profiles of type D.   In the case of more general profiles, by considering intervals on which $c_0^2\eta$ is increasing or decreasing, stability and instability results  can be proved by combining the results for I and D type profiles (see, for example, \cite{LWZ1}, Theorem 5.2, part e).

We suppose from now on that $P(x)$ is a type D profile.  In this case
\begin{align}\label{t8}
\mathrm{III}_+=\{\zeta=i|\zeta|:c_0\eta^{1/2}(\infty)\leq |\zeta|\leq c_0\eta^{1/2}(0)\}.
\end{align}

\begin{definition}\label{t9}
We refer to class III as the set of \emph{turning point frequencies}.  For each $\zeta\in\mathrm{III}_+$ there is a unique $x=x(\zeta)\in[0,\infty]$ such that $s(x(\zeta),\zeta))=0$.  We refer to $x(\zeta)$ as the \emph{turning point} associated to $\zeta$.  The map $x(\zeta):\mathrm{III}_+\to [0,\infty]$ is bijective.
 We set $\zeta_0= ic_0\eta^{1/2}(0)$ and $\zeta_\infty=ic_0\eta^{1/2}(\infty)$ and note that $x(\zeta_0)=0$ and $x(\zeta_\infty)=+\infty$.
We refer to $x(\zeta_\infty)$ as the \emph{turning point at infinity}.
\end{definition}

For $\zeta\in \mathrm{III}_+$ consider an interval $[0,K]$ that does not contain the turning point $x(\zeta)$. On any such interval the matrix $T(x,\zeta)$ is invertible, and we can use  WKB methods to construct\footnote{See for example Chapters 5 and 6 of Coddington and Levinson \cite{CL}.}approximate solutions of order $h^{m}$
of the system \eqref{t3} associated to each of the eigenvalues $\mu_i$ of the form
\begin{align}\label{t10}
\theta_i(x,\zeta,h)=e^{\frac{1}{h} h_i(x,\zeta)+k_i(x,\zeta)}
[f_{i0}(x,\zeta)+hf_{i1}(x,\zeta)+\dots+h^{(m+1)}f_{i(m+1)}(x,\zeta)].
\end{align}
Here
\begin{align}\label{t11}
h_i(x,\zeta)=\int^x_0\mu_i(x',\zeta)dx', \;k_i(0,\zeta)=0, \text{ and }f_{i,0}=T_i(x,\zeta).
\end{align}
We do not need explicit formulas  for the other quantities appearing in \eqref{t10}.  For our purposes it is only important to specify the leading term of $\theta_i$ uniquely, and the condition 
\eqref{t11} does this.  More generally, for a given $\zeta\in\{\Re\zeta\geq 0\}$ approximate solutions of this form can be constructed on any compact $x-$interval where $T(x,\zeta)$ is invertible.  Classical sufficient conditions for approximate solutions of this type to be
close
in relative error
to true exact solutions
of \eqref{t3} for $h$ small are given, for example,  in \cite{LWZ1}, Theorem 3.1.

Observing that $|\theta_1(0,\zeta,h)-T_1(0,\zeta)|\leq Ch |\theta_1(0,\zeta,h)|$ we give the following definition:
\begin{definition}[Type $\theta_1$]\label{12a}  Consider  $\theta(x,\zeta,h)$, the decaying solution of \eqref{t3}, and suppose $\zeta$ lies in $\mathcal{P}(h)$, a subset of $\Re\zeta\geq 0$ that may depend on $h$.
We say that $\theta$ is  \emph{of type $\theta_1$ at $x=0$ on $\mathcal{P}(h)$} if there is a nonvanishing scalar factor $s(\zeta,h)$ so that, given any $\kappa>0$, there exists an $h_0>0$ such that
\begin{align}\label{t12}
|s(\zeta,h)\theta(0,\zeta,h)-T_1(0,\zeta)|\leq \kappa |T_1(0,\zeta)|\text{ for }\zeta\in\mathcal{P}(h),\;0<h\leq h_0.
\end{align}
\end{definition}

\begin{rem}\label{t13}
Let $K$ be a subset of $\Re\zeta\geq 0$.  Erpenbeck realized in \cite{E2} that in order to show that the stability function $V(\zeta,h)$ is nonvanishing for $\zeta\in K$ for $h$ sufficiently small, it suffices to show that the decaying solution $\theta$ is of type $\theta_1$ at $x=0$ on $K$.   In \cite{E2,E3,LWZ1}  it was shown for type D profiles that $\theta$ is of type $\theta_1$ at $x=0$ on $K$ when $K$ is either a compact subset of $\mathrm{III_+^o}$ or a compact subset of $\{\Re\zeta\geq 0\}\setminus \mathrm{III}$.\footnote{Theorem 5.2, parts (a),(b) of \cite{LWZ1} gives a treatment of such sets $K$.} These papers did not consider sets $K$ containing either of the endpoints $\zeta_0$, $\zeta_\infty$ of $\mathrm{III}_+$; nor did they provide a uniform treatment of the set $|\zeta|\geq M$.  Moreover,  values of $\zeta$ in $\mathrm{III_+^o}$ were treated by  arguments completely different from those used to treat nearby values in $\Re\zeta>0$.  In this paper the main result will be proved by showing that $\theta$ is of type $\theta_1$ at $x=0$ on the full set $\Re\zeta\geq 0$. The proof here gives a uniform analysis near all points in $\mathrm{III}_+$, including the endpoints, and a uniform analysis for $|\zeta|\geq M$.   In section \ref{model} we give a very simple example illustrating how a second-order equation with a  turning point at $x=+\infty$ transforms to Bessel's equation under a similar transformation.

\end{rem}
\section{Assumptions}\label{assumptions}

\begin{ass}\label{thermo}
The thermodynamic functions appearing in the ZND system \eqref{ZND}, $p$ (pressure), $T$ (temperature), $\Delta F$ (free energy increment), and $r$ (reaction rate) are real analytic functions of their arguments $(v,S,\lambda)$.
\end{ass}

\begin{ass}\label{profile}
The steady strong detonation profile $P(x)=(v,u,0,0,S,\lambda)$ is of type D.  It is a real-analytic function of $x$ in the subsonic reaction zone $[0,\infty)$.   There exist constants $C_i$, $i=1,\dots,4$  such that
\begin{align}
0<C_1\leq \kappa=\frac{u}{c_0}\leq C_2 <1 \text{ and } 0<C_3\leq {u}\leq C_4 \text{ for all }x\in [0,\infty).
\end{align}

\end{ass}

\begin{ass}\label{rate}
The rate function satisfies
\begin{align}\label{a22c}
r|_{\lambda=0}=0,\;\;r_\lambda < 0,\; r_v|_{\lambda=0}=0,\; r_S|_{\lambda=0}=0.
\end{align}
\end{ass}

This assumption is satisfied, for example, by rate functions of the form
\begin{align}\label{rate1}
r=-k\rho\phi(T)\lambda,
\end{align}
where $\rho$ is density and $k>0$ is a reaction rate constant, such as the Arrhenius rate law
\begin{align}\label{rate2}
r=-k\lambda \exp(-E/RT)\text{\;\;($E$ is activation energy)}.
\end{align}

 Analogous to $V(\zeta,h)$, one can define a stability function $L_1(\zeta)$ for the von Neumann shock, considered as a purely gas dynamical shock.  This was first done in \cite{E4}, and Erpenbeck's $L_1(\zeta)$ turned out to be a nonvanishing multiple of the Majda stability determinant for shocks defined in \cite{M} twenty years later.   The functions $V(\zeta,h)$ and $L_1(\zeta)$ are described in section \ref{stabilityfn}.

\begin{ass}\label{vN}
The stability function for the von Neumann shock, $L_1(\zeta)$,  has no zeros in $\Re\zeta\geq 0$.
\end{ass}
This means that the equation of state of the unreacted explosive is such that the von Neumann step-shock would be stable if the reactions behind it were somehow suppressed.
This assumption, which is also made in \cite{E2,E3,LWZ1},  allows us to concentrate on effects that arise solely from the reactions; it always holds, for example, for step-shocks in ideal polytropic gases.

\section{Main Result}

Our main result is the following theorem.

\begin{thm}\label{t14}
Consider a strong detonation profile of type D under Assumptions \ref{thermo}, \ref{profile}, \ref{rate}, and \ref{vN}.  There exists an $h_0>0$ such that for all $\Re\zeta\geq 0$, the stability function $V(\zeta,h)\neq 0$ for $0<h\leq h_0$.
\end{thm}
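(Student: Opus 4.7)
By Remark \ref{t13}, it suffices to show that the decaying solution $\theta(x,\zeta,h)$ of \eqref{t3} is of type $\theta_1$ at $x=0$ on the full set $\{\Re\zeta\geq 0\}$. I would do this by covering the parameter set with overlapping pieces, verifying the type $\theta_1$ property on each, and then invoking compactness together with a separate uniform analysis at high frequency. A natural partition is: (i) a small neighborhood $U_\infty$ of $\zeta_\infty$; (ii) a small neighborhood $U_0$ of $\zeta_0$; (iii) a compact $K_{\mathrm{int}}\subset \mathrm{III}_+^o$ separated from the endpoints; (iv) a compact $K_{\mathrm{out}}\subset \{\Re\zeta\geq 0\}\setminus \mathrm{III}$ with $|\zeta|\leq M+1$; and (v) the high-frequency tail $\{|\zeta|\geq M\}$, for $M$ sufficiently large. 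For (iii) and (iv), the results quoted at the end of Remark \ref{t13} (from \cite{LWZ1}) directly apply.

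For region (v), the eigenvalue $\mu_1$ is well separated from the others (this is the observation highlighted after \eqref{t7}), and the bounded turning-point set $\mathrm{III}$ is disjoint from $\{|\zeta|\geq M\}$. Writing $\zeta=|\zeta|\hat\zeta$ and regarding $|\zeta|/h$ as an effective large parameter, I would construct a WKB approximation of the form \eqref{t10} that is uniform in the direction $\hat\zeta$ by rescaling $\Phi_0$. Since $T_1(0,\zeta)$ is homogeneous of degree zero in $\zeta$ (up to explicit factors that depend smoothly on $\hat\zeta$), this produces the type $\theta_1$ property uniformly for $|\zeta|\geq M$ once $h$ is small enough.

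The main obstacle is region (i), because the turning point $x(\zeta)$ escapes to $+\infty$ as $\zeta\to \zeta_\infty$, so no local Airy model on a compact subinterval can capture it. The plan, foreshadowed by Remark \ref{t13} and the model problem of section \ref{model}, is to introduce a change of independent variable $y=y(x,\zeta)$ on $[0,\infty)$, depending smoothly on $\zeta$ through $\zeta_\infty$, that compactifies the spatial interval and transforms the two-dimensional $\mu_1,\mu_2$ subblock of \eqref{t3} into a Bessel-type equation whose regular singular point is the image of $x=+\infty$. The decaying solution is then identified with a specific Bessel/Hankel function whose behavior at the singular point is known uniformly in the parameter. One must (a) perform a block-diagonalization of \eqref{t3} separating the resonant $(\mu_1,\mu_2)$ block from the decoupled $(\mu_3,\mu_4,\mu_5)$ directions, with error estimates uniform through $\zeta_\infty$; (b) reduce the resonant block to a scalar second-order equation and match it to the Bessel model; (c) track the distinguished decaying Bessel solution from large $x$ back into the WKB regime and then to $x=0$, and read off the coefficient of $T_1(0,\zeta)$. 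The delicate point is controlling the error in the Bessel reduction on an unbounded interval and showing that the WKB-to-Bessel matching is uniform as $\zeta\to\zeta_\infty$.

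Region (ii) near $\zeta_0$ is analogous in spirit but easier, because the turning point sits at or just inside the boundary $x=0$. Here a standard Airy-type local model on a small interval $[0,\delta]$ suffices; the only subtlety is that $T_1(0,\zeta)$ and $T_2(0,\zeta)$ coalesce at $\zeta=\zeta_0$, so the normalization in Definition \ref{12a} must be arranged from the Airy side and then matched to WKB on $[\delta,\infty)$. Assembling the type $\theta_1$ property on (i)--(v) and invoking Remark \ref{t13} gives $V(\zeta,h)\neq 0$ for all $\Re\zeta\geq 0$ and $0<h\leq h_0$, which is Theorem \ref{t14}.
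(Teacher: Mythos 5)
Your overall skeleton agrees with the paper's: reduce via Remark \ref{t13} to showing the decaying solution is of type $\theta_1$ at $x=0$, and decompose $\{\Re\zeta\geq 0\}$ into a neighborhood of $\zeta_\infty$, a neighborhood of $\zeta_0$, interior turning-point frequencies, non-turning-point frequencies, and $|\zeta|\geq M$ (your rescaling in (v), with $\tilde h=h/|\zeta|$ as effective parameter, is essentially Proposition \ref{k1}). However, your covering has a genuine hole at region (iii). You take $K_{\mathrm{int}}$ to be a compact subset of $\mathrm{III}_+^o$, i.e.\ of the imaginary axis, and assert that the results quoted in Remark \ref{t13} from \cite{LWZ1} ``directly apply.'' Those results cover only purely imaginary $\zeta\in\mathrm{III}_+^o$ on the one hand, and compact subsets of $\{\Re\zeta\geq 0\}\setminus\mathrm{III}$ on the other; for the latter the constants $h_K$, $C_K$ in Corollary \ref{noturn} degenerate as $K$ approaches $\mathrm{III}$, because the eigenvalue separation \eqref{q2} fails near the turning point. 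Consequently the collar of frequencies with $\Re\zeta>0$ small and close to $\mathrm{III}_+^o$ is covered by none of your pieces (i)--(v) with a uniform $h_0$. Closing this gap is precisely the content of Propositions \ref{q30}--\ref{q31}: one needs exact Airy-based solutions $\theta_\pm$ on a complex $x$-neighborhood of $x(\uzeta)$, the sign information of Remark \ref{q19a} ($\Im\rho>0$ for $\Re\zeta>0$), and a matching argument that is uniform on a full two-dimensional neighborhood $\omega_1\ni\uzeta$ in $\{\Re\zeta\geq 0\}$ — this is new work, not a citation to \cite{LWZ1}.

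Second, in region (i) your plan to identify the decaying solution with ``a specific Bessel/Hankel function whose behavior at the singular point is known uniformly in the parameter'' conceals the main difficulty rather than resolving it. After the compactifying change of variable $t\sim e^{-\mu x/2}$ one gets the \emph{perturbed} Bessel equation \eqref{t17}, and no single Bessel representation is uniformly valid as both $\zeta\to\zeta_\infty$ and $h\to 0$: the behavior depends on the size and phase of $\tilde\beta=\tilde\alpha/h$, and the paper must split into Regimes I, II, III and reduce to three different normal forms \eqref{t18} ($m=0$: exponentials; $m=1$: Airy; $m=-1$: modified Bessel of complex order, which requires extending Olver's Chapter 12 theorem because $K_\nu$ with $\Re\nu=0$ has zeros on the positive axis). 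Moreover, your phrase ``depending smoothly on $\zeta$'' misses that the block-diagonalization near infinity and the choice of progressive integration paths require the \emph{analytic} extension of the profile to a complex wedge $\bW(M_0,\theta)$ (Proposition \ref{o5} and section \ref{conj}); the paper emphasizes, with counterexamples in \cite{LWZ2}, that $C^\infty$ regularity would not suffice. As written, steps (a)--(c) of your plan name where the difficulty sits but do not supply the regime decomposition and uniform error machinery that make the argument work.
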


As explained in Remark \ref{t13}, to prove the theorem it suffices to show that the decaying solution $\theta(x,\zeta,h)$ of \eqref{t3} is of type $\theta_1$ at $x=0$ on the set $\{\Re\zeta\geq0\}$.   This property of $\theta$ is a consequence of: Propositions \ref{e4u}, \ref{f9w}, \ref{g19z}, which treat $\zeta_\infty$; Proposition \ref{q31}, which treats points in $\mathrm{III}_+^o$ and Proposition \ref{q34} which treats $\zeta_0$; Corollary \ref{noturn}, which treats compact subsets of $\{\Re\zeta\geq\setminus \mathrm{III}$ and Proposition \ref{k1}, which treats $|\zeta|\geq M$ for $M$ large.

 The main difficulty in proving existence of a uniform stability cutoff is to obtain explicit representations of $\theta$ at $x=0$ that are uniformly valid for frequencies near turning point frequencies.  For the finite turning point frequencies $\zeta\in \mathrm{III_+^o}$, the three-step strategy is to show first that $\theta$ is of type $\theta_1$ to the right of the turning point $x(\zeta)$,\footnote{This is to be expected, since $\theta$ is the decaying solution and $\Re\mu_1(x,\zeta)\leq 0$} then to perform a matching argument involving Airy functions with arguments depending on $(\zeta,h)$ as a parameter to show that $\theta$ is of type $\theta_1$ just to the left of $x(\zeta)$, say at $x(\zeta)-\delta$,  and finally to match using a basis of exact solutions close to the approximate solutions $\{\theta_i\}_{i=1}^5$ \eqref{t10} to conclude that $\theta$ is of type $\theta_1$ on $[0,x(\zeta)-\delta]$.   This strategy encounters special problems when the endpoint frequencies $\zeta_0$ and $\zeta_\infty$ are considered, and those problems are most serious in the case of $\zeta_\infty$, to which all of Part \ref{infinite} is devoted.  For this frequency it is clear that the first step in the strategy does not even make sense since $x(\zeta_\infty)=+\infty$.  The case $\zeta=\zeta_0$ is distinguished by the fact that this is the only case where the point at which $\theta(x,\zeta,h)$ must be explicitly evaluated in order to compute $V(\zeta,h)$, namely $x=0$, is itself a turning point: $x(\zeta_0)=0$.

 In the remainder of this section we discuss our approach to analyzing the turning point at infinity.
The profile $P(x)$ converges at an exponential rate to its endstate $P(+\infty)$,
\begin{align}\label{t15a}
|P(x)-P(+\infty)|\leq Ce^{-\mu x} \text{ for }\mu>0 \text{ as in }\eqref{o6a},
\end{align}
and we use this property in section \ref{extend} to analytically extend $P(x)$ to a half-plane of the form
\begin{align}
\bW(M_0):=\{x\in\bC:\Re x> M_0\}.
\end{align}
This immediately gives an analytic extension of the governing system \eqref{t3} to $\bW(M_0)$.
For any angle $\theta$ such that $0<\theta<\pi/2$, define the infinite wedge
\begin{align}
\bW(M_0,\theta):=\{x\in\bC:|\arg(x-M_0)|< \theta\}\subset \bW(M_0).
\end{align}
For $\zeta\in\{\Re\zeta\geq 0\}$  near $\zeta_\infty$ and $x\in\bW(M_0,\theta)$ the eigenvalues $\{\mu_j(x,\zeta)\}_{j=1,2}$ are well separated from  $\{\mu_j(x,\zeta)\}_{j=3,4,5}$.
In section \ref{conj} we use this fact to construct a $5\times 5$ conjugator $Y(x,\zeta,h)$ such that the map $\theta=Y(x,\zeta,h)\phi$ \emph{exactly} transforms the system \eqref{t3} to block diagonal form on $\bW(M_0,\theta)$:
\begin{align}\label{t15}
h\phi_x=\begin{pmatrix}A_{11}(x,\zeta,h)&0\\0&A_{22}(x,\zeta,h)\end{pmatrix}\phi,
\end{align}
where the blocks $A_{11}$ and $A_{22}$ are $2\times 2$ and $3\times 3$, respectively.    The conjugator has the form $Y=Y_1Y_2$, where $Y_1$ gives an approximate conjugation to block form \eqref{h5} and $Y_2$ solves away the error in the approximate conjugation.  The matrix $Y_1
=(P_0,Q_0,T_3,T_4,T_5)$ for vectors $P_0$, $Q_0$ defined in \eqref{h3} and satisfying 
\begin{align}\label{t15z}
T_1=P_0+sQ_0,\;T_2=P_0-sQ_0\text{ for }T_1,T_2\text{ as in } \eqref{t6},
\end{align}
and  the entries of the matrix $Y_2$ are constructed by solving certain integral equations on $\bW(M_0,\theta)$ by a contraction argument.   Unlike $T(x,\zeta)$, the matrix $Y_1(x,\zeta)$ is always invertible.  The analytic extension of the system \eqref{t3} to $\bW(M_0)$ gives us a freedom to choose integration paths that plays an important role in this and later contraction arguments.

The block $A_{11}$ has eigenvalues close to the crossing eigenvalues $\mu_1(x,\zeta)$, $\mu_2(x,\zeta)$.  Thus, for $\zeta$ near $\zeta_\infty$ we have reduced the problem of constructing the decaying solution of the governing system \eqref{t3} on
$[M_0,+\infty)$ to constructing the decaying solution of the $2\times 2$ system $d_x\phi_1=A_{11}(x,\zeta,h)\phi_1$.
In section \ref{conj} we rewrite this $2\times 2$ system as  equivalent second-order equation \eqref{a1}
\begin{align}\label{t16}
h^2w_{xx}=(C(x,\zeta)+hr(x,\zeta,h))w,\text{ where }C(+\infty,\zeta_\infty)=0,
\end{align}
and we focus on solving this equation on an infinite strip of the form $T_{M,R}:=\{x\in\bC:\Re x\geq M, |\Im x|\leq R\}$. Note that for $M$ large enough, $T_{M,R}\subset \bW(M_0,\theta)$. In section \ref{general} we show that a transformation of the form $t=t(x,\zeta)=f(\zeta)e^{-\mu x/2}$ for $\mu$ as in \eqref{t15a} and some $f(\zeta)$,  transforms \eqref{t16} into an equation that is a perturbation of Bessel's equation:
\begin{align}\label{t17}
\begin{split}
&\;h^2(t^2W_{tt}+tW_t)=(t^2+\tilde\alpha^2)W+\\
&\quad[(t^2+\alpha^2)t^2h_1(t,\zeta)+t^3h_2(t,\zeta)+ht^2h_3(t,\zeta,h)]W \text{ on }\mathcal W,
\end{split}
\end{align}
where $\mathcal{W}$, the image of the strip $T_{M,R}$ under the map $t=t(x,\zeta)$, is a bounded wedge in $\{\Re t\geq 0\}$ with vertex at $t=0$ (see \eqref{hr4}).

Before returning to \eqref{t17} we illustrate how  Bessel's equation arises from a very simple model equation with a turning point at $+\infty$ by a similar transformation.
This model provides motivation for introducing \eqref{t17}, and already indicates the importance of the parameter $\alpha/h$ which appears below in the definition of the frequency regimes I, II, III.

\subsection{Model problem: solution in terms of  Bessel functions}\label{model}

Consider the equation
$$h^2\frac{d^2w}{dx^2}=(e^{-2x}+\alpha^2)w,$$
which becomes under the change of variable $t=e^{-x}$:
$$h^2(t^2w_{tt}+tw_t)=(t^2+\alpha^2)w.$$
Thus, the turning point at $x=+\infty$ for $\alpha=0$ becomes a turning point at $t=0$.\\
Setting $z=h^{-1}t$ and observing the scale-invariance of the left side, we obtain
$$h^2[z^2\frac{d^2w}{dz^2}+z\frac{dw}{dz}]=(h^2z^2+\alpha^2)w,$$
which suggests that the good parameter is $\beta= h^{-1}\alpha$.
In this case, the equation becomes
$$z^2u_{zz}+zu_z-(z^2+\beta^2)u=0$$
We recognize this as the ``modified  Bessel's equation" \cite{O}.\footnote{The standard Bessel's equation is $z^2U_{zz}+zU_z+(z^2-\nu^2)U=0$.}
One may thus deduce that $u(z)= A J_{\beta}(iz)+BY_{\beta}(iz)$; using $z=h^{-1}e^{-x}$ one obtains
$$w(x)= AJ_{ih^{-1}\alpha}(ih^{-1}e^{-x})+BY_{ih^{-1}\alpha}(ih^{-1}e^{-x}),$$
and classical expansions for Bessel functions can now be used to decide which solutions decay or remain bounded as $x\to\infty$ for different choices of the complex parameter $\alpha$.  A connection between  turning points at infinity and Bessel functions is  discussed in \cite{DL}.
\\

In \eqref{t17} the parameter $\alpha=\alpha(\zeta)\in\bC$ satisfies $\alpha^2=i(\zeta-\zeta_\infty)$ and $\tilde \alpha$ is a nonvanishing multiple of $\alpha$. The strategy now is to construct solutions of \eqref{t16} on the strip $T_{M,R}$ that decay as $\Re x\to +\infty$ by constructing solutions of \eqref{t17} on $\mathcal{W}$ that decay as $t\to 0$.

At first it is not at all clear whether and in what sense the perturbation, given by the second line of \eqref{t17}, is ``small enough" for \eqref{t17} to be helpfully regarded as a perturbation of Bessel's equation.  Bessel's equation is a very singular equation, with a regular singular point at $0$, an irregular singular point at $\infty$, and turning points for certain choices of $(\tilde\alpha,h)$; it is a delicate matter to understand perturbations of such a singular object.  If one ignores the perturbation in \eqref{t17} for a moment, it can be seen that the behavior of solutions to \eqref{t17} depends on the both phase of $\tilde\alpha(\zeta)$ and on the relative magnitude of $\tilde{\alpha}$ and $h$.  Accordingly, in section \ref{perturbed} we
identify three different parameter regimes for $(\tilde\alpha(\zeta),h)$.  With $\tilde\beta=\tilde\alpha/h$ these are:\\

I: $|\tilde\beta|\geq K$, $0\leq \mathrm{arg}\;\beta\leq \frac{\pi}{2}-\delta$ \;\;where $K$ is large and $\delta>0$

II: $|\tilde  \beta|\geq K$, $\frac{\pi}{2}-\delta\leq \mathrm{arg}\;\beta\leq \frac{\pi}{2}$

III: $|\tilde\beta|\leq K$.\\

It turns out that the perturbed Bessel problem \eqref{hr4} can be analyzed in Regimes I, II, III by using suitable transformations of dependent and independent variables to reduce \eqref{hr4} to the normal form

\begin{align}\label{t18}
W_{\xi\xi}=(u^2\xi^m+\psi(\xi))W,
\end{align}
where $m=0$, $1$, or $-1$, respectively, $u$ is a large parameter, and $\psi$ depends on the perturbation in \eqref{t17}.   In Regimes I and II the correct choice of large parameter is $u=\tilde \beta=\tilde\alpha/h$ and a basis of solutions of \eqref{t18} can be written in terms of exponentials and Airy functions, respectively (see Propositions \ref{e3} and \ref{f6}).  In Regime III the large parameter is $u=1/h$ and solutions of \eqref{t18} are expressed in terms of the modified Bessel functions $I_{\tilde\beta}$, $K_{\tilde\beta}$ (Proposition \ref{g12}).

Control of the function $\psi$ in \eqref{t18} is gained by careful estimates of the functions $h_i$, $i=1,2,3$, appearing in the perturbation  (see, for example, Proposition \ref{estimates}). The analysis also makes use of some classical methods \cite{O} for constructing  solutions to \eqref{t18} on \emph{large} subdomains of $\bC$.
To complete the analysis of the turning point at $\infty$, one must unravel the many transformations leading from equation \eqref{t16} to the normal form \eqref{t18}, identify the explicit form of the solution that decays as $x$ (the original $x$) goes to $\infty$, and show that this solution is indeed of type $\theta_1$ at $x=M$, the real point on the left boundary of the infinite strip $T_{M,R}$.  Provided $\zeta$ lies close enough to $\zeta_\infty$, the point $x=M$ will lie to the \emph{left} of any of the corresponding turning points $x(\zeta)$.  From here it is then relatively easy to conclude that $\theta$ is of type $\theta_1$ at $x=0$ for $\zeta$ near $\zeta_\infty$.

\br\label{systemrmk}
 We have limited the  exposition here mostly to the case
of an idealized single-step exothermic reaction, for which $\lambda$ is
scalar.
In the
case of a multi-step reaction, the $\lambda$-equation
becomes a system of ODE, with multiple decaying modes $\sim e^{-\mu_j x}v_j$,  where
$\mu_j$ is in general complex with $\Re \mu_j>0$.  One can then apply the analytic stable manifold theorem of \cite{LWZ2} to obtain an analytic extension of the profile to a wedge $\bW(M_0,\theta)$ for {\it some} $\theta>0$.
Likewise (see
\cite{LWZ2}),
we may conjugate the turning point at infinity to a $2\times 2$ block
analytic on the same wedge.
In some circumstances the presence of multiple reaction steps can prevent us from recasting the $2\times 2$ block in the form \eqref{t17}.  Nonetheless, under the condition that $\mu_1$ is real and 
\be\label{cond}
\hbox{\rm
$\Re \mu_j > 2 \mu_1$ for $j\neq 1$}
\ee
the analysis of this paper gives, independently of the results of \cite{LWZ2} just mentioned,  high-frequency stability of type D detonations also in
this more general case.  This extension is discussed in  detail in section \ref{multistep}.
The following example is
a multi-step
 case where condition \eqref{cond} is satisfied.
\er

\begin{example} \label{shorteg}
In Eqs. (4.3)--(4.5), p. 8, \cite{S}, there is described a
model three-step chain-branching reaction given by
F $\to$ Y; F + Y $\to$ 2Y; Y $\to$  P
for a fuel F, radical species Y, and product P,
corresponding to initiation, chain-branching, and chain-termination reactions,
with rates
\be
r_I= fe^{\theta_I(1/T_I-1/T)},
\quad
r_B=
yfe^{\theta_B(1/T_B-1/T)},
\quad
 r_C=y,
\ee
and reaction dynamics
$df/dx= -r_I-r_B$,
$dy/dx= r_I+r_B-r_C$,
where
$f$ and $y$ are mass fractions of $F$ and $Y$;
$T_I>T|_{x=0}>T_B$;
$T_I>T_B>T(\infty)$;
 and $\theta_I>>\theta_B>> 1$.
This been proposed in \cite{K,SD,SKQ} as a realistic model for
hydrogen-oxygen detonations studied experimentally in \cite{AT,St},
wherein ``a small amount of reactant is converted into
chain-carriers, which may be either free radicals or atoms, by means of
the slow chain-initiation reactions, while the rise in concentration of
chain-radicals is retarded by chain-termination steps which occur either
through absorption at the vessel walls or through three-body collisions
in the interior''  \cite{SD}.
%
Setting $\lambda=(f,y)$, the vector of reactants, and linearizing about the equilibrium $\lambda=(0,0)$,
we obtain
\be
d\lambda/dx= \begin{pmatrix}
-e^{\theta_1(1/T_I-1/T(\infty))} & 0\\
* & -1
\end{pmatrix}\lambda,
\ee
verifying the condition $\mu_1=1 << \mu_2:= e^{\theta_1(1/T_I-1/T(\infty))}$
provided
$\theta_I>>1$ and $T(\infty)>T_I$.
\end{example}

\section{Discussion and open problems}
High-frequency instability was established for type I detonations
in \cite{E2,E3,LWZ1}.  Hence,  Theorem \ref{t14} completes the program of
\cite{E2,E3}
of determining
the high-frequency stability behavior of ZND detonations belonging to the two main classes I and D identified by Erpenbeck.
Having a uniform high frequency cutoff for stability is of  more than abstract interest.
As noted in \cite{LS,KaS}, numerical stability computations are
both computationally intensive and delicate, with many features difficult to
resolve or extrapolate in various asymptotic limits.
The use of rigorous analysis
to truncate the relevant parameter regime to a closed,
bounded
region is thus a critical, but up to now missing, part of any numerical stability investigation.
Bounding the set of possibly unstable perturbation frequencies is a step toward the rigorous validation of the existing numerical results on multidimensional stability of type D ZND detonations.

Results in 1D corresponding to the high-frequency results given here
may be found in \cite{Z1}.
However, we emphasize that the multi-D setting is essentially
different from that of 1D, being
more complicated both physically-
in 1D, high-frequency stability holds automatically for all types of
detonations- and mathematically- in 1D, nontrivial turning points do not enter,
so that the analysis can be carried out using the more familiar tools of repeated diagonalization and (a useful modification of) the gap lemma, under the mild
hypothesis of $C^r$ coefficients for the eigenvalue ODE.
Here, by contrast, our arguments use in important ways our assumption of
analytic coefficients.
This is not mathematical convenience, but reflects the inherent difficulty
of the problem; in a companion paper \cite{LWZ2}, we show by explicit counterexamples that the conclusions made here may fail for coefficients that are $C^r$ or even $C^\infty$.

The stability result of Theorem \ref{t14} and the instability results of \cite{LWZ1}
concern the  multidimensional stability (or instability) of ZND detonations with respect to high frequency perturbations.
A fundamental open problem is
to establish full multi-D
stability of ideal gas ZND detonations with one-step Arrhenius reaction rate
in the small-heat release and high-overdrive limits,
generalizing the 1D results
of \cite{Z1} and giving rigorous validation to the formal observations
of Erpenbeck in \cite{E1,E5}.
This would represent the first complete (i.e., covering all frequencies), rigorous result on
multi-D stability of any detonation wave.
Again, given the delicacy of numerical computations on this subject, any such
analytical signposts are invaluable.

We note that the 1D argument of \cite{Z1}, applied
word for word together with
the result of Majda \cite{M} on multi-D stability
of ideal gas shock fronts, gives already by a simple continuity
argument {\it bounded frequency} multi-D
stability of ideal gas ZND detonations with one-step Arrhenius reaction rate
in the small-heat release and high-overdrive limits.
The methods of this paper provide a starting point for treating the remaining high frequency regime.

Another possibility opened up by our analysis
is the treatment of stability in the multidimensional ZND limit of
reactive Navier--Stokes (rNS) detonations.
Establishing a close link between ZND and rNS stability functions for small viscosity/heat conduction/diffusion would instantly give a large number of spectral
stability and bifurcation results for rNS; in 1D such results were proved
in \cite{Z2}.   Solving this problem would involve giving  a multi-parameter extension of the turning-point analysis
carried out here, with viscosity, heat conduction, and species diffusion as
the additional parameters.
As noted in \cite{CJLW,Z2},
the nonlinear implications of spectral stability (``normal modes'')
analysis are far from clear for ZND,
which includes all of the difficulties of the nonreacting Euler equations
and more.    For rNS on the other hand, which incorporates mechanisms for dissipation,  there is a much better chance of translating results on multi-D spectral stability/instability into corresponding nonlinear stability/instability results.   In 1D a  number of results of this type are given in \cite{TZ}.

As a
direction beyond ZND, we mention
the rigorous treatment for Maxwell's equations
of ``hybrid resonance'' or ``X-mode'' heating of fusion plasma
at the ``cutoff'' frequency where light and plasma frequencies collide.  This frequency corresponds to
a finite but singular turning point \cite{DIW}.
For parallel electric and magnetic fields, there is exact
decoupling into ``ordinary'' (O) modes governed by Airy's equation,
and ``extraordinary'' (X) modes governed by a {\it singular}
cousin which is a perturbed  Bessel equation similar to our equation \eqref{t17}.  
Exact conjugation tools like those we have developed here may be useful for completing this singular ODE analysis. 

Finally, the uniform estimates given here are potentially
useful for general turning-point problems on unbounded spatial intervals,
both for spectral stability analysis as here and
for resolvent estimates toward linearized and
nonlinear stability or instability.

\part{The turning point at infinity}\label{infinite}

\section{Analytic extension of the profile to a half-plane}\label{extend}

The analytic extension of the profile $P(x)$ to a half-plane given in this section turns out to be important for the construction in the next section of the conjugator to block form near $\infty$.   For that an extension merely to a strip like $T_{M,R}$ does  not appear to suffice.

In view of Assumptions \ref{thermo}, \ref{profile}, and \ref{rate},
the nonzero components of the detonation profile $P(x)=(v,u,S,\lambda):=(q,\lambda)$ satisfy a $4\times 4$ system of ODEs on $[0,\infty)$ of the form:
\begin{align}\label{o1}
\begin{pmatrix}F(P)\\\lambda\end{pmatrix}_x=\begin{pmatrix}0\\h(P)\lambda\end{pmatrix},\;\;\;\begin{pmatrix}q(0)\\\lambda(0)\end{pmatrix}=\begin{pmatrix}q_0\\\lambda_0\end{pmatrix},
\end{align}
where $F(P)$ and $h(P)$ are real-analytic.   Here the equation $F(P)_x=0$ expresses conservation of mass, momentum, and energy (see \cite{FD}, p. 98), and can be integrated to give $F(q,\lambda)=F(P(+\infty))$,
an equation that determines $q$ as a function  $q=Q(\lambda)$.\footnote{Here $Q(\lambda)$ is actually a branch of a multivalued function.}  With $\bQ(\lambda)=(Q(\lambda),\lambda)$ the system thus reduces to a scalar problem of the form
\begin{align}
\lambda_x=h(\bQ(\lambda))\lambda,\;\;\;\lambda(0)=\lambda_0>0,
\end{align}
where for some constants constants $c_1$, $c_2$
\begin{align}\label{o2}
-c_1<h(\bQ(\lambda))<-c_2<0\text{ on }[0,\lambda_0].
\end{align}
  The condition \eqref{o2} implies
\begin{align}\label{o3}
|\lambda(x)|\leq Ce^{-c_2x}\text{ on }[0,\infty),
\end{align}
and thus $P(x)=(Q(\lambda(x)),\lambda(x))$ satisfies
\begin{align}\label{o4}
|P(x)-P(\infty)|\leq Ce^{-c_2 x},\text{ where }P(\infty)=(Q(0),0).
\end{align}

The next proposition gives more precise information on the profile for $x$ large.
\begin{prop}\label{o5}
For $M_0$ large enough, the profile $P(x)$ extends analytically to a solution of \eqref{o1} on a half-plane $\bW(M_0):=\{x\in\bC:\Re x>M_0\}$.  The extended profile has a convergent expansion
\begin{align}\label{o6}
P(x)=P_0+P_1e^{-\mu x}+P_2e^{-2\mu x}+\dots \text{ on }\mathbb{W}(M_0),
\end{align}
where $\mu=-h(\bQ(0))>0$ and the $P_j$ are constant vectors.  Thus, $P(x)$ satisfies
\begin{align}\label{o6a}
 |P(x)-P(\infty)|\leq Ce^{-\mu x} \text{ on }\bW(M_0).
 \end{align}

\end{prop}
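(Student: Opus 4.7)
The plan is to reduce the proof to constructing an analytic solution of a regular ODE at the origin of an auxiliary complex variable $z=e^{-\mu x}$, and then to read off the expansion \eqref{o6} and the bound \eqref{o6a} from the resulting convergent Taylor series in $z$.

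First, I would exploit the reduction already carried out in the paragraph preceding the statement: it is enough to work with the scalar equation $\lambda_x = h(\bQ(\lambda))\lambda$, since the other components are recovered from $q=Q(\lambda)$ and $Q$ is analytic near $\lambda=0$ (at the branch corresponding to $\lambda\to 0$, $q\to Q(0)$). Writing $h(\bQ(\lambda)) = -\mu + \lambda\,\tilde h(\lambda)$ with $\tilde h$ analytic in a disk $|\lambda|<\rho$ (where $\mu=-h(\bQ(0))>0$ by \eqref{o2}), the ODE becomes
\begin{equation*}
\lambda_x = -\mu\lambda + \lambda^2\tilde h(\lambda).
\end{equation*}
From \eqref{o3}, $\lambda(x)\in\{|\lambda|<\rho\}$ for all $x\ge M_0$ with $M_0$ large. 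Moreover, the identity $(\lambda e^{\mu x})_x = \lambda^2\tilde h(\lambda)e^{\mu x} = O(e^{-\mu x})$ shows that $\lambda(x)e^{\mu x}$ is Cauchy as $x\to+\infty$, so there exists $a_1:=\lim_{x\to+\infty}\lambda(x)e^{\mu x}\in\mathbb{R}$.

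The main step is then the change of variable $z=e^{-\mu x}$ together with the ansatz $\lambda(x)=z\,\psi(z)$. A short computation, using $\lambda_x=-\mu z(\psi+z\psi')$, shows that $\psi$ must satisfy the regular analytic ODE
\begin{equation*}
\psi'(z) = -\mu^{-1}\psi(z)^2\,\tilde h\bigl(z\psi(z)\bigr),
\end{equation*}
with prescribed value $\psi(0)=a_1$. Since the right-hand side is analytic in $(z,\psi)$ near $(0,a_1)$, the Cauchy existence theorem for analytic ODEs yields a unique solution $\psi(z)=\sum_{k\ge 0}c_k z^k$ holomorphic in some disk $|z|<r$. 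By uniqueness of solutions to the original ODE on the real interval $[M_0,\infty)$ and the matching of the asymptotic constant $a_1$, the function $x\mapsto e^{-\mu x}\psi(e^{-\mu x})$ agrees with $\lambda(x)$ there. Since $z=e^{-\mu x}$ maps $\{\Re x>-\mu^{-1}\log r\}$ biholomorphically into the punctured disk $\{0<|z|<r\}$ and extends continuously (with $z=0$) at $+\infty$, this produces the desired analytic extension of $\lambda(x)$, and hence of $P(x)=(Q(\lambda(x)),\lambda(x))$, to a half-plane $\bW(M_0)$ after enlarging $M_0$ if necessary.

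Finally, expanding $\psi(z)=\sum_{k\ge 0}c_k z^k$ gives
\begin{equation*}
\lambda(x) = \sum_{k\ge 1} c_{k-1}\,e^{-k\mu x},
\end{equation*}
and composing with the analytic function $\bQ$ at $0$ produces the expansion \eqref{o6} with $P_0=P(\infty)$ and the constants $P_j$ obtained from the Taylor coefficients of $\bQ\circ(z\psi(z))$. Uniform convergence on any sub-halfplane $\{\Re x\ge M_0+\delta\}$, where $|e^{-\mu x}|\le e^{-\mu(M_0+\delta)}<r$, yields the exponential decay estimate \eqref{o6a}. The only real obstacle in this plan is purely bookkeeping: verifying that the branch of $Q$ selected by the real profile indeed extends analytically to a full neighborhood of $\lambda=0$ (not merely to a sector), which is guaranteed here because $F(q,\lambda)=F(P(\infty))$ defines $Q$ implicitly with a nonvanishing Jacobian at $P(\infty)$ by the standard Rankine--Hugoniot/strict-hyperbolicity setup for type-D profiles.
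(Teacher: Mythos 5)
Your strategy is sound and genuinely different from the paper's. You pass to the variable $z=e^{-\mu x}$, set $\lambda=z\psi(z)$, and solve the resulting analytic Cauchy problem $\psi'=-\mu^{-1}\psi^2\tilde h(z\psi)$, $\psi(0)=a_1$, with the asymptotic constant $a_1=\lim_{x\to\infty}\lambda(x)e^{\mu x}$ as the datum; the paper instead integrates the identity $\frac{\lambda_x}{\lambda}+\mu=\mu R(\lambda)\lambda_x$ (its \eqref{oo9}) to obtain the implicit relation $\ln T=\mu V(e^{-\mu x}T)+C_0$ for $T=\lambda e^{\mu x}$ and solves it by the analytic implicit function theorem in $(T,b)$, $b=e^{-\mu x}$. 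What the paper's route buys is that the identification with the actual profile is automatic: the implicit relation is derived by integrating along the real axis from the true solution, so the IFT branch through $(e^{C_0},0)$ coincides with $\lambda(x)e^{\mu x}$ at real points without any further argument. Your route is equally natural but leaves two steps to be shored up. First, your justification that $a_1$ exists is off: $(\lambda e^{\mu x})_x=\lambda^2\tilde h(\lambda)e^{\mu x}$ is only $O(e^{(\mu-2c_2)x})$ if you use \eqref{o3}, and $c_2$ need not exceed $\mu/2$; either bootstrap the decay (since $\lambda\to 0$, $h(\bQ(\lambda))\le-\mu+\eps$ eventually, so $\lambda=O(e^{-(\mu-\eps)x})$), or, better, note $\frac{d}{dx}\ln(\lambda e^{\mu x})=h(\bQ(\lambda))+\mu=O(\lambda)=O(e^{-c_2x})$, which is integrable and also shows $a_1>0$.

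Second, the matching sentence ``by uniqueness of solutions to the original ODE and the matching of the asymptotic constant'' is not yet a proof: ODE uniqueness concerns agreement at a point, not agreement of asymptotic constants. You need the extra (standard, but necessary) observation that for this scalar autonomous equation all small positive decaying solutions lie on a single orbit, so $\lambda$ and your $\lambda^*(x)=e^{-\mu x}\psi(e^{-\mu x})$ are time-translates of one another, $\lambda^*(x)=\lambda(x+\tau)$; translating by $\tau$ multiplies the asymptotic constant by $e^{-\mu\tau}$, and since $a_1\neq 0$ equality of the constants forces $\tau=0$. With these two repairs your argument is complete; your closing concern about the branch of $Q$ is legitimate but is exactly the hypothesis the paper also takes from its preceding reduction of \eqref{o1}, so it does not distinguish the two proofs.
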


\begin{proof}
\textbf{1. }In view of the above discussion it suffices to show that $\lambda(x)$ has an expansion like \eqref{o6} (with $\lambda_0=0$) on $\bW(M_0)$ for $M_0$ large.  Let us set $H(\lambda):=h(\bQ(\lambda))$, so $\mu=-H(0)$ and
\begin{align}\label{oo7}
\lambda_x=\lambda H(\lambda)
\end{align}
where $H(\lambda)$ is analytic in a neighborhood of $\lambda=0$.

\textbf{2. }We have $H(\lambda)=H(0)+\lambda K(\lambda)$, so
\begin{align}\label{oo8}
\frac{1}{H(\lambda)}-\frac{1}{H(0)}=\lambda R(\lambda),
\end{align}
for some functions $K(\lambda)$,  $R(\lambda)$ analytic near $\lambda=0$. Multiplying \eqref{oo8} by $\lambda_x$ and  using \eqref{oo7}, we obtain
\begin{align}\label{oo9}
\frac{\lambda_x}{\lambda}+\mu=\mu R(\lambda)\lambda_x.
\end{align}
Let $V(\lambda)=\int^\lambda_0 R(s)ds$ and set $\lambda(x)=e^{-\mu x }T(x)$.  Noting that $\ln T$ is a primitive of the left side of \eqref{oo9}, we obtain by integrating \eqref{oo9} from $M_1$ to $x$ for $M_1$ large:
\begin{align}\label{oo10}
\ln T=\mu V(e^{-\mu x}T(x))+C_0,
\end{align}
where $C_0=C_0(M_1)$ is a known constant.   Defining $\cK(T,b)=\ln T-\mu V(bT)$ near the basepoint $(T_0,b_0)=(e^{C_0},0)$, we can solve $\cK(T,b)=C_0$ by the implicit function theorem to obtain
\begin{align}\label{oo11}
T(b)=e^{C_0}+\sum^\infty_{j=1}a_jb^j \text{ for }|b|<\delta
\end{align}
for some  $\delta>0$.   Thus, $\tilde\lambda(b):=bT(b)$ is analytic for $|b|<\delta$, which implies  $\lambda(x) =\tilde\lambda(e^{-\mu x})$ is analytic for $x$ such that $e^{\mu\Re x}< \delta$, that is, $\Re x>-\frac{\ln \delta}{\mu}:=M_0$.  The expansion \eqref{oo11} implies
\begin{align}\label{oo12}
\lambda(x)=e^{C_0}e^{-\mu x}+\sum^\infty_{j=1}a_je^{-(j+1)\mu x} \text{ on }\bW(M_0),
\end{align}
so $P$ depends analytically on $e^{-\mu x}$.\footnote{A similar analysis of profiles was given in \cite{L}.}

\end{proof}


\section{Conjugation to block form near infinity.}\label{conj}
Here we perform a conjugation, based on Proposition \ref{h10z} below, of  Erpenbeck's $5\times 5$ system
\begin{align}\label{f43}
h\theta_x=(\Phi_0(x,\zeta)+h\Phi_1(x))\theta:=G(x,\zeta,h)\theta.
\end{align}
to  block form
\begin{align}\label{f44}
h\phi_x=\begin{pmatrix}A_{11}(x,\zeta,h)&0\\0&A_{22}(x,\zeta,h)\end{pmatrix}\phi
\end{align}
on an infinite wedge
\begin{align}\label{f44r}
\mathbb{W}(M_0,\theta):=\{x=x_r+ix_i\in\mathbb{C}:|\arg(x-M_0)|<\theta\},\quad M_0>>1,
\end{align}
 contained in the half-plane $\bW(M_0)$
 to which the profile $P(x)=(v,u,,S,\lambda)$ has been analytically extended.  On $\bW(M_0)$ we have
 \begin{align}\label{f44s}
 |P(x)-P(\infty|\leq Ce^{-\mu\Re x}
 \end{align}
 for $\mu>0$ as in Proposition \ref{o5}.


Define the $5\times 5$ matrix
\begin{align}\label{h2}
Y_1=\begin{pmatrix}P_0&Q_0&T_3&T_4&T_5\end{pmatrix},
\end{align}
where
\begin{align}\label{h3}
P_0=\begin{pmatrix}0\\\frac{\zeta}{u}\\-i\\0\\0\end{pmatrix}, Q_0=\begin{pmatrix}\frac{m}{\kappa u}\\0\\0\\-\frac{\kappa}{mu}p_S\\-\frac{\kappa}{mu}p_\lambda\end{pmatrix}, T_3=\begin{pmatrix}-\frac{im}{1-\eta}\\i\\\frac{\zeta}{u}\\0\\0\end{pmatrix}, T_4=\begin{pmatrix}0\\0\\0\\1\\0\end{pmatrix}, T_5=\begin{pmatrix}0\\0\\0\\0\\1\end{pmatrix}.
\end{align}
Thus, we have
\begin{align}\label{h4}
T_1=P_0+sQ_0,\;\;T_2=P_0-sQ_0,\;\;s=\sqrt{\zeta^2+c_0^2\eta(x)}
\end{align}
for $T_1$, $T_2$ as in \eqref{t6}.   Setting $\theta=Y_1 \theta^a$, we have
\begin{align}\label{h5}
h\theta^a_x=\begin{pmatrix}A^0_{11}&0\\0&A^0_{22}\end{pmatrix}\theta^a+h\begin{pmatrix}d_{11}&d_{12}\\d_{21}&d_{22}\end{pmatrix}\theta^a,
\end{align}
where
\begin{align}\label{h6}
A^0_{11}=\begin{pmatrix}-\frac{\kappa^2\zeta}{\eta u}&-\frac{\kappa}{\eta u}\\-\frac{s^2\kappa}{\eta u}&-\frac{\kappa^2\zeta}{\eta u}\end{pmatrix},\;A^0_{22}=\begin{pmatrix}\frac{\zeta}{u}&0&0\\0&\frac{\zeta}{u}&0\\0&0&\frac{\zeta}{u}\end{pmatrix},\;\;\begin{pmatrix}d_{11}&d_{12}\\d_{21}&d_{22}\end{pmatrix}=Y_1^{-1}\Phi_1 Y_1-Y_1^{-1}d_xY_1.
\end{align}
Since the eigenvalues of $A^0_{11}$ are separated from those of $A^0_{22}$, we can apply Proposition \ref{h10z} below to find a second conjugator, bounded and analytic in its arguments,
\begin{align}\label{h6a}
Y_2(x,\zeta,h)=\begin{pmatrix}I&h\alpha_{12}\\h\alpha_{21}&I\end{pmatrix},
\end{align}
such that if we set $\theta^a=Y_2\phi$, we have
\begin{align}\label{h7}
h\phi_x=\begin{pmatrix}A^0_{11}+hd_{11}+h^2\beta_{11}&0\\0&A^0_{22}+hd_{22}+h^2\beta_{22}\end{pmatrix}\phi:=\begin{pmatrix}A_{11}(x,\zeta,h)&0\\0&A_{22}(x,\zeta,h)\end{pmatrix}\phi,
\end{align}
where
\begin{align}\label{h8}
\beta_{11}=d_{12}\alpha_{21},\;\;\beta_{22}=d_{21}\alpha_{12}.
\end{align}
Setting $Y=Y_1Y_2$, we conclude that $\theta$ satisfies \eqref{f43} on the wedge $\mathbb{W}(M_0,\theta)$ if and only if $\phi=\begin{pmatrix}\phi_1\\\phi_2\end{pmatrix}$ defined by
$\theta=Y\phi$ satisfies \eqref{h7}, where the $2\times 2$ block
\begin{align}\label{f45}
\begin{split}
&A_{11}(x,\zeta,h)=\begin{pmatrix}a&b\\c&d\end{pmatrix}=\begin{pmatrix}\ua&\ub\\\uc&\ud\end{pmatrix}+O(h)\text{ with } \\
&\ua=-\frac{\kappa^2\zeta}{\eta u},\;\ub=-\frac{\kappa}{\eta u},\;\uc=-\frac{s^2\kappa}{\eta u},\; \ud=-\frac{\kappa^2\zeta}{\eta u}.
\end{split}
\end{align}

\begin{lem}\label{h10}
The functions $d_{11}(x)$ and  $d_{12}(x)$
 decay exponentially to $0$ on $\bW(M_0,\theta)$ as $\Re x\to \infty$ at the same rate as $\lambda(x)$.
\end{lem}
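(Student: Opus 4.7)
I would split the proof into three steps, working with the decomposition
\begin{equation*}
\begin{pmatrix} d_{11} & d_{12} \\ d_{21} & d_{22}\end{pmatrix} = Y_1^{-1}\Phi_1 Y_1 - Y_1^{-1}\partial_x Y_1
\end{equation*}
and controlling the two terms separately.

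\emph{Step 1 (derivative term).} First I would note that $Y_1(x,\zeta)$ depends on $x$ only through the profile components $u,\eta,m,p_S,p_\lambda$, each a real-analytic function of $P$. Proposition~\ref{o5} and the convergent expansion \eqref{o6} give $\partial_x P(x) = O(e^{-\mu\Re x})$ on $\bW(M_0,\theta)$, and hence $\partial_x Y_1 = O(e^{-\mu\Re x})$. A direct calculation from \eqref{h3} (expanding along the last two columns, which are $e_4,e_5$) shows $\det Y_1$ is a nonzero scalar multiple of $u^2 - \zeta^2$; since $\zeta_\infty = ic_0(\infty)\eta^{1/2}(\infty)$ is purely imaginary while $u(\infty)>0$, the matrix $Y_1(\infty,\zeta_\infty)$ is invertible, so $Y_1^{-1}$ is uniformly bounded on the wedge for $\zeta$ near $\zeta_\infty$ (enlarging $M_0$ if needed). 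The ``connection'' contribution $Y_1^{-1}\partial_x Y_1$ is therefore already $O(e^{-\mu\Re x})$.

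\emph{Step 2 (algebraic term at infinity).} Next I would show that $d_{11}^\infty := \lim_{\Re x\to\infty} d_{11}(x)$ and $d_{12}^\infty$ both vanish. The zeroth-order matrix $B$ in the linearized ZND system splits as $B = -S'(P) + (\text{terms proportional to }\partial_x P)$, where $S(U)=(0,0,0,\Phi,r)^t$ is the source of the 5-variable reduced system, with $r=r(v,S,\lambda)$ and $\Phi = -r\Delta F/T$. Since $\partial_x P(\infty) = 0$, only the first contribution survives at $x=\infty$, giving $B(\infty) = -S'(P(\infty))$. Using $\lambda(\infty)=0$ and Assumption~\ref{rate}, both $r$ and $\Phi$ along with all of their $v$- and $S$-derivatives vanish at $P(\infty)$; the only potentially nonzero entries of $B(\infty)$ are thus at positions $(4,5)$ and $(5,5)$, i.e.\ $B(\infty) = b\,e_5^t$ for some $b\in\Span\{e_4,e_5\}$. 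Consequently
\begin{equation*}
\Phi_1(\infty) = (A_x^{-1}(\infty) B(\infty))^t = e_5\bigl(A_x^{-1}(\infty) b\bigr)^t =: e_5 w^t,
\end{equation*}
a matrix supported only on its fifth row.

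\emph{Step 3 (conclusion).} Because $T_5 = e_5$ identically in $(x,\zeta)$, we have $Y_1 e_5 \equiv e_5$, so $Y_1^{-1}(\infty) e_5 = e_5$. This gives
\begin{equation*}
Y_1^{-1}(\infty)\Phi_1(\infty) Y_1(\infty) = e_5\bigl(w^t Y_1(\infty)\bigr),
\end{equation*}
whose first four rows vanish; in particular $d_{11}^\infty = 0$ and $d_{12}^\infty = 0$. To finish, I would invoke smoothness of $\Phi_1$ and $Y_1^{\pm 1}$ in $P$ together with $|P(x)-P(\infty)|\le Ce^{-\mu\Re x}$ from Proposition~\ref{o5} to get $Y_1^{-1}(x)\Phi_1(x) Y_1(x) - Y_1^{-1}(\infty)\Phi_1(\infty) Y_1(\infty) = O(e^{-\mu\Re x})$, and combine with Step 1 to conclude $d_{11}(x), d_{12}(x) = O(e^{-\mu\Re x})$, the decay rate of $\lambda(x)$. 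The main obstacle will be Step 2: one must carefully unwind the construction of $B$ in the 5-variable reduced ZND system and verify that every entry outside positions $(4,5)$ and $(5,5)$ is killed either by $\partial_x P(\infty)=0$ or by the vanishing conditions in Assumption~\ref{rate}. Once this sparse structure is in hand, the structural identity $Y_1 e_5 \equiv e_5$ makes the remaining conjugation step essentially trivial.
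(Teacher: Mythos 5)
Your proposal is correct and follows essentially the same route as the paper: reduce to showing that the relevant blocks of $Y_1^{-1}\Phi_1Y_1$ vanish at $x=\infty$ (the derivative term and all of $B(\infty)$ outside the last column being killed by $\partial_x P(\infty)=0$ and Assumption \ref{rate}), and then conclude the exponential rate from the profile's exponential convergence on the wedge. The only real difference is in the endgame: where the paper invokes Erpenbeck's splitting $\Phi_1=W_{10}+W_{11}$ and verifies the $(1,2)$ block by a sparsity computation with the sub-blocks of $Y_1^{\pm 1}$, you use the rank-one structure $\Phi_1(\infty)=e_5w^t$ together with $Y_1e_5=e_5$, which disposes of $d_{11}(\infty)$ and $d_{12}(\infty)$ simultaneously and is a slightly cleaner way to carry out the same computation.
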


\begin{prop}\label{h10z}
a)  Let $\theta$ be any angle such that $0<\theta<\frac{\pi}{2}$. There exist positive constants $M_0$, $h_0$, and a neighborhood $\omega\ni\zeta_\infty$ such that for $\zeta\in\omega$ and $0<h<h_0$, the conjugator $Y_2(x,\zeta,h)$ as in \eqref{h6a} can be constructed on $\mathbb{W}(M_0,\theta)$ with $\alpha_{12}(x,\zeta,h)$ and $\alpha_{21}(x,\zeta,h)$ bounded and analytic in their arguments.    Moreover, there exists a well-defined endstate $\alpha_{21}(\infty)$ and we have estimates
\begin{align}\label{f45c}
|\partial_x^k\left(\alpha_{21}(x,\zeta,h)-\alpha_{21}(\infty,\zeta,h)\right)|\leq C_kh^{-k}e^{-\mu\Re x}
\end{align}
for $\mu>0$ as in Proposition \ref{o5}.

\end{prop}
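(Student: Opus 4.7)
The plan is to derive explicit Sylvester-type ODEs for $\alpha_{12},\alpha_{21}$, identify the endstate $\alpha_{21}(\infty,\zeta,h)$ as the solution of the corresponding algebraic Sylvester equation at $x=\infty$, and then set up a contraction mapping for the difference $\tilde\alpha_{21}=\alpha_{21}-\alpha_{21}(\infty,\zeta,h)$ in a weighted space of analytic functions on $\mathbb{W}(M_0,\theta)$. Specifically, imposing that $Y_2$ conjugates \eqref{h5} into fully block-diagonal form \eqref{h7} yields, after matching off-diagonal blocks, a matrix ODE of the form
\begin{equation*}
h(\alpha_{21})_x = A^0_{22}\alpha_{21}-\alpha_{21}A^0_{11} + d_{21} + h\big(d_{22}\alpha_{21}-\alpha_{21}d_{11}\big) - h^2\alpha_{21}d_{12}\alpha_{21},
\end{equation*}
and symmetrically for $\alpha_{12}$. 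Writing $\mathcal{L}(x,\zeta)\alpha:=A^0_{22}\alpha-\alpha A^0_{11}$, the spectral gap computation shows that eigenvalues of $\mathcal{L}(x,\zeta)$ are of the form $\zeta/(\eta u)\pm \kappa s/(\eta u)$, which at $(x,\zeta)=(\infty,\zeta_\infty)$ both collapse to $\zeta_\infty/(\eta(\infty)u(\infty))\neq 0$. Hence, by continuity, $\mathcal{L}(x,\zeta,h):=\mathcal{L}(x,\zeta)+h(d_{22}\cdot-\cdot d_{11})$ remains boundedly invertible (as an operator on matrices) uniformly on $\mathbb{W}(M_0,\theta)\times\omega$ for $M_0$ large, $\omega$ small around $\zeta_\infty$, and $h$ small.

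Freezing coefficients at $x=\infty$ and using that $d_{11}(\infty)=d_{12}(\infty)=0$ by Lemma \ref{h10}, the natural endstate $\alpha_{21}(\infty,\zeta,h)$ is defined as the unique solution of the algebraic equation
\begin{equation*}
\bigl(A^0_{22}(\infty)+h d_{22}(\infty)\bigr)\alpha_{21}(\infty)-\alpha_{21}(\infty)A^0_{11}(\infty)=-d_{21}(\infty),
\end{equation*}
which exists by the bounded invertibility above. Substituting $\alpha_{21}=\alpha_{21}(\infty,\zeta,h)+\tilde\alpha_{21}$ into the ODE gives
\begin{equation*}
h(\tilde\alpha_{21})_x = \mathcal{L}(x,\zeta,h)\tilde\alpha_{21} + F(x,\zeta,h)+N(\tilde\alpha_{21}),
\end{equation*}
where $N$ is quadratic and the forcing $F$ is the sum of $(d_{21}(x)-d_{21}(\infty))$, $\bigl(\mathcal{L}(x,\zeta,h)-\mathcal{L}(\infty,\zeta,h)\bigr)\alpha_{21}(\infty,\zeta,h)$, and an $O(h\cdot e^{-\mu\Re x})$ remainder coming from Lemma \ref{h10} and Proposition \ref{o5}, so that $|F(x,\zeta,h)|\le C e^{-\mu\Re x}$ uniformly on $\mathbb{W}(M_0,\theta)$.

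Next I would invert $\mathcal{L}(x,\zeta,h)$ pointwise and rewrite the ODE as the fixed-point equation
\begin{equation*}
\tilde\alpha_{21}=\mathcal{T}[\tilde\alpha_{21}]:=\mathcal{L}(x,\zeta,h)^{-1}\bigl(F(x,\zeta,h)+N(\tilde\alpha_{21})-h(\tilde\alpha_{21})_x\bigr),
\end{equation*}
and solve it by contraction on the Banach space $\{\alpha\text{ analytic on }\mathbb{W}(M_0,\theta):\|\alpha\|:=\sup e^{\mu\Re x}|\alpha(x)|<\infty\}$. The problematic term is the $-h(\tilde\alpha_{21})_x$ piece; I would control it by the standard Cauchy estimate on a slightly narrower subwedge of $\mathbb{W}(M_0,\theta)$, which converts a $\|\cdot\|$-bound on $\tilde\alpha_{21}$ into an $O(h/\mathrm{dist})$-bound on $h(\tilde\alpha_{21})_x$. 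Choosing $M_0$ large absorbs the $O(h)$ coefficient and makes $\mathcal{T}$ a contraction whose fixed point $\tilde\alpha_{21}$ then satisfies $|\tilde\alpha_{21}(x,\zeta,h)|\le C_0 e^{-\mu\Re x}$. The analogous argument constructs $\alpha_{12}$. Higher-derivative estimates follow by differentiating the original ODE $k$ times, each differentiation producing a factor of $h^{-1}$ on the already-decaying right hand side, giving the advertised bound $|\partial_x^k(\tilde\alpha_{21})|\le C_k h^{-k}e^{-\mu\Re x}$.

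The principal obstacle is precisely avoiding a loss of $h^{-1}$ at $k=0$: at $\zeta=\zeta_\infty$ the spectrum of $\mathcal{L}_\infty$ is purely imaginary, so the Green's kernel for $h\partial_x-\mathcal{L}_\infty$ oscillates without decaying, and a naive Duhamel representation of $\tilde\alpha_{21}$ would pick up a factor of $h^{-1}$ from the integration. The key point that makes the pointwise Sylvester-inversion fixed point work instead is that $\mathcal{L}(x,\zeta,h)^{-1}$ is bounded independently of $h$, and the extra $h(\tilde\alpha_{21})_x$ contribution is small only because $\tilde\alpha_{21}$ is analytic on a genuine two-dimensional wedge — so the analytic extension of the profile guaranteed by Proposition \ref{o5}, and the freedom it gives us to choose integration/Cauchy-estimate paths within $\mathbb{W}(M_0,\theta)$, is what ultimately closes the argument.
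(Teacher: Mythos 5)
Your setup coincides with the paper's up to the point where the Sylvester--Riccati equation must actually be solved: the ODE you write for $\alpha_{21}$ is exactly \eqref{f45b}, your eigenvalue computation $(\zeta\pm\kappa s)/(\eta u)$ for the Sylvester operator matches the paper's observation that the eigenvalues $a_j(\zeta,h)$ of $\mathcal A(\zeta,h)$ stay bounded away from $0$ (indeed have imaginary part $\geq \underline a>0$) near $(\infty,\zeta_\infty)$, and your algebraic Sylvester equation for the endstate is consistent with the paper's $D(\zeta,h)$. The genuine gap is in the existence step. The map $\mathcal T[\alpha]=\mathcal L^{-1}\bigl(F+N(\alpha)-h\alpha_x\bigr)$ is not a bounded self-map of the Banach space you propose: the term $h\alpha_x$ costs one $x$-derivative, and the Cauchy estimate you invoke controls it only on a strictly smaller subwedge (it degenerates as $x$ approaches $\partial\mathbb W(M_0,\theta)$), so $\mathcal T$ does not map your ball into itself; and if you instead iterate on a shrinking scale of wedges, each iteration loses a factor proportional to the reciprocal of the domain shrinkage against only the fixed small factor $h$, and the accumulated losses diverge no matter how small $h$ is. What your quasi-static inversion really produces is the asymptotic expansion $\tilde\alpha_{21}\sim-\mathcal L^{-1}F+h\,\mathcal L^{-1}\partial_x(\mathcal L^{-1}F)+\cdots$, i.e.\ an approximate solution to any finite order in $h$, not the exact bounded solution; at some stage the ODE must actually be integrated, and that is precisely the step your argument avoids.

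The paper's key point is that on the wedge the Duhamel representation does \emph{not} lose $h^{-1}$, contrary to the heuristic in your final paragraph: integrating from $\infty_-$, the point at infinity on the lower edge of $\mathbb W(M_0,\theta)$, along straight segments, each eigenvalue $a=a_r+ia_i$ of $\mathcal A(\zeta,h)$ satisfies $\Re\bigl(a(x-y)\bigr)\le-(\underline a\tan\theta-\eps_0)\,|x_r-s|$ as in \eqref{p8}--\eqref{p9}, because $a_i\ge\underline a>0$ while $|a_r|<\eps_0<\underline a\tan\theta$ and the path makes an angle at least $\theta$ with the real axis. Hence the kernel $e^{h^{-1}\mathcal A(\zeta,h)(x-y)}$ decays like $e^{-\kappa|x_r-s|/h}$ (up to the polynomial factor from possible Jordan blocks, \eqref{p10}), which exactly absorbs the $h^{-1}$ prefactor in \eqref{p7}; the contraction then closes on the full wedge, and the endstate together with the decay estimate \eqref{f45c} is read off from \eqref{p11}--\eqref{p12}. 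So analyticity on the wedge is indeed the decisive input, but it is exploited through the choice of oblique integration paths --- turning purely oscillatory exponentials into decaying ones --- rather than through Cauchy estimates; replacing your pointwise inversion of $\mathcal L$ by this Duhamel fixed point is the missing idea. Your remarks on $\alpha_{12}$ and on the higher-derivative bounds $C_kh^{-k}e^{-\mu\Re x}$ are fine once the $k=0$ estimate is obtained in this way.
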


\begin{rem}\label{f45z}

1.) The proofs are given in section \ref{conjugation}.  The differential equation satisfied by $\alpha_{21}$ is
\begin{align}\label{f45b}
hd_x\alpha_{21}=A^0_{22}\alpha_{21}-\alpha_{21}A_{11}^0+d_{21}+h(d_{22}\alpha_{21}-\alpha_{21}d_{11})-h^2\alpha_{21}d_{12}\alpha_{21}.
\end{align}
The argument uses the fact that the eigenvalues of $A^0_{11}(\infty,\zeta)$ and $A^0_{22}(\infty,\zeta)$ are separated and  close to the imaginary axis for  $\zeta$ near $\zeta_\infty$.
Using the fact that $d_{21}$ converges exponentially to its endstate $d_{21}(\infty)$,
  \begin{align}
  |d_{21}(x,\zeta)-d_{21}(\infty,\zeta)|\leq Ce^{-\mu \Re x}, \;
  \end{align}
and that $d_{12}$ satisfies a similar estimate but with $d_{12}(\infty)=0$,
  one can show that $\alpha_{21}$ has a well-defined endstate and that the estimates \eqref{f45c} hold.  These estimates are the key to the treatment of the $h_3$ term in the perturbation of Bessel's equation given by \eqref{t17}.

2.) The above lemma and proposition imply that the function $\beta_{11}(x,\zeta,h)$ decays exponentially to $0$ on $\bW(M_0,\theta)$ as $\Re x\to \infty$ at the same rate as $\lambda(x)$.
 Thus, the same holds for the $O(h)$ terms in \eqref{f45}.

\end{rem}

A valuable tool for understanding the behavior of solutions of \eqref{f43} as $x\to \infty$ is the conjugator $M(x,\zeta,h)$ described in the following Proposition.

\begin{prop}[``\cite{MZ} conjugator", \cite{MZ}, Lemma 2.6]\label{MZ}
Consider any $N\times N$ system $d_x\theta=A(x,\zeta,h)\theta$ on $[0,\infty)$, for  $(\zeta,h)$ near a fixed basepoint $(\uzeta,\uh)\in\{\Re\zeta\geq 0\}\times (0,1]$, where $A$ is analytic in its arguments.   Assume there is a corresponding limiting system $d_x\theta_{\infty }=A(\infty,\zeta,h)\theta_\infty$ and that
  \begin{align}
  |A(x,\zeta,h)-A(\infty,\zeta,h)|\leq Ce^{-\beta x}\text{ for some }\beta>0.
  \end{align}
 Then  there exists a neighborhood $\mathcal O\ni(\uzeta,\uh)$ and an $N\times N$ matrix $M(x,\zeta,h)$, analytic in its arguments $x\in [0,\infty]$, $(\zeta,h)\in \mathcal{O}$, and uniformly bounded together with its inverse, such that $\theta(x,\zeta,h)$ is a solution of $d_x\theta=A(x,\zeta,h)\theta$ on $[0,\infty)$ if and only if $\theta_\infty$ defined by
\begin{align}\label{f45a}
\theta=M(x,\zeta,h)\theta_\infty
\end{align}
is a solution of the limiting system.  Moreover, for $k\in\{0,1,2,\dots\}$ $M$ satisfies $|d_x^k(M(x,\zeta,h)-I)|\leq C_ke^{-\delta x}$ for any $0<\delta<\beta$, uniformly for $(\zeta,h)\in\mathcal O$.

\end{prop}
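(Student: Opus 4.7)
The plan is to recast the conjugation requirement as an ODE for $M$ and solve it by contraction in an exponentially weighted space. A direct calculation shows that $\theta = M\theta_\infty$ converts solutions of the limiting system into solutions of the original one if and only if
\begin{equation*}
M_x = A(x,\zeta,h)M - MA(\infty,\zeta,h),
\end{equation*}
and the asymptotic condition $M(\infty) = I$ singles out the desired conjugator. Writing $M = I + N$ and $E := A - A(\infty,\zeta,h)$, this becomes
\begin{equation*}
\mathcal{L}N := N_x - [A(\infty,\zeta,h), N] = E + EN,
\end{equation*}
which I treat as a fixed-point problem $N = \mathcal{L}^{-1}(E + EN)$ on a weighted space $X_\delta := \{F : \sup_{x \geq 0} e^{\delta x}|F(x)| < \infty\}$ for some $\delta \in (0, \beta)$.

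To invert $\mathcal{L}$ I would use the Riesz projection calculus to obtain a holomorphic spectral decomposition of $A(\infty,\zeta,h)$ for $(\zeta,h)$ near $(\uzeta, \uh)$, grouping coalescent eigenvalues into blocks. The commutator $[A(\infty,\zeta,h), \cdot]$ then splits into spectral blocks whose spectra lie near the differences $\{\mu_i(\uzeta,\uh) - \mu_j(\uzeta,\uh)\}$, and on each block I construct the right inverse separately: backward integration from $+\infty$, $\mathcal{L}^{-1} F(x) = -\int_x^\infty e^{(x-y)[A(\infty), \cdot]} F(y)\, dy$, on blocks whose spectrum lies strictly to the right of $\{\Re \lambda = -\delta\}$, and forward integration from a basepoint $x_0$ with zero initial data on the remaining blocks. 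Direct estimates using the exponential bound on $E$ give $\|\mathcal{L}^{-1} F\|_\delta \leq K \|F\|_\beta$ with $K$ uniform for $(\zeta,h)$ in a neighborhood $\mathcal{O}$ of $(\uzeta, \uh)$. Choosing $x_0$ large enough that $\sup_{x \geq x_0} e^{\beta x} |E(x)| < 1/(2K)$, the map $N \mapsto \mathcal{L}^{-1}(E + EN)$ is a contraction on a small ball of $X_\delta([x_0, \infty))$, producing a unique fixed point $N$ holomorphic in $(\zeta,h) \in \mathcal{O}$ by holomorphic dependence of the contracting map. The solution extends to $[0, x_0]$ by integrating the linear ODE $M_x = AM - MA(\infty,\zeta,h)$ backward from $x_0$, preserving analyticity in all arguments.

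Analyticity of $M$ at $x = +\infty$ relies on Proposition \ref{o5}: since $A(x,\zeta,h)$ is analytic in $b := e^{-\mu x}$ near $b = 0$, the substitution converts the ODE for $M$ into a Fuchsian system $\mu b\, M_b = -[A(\infty,\zeta,h), M] - b\, \widetilde E(b,\zeta,h) M$, whose unique solution with $M(0) = I$ is given by a convergent power series $M = I + \sum_{k \geq 1} M_k(\zeta,h) b^k$ obtained recursively, provided $\mathcal{O}$ is shrunk so that no positive integer lies in the spectrum of $-[A(\infty,\zeta,h),\cdot]/\mu$. Invertibility of $M$ follows from Liouville's formula for $\det M$ combined with $\det M(\infty) = 1 \neq 0$, with uniform boundedness of $M^{\pm 1}$ then automatic from $M \to I$ on the compactified interval. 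The estimate $|d_x^k(M - I)| \leq C_k e^{-\delta x}$ follows by induction: differentiating the defining ODE expresses $d_x^{k+1} M$ algebraically in lower $x$-derivatives of $M$ and $E$, and the bounds $|d_x^j E| \leq C_j e^{-\beta x}$ (available from analyticity of $A$) propagate the desired decay. The principal technical obstacle is the uniformity in $(\zeta,h)$ of the bound on $\mathcal{L}^{-1}$: commutator eigenvalues $\mu_i - \mu_j$ may coalesce or cross the threshold $\{\Re \lambda = -\delta\}$ as $(\zeta,h)$ varies, but both phenomena are handled by working with holomorphic spectral blocks rather than individual eigenvalues over the sufficiently small neighborhood $\mathcal{O}$.
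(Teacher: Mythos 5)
The paper gives no proof of this proposition at all---it is imported directly from \cite{MZ}, Lemma 2.6---and your argument is essentially the standard proof of that conjugation lemma (and the one in the cited source): reduce to $M_x=AM-MA_\infty$, split $\mathrm{ad}_{A_\infty}$ into holomorphic spectral blocks, integrate from $+\infty$ on blocks with $\Re\lambda$ above the threshold and from a finite basepoint on the rest, close by contraction on $[x_0,\infty)$ for $x_0$ large, continue to $[0,x_0]$ by the linear flow, and get invertibility from Liouville's formula. Two small repairs: the condition $\sup_{x\ge x_0}e^{\beta x}|E|<1/(2K)$ need not be attainable when $|E|$ decays exactly at rate $\beta$ (the contraction closes anyway because the mismatch between the $e^{-\delta x}$ weight and the $e^{-\beta x}$ decay of $E$ produces a factor $e^{-(\beta-\delta)x_0}\to 0$, or because $\|E\|_{L^\infty[x_0,\infty)}\to 0$), and shrinking $\mathcal O$ cannot remove a resonance $-\mu k\in\operatorname{spec}(\mathrm{ad}_{A_\infty})$ already present at the basepoint $(\uzeta,\uh)$---though that Fuchsian refinement bears only on the application-specific analyticity at $x=\infty$, not on the conjugation property, the bounds on $M^{\pm1}$ and $d_x^k(M-I)$, or the analyticity in $(\zeta,h)$ that the paper actually uses.
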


From the matrix formulas given in section \ref{coefficients}, it is not hard to see that the eigenvalues of $G(x,\zeta,h)$ \eqref{f43} are
\begin{align}\label{h9}
\begin{split}
&\mu_j^*:=\mu_j(x,\zeta)+O(he^{-\mu x}), \;j=1,2,3,4 \\
&\mu_5^*=\mu_5(x,\zeta)-h\frac{r_\lambda}{u}+O(he^{-\mu x})
\end{split}
\end{align}
where  $r_\lambda<0$ and $\mu$ is as in Proposition \ref{o5}.\footnote{The details are given in section 2 of \cite{LWZ1}.}
Thus, the eigenvalues of the limiting system $G(\infty,\zeta,h)$ are  $\mu_j(\infty,\zeta)$, $j=1,2,3,4$ and $\mu_5(\infty,\zeta)-h \frac{r_\lambda}{u}(\infty)$.
For $\Re\zeta>0$ only $\mu_1(\infty,\zeta)$ has negative real part, so use of the conjugator $M(x,\zeta,h)$ shows that for $\Re\zeta>0$, the system \eqref{f43} has a one-dimensional space $\mathcal D(\zeta,h)$ of decaying solutions on $[M,\infty)$.

Lemma \ref{h10} implies that $A_{11}(\infty,\zeta,h)=A^0_{11}(\infty,\zeta)$, so
the eigenvalues of $A_{11}(\infty,\zeta,h)$ are $\mu_j(\infty,\zeta)$, $j=1,2$.
   Use of the \cite{MZ} conjugator again implies that for $\Re\zeta>0$ the equation
   \begin{align}\label{h11}
   h\phi_{1x}=A_{11}(x,\zeta,h)\phi_1
    \end{align}
    has a one dimensional space of decaying solutions $\mathcal D_1(\zeta,h)$.  Thus, we must have
\begin{align}\label{f46b}
\mathcal D(\zeta,h)=\{Y(x,\zeta,h)\begin{pmatrix}\phi_1\\0\end{pmatrix},\;\phi_1\in\mathcal D_1(\zeta,h)\}.
\end{align}

Next we reduce \eqref{h11} to an equivalent scalar second-order equation.  Letting $\varphi_0$ for the moment denote any primitive of $\frac{a+d}{2}$, and making the transformation
\begin{align}\label{h12}
\tilde\phi_1=e^{-\frac{\varphi_0}{h}}\phi_1,
\end{align}
we obtain the system
\begin{align}\label{h13}
hd_x\tilde\phi_1=\begin{pmatrix}-\alpha&b\\c&\alpha\end{pmatrix}\tilde\phi_1, \; \alpha=\frac{d-a}{2}.
\end{align}
Setting $\tilde\phi_1=\begin{pmatrix}\tilde u\\\tilde v\end{pmatrix}$, we rewrite the first row of \eqref{h13} as
\begin{align}\label{h14}
\tilde v=b^{-1}(h\tilde u'+\alpha\tilde u),\;\;\text{ where } '=d/dx,
\end{align}
and hence the second row of \eqref{h13} becomes
\begin{align}\label{h15}
h^2(b^{-1}\tilde u')'+h(\frac{\alpha}{b}\tilde u)'=(c+\frac{\alpha^2}{b})\tilde u+\frac{\alpha}{b}h\tilde u'.
\end{align}
Defining $w=b^{-1/2}\tilde u$ and\footnote{It does  not matter which branch of the square root we use here, as long as we always use the same one.} using the identity
\begin{align}\label{h16}
(b^{-1}\tilde u')'=(\frac{1}{2}b^{-\frac{3}{2}}b')'w+b^{-\frac{1}{2}}w'',
\end{align}
we obtain in place of equation \eqref{h15}
\begin{align}\label{h17}
h^2w''=(bc+\alpha^2)w-hb(\frac{\alpha}{b})'w-h^2b^{\frac{1}{2}}(\frac{1}{2}b^{-\frac{3}{2}}b')'w.
\end{align}
With $A^0_{11}=\begin{pmatrix}\ua&\ub\\\uc&\ud\end{pmatrix}$ as in \eqref{h6}, we can rewrite \eqref{h17} as
\begin{align}\label{h18}
h^2w''=(C(x,\zeta)+hr(x,\zeta,h))w,
\end{align}
where
\begin{align}\label{h19}
\begin{split}
&C(x,\zeta)=\ub\uc+(\frac{\ud-\ua}{2})^2=(\zeta^2+c^2_0\eta(x))\ub^2(x) \text{ and }\\
&hr(x,\zeta,h)=(bc+\alpha^2)-\left(\ub\uc+(\frac{\ud-\ua}{2})^2\right)-hb(\frac{\alpha}{b})'-h^2b^{\frac{1}{2}}(\frac{1}{2}b^{-\frac{3}{2}}b')'
\end{split}
\end{align}

\begin{prop}\label{h21}
The function $r$ satisfies $r(\infty,\zeta,h)=0$.

\end{prop}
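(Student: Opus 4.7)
The plan is to read off $r(\infty,\zeta,h)$ term by term from the formula
\begin{align*}
hr(x,\zeta,h)=(bc+\alpha^2)-\Bigl(\underline{b}\,\underline{c}+\bigl(\tfrac{\underline{d}-\underline{a}}{2}\bigr)^{2}\Bigr)-hb\bigl(\tfrac{\alpha}{b}\bigr)'-h^{2}b^{1/2}\bigl(\tfrac{1}{2}b^{-3/2}b'\bigr)',
\end{align*}
and show that each of the four pieces vanishes at $x=+\infty$. The computation rests on two structural facts already recorded in the excerpt: the relation $A_{11}=A^{0}_{11}+hd_{11}+h^{2}\beta_{11}$ together with the exponential decay of $hd_{11}+h^{2}\beta_{11}$, and the exponential convergence of the profile to its endstate on $\bW(M_{0},\theta)$.

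First, I would observe that at $x=\infty$ the perturbation $A_{11}-A^{0}_{11}$ vanishes. Indeed, Lemma \ref{h10} gives $d_{11}(\infty,\zeta)=0$ (decay at the rate of $\lambda$), and Remark \ref{f45z}(2) yields $\beta_{11}(\infty,\zeta,h)=0$ for the same reason. Hence $a(\infty)=\underline{a}(\infty)$, $b(\infty)=\underline{b}(\infty)$, $c(\infty)=\underline{c}(\infty)$, $d(\infty)=\underline{d}(\infty)$, so
\begin{align*}
(bc+\alpha^{2})\big|_{x=\infty}=\Bigl(\underline{b}\,\underline{c}+\bigl(\tfrac{\underline{d}-\underline{a}}{2}\bigr)^{2}\Bigr)\Big|_{x=\infty},
\end{align*}
killing the first two terms in $hr$.

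Next I would argue that the derivatives $a',b',c',d'$, and also $b''$, all vanish at $x=\infty$. The entries of $A^{0}_{11}$ are rational functions of the profile variables $(\kappa,\eta,u,s)$, and by Proposition \ref{o5} the profile admits a convergent expansion $P(x)=P_{0}+P_{1}e^{-\mu x}+\cdots$ on the half-plane $\bW(M_{0})$. Thus the entries $\underline{a},\underline{b},\underline{c},\underline{d}$, being analytic functions of this profile, likewise have convergent expansions in $e^{-\mu x}$ on $\bW(M_{0})$, so all their $x$-derivatives decay to $0$ at $\Re x=+\infty$. The analytic correction $hd_{11}+h^{2}\beta_{11}$ decays at the rate $e^{-\mu\Re x}$, and being analytic on the wedge its derivatives decay at the same rate by Cauchy estimates. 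Hence $a'(\infty)=b'(\infty)=c'(\infty)=d'(\infty)=b''(\infty)=0$, and therefore $\alpha'(\infty)=0$.

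Finally I would combine these facts with the nonvanishing of $b$ at infinity — $\underline{b}(\infty)=-\kappa(\infty)/(\eta(\infty)u(\infty))\neq 0$ by Assumption \ref{profile} — to conclude that the last two terms of $hr$ vanish: $b(\alpha/b)'|_{\infty}=(\alpha'-\alpha b'/b)|_{\infty}=0$ and $(\tfrac{1}{2}b^{-3/2}b')'|_{\infty}=-\tfrac{3}{4}b^{-5/2}(b')^{2}|_{\infty}+\tfrac{1}{2}b^{-3/2}b''|_{\infty}=0$. The step I expect to be the most delicate is only a bookkeeping one — invoking Cauchy estimates in the wedge to transfer the exponential decay of $A_{11}-A^{0}_{11}$ to decay of its derivatives — but no genuine obstacle arises since analyticity on $\bW(M_{0},\theta)$ has been established in the previous sections. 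Putting the four vanishings together gives $hr(\infty,\zeta,h)=0$, and dividing by $h$ yields $r(\infty,\zeta,h)=0$.
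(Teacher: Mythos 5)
Your proof is correct and follows essentially the same route as the paper's: the paper's one-line argument — that every $x$-dependent quantity in \eqref{h19} is built from the entries of $A^0_{11}$ (hence the profile), $d_{11}$, and $\beta_{11}$, so Lemma \ref{h10} and Remark \ref{f45z}(2) give vanishing at $x=\infty$ — is exactly what you verify term by term. Your explicit treatment of the derivative terms (via the convergent expansion of Proposition \ref{o5} and Cauchy estimates, or equivalently \eqref{f45c}) simply fills in details the paper leaves implicit.
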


\begin{proof}
The functions appearing in the expression for $r$ can all be expressed in terms of the components of $A^0_{11}$, $d_{11}$ and $\beta_{11}$, so the Proposition follows directly from Lemma \ref{h10}.
\end{proof}

Making the following choice of $\varphi_0$ such that $\varphi_0'= \frac{a+d}{2}$,
\begin{align}\label{f46a}
\varphi_0(x,\zeta,h)=\frac{a+d}{2}(\infty,\zeta,h)\cdot x+\int^x_\infty \left[\frac{a+d}{2} (s,\zeta,h)-\frac{a+d}{2}(\infty,\zeta,h)\right]ds,
\end{align}
we have shown that solutions of $hd_x\phi_1=A_{11}(x,\zeta,h)\phi_1$ are given by
\begin{align}\label{f46}
\phi_1=e^\frac{\varphi_0}{h}\begin{pmatrix}b^{1/2}&0\\\alpha b^{-1/2}-h(b^{-1/2})_x&b^{-1/2}\end{pmatrix}\begin{pmatrix}w\\hw_x\end{pmatrix}:=K(x,\zeta,h)\begin{pmatrix}w\\hw_x\end{pmatrix},
\end{align}
where $(w,hw_x)$ satisfies
\begin{align}\label{h20}
h\begin{pmatrix}w\\hw_x\end{pmatrix}_x=\begin{pmatrix}0&1\\C(x,\zeta)+hr(x,\zeta,h)&0\end{pmatrix}\begin{pmatrix}w\\hw_x\end{pmatrix}.
\end{align}

\begin{rem}\label{f47a}
Since $\frac{a+d}{2}(\infty,\zeta,h)=\frac{\mu_1+\mu_2}{2}(\infty,\zeta)=-\zeta \frac{\kappa^2}{\eta u}(\infty)$, we see that $\varphi_0$ is the sum of a term with real part $\leq 0$ and, by Lemma \ref{h10}, a term that decays  exponentially to $0$ as $x\to\infty$.
\end{rem}

Using Remark \ref{f47a}, for $\Re\zeta>0$ we  obtain
\begin{align}\label{f47}
\mathcal{D}(\zeta,h)=\mathrm{span}\;\; Y(x,\zeta,h)\begin{pmatrix}K(x,\zeta,h)\begin{pmatrix}w\\hw_x\end{pmatrix}\\0\end{pmatrix},
\end{align}
where $(w,hw_x)$ gives a decaying solution of \eqref{h20}.  Erpenbeck's stability function $V(\zeta,h)$  is expressed in terms of $\theta(0,\zeta,h)$, where $\theta(x,\zeta,h)\in \mathcal{D}(\zeta,h)$.

Our next main task is to construct explicit asymptotic formulas for the exact solutions of \eqref{h20} that decay to zero as $x\to \infty$.

\section{Reduction to a perturbation of Bessel's equation in the general case}\label{general}

In the general case we must consider equation \eqref{h18}
\begin{align}\label{a1}
h^2w''=\left(C(x,\zeta)+hr(x,\zeta,h)\right)w,
\end{align}
where $C(x,\zeta)=(\zeta^2+c_0^2\eta(x))\ub^2(x)$.  Here the $x$-dependence enters only through the detonation profile $P(x)=(v,u,S,\lambda)$, and $\ub=-\kappa/\eta u$.

Since $\zeta^2+c_0^2\eta=(\zeta-ic_0\sqrt{\eta})(\zeta+ic_0\sqrt{\eta})$, we can
write
\begin{align}
C(x,\zeta)=(e(x)+\alpha^2)D(x,\zeta)
\end{align}
where
\begin{align}\label{a1a}
e(x)=c_0\sqrt{\eta}(x)-c_0\sqrt{\eta}(\infty),\;\alpha^2=i(\zeta-\zeta_\infty), \;D(x,\zeta)=(-i\zeta+c_0\sqrt{\eta}(x))\ub^2(x).
\end{align}
We have
 \begin{align}\label{a16b}
D(\infty,\zeta)>0 \text{ for }\zeta=i|\zeta|,
\end{align}
so the function $D(x,\zeta)$ is strictly bounded away from $0$ for $x$ large and $\zeta$ near $\zeta_\infty$ (the latter frequency being the endpoint of $\mathrm{III}_+$ corresponding to the turning point at infinity).
We take $\zeta$ in a small neighborhood of $\zeta_\infty$ in $\Re\zeta\geq 0$.
Note that $e(x)$ is strictly positive on $[0,\infty)$ and decreases to $0$ at an exponential rate (case D).

For the wedge $\bW(M_0,\theta)$  in Proposition \ref{h10z} we now choose $M>M_0$ and $R>0$ such that
the strip
\begin{align}\label{ac1}
T_{M,R}:=\{x=x_r+ix_i:x_r\geq M, |x_i|\leq R\}\subset \bW(M_0,\theta),
\end{align}
 and consider \eqref{a1} on $T_{M,R}$. Proposition \ref{o5} implies that $e(x)$ has an expansion similar to $\lambda(x)$ \eqref{oo12} on $T_{M,R}$, so in particular
 \begin{align}\label{ac2}
 e(x)=ae^{-\mu x}+m(x)e^{-\mu x}\text{ where }a>0,\;|m(x)|\leq Ce^{-\mu\Re x},
 \end{align}
and $m(x)$ is real-valued on $[M,+\infty)$.

Setting $d(x,\zeta)=D(x,\zeta)-D(\infty,\zeta)$,
the problem \eqref{a1} can now be written
\begin{align}\label{a16}
\begin{split}
&h^2w''=\left(ae^{-\mu x}+\alpha^2+m(x)e^{-\mu x}\right)D(x,\zeta)w+hr(x,\zeta,h)w=\\
&(ae^{-\mu x}+\alpha^2)D(\infty,\zeta)w+[(ae^{-\mu x}+\alpha^2)d(x,\zeta)+m(x)e^{-\mu x}D(x,\zeta)]w+hr(x,\zeta,h)w.
\end{split}
\end{align}

Recalling that the $x-$dependence in $d(x,\zeta)$ enters only through the profile, using \eqref{o6}, and setting $t=\frac{2}{\mu}\sqrt{aD(\infty,\zeta)}e^{-\mu x/2}$, we can rewrite $d(x,\zeta)$ using the $t$ variable as
\begin{align}
\begin{split}
&d(x(t),\zeta)=t^2h_1(t,\zeta)\\
\end{split}
\end{align}
where $h_1$ is analytic in $t$ and $O(1)$ on the bounded wedge $\mathcal{W}$ with vertex at $t=0$, which is the image of the strip $T_{M,R}$ under the change of variable $t=t(x)$.   Similarly,
\begin{align}\label{hr1a}
m(x)e^{-\mu x}D(x,\zeta)=t^3h_2(t,\zeta),
\end{align}
where $h_2(t,\zeta)=O(|t|)$  and analytic on $\mathcal W$.     The function $r(x,\zeta,h)$ is more complicated and must be handled carefully.  Equation \eqref{h19} and \eqref{h7} show that the $x-$ dependence in $r$ enters through the components of
\begin{align}\label{ab1}
\begin{split}
&(a)\;P(x),P'(x),P''(x)\\
&(b)\;h\beta_{11},h^2\beta_{11}',h^3\beta_{11}'', \text{ where }\beta_{11}=d_{12}\alpha_{21}.
\end{split}
\end{align}
Thus, Proposition \ref{h21} and Lemma \ref{h10} allow us to write
\begin{align}\label{ab2}
r(x(t),\zeta,h)=t^2h_3(t,\zeta,h)
\end{align}
where $h_3$ is analytic in $t$ and $O(1)$ on $\mathcal W$.

\begin{defn}\label{defW}
It will be convenient to shrink $\mathcal W$ slightly and take it to be symmetric about the horizontal axis.   So we redefine
$\mathcal{W}=\{t\in\bC:|\arg t|<\eps_1,\;0<|t|<\eps_2\}$, where $\eps_1$, $\eps_2$ are small positive constants easily expressed in terms of $M$ and $R$.  Since \eqref{a16b} holds, after shrinking $\omega\ni\zeta_\infty$ if necessary, we can be sure that  $\mathcal W$ still contains the image of $[M,\infty)$ under the map $t=t(x,\zeta)$ for all $\zeta\in\omega$.
\end{defn}




Setting $\tilde\alpha:=\frac{2}{\mu}\alpha\sqrt{D(\infty,\zeta)}$ and
\begin{align}\label{hr3}
z=\frac{t}{h},\;\beta=\frac{\alpha}{h},\;\tilde\beta=\frac{\tilde\alpha}{h},
\end{align}
we obtain the following two equivalent forms for the equation \eqref{a16}
\begin{align}\label{hr4}
\begin{split}
&(a)\;h^2(t^2W_{tt}+tW_t)=(t^2+\tilde\alpha^2)W+\\
&\quad[(t^2+\alpha^2)t^2h_1(t,\zeta)+t^3h_2(t,\zeta)+ht^2h_3(t,\zeta,h)]W \text{ on }\mathcal W\\
&(b)\;(z^2 W_{zz}+zW_z)=(z^2+\tilde\beta^2)W+\\
&\quad [h^2(z^2+\beta^2) z^2h_1(hz,\zeta)+hz^3h_2(hz,\zeta)+hz^2h_3(hz,\zeta,h)]W,
\end{split}
\end{align}
where $z$ lies in the wedge $\mathcal{Z}_h=\mathcal{W}/h$.\footnote{In \eqref{hr4} the functions $h_i$ in \eqref{hr4} are nonvanishing constant multiples of their former selves.}

\begin{rem}\label{hr4z}
For later reference we note that if we set $t=t(x)$ and $W(t(x))=w(x)$, then the right side of \eqref{hr4}(a) is
$\frac{4}{\mu^2}(C(x,\zeta)+hr(x,\zeta,h))w$.

\end{rem}

\begin{prop}\label{estimates}
The functions $h_1(t,\zeta)$, $h_2(t,\zeta)$, $h_3(t,\zeta,h)$ satisfy the following estimates.  Here $h_1(0,\zeta)$, for example, denotes the limiting value of $h_1$ as $t\to 0$, and $k\in\{0,1,2,...\}$.
\begin{align}\label{est}
\begin{split}
&(a)\;|\partial_t^k(h_1(t,\zeta)-h_1(0,\zeta))|\leq C_k|t|^{2-k}, \\
&(b)\;|\partial_t^k h_2(t,\zeta)|\leq C_k |t|^{1-k}\\
&(c)\;|\partial_t^k(h_3(t,\zeta,h)-h_3(0,\zeta,h))|\leq C_k|t|^{2-k}h^{1-k}.
\end{split}
\end{align}
The estimates are uniform for $h\in (0,1]$, $\zeta$ in a small neighborhood of $\zeta_\infty$ in $\Re\zeta\geq 0$, and $t\in\mathcal{W}$ (a wedge in $\Re t\geq 0$ with vertex at $t=0$ such that  $|t|<<1$ for $t\in\mathcal{W}$).
\end{prop}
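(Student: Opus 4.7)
The plan is to exploit the analyticity of the profile after the change of variables $t = \frac{2}{\mu}\sqrt{aD(\infty,\zeta)}\,e^{-\mu x/2}$, which makes $e^{-\mu x}$ a nonvanishing constant multiple of $t^2$, so that Proposition \ref{o5} turns exponential decay in $x$ into analyticity in $t^2$. Then I would use Cauchy's integral formula on slightly shrunken subwedges of $\mathcal{W}$ to upgrade sup-norm bounds into the stated derivative bounds.

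For part (a), I would first observe that under the change of variables the profile $P(x(t))$ becomes an analytic function of $t^2$ on $\mathcal{W}$ with $P(x(t)) - P(\infty) = O(|t|^2)$. Since $D(x,\zeta) = (-i\zeta + c_0\sqrt{\eta(x)})\ub^2(x)$ depends analytically on $P(x)$, the function $d(x(t),\zeta) = D(x(t),\zeta) - D(\infty,\zeta)$ is analytic in $t^2$ on $\mathcal{W}$ and is $O(|t|^2)$; hence $h_1(t,\zeta) = d(x(t),\zeta)/t^2$ is itself analytic in $t^2$ and $h_1(t,\zeta) - h_1(0,\zeta) = O(|t|^2)$. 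Applying Cauchy's formula on disks of radius $\sim |t|$ converts this into the $|\partial_t^k|$ bounds of order $|t|^{2-k}$. For (b), the function $m(x)e^{-\mu x}$ is by construction (cf.~\eqref{ac2}) the remainder after extracting the leading term from the expansion of $e(x) = c_0\sqrt{\eta(x)} - c_0\sqrt{\eta(\infty)}$, so it is analytic in $t^2$ and $O(|t|^4)$; multiplying by the analytic factor $D(x,\zeta)$ and factoring out $t^3$ yields $h_2(t,\zeta) = O(|t|)$ analytic in $t$, and Cauchy estimates then give (b).

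Part (c) is the principal step. Using the decomposition $A_{11} = A_{11}^0 + hd_{11} + h^2\beta_{11}$ from \eqref{h7} and the observation from \eqref{h6} that the diagonal entries of $A_{11}^0$ coincide, so that $\underline{\alpha} := (\ud-\ua)/2 \equiv 0$, I would expand each summand on the right-hand side of the defining expression \eqref{h19} in powers of $h$. The contribution of $\alpha^2 - ((\ud-\ua)/2)^2 = \alpha^2$ is immediately $O(h^2)$, and together with a cancellation in $(bc) - (\ub\uc)$ arising from the specific form of the conjugator $Y_1$ in \eqref{h2}, the first bracket of \eqref{h19} is $O(h^2)$ overall. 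Combined with the explicit $h$- and $h^2$-prefactors on the remaining two summands of \eqref{h19}, this yields the decomposition
\begin{align*}
r(x,\zeta,h) \;=\; h\bigl(r_{\rm prof}(x,\zeta,h) + r_{\alpha}(x,\zeta,h)\bigr),
\end{align*}
where $r_{\rm prof}$ depends on $x$ only through the profile and $r_\alpha$ encodes all $\alpha_{21}$-dependent contributions. Both pieces vanish at $x = \infty$ (by Proposition \ref{h21}, Lemma \ref{h10}, and the decay built into Proposition \ref{h10z}) and satisfy $O(e^{-\mu\Re x}) = O(|t|^2)$. The $r_{\rm prof}$ piece is analytic in $t^2$, and contributes $|t|^{2-k}$ to $\partial_t^k$ of $(h_3 - h_3(0))/h$, handled as in (a). For the $r_\alpha$ piece, the chain rule $\partial_t = -(2/\mu t)\partial_x$ produces a factor $|t|^{-k}$, estimate \eqref{f45c} supplies $h^{-k}$ from the $\alpha_{21}$-derivatives, and the baseline $e^{-\mu\Re x} = O(|t|^2)$ supplies the $|t|^2$; altogether this is $|t|^{2-k}h^{-k}$ for $(h_3-h_3(0))/h$, and multiplying by the prefactor $h$ gives $|t|^{2-k}h^{1-k}$, which is (c).

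The main obstacle is the structural identification $r = h\,\tilde r$: one must verify the cancellation forced by $\ua = \ud$ and, at the next order, a further cancellation specific to the conjugator $Y_1$ that reduces the apparent $O(h)$ size of $(bc)-(\ub\uc)$ to $O(h^2)$. Once this cancellation is in hand, the rest is a careful but routine application of Cauchy estimates together with the chain rule, using the decay information from Lemma \ref{h10} and Proposition \ref{h10z} that is already at our disposal.
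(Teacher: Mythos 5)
Your treatment of (a) and (b) is fine: analyticity of the profile in $e^{-\mu x}\propto t^2$ (Proposition \ref{o5}) plus Cauchy estimates on disks of radius comparable to $|t|$ is a legitimate variant of the paper's argument, which instead differentiates through the change of variables $x\mapsto t$ and inducts on $k$; both routes deliver \eqref{est}(a),(b).

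The gap is in (c), and it is exactly at the point you flag as "the main obstacle." Your argument needs the identity $r=h\tilde r$, i.e.\ that the bracket $(bc+\alpha^2)-\bigl(\ub\uc+(\tfrac{\ud-\ua}{2})^2\bigr)$ in \eqref{h19} is $O(h^2)$. The part coming from $\alpha^2$ is indeed $O(h^2)$ because $\ua=\ud$, but the other part is not: writing $b=\ub+h(d_{11})_{12}+h^2(\beta_{11})_{12}$, $c=\uc+h(d_{11})_{21}+h^2(\beta_{11})_{21}$, one gets
\begin{align*}
bc-\ub\uc \;=\; h\bigl[\ub\,(d_{11})_{21}+\uc\,(d_{11})_{12}\bigr]+O(h^2),
\end{align*}
and the bracket is a purely profile-dependent quantity built from $Y_1$, $\Phi_1$ and $d_xY_1$; nothing in the structure of $Y_1$ makes it vanish, and the paper neither claims nor uses such a cancellation. (Lemma \ref{h10} only says this quantity decays to $0$ as $\Re x\to\infty$ — that is what gives $r(\infty,\zeta,h)=0$ and hence $h_3=O(1)$ in \eqref{ab2} — it does not supply a factor of $h$.) So $r$ is genuinely $O(1)$, your decomposition $r=h(r_{\rm prof}+r_\alpha)$ fails, and with it your derivation of the $h^{1-k}$ factor for the profile-dependent part of $h_3$. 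The paper's actual mechanism is different and does not need any cancellation beyond $\ua=\ud$: one splits the terms of $r$ according to whether they involve $\beta_{11}=d_{12}\alpha_{21}$. The terms that do (e.g.\ $h\,d_{12}\alpha_{21}\ub$ inside $\tfrac1h(bc-\ub\uc)$) already carry an explicit factor $h$, and their $x$-derivatives cost $h^{-k}$ by \eqref{f45c}, which after the change of variables gives $C_k|t|^{2-k}h^{1-k}$; the profile-only terms carry no $h$ but are estimated exactly like $h_1$, i.e.\ by $C_k|t|^{2-k}$, which is all that is needed since $h^{1-k}\geq 1$ for $k\geq 1$ and only the $O(1)$ bound on $h_3$ itself (from \eqref{ab2}) together with the $k\geq1$ estimates is used in the later perturbation analysis (cf.\ \eqref{f37}). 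If you drop the unproven cancellation and adopt this splitting, the rest of your chain-rule/Cauchy mechanics for the $\alpha_{21}$-dependent piece goes through essentially as you describe.
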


\begin{proof}

\textbf{1. }Recall $t=\frac{2}{\mu}\sqrt{aD(\infty,\zeta)}e^{-\mu x/2}$. Given a function $f(x)$ analytic on the strip $T_{M,R}$, let
\begin{align}\label{a19}
f^*(t):=f\left(-\frac{2}{\mu}\log\frac{\mu t}{2\sqrt{aD(\infty,\zeta)}}\right)
\end{align}
be the corresponding function on $\mathcal{W}$.  Suppose
\begin{align}\label{a18}
|\partial_x^k f(x)|\leq C_k e^{- |x|\mu},\; x\in T_{M,R}.
\end{align}
Then since $|x|\sim -\frac{2}{\mu} \ln |t|$, \eqref{a19} implies
\begin{align}\label{a20}
|\partial_t f^*(t)|\leq C_1 e^{(\frac{2}{\mu}\ln|t|)\mu}\frac{1}{|t|}\leq C_1|t|,
\end{align}
and by induction
\begin{align}\label{a21}
|\partial_t^k f^*(t)| \leq C_k|t|^{2-k}.
\end{align}

\textbf{2. }To estimate $h_1$ we use the fact that the $x-$dependence in $D(x,\zeta)$ enters only through the profile $P(x)$ and write $D(x,\zeta)=E(P(x),\zeta)$.
We have
\begin{align}
\begin{split}
&D(x,\zeta)-D(\infty,\zeta)=(P(x)-P(\infty))\int^1_0\partial_P E(P(\infty)+s(P(x)-P(\infty)),\zeta)ds\\
&\qquad\quad=t^2(x)h_1(t(x),\zeta),
\end{split}
\end{align}
 where $h_1(t,\zeta)-h_1(0,\zeta)=f^*(t)$ for a function $f(x)$ which satisfies  \eqref{a18} in view of the expansion \eqref{o6}, so we obtain \eqref{est}(a).


\textbf{3. }To estimate $h_2$ we use \eqref{hr1a} and the fact that $m(x)$ satisfies the estimates \eqref{a18}.   Thus,
$f(x):=m(x)D(x,\zeta)$ satisfies the same estimates.  Since $h_2(t)= \frac{f^*(t)}{t}\frac{\mu^2}{4aD(\infty,\zeta)}$, the estimate \eqref{est}(b) follows from \eqref{a21}.

\textbf{4. Estimate of $h_3$. }Terms not involving $\beta_{11}$ in the expression for $h_3$ can be estimated like $h_1$; the worst terms involve $\beta_{11}$ and its derivatives, where $x-$dependence enters not only through the profile $P(x)$ but also through $\alpha_{21}(x,\zeta,h)$.  For example, consider the term
\begin{align}\label{a21z}
h\beta_{11}\ub=hd_{12}\alpha_{21}\ub,
\end{align}
which appears in the expression for $\frac{1}{h} (bc-\ub\uc)$. \footnote{Here and in the rest of step \textbf{5} we use $\beta_{11}$ to denote the appropriate entry of the $2\times 2$ matrix $\beta_{11}$; recall \eqref{h7}. A similar remark applies to $d_{12}$ and $\alpha_{21}$.} Lemma \ref{h10} and the argument giving \eqref{ac2} show that $d_{12}=e^{\mu x}(a+m(x))$ for some (new) $a$ and $m(x)$ satisfying $|m(x)|\leq C^{-\mu\Re x}$.   Thus
\begin{align}\label{a21y}
h\beta_{11}\ub=e^{\mu x}(a+m(x))h\alpha_{21}\ub:=t^2(x)H_3(t(x),\zeta,h).
\end{align}
Since $\alpha_{21}$ satisfies the estimates \eqref{f45c}, using the explicit form of $x=x(t)$ we obtain
\begin{align}\label{a21x}
|\partial_t^k (H_3(t,\zeta,h)-H_3(0,\zeta,h))|\leq C |t|^{2-k}h^{1-k}
\end{align}
by arguing as for $h_1$.
The functions $hd_x\alpha_{21}$ and $h^2d^2_x\alpha_{21}$ also satisfy the estimates \eqref{f45c} (now with endstates $0$), so the terms involving derivatives of $\beta_{11}$ (recall \eqref{ab1}) can be estimated in the same way.

\end{proof}

\section{Differential equations with singularities, turning points, and a large parameter}\label{parameter}
Consider equations of the form
\begin{align}\label{b1}
w_{\sigma\sigma}=(u^2 f(\sigma)+g(\sigma))w
\end{align}
on a domain $D\subset \mathbb{C}$, where $u$ is a large real or complex parameter, and the functions $f$ and $g$ are analytic except at boundary points or isolated interior points of $D$.  Under certain conditions on $f$ and $g$  the problem \eqref{b1} can be usefully transformed by a change of dependent and independent variables into one of the normal forms:
\begin{align}\label{b2}
W_{\xi\xi}=(u^2\xi^m+\psi(\xi))W,
\end{align}
where $m=0$, $1$, or $-1$, and $\psi$ can be expressed explicitly in terms of $f$ and $g$.
The transformation of independent variable in these cases is, respectively,
\begin{align}\label{b3}
\begin{split}
&(a)\; \xi=\int^\sigma_{\sigma_0} f^{1/2}(r)dr\\
&(b)\; \frac{2}{3}\xi^{3/2}=\int_{\sigma_0}^\sigma f^{1/2}(r)dr\\
&(c)\;2\xi^{1/2}=\int_{\sigma_0}^\sigma f^{1/2}(r)dr,
\end{split}
\end{align}
where $\sigma_0$ is  a zero or pole of $f$ in (b), (c), respectively (\cite{O}, Chapter 10).  With $\dot  \sigma=\frac{d\sigma}{d\xi}$ one defines
$W=\dot \sigma^{-1/2}w$, and then finds
\begin{align}\label{b3a}
\psi(\xi)=\dot \sigma^2 g(\sigma)+\dot \sigma^{1/2} \frac{d^2}{d\xi^2}(\dot \sigma^{-1/2}).
\end{align}

The problem \eqref{b2} is easily solved in the elementary case when $\psi$ is identically zero, so it is natural to use variation of constants and integral equations to solve the general case.   This program is carried out in detail in Chapters 10, 11, and 12 of \cite{O}, which treat the respective cases $m=0$, $1$, $-1$.  The elementary solutions  are exponentials $e^{\pm u\xi}$ in the case $m=0$, and Airy functions in the case $m=1$.

 In the case $m=-1$, it is shown in \cite{O}, Chapter 12, that if $g$ has a simple or double pole at $\sigma=\sigma_0$, and we define $\nu$ by
\begin{align}
\frac{\nu^2-1}{4}=(\sigma-\sigma_0)^2g(\sigma)|_{\sigma=\sigma_0},
\end{align}
then under the above transformations \eqref{b1} takes the form
\begin{align}\label{b4}
W_{\xi\xi}=\left(\frac{u^2}{\xi}+\psi(\xi)\right)W=\left(\frac{u^2}{\xi}+\frac{\nu^2-1}{4\xi^2}+\frac{\phi(\xi)}{\xi}\right)W,
\end{align}
where $\phi$ is analytic at $\xi=0$.  We now take the equation obtained by neglecting $\frac{\phi(\xi)}{\xi}$ in \eqref{b4} as the ``elementary equation"; its solutions are the modified Bessel functions $\xi^{1/2}I_\nu(2u\xi^{1/2})$ and $\xi^{1/2}K_\nu(2u\xi^{1/2})$.

\section{Three frequency regimes.}\label{frequencies}
It is not yet clear whether and in what sense the equations \eqref{hr4} are useful perturbations of Bessel's equation.  The answer turns out to depend on both the phase of $\alpha=\sqrt{i(\zeta-\zeta_\infty)}$  and the relative magnitude of $\alpha$ and $h$.\footnote{The square root is positive when its argument is positive.}Here $\zeta$ lies in a small neighborhood of $\zeta_\infty$ in $\Re\zeta\geq 0$.   Let $\beta=\alpha/h$.
For $K>0$ sufficiently large and  a fixed small $\delta>0$  we distinguish the following three regimes, which exhaust the relevant $\alpha$:\\

I: $|\beta|\geq K$, $0\leq \mathrm{arg}\;\beta\leq \frac{\pi}{2}-\delta$, where $\delta>\eps_1$ (for $\eps_1$ as in Definition \ref{defW}).

II: $| \beta|\geq K$, $\frac{\pi}{2}-\delta\leq \mathrm{arg}\;\beta\leq \frac{\pi}{2}$

III: $|\beta|\leq K$.\\

It will turn out that the perturbed Bessel problem \eqref{hr4} can be analyzed in Regimes $\mathrm{I}$, $\mathrm{II}$, $\mathrm{III}$ by reducing to the normal form \eqref{b2} where $m$ is respectively $0$, $1$, $-1$.

\subsection{Regime I}\label{one}
To get an idea of how this works in a simple setting closely related to our perturbed problem, consider the modified Bessel's equation
\begin{align}\label{c1}
w_{zz}+\frac{1}{z}w_z=(1+\frac{\beta^2}{z^2})w,
\end{align}
where first we take $\beta=\alpha/h$ as in case I, and $z=t/h$ for $t\in \mathcal W$ (Definition \ref{defW}).
So $z\in\mathcal{Z}_h=\mathcal{W}/h$.

Setting $w=\hat w z^{-\frac{1}{2}}$ to eliminate the first derivative, we obtain
\begin{align}\label{c3}
\hat w_{zz}=(1+\frac{\beta^2}{z^2})\hat w-\frac{1}{4z^2}\hat w\text{ on }\mathcal{Z}_h.
\end{align}
Next set $v(\sigma)=\hat w(\beta \sigma)$ for $\sigma$ in the rotated large wedge $\mathcal{W}/h\beta=\mathcal{W}/\alpha:=\cZ_\alpha$ to obtain
\begin{align}\label{c4}
v_{\sigma\sigma}=\beta^2(1+\frac{1}{\sigma^2})v-\frac{1}{4\sigma^2}v \text{ on }\cZ_\alpha,
\end{align}
which is a problem of the form \eqref{b1} with
\begin{align}
u=\beta,\; f(\sigma)=1+\frac{1}{\sigma^2},\; g(\sigma)=-\frac{1}{4\sigma^2}.
\end{align}
Note that the condition  $\delta> \eps_1$ in  the definition of Regime I implies that the points $\sigma=\pm i$, where $f(\sigma)=0$, do not lie in $Z_\alpha$ for $\beta$ in Regime I.   As shown in \cite{O}, Chapter 10, the transformations
\begin{align}\label{c5}
\xi=\int f^{1/2}(\sigma)d\sigma,\;\; v=f^{-1/4}(\sigma)W
\end{align}
change \eqref{c4} into a problem satisfied by $W(\xi)$ of the normal form \eqref{b2} with $m=0$ and
\begin{align}\label{c6}
\psi(\xi)=\frac{g(\sigma)}{f(\sigma)}-\frac{1}{f^{3/4}(\sigma)}\frac{d^2}{d\sigma^2}\left(\frac{1}{f^{1/4}(\sigma)}\right).
\end{align}

\begin{rem}\label{c7}
The problem \eqref{c4} has a regular singularity at $0$ and an irregular singularity ``at $\infty$", but no turning points (which are points where $f(\sigma)=0$) in $\mathcal{Z}_\alpha$.   The wedge $\cZ_\alpha$ is bounded for fixed $\alpha$, but since
$\alpha$ can be $O(h)$ for some $\beta$ in regime I, and since we are interested in uniform estimates as $h\to 0$, the domain $\mathcal{Z}_\alpha$ can become unbounded as $h\to 0$.  Thus, we effectively have a singularity at infinity.

In our application to Erpenbeck's stability problem we study \eqref{a1} in the original $x$ variables on the infinite strip $T_{M,R}$ \eqref{ac1}, and we need to know how the solution that decays at $x=\infty$, which corresponds to $\sigma=0$, behaves at $x=M$, which corresponds to $\sigma=e^{-CM}/h$, for some $C>0$.  Obtaining an explicit formula for the exact decaying solution at $x=M$ is the main step before extending the solution to $x=0$, where the stability function can be assessed.
A great advantage of the method presented in Chapter 10 of \cite{O} is that it produces an asymptotic representation of the exact solution at once on the entire (large) domain $\cZ_\alpha$.  If instead one tried, say, to use the theory of regular singularities to construct the decaying solution near $\sigma=0$, and another method to construct a solution near infinity (i.e., for $\sigma=O(1/h)$), there would remain the difficult problem of matching up the two expansions somewhere in between.

\end{rem}

\subsection{Regimes II}\label{two}
Next consider \eqref{c1} again, but with large $\beta$ with argument close to $\pi/2$. So $\beta=i\gamma$ where $\mathrm{arg}\;\gamma$ is close to $0$.  Rewriting \eqref{c3} with $\beta^2=-\gamma^2$ and setting $v(\sigma)=\hat w(\gamma \sigma)$ now for $\sigma\in \mathcal{W}/(-i\alpha)=:\cZ_{-i\alpha}$, we obtain instead of \eqref{c4}
\begin{align}\label{c8}
v_{\sigma\sigma}=\gamma^2(1-\frac{1}{\sigma^2})v-\frac{1}{4\sigma^2}v \text{ on }\cZ_{-i\alpha},
\end{align}
This problem has singularities at zero and infinity as before, but now there is a turning point, namely $\sigma=1$, in the interior of $\cZ_{-i\alpha}$, since $\mathrm{arg}(-i\alpha)$ is near $0$.  Instead of having turning points converging to $z=0$, or running off to infinity in the original $x$ variables, the device of considering $v(\sigma)=\hat w(\gamma \sigma)$ yields a
problem with a single fixed turning point and large parameter $u=\gamma$.  Using the new variables $\xi$ and $W$ defined by
\begin{align}\label{c9}
\left(\frac{d\xi}{d\sigma}\right)^2=\frac{\sigma^2-1}{\xi \sigma^2}=\frac{f}{\xi}:=\hat f,\;\;v=\left(\frac{d\xi}{d\sigma}\right)^{-1/2}W
\end{align}
transforms \eqref{c8} into the normal form \eqref{b2} with $m=1$ and
\begin{align}\label{c10}
\psi(\xi)=\frac{g(\sigma)}{\hat f(\sigma)}-\frac{1}{\hat f^{3/4}(\sigma)}\frac{d^2}{d\sigma^2}\left(\frac{1}{\hat f^{1/4}(\sigma)}\right), \text{ where }g(\sigma)=-\frac{1}{4\sigma^2}.
\end{align}
The method of Chapter 11 of \cite{O} yields an expansion of the exact solution of \eqref{c8} valid on $\cZ_{-i\alpha}$, a large wedge (growing as $h\to 0$) with vertex at $\sigma=0$ and turning point $\sigma=1$ in its interior.

\subsection{Regime III}\label{three}
Now we consider equation \eqref{c1} for $|\beta|\leq K$, which includes the case corresponding to the ``turning point at infinity", $\beta=0$.  Here  it is best to work in the $t$ variables on the bounded ($h-$independent) wedge $\mathcal{W}$. The equation in these variables is
\begin{align}
w_{tt}+\frac{1}{t}w_t=\frac{1}{h^2}\left(1+\frac{\alpha^2}{t^2}\right)w.
\end{align}
Setting $w=v t^{-1/2}$ we obtain
\begin{align}\label{c11}
v_{tt}=\frac{1}{h^2}\left(1+\frac{\alpha^2}{t^2}\right)v-\frac{1}{4t^2}v= \frac{1}{h^2}(1)+\left(\frac{\beta^2-\frac{1}{4}}{t^2}\right) v \text{ on }\mathcal{W},
\end{align}
where we have used the fact that $\beta^2$ is now comparable in size to $1/4$ to group these terms together.

 One might  regard \eqref{c11} as a problem that is already in the normal form \eqref{b2} with $m=0$, $u=\frac{1}{h}$ and $\psi=
\frac{\beta^2-\frac{1}{4}}{t^2}$, and try to apply the method of Chapter 10 of \cite{O}.  This does not work;
the integrals of $|\psi|$ on paths starting at $0$ need to be finite
in order to solve the integral equation arising in the error estimates, but such integrals blow up.   One can see from
\eqref{c6} that $f$ must have a singularity at $t=0$ to balance that of $g$ at $t=0$
in order for such integrals to be finite.   Instead, one might regard \eqref{c11} as a problem of
the form \eqref{b1} with $u=\frac{1}{h}$, $f(t)=1+\frac{\alpha^2}{t^2}$, and $g(t)=-\frac{1}{4t^2}$,
and use transformations like   \eqref{c5} to reduce \eqref{c11} to the normal form \eqref{b2} with $m=0$ and a different $\psi$.
This also fails; the function $\psi$ now depends on $\alpha=O(h)$, and though the integrals described above are now finite for fixed $h$, they blow up as $h\to 0$.

Instead we proceed as follows. Setting $t=2s^{1/2}$ and $\hat v(s)=s^{1/4}v(2s^{1/2})$, we obtain
\begin{align}\label{c12}
\hat v_{ss}=\left(\frac{1}{h^2s}+\frac{\beta^2-1}{4s^2}\right)\hat v \text{ on }\frac{\mathcal{W}^2}{4}.
\end{align}
This problem already has the form of the ``elementary equation" corresponding to the case $m=-1$ of section \ref{parameter}, and has solutions that can be expressed in terms of modified Bessel functions.
There is no singularity at $\infty$ now, since  $\mathcal{W}^2$, which is bounded, is independent of $h$; turning points are absent as well from \eqref{c12}. \footnote{Observe, though, that turning points are present in the first equation of \eqref{c11} for $\arg\alpha=\frac{\pi}{2}$.  They converge to zero as $h\to 0$ since $\alpha=O(h)$.}

When we consider the perturbed Bessel equation in this frequency regime, we will obtain an equation like \eqref{c12} with the same $g(s)$, but with
$\frac{1}{h^2s}$ replaced by $\frac{1}{h^2} f$, where $f=\frac{1}{s}+f_p(s)$, with $f_p$  the perturbation given in \eqref{d6}.

\section{Transformation of the perturbed Bessel's equation.}\label{perturbed}
Next we describe how transformations like those described above can be applied to the perturbed equations given in \eqref{hr4}.   Recalling the definition  of $\tilde\beta=\tilde\beta(\zeta,h)$ from \eqref{hr3} and the formula for $D(\infty,\zeta)$ \eqref{a1a}, we see that corresponding to $\zeta$ in the each of the frequency regimes of section \ref{frequencies}, we have, respectively:\\

I: $|\tilde\beta|\geq K_1$, $-\delta_1 \leq \mathrm{arg}\;\tilde\beta\leq \frac{\pi}{2}-\delta_1$, where $\delta_1>\eps_1$
for $\eps_1$ as in Definition \ref{defW}.\footnote{Since \eqref{a16b} holds, we see that after shrinking $\omega\ni\zeta_\infty$ if necessary,
we can describe Regime I here using a $\delta_1>\eps_1$
provided $\delta>\eps_1$ for $\delta$ as in the definition of Regime I in section \ref{frequencies}.}

II: $| \tilde \beta|\geq K_1$, $\frac{\pi}{2}-\delta_2\leq \mathrm{arg}\;\tilde \beta\leq \frac{\pi}{2}$

III: $|\tilde \beta|\leq K_2$.\\

Here $0<K_1<K_2$, $\delta_j>0$ is small, and $K_1$ can be made arbitrarily large  by taking $K$ in section \ref{frequencies} large.



\subsection{Regime I}\label{onep}

Applying the same transformations as in section \eqref{one} to the perturbed equation \eqref{hr4}(b), but with $\tilde\beta$ now playing the role of $\beta$, we obtain instead of \eqref{c4} the equation
\begin{align}\label{d1}
v_{\sigma\sigma}=(\tilde\beta^2 f(\sigma)+g(\sigma))v \text{ on }\mathcal{W}/\tilde\alpha :=\mathcal{Z}_{\tilde\alpha},
\end{align}
where $g(\sigma)=-\frac{1}{4\sigma^2}$ as before and
\begin{align}\label{d2}
\begin{split}
&f(\sigma)=f_0(\sigma)+f_p(\sigma),\text{ where }\\
&f_0(\sigma)=1+\frac{1}{\sigma^2}\text{ and }f_p(\sigma)=(\tilde\alpha^2 \sigma^2+\alpha^2)h_1(\tilde\alpha \sigma,\zeta)+\tilde\alpha \sigma h_2(\tilde\alpha \sigma,\zeta)+hh_3(\tilde\alpha \sigma,\zeta,h).
\end{split}
\end{align}

\begin{rem}\label{d3a}
It will be important later to take the perturbation $f_p(\sigma)$ sufficiently small on the relevant domain (e.g., $\mathcal{Z}_{\tilde\alpha}$ or $\mathcal{Z}_{-i\tilde\alpha}$).   This will be the case provided $\tilde\alpha \sigma$, $\alpha$, and $h$ are small.  Since $\tilde\alpha \sigma\in\mathcal{W}$, $\tilde \alpha \sigma$ is small when $\mathcal{W}$, a wedge with vertex at $0$, is small; more precisely, $|\tilde\alpha\sigma|\leq \eps_2$ for $\eps_2$ as in  Definition \ref{defW}. The parameter $\eps_2$ is small when the strip $T_{M,R}$ is a sufficiently small neighborhood of $\infty$, that is, when $M$ is sufficiently large. One makes $\alpha$ small by restricting $\zeta$ to a sufficiently small neighborhood $\omega\ni\zeta_\infty$.
\end{rem}

\subsection{Regime II}\label{twop}
Writing $\tilde\beta^2=-\tilde\gamma^2$, where $\arg \tilde \gamma$ is close to zero,  and applying the same transformations as in section \ref{two} to the perturbed equation \eqref{hr4}(b), but with $\tilde\gamma$ now playing the role of $\gamma$, we obtain instead of \eqref{c8} the equation
\begin{align}\label{d3}
v_{\sigma\sigma}=(\tilde\gamma^2 f(\sigma)+g(\sigma))v \text{ on }\mathcal{W}/(-i\tilde\alpha) :=\mathcal{Z}_{-i\tilde\alpha},
\end{align}
where $g(\sigma)=-\frac{1}{4\sigma^2}$ as before and (since $h\tilde\gamma=-i\tilde\alpha$)
\begin{align}\label{d4}
\begin{split}
&f(\sigma)=f_0(\sigma)+f_p(\sigma),\text{ where }\\
&f_0(\sigma)=1-\frac{1}{\sigma^2}\text{ and }f_p(\sigma)=(\alpha^2-\tilde\alpha^2 \sigma^2)h_1(-i\tilde\alpha \sigma,\zeta)-i\tilde\alpha \sigma h_2(-i\tilde\alpha \sigma,\zeta)+hh_3(-i\tilde\alpha \sigma,\zeta,h).
\end{split}
\end{align}
Clearly the function $f_p$ will be small under the same conditions as described in Remark \ref{d3a}.


\subsection{Regime III}
Starting now with the perturbed equation in the $t$ form \eqref{hr4}(a) and making the same transformations as in section \ref{three}, in place of \eqref{c12} we obtain
\begin{align}\label{d5}
\hat v_{ss}=\left(\frac{1}{h^2}f(s)+g(s)\right)\hat v \text{ on } \mathcal{W}^2/4,
\end{align}
where $g(s)=\frac{\tilde\beta^2-1}{4s^2}$ and
\begin{align}\label{d6}
\begin{split}
&f(s)=f_0(s)+f_p(s)\text{ with }\\
&f_0(s)=\frac{1}{s}\text{ and }f_p(s)=\frac{1}{s}\left[(4s+\alpha^2)h_1(2s^{1/2},\zeta)+2s^{1/2}h_2(2s^{1/2},\zeta)+hh_3(2s^{1/2},\zeta,h)\right].
\end{split}
\end{align}

Note that each of the perturbations $f_p$  as in \eqref{d2}, \eqref{d4}, or \eqref{d6} is determined once $\mathcal{W}$, $\zeta$, and $h$ are specified, where $\mathcal{W}$ is the wedge defined by the choice of constants $\eps_1$, $\eps_2$ as in Definition \ref{defW}.   Thus, we can write $f_p(\cdot)=f_p(\cdot,\eps_1,\eps_2,\zeta,h)$.  Remark \eqref{d3a} and the estimates of $h_j$, $j=1,2,3$ of Proposition \ref{estimates} directly imply:

\begin{prop}\label{d7}
Let $f_p$ be a perturbation as above.  Set $N(p)=\eps_2+|\zeta-\zeta_\infty|+h$, where $\eps_2$ appears in the definition of $\mathcal{W}$.   Then given $\delta_1>0$ there exists $\delta_2>0$ such that
\begin{align}
\begin{split}
&N(p)<\delta_2\Rightarrow |f_p|_{L^\infty(\mathcal{Z}_{\tilde \alpha})}<\delta_1\text{ for Regime I},\\
&N(p)<\delta_2\Rightarrow |f_p|_{L^\infty(\mathcal{Z}_{-i\tilde \alpha})}<\delta_1\text{ for Regime II},\\
&N(p)<\delta_2\Rightarrow |sf_p|_{L^\infty(\mathcal{W}^2/4)}<\delta_1\text{ for Regime III},
\end{split}
\end{align}
\end{prop}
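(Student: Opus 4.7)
The plan is to read off each perturbation term from its explicit formula in \eqref{d2}, \eqref{d4}, \eqref{d6} and bound it directly using Proposition \ref{estimates}, exploiting two elementary observations: first, the argument of each $h_j$ is $\tilde\alpha\sigma$, $-i\tilde\alpha\sigma$, or $2s^{1/2}$, which in each case lies in $\mathcal{W}$ by definition of the scaled domains $\mathcal Z_{\tilde\alpha}$, $\mathcal Z_{-i\tilde\alpha}$, $\mathcal{W}^2/4$, hence has modulus at most $\eps_2$; second, $|\alpha|^2=|\zeta-\zeta_\infty|$. In particular the three contributing factors $|\tilde\alpha\sigma|$ (or $|2s^{1/2}|$), $|\alpha|$, and $h$ are all bounded in terms of $N(p)=\eps_2+|\zeta-\zeta_\infty|+h$.

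For Regime I, I would use \eqref{est}(a) to write $|h_1(\tilde\alpha\sigma,\zeta)|\leq |h_1(0,\zeta)|+C|\tilde\alpha\sigma|^2\leq C'$, giving
\[
|(\tilde\alpha^2\sigma^2+\alpha^2)h_1(\tilde\alpha\sigma,\zeta)|\leq C(|\tilde\alpha\sigma|^2+|\alpha|^2)\leq C(\eps_2^2+|\zeta-\zeta_\infty|).
\]
Next, \eqref{est}(b) gives $|\tilde\alpha\sigma\, h_2(\tilde\alpha\sigma,\zeta)|\leq C|\tilde\alpha\sigma|^2\leq C\eps_2^2$; and since $h_3$ is analytic and $O(1)$ on $\mathcal W$ (as recorded in \eqref{ab2}), the third term satisfies $|hh_3|\leq Ch$. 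Summing, $|f_p|_{L^\infty(\mathcal Z_{\tilde\alpha})}\leq C N(p)$, and the conclusion follows by choosing $\delta_2\leq \delta_1/C$. Regime II is identical after replacing $\tilde\alpha\sigma$ by $-i\tilde\alpha\sigma$, since $|-i\tilde\alpha\sigma|=|\tilde\alpha\sigma|$ and the estimates of Proposition \ref{estimates} are insensitive to phase.

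For Regime III the mild twist is that the bound is on $|sf_p|$, not $|f_p|$; multiplying through by $s$ absorbs the $1/s$ factor in \eqref{d6} and the same term-by-term argument applies: $|(4s+\alpha^2)h_1(2s^{1/2},\zeta)|\leq C(|s|+|\alpha|^2)\leq C(\eps_2^2+|\zeta-\zeta_\infty|)$, using $|s|\leq \eps_2^2/4$; $|2s^{1/2}h_2(2s^{1/2},\zeta)|\leq C|2s^{1/2}|^2\leq C\eps_2^2$ by \eqref{est}(b); and $|hh_3(2s^{1/2},\zeta,h)|\leq Ch$. I do not expect a genuine obstacle: the proposition is essentially a bookkeeping corollary of Proposition \ref{estimates} and the definition of $\mathcal W$. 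The one point that requires a moment's care is verifying that $h_3$ is uniformly $O(1)$ on $\mathcal W$ (not merely that $h_3-h_3(0,\zeta,h)$ is small), which was recorded at \eqref{ab2}; without this, the $h h_3$ contribution would not be obviously controlled.
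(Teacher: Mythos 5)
Your proposal is correct and takes essentially the same route as the paper: the paper offers no separate proof, asserting that the proposition follows directly from Remark \ref{d3a} (i.e.\ $|\tilde\alpha\sigma|\leq\eps_2$, $|\alpha|^2=|\zeta-\zeta_\infty|$, $h$ small) together with the bounds of Proposition \ref{estimates}, which is precisely the term-by-term bookkeeping you carry out. Your closing observation that one needs $h_3$ itself (not just $h_3-h_3(0,\zeta,h)$) to be uniformly $O(1)$ on $\mathcal{W}$, as recorded at \eqref{ab2}, is the right point of care.
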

\begin{rem}
In later arguments arguments we will reduce the perturbation $f_p$ by reducing $N_p$.  We do not include $\eps_1$ (as in Definition \ref{defW}) as one of the summands in the definition of $N_p$, since in that case shrinking $N_p$ could produce a wedge $\mathcal{W}$ that no longer contains the image of $[M,\infty)$ under the map $t=t(x,\zeta)$.   Although there are restrictions on the size of $\eps_1$ (for example, in the definition of Regime I), Proposition \ref{d7} implies that for  almost all purposes it suffices to shrink $N_p$ as defined above.

\end{rem}

\section{Leading term expansions}
In this section we describe the form of the leading term expansions for exact solutions to the perturbed problem in each of the frequency regimes described in section \ref{perturbed}.  In each case the $\xi$ variable is defined  as in \eqref{b3} for appropriately chosen lower limits $\sigma_0$, where $f$ is given by \eqref{d2}, \eqref{d4}, or \eqref{d6}.  In each case one achieves the normal form \eqref{b2} by defining $W(\xi)$ and $\psi(\xi)$ as described in section \ref{parameter}.

The main things to check are that ``progressive paths" of integration can be chosen as required by the contraction arguments
and that the integrals involving $\psi(\xi)$ that arise in the error estimate converge at the singularity at zero and (in the cases of regimes I and II) at the singularity at infinity.
These points are explained in the following discussion and in the proofs.

\subsection{Regime I}\label{onelead}

 Recall the definitions of the variables
\begin{align}\label{e4s}
t=\frac{2}{\mu}\sqrt{aD(\infty,\zeta)}e^{-\mu x/2},\;z=\frac{t}{h},\;\tilde\alpha=\frac{2}{\mu}\alpha\sqrt{D(\infty,\zeta)},\;\tilde\beta=\frac{\tilde\alpha}{h},\;\sigma=\frac{z}{\tilde\beta}\in\mathcal{W}/\tilde\alpha=\mathcal Z_{\tilde\alpha}.
\end{align}

When $f_p(\sigma)$ in \eqref{d2} is neglected, the integral defining $\xi(\sigma)$ is easily evaluated by trigonometric substitution and yields
\begin{align}\label{e1}
\xi=\int^\sigma_{\sigma_0}\frac{(1+s^2)^{1/2}}{s}ds=(1+\sigma^2)^{1/2}+\log\frac{\sigma}{1+(1+\sigma^2)^{1/2}},
\end{align}
where the branches of square root and logarithm are the principal ones.   Here $\sigma_0$ is the point on the positive real axis at which the right side of \eqref{e1} vanishes.  Using, for example, the fact that for small $|\sigma|$, $\xi=\log(\frac{1}{2}\sigma)+1+o(1)$, while for large $|\sigma|$, $\xi=\sigma+o(1)$, it is not hard to draw a picture of the domain in the $\xi-$plane, $\mathcal{Z}_\xi$, that corresponds to the large wedge $\mathcal{Z}_{\tilde\alpha}$ under the map \eqref{e1} (Figure 7.2, \cite{O}, Chapter 10).  Progressive paths are of two types: those along which $\Re(\tilde\beta\xi)$ is nondecreasing, and those along which $\Re(\tilde\beta\xi)$ is nonincreasing. The choice of such paths is obvious in the $\xi-$plane and using
\begin{align}
\psi(\xi)=\frac{1}{4}\sigma^2(4-\sigma^2)/(1+\sigma^2)^3,
\end{align}
one sees that in this case ($f_p$ neglected) the integrals
\begin{align}\label{e2}
\int^\xi_{\xi(\sigma_j)}|\psi(s)|\;d|s|, \text{ where }\sigma_1=0,\;\sigma_2=\infty
\end{align}
are finite along such paths.\footnote{Here ``$\infty$" should be interpreted as a point at the far right extreme of the large wedge $\cZ_{\tilde\alpha}$.}

When $f_p$ is included in the definition of $f$, the domain in the $\xi-$plane corresponding to $\mathcal{Z}_{\tilde\alpha}$ under the map
$\xi=\int_{\sigma_0}^\sigma f^{1/2}(s)ds$ is a small perturbation of $\cZ_\xi$ when $f_p$ is small, and progressive paths are again easy to choose on an appropriate subdomain.   Using the estimates of Proposition \ref{estimates} for the functions $h_j$, $j=1,2,3$ appearing in the definition of $f_p$, one can show that integrals \eqref{e2} involving the redefined $\psi(\xi)$ are again finite.

To get started it is necessary to show that the map $\sigma\to \xi$ defines a good, global change of variables:

\begin{prop}\label{e4z}
For $f_0$ and $f_p$ as in \eqref{d2} let
\begin{align}\label{e4zz}
\xi_f(\sigma):=\int_{\sigma_0}^\sigma (f_0+f_p)^{1/2}(s)ds,
\end{align}
where $\sigma_0$ is (as before) the point on the positive real axis where the right side of \eqref{e1} vanishes.
For perturbations $f_p$ with $N_p$ sufficiently small (recall Proposition \ref{d7}) the function $\xi=\xi_{f}(\sigma)$, is a globally one-to-one analytic map of $\cZ_{\tilde\alpha}$ onto the open set which is its range.

\end{prop}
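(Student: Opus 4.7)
I would view $\xi_f$ as a holomorphic perturbation of the unperturbed map $\xi_{f_0}$, which is given explicitly by \eqref{e1}, and transfer global injectivity from $\xi_{f_0}$ to $\xi_f$. First I would check that $f = f_0+f_p$ is nowhere zero on $\mathcal{Z}_{\tilde\alpha}$, so that a single-valued analytic branch of $f^{1/2}$ exists on this simply connected wedge. The hypothesis $\delta_1>\eps_1$ defining Regime I forces the angular opening of $\mathcal{Z}_{\tilde\alpha}$ to lie strictly inside $(-\pi/2,\pi/2)$ and to stay uniformly away from the directions $\sigma=\pm i$ where $f_0$ vanishes; a direct estimate then yields $|f_0(\sigma)|\geq c_*$ on $\mathcal{Z}_{\tilde\alpha}$ for some constant $c_*>0$ independent of $\tilde\alpha,\zeta,h$. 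Combined with the $L^\infty$ smallness of $f_p$ from Proposition~\ref{d7}, this gives $|f|\geq c_*/2$ once $N_p$ is small enough, so $\xi_f$ is analytic with non-vanishing derivative $\xi_f'=f^{1/2}$. This already provides local injectivity of $\xi_f$ and openness of its range via the open mapping theorem.

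Second, I would record that $\xi_{f_0}$ is itself a biholomorphism of $\mathcal{Z}_{\tilde\alpha}$ onto its image $\mathcal{Z}_\xi^0$. This is read off from the closed form \eqref{e1}: the asymptotics $\xi\sim 1+\log(\sigma/2)$ as $\sigma\to 0$ and $\xi\sim \sigma$ as $\sigma\to\infty$, together with the absence of zeros of $f_0$ inside $\mathcal{Z}_{\tilde\alpha}$, identify $\mathcal{Z}_\xi^0$ explicitly (see Figure~7.2 of \cite{O}, Chapter~10) and establish global injectivity of $\xi_{f_0}$.

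Third, I would introduce the conjugated map $\Psi := \xi_f\circ \xi_{f_0}^{-1}$ on $\mathcal{Z}_\xi^0$. The chain rule gives
\begin{equation*}
\Psi'(\xi) = (1+f_p(\sigma)/f_0(\sigma))^{1/2}\quad\text{at}\quad \sigma = \xi_{f_0}^{-1}(\xi),
\end{equation*}
and since $|f_p/f_0|\leq 2\|f_p\|_{L^\infty}/c_*$ can be made arbitrarily small by Proposition~\ref{d7}, we obtain $\|\Psi'-1\|_{L^\infty(\mathcal{Z}_\xi^0)}\leq \delta_*$ for any prescribed $\delta_*>0$. Global injectivity of $\xi_f$ thereby reduces to global injectivity of $\Psi$.

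The main obstacle is the final step: proving global injectivity of $\Psi$ on $\mathcal{Z}_\xi^0$, which is in general not convex and may become unbounded as $\tilde\alpha\to 0$. I would establish the \emph{quasi-convexity} of $\mathcal{Z}_\xi^0$, namely that any two points $\xi_1,\xi_2\in\mathcal{Z}_\xi^0$ can be joined by a path $\gamma\subset\mathcal{Z}_\xi^0$ of length at most $C\,|\xi_2-\xi_1|$, with $C$ independent of $\tilde\alpha,\zeta,h$. This follows from the explicit geometry of $\mathcal{Z}_\xi^0$ read off from \eqref{e1}: the domain is asymptotic to a horizontal half-strip as $\Re\xi\to -\infty$, asymptotic to a sector for $|\xi|\to\infty$, with a compact interpolation region in between, and in each regime a short connecting path can be constructed by elementary estimates. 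Given such a $\gamma$, the fundamental theorem of calculus yields
\begin{equation*}
\Psi(\xi_2)-\Psi(\xi_1)=(\xi_2-\xi_1)+\int_\gamma(\Psi'(\xi)-1)\,d\xi,
\end{equation*}
so that $|\Psi(\xi_2)-\Psi(\xi_1)-(\xi_2-\xi_1)|\leq C\,\delta_*\,|\xi_2-\xi_1|$. Choosing $\delta_*<1/(2C)$, that is, shrinking $N_p$ accordingly, forces $|\Psi(\xi_2)-\Psi(\xi_1)|\geq |\xi_2-\xi_1|/2>0$, which gives global injectivity of $\Psi$ and hence of $\xi_f=\Psi\circ\xi_{f_0}$.
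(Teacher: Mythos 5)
Your proposal is correct, but it follows a genuinely different route from the paper's. The paper works entirely in the $\sigma$ variable: after the same observation that $f_0+f_p$ is nonvanishing on $\cZ_{\tilde\alpha}$ for $N_p$ small, it proves pointwise comparison estimates for $\xi_f-\xi_{f_0}$ (an absolute bound for $|\sigma|\leq K$ and a relative bound for $|\sigma|\geq K$, the Regime I analogue of Lemma \ref{f11a}), then splits $\cZ_{\tilde\alpha}$ into regions of small, medium and large modulus and proves injectivity on each by separate analytic arguments (a lower bound on the averaged derivative near $\sigma=0$ starting from \eqref{f17}, compactness plus closeness to the injective $\xi_{f_0}$ in the middle region, first/second derivative estimates at large $|\sigma|$), finishing with a separation-of-levels argument for points in adjacent regions. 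You instead conjugate by the unperturbed map: since $|f_0|\geq c_*$ on $\cZ_{\tilde\alpha}$ (this is exactly where $\delta_1>\eps_1$ enters, i.e.\ the paper's remark that $\pm i\notin\cZ_{\tilde\alpha}$), the map $\Psi=\xi_f\circ\xi_{f_0}^{-1}$ has derivative $(1+f_p/f_0)^{1/2}$ uniformly close to $1$, and global injectivity reduces to a perturbation-of-the-identity argument on $\xi_{f_0}(\cZ_{\tilde\alpha})$. What your route buys is a single clean mechanism replacing the case analysis, and it dispenses with the relative comparison estimate at large $\sigma$; what it costs is that essentially all the work is relocated into the uniform quasi-convexity of the image domains, which you only sketch and which is the step that must be done carefully: the constant has to be independent of $(\tilde\alpha,\zeta,h)$ even though the sector arm has length of order $\eps_2/|\tilde\alpha|\to\infty$ and its direction $-\arg\tilde\alpha$ sweeps over $[-\pi/2+\delta_1,\,\delta_1]$. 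The claim is true — the half-strip arm and the sector arm meet at an angle at least $\pi/2+\delta_1$, so the dog-leg path through the (uniformly bounded) junction region has length comparable to the chord, and each arm is convex up to uniformly controlled boundary distortion — but writing this out takes roughly the same explicit asymptotics of \eqref{e1} that the paper exploits. One further point worth noting: your conjugation trick is specific to Regime I, since in Regime II the function $f$ vanishes at $\sigma_0\in\cZ_{-i\tilde\alpha}$ and $f_p/f_0$ is not uniformly small there; the paper deliberately structures the Regime I proof to run parallel to the Regime II proof of Proposition \ref{f12}, whereas your argument would need to be abandoned in that harder case.
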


In the next Proposition $\mathcal{Z}_{\tilde\alpha,s}$ is an open subdomain of $\mathcal{Z}_{\tilde\alpha}$ containing the image of the segment of the $x-$axis, $[M,\infty)$, under the map $x\to \sigma$.   The domain $\mathcal{Z}_{\tilde\alpha,s}$ is defined as $\xi_f^{-1}(\Delta_\xi)$, where $\Delta_\xi$ (described precisely in the proof given in section \ref{pfinfiniteII}) is an open domain in $\xi-$space on which progressive paths  can be chosen.

\begin{prop}\label{e3}
Suppose $\tilde\beta$  as  defined in \eqref{hr3} lies in Regime I.
For  $f_p$ as in \eqref{d2} taken sufficiently small (by the choices explained in Remark \eqref{d3a}), the perturbed Bessel problem \eqref{d1}  has exact solutions
\begin{align}
\begin{split}\label{e3q}
&v_j(\sigma)=\xi^{-1/2}_\sigma(\sigma)\left(e^{\tilde\beta\xi(\sigma)}+\eta_1(\tilde\beta,\xi(\sigma))\right),\;j=1,2\\
&v_2(\sigma)=\xi^{-1/2}_\sigma(\sigma) \left(e^{-\tilde\beta\xi(\sigma)}+\eta_2(\tilde\beta,\xi(\sigma))\right)
\end{split}
\end{align}
on $\mathcal{Z}_{\tilde\alpha,s}$, where the error terms satisfy
\begin{align}\label{e3z}
|\eta_j(\tilde\beta,\xi)|,|\partial_\xi\eta_j(\tilde\beta,\xi)|\leq \frac{C}{|\tilde\beta|} |e^{(-1)^{j-1}\tilde\beta\xi}|.
\end{align}

\end{prop}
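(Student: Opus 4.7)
The plan is to follow the classical Olver scheme (case $m=0$ of Section \ref{parameter}) adapted to the perturbed coefficients \eqref{d2}. First, I would apply the change of variables $\xi = \xi_f(\sigma)$ from Proposition \ref{e4z} and set $W = \xi_\sigma^{1/2} v$, i.e.\ $v = f^{-1/4} W$. Using the identity \eqref{b3a} this converts \eqref{d1} on $\mathcal{Z}_{\tilde\alpha,s}$ into the normal form
\begin{equation*}
W_{\xi\xi} = \bigl(\tilde\beta^2 + \psi(\xi)\bigr)W \text{ on } \Delta_\xi,
\end{equation*}
with $\psi = \dot\sigma^2 g(\sigma) + \dot\sigma^{1/2}(\dot\sigma^{-1/2})_{\xi\xi}$ and $\dot\sigma = f^{-1/2}$. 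The unperturbed case ($f_p\equiv 0$) is explicitly given by \eqref{e1} with $\psi = \sigma^2(4-\sigma^2)/\bigl(4(1+\sigma^2)^3\bigr)$, whose image geometry (Fig.~7.2 of \cite{O}) already provides $\Delta_\xi$ and two natural families of progressive paths along which $\Re(\tilde\beta\xi)$ is monotone, terminating at the two singular ends $\xi\to -\infty$ (image of $\sigma\to 0$) and $\xi\to +\infty$ (image of $\sigma\to\infty$ within $\cZ_{\tilde\alpha}$).

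Second, I would check that for $f_p$ sufficiently small — made so by shrinking $N(p)$ using Proposition \ref{d7} — the set $\Delta_\xi$ is a small perturbation of the unperturbed domain, so the same two families of progressive paths persist, and that $\int|\psi(s)|\,d|s|$ remains finite along them. In the unperturbed case, near $\sigma = 0$ one has $\psi\,d\xi\sim\sigma\,d\sigma$, while near $\sigma\to\infty$ one has $\psi\,d\xi\sim d\sigma/\sigma^2$; both are integrable. For the perturbed $\psi$, the contribution of $f_p$ is controlled via Proposition \ref{estimates}: since $t=\tilde\alpha\sigma$ stays in the bounded wedge $\mathcal W$, the bounds $|\partial_t^k(h_j(t,\zeta)-h_j(0,\zeta))|\lesssim |t|^{2-k}$ translate into decay of $f_p$ and $\partial_\sigma^k f_p$ of the same order as the unperturbed leading pieces, so convergence is preserved with a uniform constant.

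Third, with these ingredients in hand, I would apply the standard Volterra construction (Olver \cite{O}, Ch.~10, Thm.~3.1): look for
\begin{equation*}
W_1(\xi) = e^{\tilde\beta\xi}\bigl(1+\eta_1(\xi)\bigr),\qquad W_2(\xi) = e^{-\tilde\beta\xi}\bigl(1+\eta_2(\xi)\bigr),
\end{equation*}
derive by variation of parameters a Volterra equation of the form
\begin{equation*}
\eta_j(\xi) = \frac{1}{2\tilde\beta}\int_{\xi_j^*}^\xi \bigl(1 - e^{\mp 2\tilde\beta(\xi-s)}\bigr)\psi(s)\bigl(1+\eta_j(s)\bigr)\,ds,
\end{equation*}
and run the integration along the progressive path from the singular end where the relevant exponential vanishes ($\xi_1^* = -\infty$ for $\eta_1$, $\xi_2^* = +\infty$ for $\eta_2$), so that the kernels stay uniformly bounded. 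The contraction argument on $L^\infty$ of the path gives $|\eta_j|, |\partial_\xi\eta_j|\lesssim V(\psi)/|\tilde\beta|$, which is precisely \eqref{e3z} after attaching the exponential prefactor. Converting back via $v = \xi_\sigma^{-1/2} W$ yields \eqref{e3q}.

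The principal obstacle is uniformity across Regime I as $h\to 0$. Since only $|\tilde\beta|\geq K_1$ is imposed, $\tilde\alpha$ may be as small as $O(hK_1)$, and then $\mathcal Z_{\tilde\alpha} = \mathcal{W}/\tilde\alpha$ and its image $\Delta_\xi$ grow without bound, so finiteness of $V(\psi)$ at the ``$\sigma = \infty$'' end must be established uniformly in $h$ and in $\zeta$ near $\zeta_\infty$. This is exactly the purpose of Proposition \ref{estimates}: because the variable $t=\tilde\alpha\sigma$ driving $h_1,h_2,h_3$ remains in the fixed bounded wedge $\mathcal W$, the bounds on $f_p$ and hence on $\psi$ are independent of $h$, and the contraction constant in the fixed-point scheme is controlled uniformly in $h$.
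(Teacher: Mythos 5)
Your proposal is correct and follows essentially the same route as the paper's proof in section \ref{pfinfiniteII}: pass to Olver's $m=0$ normal form via $\xi=\xi_f(\sigma)$ and $W=\xi_\sigma^{1/2}v$, verify the perturbed domain $\Delta_\xi$, the existence of progressive paths, and the uniform (in $h$, $\zeta$) finiteness of $\int|\psi|\,d|s|$ using Proposition \ref{estimates} and the fact that $t=\tilde\alpha\sigma$ stays in the fixed wedge $\mathcal{W}$, then apply the Volterra/contraction scheme of Theorem 3.1 of \cite{O}, Chapter 10. The only points the paper treats more explicitly are the geometry of $\Delta_\xi$ and of the progressive paths over the full range $0\leq\arg\tilde\beta\leq \tfrac{\pi}{2}-\delta_1$ (the rotated-wedge case, where monotonicity of $\Re(\tilde\beta\xi)$ rather than $\Re\xi$ must be checked via the tangent-vector criterion), and the fact that the reference point $\alpha_2$ for $\eta_2$ is taken on the bounded right boundary arc of $\Delta_\xi$ where $\Re(\tilde\beta\xi)$ is maximized rather than literally at $+\infty$; neither changes your argument.
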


\begin{rem}
The proof, given in section \ref{pfinfiniteII}, is based on Theorem 3.1 of Chapter 10 of \cite{O} and the estimates of Proposition \ref{estimates}.   The result of \cite{O} constructs solutions
\begin{align}
W_j(\xi)=e^{(-1)^{j-1}\tilde\beta\xi}+\eta_j(\tilde\beta,\xi),\;j=1,2
\end{align}
of
\begin{align}\label{e3ww}
W_{\xi\xi}=(\tilde\beta^2+\psi(\xi))W,\;\text{ where }
\psi(\xi)=\frac{g(\sigma)}{f(\sigma)}-\frac{1}{f^{3/4}(\sigma)}\frac{d^2}{d\sigma^2}\left(\frac{1}{f^{1/4}(\sigma)}\right),
\end{align}
by solving the  integral equation satisfied by $\eta_j$ (obtained by variation of parameters).  When $j=1$ the equation is
 \begin{align}
 \eta_1(\tilde\beta,\xi)=\int^\xi_{\alpha_j}K(\xi,v)\left[\frac{\psi(v)e^{\tilde\beta v}}{\tilde\beta}+\frac{\psi(v)\eta_1(\tilde\beta,v)}{\tilde \beta}\right]dv,\text{ where }K(\xi,v)=\frac{1}{2}\left(e^{\tilde\beta(\xi-v)}-e^{\tilde\beta(v-\xi)}\right),
 \end{align}
 and the integral is taken on progressive paths.
 This equation is solved on the  domain $\Delta_\xi=\xi(\mathcal{Z}_{\tilde\alpha,s})$ by  iteration.
  The progressive path property of $\Delta_\xi$ gives a useful pointwise estimate of $|K(\xi,v)|$.   Together with a uniform bound on the integrals \eqref{e2},  this yields convergence of the sequence of iterates.\footnote{It is not necessary to take $|\tilde\beta|$ large to obtain convergence.  See Theorem 10.1 of Chapter 6, \cite{O}.}  Convergence of the sequence of differentiated iterates follows from a similar pointwise estimate of $|\partial_\xi K(\xi,v)|$   and the property  $K(\xi,\xi)=0$.

\end{rem}

Consider the first-order system corresponding to \eqref{a1}:
\begin{align}\label{e4yy}
h\begin{pmatrix}w\\hw_x\end{pmatrix}_x=\begin{pmatrix}0&1\\C(x,\zeta)+hr(x,\zeta,h)&0\end{pmatrix}\begin{pmatrix}w\\hw_x\end{pmatrix}.
\end{align}
The next Proposition describes the solutions of \eqref{e4yy} that are bounded for $\Re\zeta=0$ and decaying for $\Re\zeta>0$ as $x\to \infty$ in $[M,\infty)$, when $\tilde\beta(\zeta,h)$ lies in Regime I.
Recall that Proposition \ref{e3} is valid for a small enough choice of wedge $\mathcal W$, neighborhood $\omega \ni \zeta_\infty$, and $h_0$ such that $0<h\leq h_0$.

\begin{prop}[Choice of decaying solution]\label{e4y}
After shrinking $\omega$ and reducing $h_0$ if necessary, we have, for $\zeta\in \omega$ and $0<h<h_0$ such that $\tilde\beta(\zeta,h)$ lies in Regime I,  that the bounded (resp. decaying) solution of \eqref{e4yy} on $[M,\infty)$ for $\Re\zeta=0$ (resp. $\Re\zeta>0$) is given by
\begin{align}\label{e4x}
w(x)=z(x)^{-1/2}v_1(\sigma(x))
\end{align}
Here $v_1$ is defined in  \eqref{e3q} and the maps  $x\to z(x)$ and $x\to \sigma(x)$ are defined  by \eqref{e4s}.   The corresponding decaying solution of Erpenbeck's $5\times 5$ system \eqref{f43} is thus given by the formula in \eqref{f47} for this choice of $w(x)$.

\end{prop}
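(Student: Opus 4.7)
The plan is to verify three things: (i) $w(x)=z(x)^{-1/2}v_1(\sigma(x))$ defines, via the pair $(w,hw_x)$, a solution of the first-order system \eqref{e4yy} on $[M,\infty)$; (ii) this solution is bounded for $\Re\zeta=0$ and decays as $x\to\infty$ for $\Re\zeta>0$; and (iii) the decaying/bounded solution space of \eqref{e4yy} is one-dimensional, so $w$ exhausts it up to a scalar multiple.

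First I would retrace the chain of transformations connecting \eqref{a1} to the perturbed Bessel equation \eqref{d1}. Setting $W(t(x))=w(x)$ with $t$ as in \eqref{e4s} converts \eqref{a1} into \eqref{hr4}(a); rescaling by $z=t/h$ gives \eqref{hr4}(b); the substitution $\hat W=z^{1/2}W$ removes the first-derivative term in the standard way; and finally $v(\sigma)=\hat W(\tilde\beta\sigma)$ transforms the problem to \eqref{d1}. Composing these maps in reverse produces exactly the identity $w(x)=z(x)^{-1/2}v(\sigma(x))$. After shrinking $\mathcal{W}$ (by enlarging $M$), $\omega$, and $h_0$ so that $N(p)$ is small enough to invoke Propositions \ref{d7} and \ref{e3}, the exact solution $v_1$ of \eqref{d1} from Proposition \ref{e3} yields a $w$ that solves \eqref{a1}, hence $(w,hw_x)$ solves \eqref{e4yy}.

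Next I analyze the asymptotics as $x\to+\infty$. The change of variables sends $x\to+\infty$ to $t,z,\sigma\to 0^+$ along the real axis, and from \eqref{e1} we read off $\xi(\sigma)=\log(\sigma/2)+1+o(1)$, so $\xi\to-\infty$ along the negative real axis. In Regime I the condition $-\delta_1\leq\arg\tilde\beta\leq\pi/2-\delta_1$ yields $\Re\tilde\beta\geq|\tilde\beta|\sin\delta_1>0$. Combining $\xi_\sigma^{-1/2}\sim\sigma^{1/2}$ with $\sigma^{1/2}z^{-1/2}=\tilde\beta^{-1/2}$ and the error bound \eqref{e3z}, the leading behavior is
\begin{equation*}
w(x)=\tilde\beta^{-1/2}\bigl(e^{\tilde\beta\xi(\sigma(x))}+\eta_1(\tilde\beta,\xi(\sigma(x)))\bigr)(1+o(1)),
\end{equation*}
whose modulus behaves like $|\tilde\beta|^{-1/2}\exp(\Re(\tilde\beta)\,\xi)$ and tends to $0$ as $\xi\to-\infty$. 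This gives decay for $\Re\zeta>0$; and since $\Re\zeta=0$ within Regime I forces $\arg\tilde\beta=0$ (hence $\tilde\beta>0$ real), the same computation yields boundedness, which is all that is asserted in that case.

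For (iii) I appeal to the \cite{MZ}-conjugator analysis following \eqref{h9}: the eigenvalues of $A_{11}(\infty,\zeta,h)$ are $\mu_1(\infty,\zeta)$ and $\mu_2(\infty,\zeta)$, of which exactly one has negative real part when $\Re\zeta>0$ and exactly one has nonpositive real part when $\Re\zeta=0$ near $\zeta_\infty$, so the bounded/decaying solution space of \eqref{e4yy} is one-dimensional and must therefore be spanned by the $w$ constructed above. Inserting $(w,hw_x)$ into formula \eqref{f47} delivers the corresponding decaying solution of Erpenbeck's $5\times 5$ system. The principal technical difficulty is ensuring uniform applicability of Proposition \ref{e3} throughout Regime I as $\zeta\to\zeta_\infty$ and $h\to 0$: this requires controlling $N(p)=\eps_2+|\zeta-\zeta_\infty|+h$ via Proposition \ref{d7}, and this is precisely what shrinking the wedge $\mathcal{W}$ (i.e., enlarging $M$), the neighborhood $\omega$, and the cutoff $h_0$ accomplishes, giving the statement of the proposition.
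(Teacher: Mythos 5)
Your proposal is correct and follows essentially the same route as the paper's proof: pass to the $\xi$ variable, note that the image of $[M,\infty)$ approaches left infinity near the real axis so that $\Re(\tilde\beta\xi(\sigma(x)))\to-\infty$ in Regime I, bound the prefactor $z^{-1/2}\xi_\sigma^{-1/2}=O(|\tilde\beta|^{-1/2})$, and invoke the error estimates of Proposition \ref{e3}. The extra steps you spell out (that $w$ solves \eqref{a1} via the transformation chain, and one-dimensionality of the decaying space via the \cite{MZ} conjugator) are exactly what the paper relies on from the earlier sections, so there is no substantive difference.
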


Having identified the exact decaying solution of Erpenbeck's system for $\tilde\beta$ in Regime I, the next step is to show that this solution is of type $\theta_1$ at $x=M$ (recall \eqref{t10} and Definition \ref{12a}).  Since $x=M$ is to the left of any turning point, it will then be rather easy to conclude that the exact decaying solution is of type $\theta_1$ at $x=0$.  This will allow us to deduce that the stability function $V(\zeta,h)$ is nonvanishing for $\tilde\beta$ in Regime I.


\begin{prop}[Decaying solution is of type $\theta_1$ at $x=M$]\label{e4w}
Let $\theta(x,\zeta,h)$ be the exact decaying solution of \eqref{f43} identified in Proposition \ref{e4y}, and let $\theta_1(x,\zeta,h)$ be the approximate solution given by \eqref{t10}.      There exist $h_0>0$, a  neighborhood $\omega\ni\zeta_\infty$, and a nonvanishing scalar function $H(x,\zeta,h)$ defined for $x$ near $M$ such that
\begin{align}\label{e4v}
|H(x,\zeta,h)\theta(x,\zeta,h)-\theta_1(x,\zeta,h)|\leq\frac{C}{|\tilde\beta(\zeta,h)|}|\theta_1(x,\zeta,h)|\text{ for }x\text{ near }M,
\end{align}
where $C$ is independent of $x$ near $M$ and of $\zeta\in\omega$, $0<h\leq h_0$ such that $\tilde\beta$ lies in Regime I.\footnote{Recall that for $\tilde\beta(\zeta,h)$ in Regime I we have $h<\frac{1}{|\tilde\beta|}\leq\frac{1}{K_1}<<1$.}
\end{prop}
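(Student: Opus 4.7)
The plan is to combine Propositions \ref{e4y} and \ref{e3} to write the exact decaying solution $\theta(x,\zeta,h)$ explicitly, then carry out an order-by-order matching against the WKB approximation $\theta_1 = e^{h_1/h + k_1}[T_1 + O(h)]$ near $x = M$. By those results,
\begin{align*}
\theta = Y_1 Y_2 \begin{pmatrix} \phi_1 \\ 0 \end{pmatrix}, \quad \phi_1 = e^{\varphi_0/h} K(x,\zeta,h) \begin{pmatrix} w \\ hw_x \end{pmatrix}, \quad w(x) = z(x)^{-1/2} v_1(\sigma(x)),
\end{align*}
with $v_1(\sigma) = \xi_\sigma^{-1/2}(e^{\tilde\beta\xi} + \mathrm{err})$ and $|\mathrm{err}|,|\partial_\xi\mathrm{err}|\leq (C/|\tilde\beta|)|e^{\tilde\beta\xi}|$. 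The scalar $H$ will simply collect the scalar prefactors of $\theta$ that are absent from $\theta_1$.

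\textbf{Exponential matching.} Using the chain rule with $\sigma = z/\tilde\beta$, $z = t/h$, $t = (2/\mu)\sqrt{aD(\infty,\zeta)}e^{-\mu x/2}$ (so $\sigma_x = -\mu\sigma/2$) and the defining relation $(\xi_\sigma)^2 = 1+1/\sigma^2+f_p(\sigma)$ from \eqref{d2}, one verifies $(\mu\tilde\alpha\sigma\xi_\sigma/2)^2 = C(x,\zeta)(1+O(f_p)) = s^2\ub^2(1+O(f_p))$, using $\tilde\alpha\sigma=t$, $1+1/\sigma^2=(t^2+\tilde\alpha^2)/t^2$, and the identity $(\mu^2/4)(t^2+\tilde\alpha^2)=C(x,\zeta)=s^2\ub^2$ built into the rescalings. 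With the appropriate sign this yields $\tilde\beta\xi_x = s\ub/h\cdot(1+O(f_p))$. Coupling with $\varphi_0' = \ua+O(h)$ (from \eqref{f46a}) and the formula $\mu_1=\ua+s\ub$ for the leading WKB eigenvalue, I conclude $\varphi_0(x,\zeta,h)/h + \tilde\beta\xi(\sigma(x)) = h_1(x,\zeta)/h + C_0(x,\zeta,h)$ for a bounded $C_0$, which together with $k_1$ will be absorbed into $H$.

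\textbf{Vector matching.} At leading order $K = \mathrm{diag}(\ub^{1/2},\ub^{-1/2}) + O(h)$ and $b = \ub+O(h)$, so
\begin{align*}
K\begin{pmatrix} w\\ hw_x\end{pmatrix} = \ub^{1/2}w\begin{pmatrix} 1 \\ \ub^{-1}hw_x/w\end{pmatrix} + O(h)(\cdots).
\end{align*}
The identity from the previous step gives $\ub^{-1}hw_x/w = s + O(1/|\tilde\beta|)$, so $\phi_1 = \ub^{1/2}w e^{\varphi_0/h}(1,s)^T(1+O(1/|\tilde\beta|))$. Since $Y_2 = I+O(h)$ and $Y_1(1,s,0,0,0)^T = P_0+sQ_0 = T_1$ by \eqref{h4}, applying $Y_1Y_2$ yields $\theta = [\ub^{1/2}w e^{\varphi_0/h}]T_1(x,\zeta)(1+O(1/|\tilde\beta|))$. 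All the $O(h)$ corrections are subsumed under $O(1/|\tilde\beta|)$, because in Regime~I the relation $h = |\tilde\alpha|/|\tilde\beta|$ with $|\tilde\alpha|$ uniformly bounded after shrinking $\omega\ni\zeta_\infty$ gives $h = O(1/|\tilde\beta|)$.

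Setting $H(x,\zeta,h) := e^{h_1/h + k_1}/[\ub^{1/2}(x)\,w(x,\zeta,h)\,e^{\varphi_0/h}]$, the denominator is a product of nonvanishing exponential and amplitude factors at $x$ near $M$, so $H$ is nonvanishing. Then $H\theta = e^{h_1/h+k_1}T_1(1+O(1/|\tilde\beta|))$ while $\theta_1 = e^{h_1/h+k_1}T_1(1+O(1/|\tilde\beta|))$, and subtracting (using the lower bound on $|T_1|$ at the regular point $x=M$, which lies to the left of any turning point by the choice of $M$) yields $|H\theta - \theta_1| \leq (C/|\tilde\beta|)|\theta_1|$. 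The main obstacle is the algebraic verification of the identity $-\mu\tilde\alpha\sigma\xi_\sigma/2 = s\ub$ at leading order: every profile-dependent scaling constant introduced by the chain $x\to t\to z\to\sigma$ must combine exactly, and the perturbation estimates of Proposition \ref{estimates} together with the smallness conclusion of Proposition \ref{d7} must be used to control the $f_p$-dependent corrections uniformly in Regime~I.
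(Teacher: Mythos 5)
Your overall route is the paper's route: represent the exact decaying solution through \eqref{f47}--\eqref{f50} with $w=z^{-1/2}v_1(\sigma)$ from Propositions \ref{e4y} and \ref{e3}, compute $hw_x/w$, identify the bracket with $P_0+sQ_0=T_1$, and absorb every scalar prefactor into $H$. One remark first: your claim that $\varphi_0/h+\tilde\beta\xi=h_1/h+C_0$ with $C_0$ \emph{bounded} is not true in general --- the discrepancy contains an $x$-independent term of size $O(1/h)$ (compare \eqref{f59}, \eqref{f65}, \eqref{n10}, where $g=g_1(\zeta,h)/h+g_2$) --- but this is harmless in your argument, since your $H$ is defined as the exact ratio and the exponentials cancel identically.

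The substantive problem is in the ``exponential/vector matching'' identity, which you state with a multiplicative slack: $(\mu\tilde\alpha\sigma\xi_\sigma/2)^2=s^2\ub^2\,(1+O(f_p))$, derived from the only-approximate relation $(\mu^2/4)(t^2+\tilde\alpha^2)=C(x,\zeta)$. Near $x=M$ the perturbation $f_p$ is small but of a \emph{fixed} size, controlled only through $N_p=\eps_2+|\zeta-\zeta_\infty|+h$ (Proposition \ref{d7}); it is not $O(1/|\tilde\beta|)$ and does not shrink as $h\to 0$. With that slack your matching step only gives $\ub^{-1}hw_x/w=s+O(f_p)+O(h)+O(1/|\tilde\beta|)$, so the conclusion would be \eqref{e4v} with $C/|\tilde\beta|$ replaced by a fixed small constant --- weaker than the stated proposition and insufficient for the use made of it in Proposition \ref{e4u} and Definition \ref{12a}. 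The repair is that no slack is needed: by Remark \ref{hr4z} and $\xi_\sigma=\sqrt{f}$ with $f=f_0+f_p$ as in \eqref{d2}, one has the \emph{exact} identity $\tfrac{\mu^2}{4}\tilde\alpha^2\sigma^2\xi_\sigma^2=C(x,\zeta)+hr(x,\zeta,h)$ (this is \eqref{n3}); the $h_1,h_2,h_3$ terms in $f_p$ are part of the identity, not corrections to be estimated. Since $C$ is bounded away from zero near $x=M$, taking the square root with the correct branch gives $\tfrac{\mu}{2}\tilde\alpha\sigma\xi_\sigma=-s\ub+O(h)$ as in \eqref{n4}, leaving only relative errors $O(h)+O(1/|\tilde\beta|)=O(1/|\tilde\beta|)$, which is exactly how the paper obtains the stated rate.
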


The final step in the treatment of Regime I is to show that the exact decaying solution of \eqref{f43} is of type $\theta_1$ at $x=0$.  In fact, we show next that a multiple of $\theta$ is of type $\theta_1$ on all of $[0,M]$.
The explicit formula \eqref{u1} for the stability function $V(\zeta,h)$ in terms of $\theta(0,\zeta,h)$ shows then that $V(\zeta,h)$ is nonvanishing for $(\zeta,h)$ in Regime I when Assumption \ref{vN} holds.

\begin{prop}[Decaying solution is of type $\theta_1$ at $x=0$]\label{e4u}
Let $\theta(x,\zeta,h)$ be the exact decaying solution of \eqref{f43} identified in Proposition \ref{e4y},  and let $H(x,\zeta,h)$ be the function referred to in Proposition \ref{e4w}.    There exist $h_0>0$ and a  neighborhood $\omega\ni\zeta_\infty$ such that
\begin{align}\label{e4t}
|H(M,\zeta,h)\theta(x,\zeta,h)-\theta_1(x,\zeta,h)|\leq\frac{C}{|\tilde\beta(\zeta,h)|}|\theta_1(x,\zeta,h)|\text{ on }[0,M],
\end{align}
where $C$ is independent of $x\in [0,M]$ and of $\zeta\in\omega$, $0<h\leq h_0$ such that $\tilde\beta$ lies in Regime I.
\end{prop}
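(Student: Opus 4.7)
The plan is to propagate the type-$\theta_1$ estimate at $x=M$ provided by Proposition \ref{e4w} down to $x=0$ via a standard WKB basis on the compact interval $[0,M]$. In Regime I the angle restriction $\arg\tilde\beta\le\pi/2-\delta_1$ keeps $\zeta$ uniformly bounded away from the imaginary axis (after possibly shrinking $\omega\ni\zeta_\infty$), so no turning point enters $[0,M]$: the eigenvector matrix $T(x,\zeta)$ of \eqref{t6} is uniformly invertible on $[0,M]\times\omega$, the eigenvalues $\mu_j(x,\zeta)$ are uniformly separated, and $\Re\mu_j-\Re\mu_1\ge c>0$ for $j\ne 1$. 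Moreover, since $|\tilde\alpha|\to 0$ as $\zeta\to\zeta_\infty$ and $|\tilde\beta|=|\tilde\alpha|/h$, the inequality $|\tilde\beta|\ge K_1$ defining Regime I gives $h\le|\tilde\alpha|/K_1\le C/|\tilde\beta|$, so all $O(h)$ errors are absorbed into the required $O(1/|\tilde\beta|)$.

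First I would apply Theorem~3.1 of \cite{LWZ1} to obtain exact solutions $\tilde\theta_i(x,\zeta,h)$, $i=1,\dots,5$, of \eqref{f43} on $[0,M]$ satisfying $|\tilde\theta_i-\theta_i|\le Ch|\theta_i|$ for $\theta_i$ as in \eqref{t10}. These form a basis, and I expand
\[
H(M,\zeta,h)\,\theta(x,\zeta,h)=\sum_{i=1}^{5}c_i(\zeta,h)\,\tilde\theta_i(x,\zeta,h).
\]
Write $E_i(x):=\exp\{h^{-1}h_i(x,\zeta)+k_i(x,\zeta)\}$, so $\tilde\theta_i(x)/E_i(x)=T_i(x,\zeta)+O(1/|\tilde\beta|)$. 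Evaluating the expansion at $x=M$, invoking Proposition \ref{e4w}, and inverting against the matrix with columns $T_i(M,\zeta)$ (uniformly invertible by the remarks above) yields
\[
c_1(\zeta,h)=1+O(1/|\tilde\beta|),\qquad c_j(\zeta,h)=O(1/|\tilde\beta|)\cdot\frac{|E_1(M)|}{|E_j(M)|}\ \ (j\ne 1).
\]
The factor $|E_1(M)|/|E_j(M)|=\exp\bigl(h^{-1}\int_0^M\Re(\mu_1-\mu_j)\,dx'\bigr)\cdot O(1)$ is exponentially small in $1/h$ by the uniform gap.

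Finally, evaluate at arbitrary $x\in[0,M]$. The $c_1$-term gives $c_1\tilde\theta_1(x)=(1+O(1/|\tilde\beta|))\theta_1(x)$, which is the required main contribution. For $j\ne 1$,
\[
\frac{|c_j\tilde\theta_j(x)|}{|\theta_1(x)|}\le \frac{C}{|\tilde\beta|}\cdot\frac{|E_1(M)|}{|E_1(x)|}\cdot\frac{|E_j(x)|}{|E_j(M)|}\cdot\frac{|T_j(x)|}{|T_1(x)|}=\frac{C'}{|\tilde\beta|}\exp\!\left(-\frac{1}{h}\int_x^M\Re(\mu_j-\mu_1)\,dx'\right)\le\frac{C''}{|\tilde\beta|},
\]
since the exponent is non-positive for $x\le M$. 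Summing over $j$ and comparing with $\theta_1(x)$ gives \eqref{e4t}.

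The only (modest) obstacle is securing the uniform eigenvalue gap and the uniform invertibility of $T(x,\zeta)$ as $\zeta\to\zeta_\infty$: although $\mu_1$ and $\mu_2$ coalesce at $x=\infty$ when $\zeta=\zeta_\infty$, on the compact set $[0,M]$ they remain separated by a constant depending only on $M$ and the strictly decreasing quantity $c_0^2\eta$ (Definition \ref{t7a}). After possibly shrinking $\omega$, this makes the WKB analysis on $[0,M]$ routine and uniform in Regime I.
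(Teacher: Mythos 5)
Your proposal is correct and takes essentially the same route as the paper's proof: expand $H(M,\zeta,h)\theta$ in a basis of exact solutions that are $Ch$-close to the WKB approximate solutions $\theta_i$ on the turning-point-free interval $[0,M]$, use Proposition \ref{e4w} at $x=M$ together with Cramer's rule to obtain $c_1=1+O(1/|\tilde\beta|)$ and exponentially small $c_j$, $j\neq 1$, and conclude from $\Re\mu_j\geq \Re\mu_1$ on $[0,M]$ that the $\theta_1$ term dominates down to $x=0$. One inessential inaccuracy: Regime I does not keep $\zeta$ uniformly away from the imaginary axis (it contains purely imaginary $\zeta$ arbitrarily close to $\zeta_\infty$); the uniform eigenvalue separation and invertibility of $T(x,\zeta)$ on $[0,M]$ come, as you correctly note in your final paragraph, from $s^2=\zeta^2+c_0^2\eta(x)$ being bounded away from zero there because $c_0^2\eta$ is strictly decreasing, which is exactly the content of \eqref{f9u}--\eqref{f9t} in the paper's argument.
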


\begin{proof}
Let $\overline\theta_j(x,\zeta,h)$, $j=1,\dots,5$ be exact solutions of \eqref{f43} on $[0,M]$  (constructed as in \cite{LWZ1}, Theorem 3.1, for example) such that
\begin{align}\label{f9v}
|\overline\theta_j-\theta_j|\leq Ch|\theta_j|\text{ on }[0,M],
\end{align}
where the approximate solutions $\theta_j$ are defined on $[0,M]$, an interval with no turning points.
The formulas \eqref{t4} for the $\mu_j(x,\zeta)$ and the fact that $0<\kappa(x)<1$ imply that for $x\in [0,M]$ and $\zeta\in\omega_\infty$, we have
\begin{align}\label{f9u}
\Re \mu_1<0,\;\Re\mu_2>0,\;\Re\mu_j\geq 0, \;j=3,4,5,
\end{align}
and thus for $h_j(x,\zeta)=\int^x_0\mu_j(x',\zeta)dx'$ we have
\begin{align}\label{f9t}
-\Re h_1(M,\zeta):=a>0,\;\Re h_2(M,\zeta):=b>0,\;\Re h_j(M,\zeta)=c\geq 0, \;j=3,4,5.
\end{align}
Expanding the exact solution solution $H(M,\zeta,h)\theta(x,\zeta,M)$ in the given basis,
\begin{align}
H(M,\zeta,h)\theta(x,\zeta,h)=c_1(\zeta,h)\overline\theta_1(x,\zeta,h)+\dots+c_5(\zeta,h)\overline\theta_5 \text{ on }[0,M],
\end{align}
evaluating at $x=M$, and then using \eqref{f9za}, \eqref{f9v}, and Cramer's rule, we obtain
\begin{align}\label{f9s}
c_1(\zeta,h)=1+O(1/|\tilde\beta(\zeta,h)|),\;c_2=O(e^{-\frac{a+b}{h}}/|\tilde\beta(\zeta,h)|),\;c_j=O(e^{-\frac{a+c}{h}}/|\tilde\beta(\zeta,h)|),\;j=3,4,5.
\end{align}
In view of \eqref{f9v} the behavior of the $\overline\theta_j$ on $[0,M]$ is given by the explicit formulas for the $\theta_j$.
Thus, it follows from these formulas and \eqref{f9s} that the $\overline\theta_1$ term dominates on $[0,M]$, or more precisely,  that \eqref{e4t} holds.

\end{proof}

\subsection{Regime II}\label{twolead}

 Recall the definitions of the variables
\begin{align}\label{f9zz}
t=\frac{2}{\mu}\sqrt{aD(\infty,\zeta)}e^{-\mu x/2},\;z=\frac{t}{h},\;\tilde\gamma=-i\tilde\beta,\;\sigma=\frac{z}{\tilde\gamma}\in\mathcal Z_{-i\tilde\alpha}.
\end{align}
With $[0,1]$ denoting the line segment joining $0$ to $1$, we define $\mathcal{Z}_{cut}(1)$ to be the simply connected subregion of $\Re \sigma>0$ given by
\begin{align}\label{f1}
\mathcal{Z}_{cut}(1)= \cZ_{-i\tilde\alpha}\setminus [0,1].
\end{align}
Set $\Xi:=\frac{2}{3}\xi^{3/2}$.  When $f_p(\sigma)$ in \eqref{d4} is neglected, we define $\Xi(\sigma)$ by
\begin{align}\label{f3}
\Xi(\sigma)=\int^\sigma_1 \frac{(s^2-1)^{1/2}}{s}ds=(\sigma^2-1)^{1/2}+i\log\left(\frac{1+i(\sigma^2-1)^{1/2}}{\sigma}\right),
\end{align}
where the branch of $(\sigma^2-1)^{1/2}$ on $\cZ_{cut}(1)$ is positive for $\sigma>1$, and the branch of $\log\left(\frac{1+i(\sigma^2-1)^{1/2}}{\sigma}\right)$  takes negative (resp. positive) values in the limit as $\sigma\to a_\pm$, $0<a<1$, from the upper (resp. lower) half-plane.\footnote{This branch takes values $ib$, $0<b<\frac{\pi}{2}$, for $\sigma>1$.} The definition of $\Xi(\sigma)$ is extended by continuity to $a_\pm$ for $0<a<1$.  Observe that
\begin{align}\label{f4}
\Xi(a_\pm)=\mp ib,\text{ for some }b=b(a)>0, \text{ for }0<a<1,
\end{align}
and that $\mp ib(a)$ are mapped to the same point on the negative $\xi$ axis under the map $\Xi\to\xi$. Morever, the map
$\sigma\to\xi(\sigma)$ turns out to extend analytically to a map of a full neighborhood of $\sigma=1$ onto a full neighborhood of $\xi=0$.

In this case ($f_p$ neglected) one can draw the domains in the $\Xi$ and $\xi$ planes corresponding to $\cZ_{cut}(1)$ under \eqref{f3}.\footnote{Drawings for the rather different case where $f(\sigma)=\frac{1}{\sigma^2}-1$ are given in Figures 10.1-10.4 of \cite{O}, Chapter 10.} Progressive paths in the $\xi$ plane now have the property that on corresponding paths in the $\Xi$ plane $\Re(\tilde\gamma \Xi)$ is monotonic, and it is easy to identify such paths in the $\Xi$-plane.

When $f_p$ is included in the definition of $f$, $\Xi$ and $\xi$ are now defined by \eqref{b3}(b) with $f$ as in \eqref{d4}, where $\sigma_0$ is the point (close to $1$) where $f(\sigma_0)=0$.  In the definition of $\Xi$ we now take $\sigma\in \cZ_{cut}(\sigma_0)$ defined by
\begin{align}\label{f4a}
\mathcal{Z}_{cut}(\sigma_0)= \cZ_{-i\tilde\alpha}\setminus [0,\sigma_0],
\end{align}
where $[0,\sigma_0]$ is the line segment joining $0$ to $\sigma_0$.  We show below that, unlike $\Xi(\sigma)$, the function $\xi(\sigma)$ extends across the cut to be analytic on all of $\cZ_{-i\tilde \alpha}$.

The domain in $\xi$ space corresponding to $\mathcal{Z}_{-i\tilde \alpha}$ under the map
$\sigma\to \xi$ is a small perturbation, when $f_p$ is small, of the  domain in the case $f_p=0$, and progressive paths are again not hard to choose.   Using the estimates of Proposition \ref{estimates} for the functions $h_j$, $j=1,2,3$ appearing in the definition of $f_p$, one can show that integrals arising in the error analysis,
\begin{align}\label{f5}
\int_{\alpha_j}^\xi |\psi(s)s^{-1/2}|\;d|s| \text{ (on progressive paths)}
\end{align}
are finite.
Here $\psi(\xi)$ is given by \eqref{c10} with $f$ as in \eqref{d4} and $g=-\frac{1}{4\sigma^2}$.

The next Proposition, proved in section \ref{pfinfiniteII}, is more difficult for Regime II than its analogue for Regime I, since $f=f_0+f_p$ vanishes at $\sigma_0\in\mathcal{Z}_{-i\tilde\alpha}$.

\begin{prop}\label{f12}
For perturbations $f_p$ with $N_p$ sufficiently small (recall Proposition \ref{d7}) the function $\xi=\xi_{f}(\sigma)$, which is initially defined on $\cZ_{cut}(\sigma_0)$, extends across the cut as a globally one-to-one analytic map of $\cZ_{-i\tilde\alpha}$ onto the open set which is its range.

\end{prop}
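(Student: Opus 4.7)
The plan is to first establish that $\xi_f$ extends analytically across the cut using the simple-zero structure at $\sigma_0$, and then deduce global injectivity by continuous deformation from the unperturbed case.

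For the analytic extension, I would first invoke the implicit function theorem: for $N_p$ sufficiently small, $f = f_0 + f_p$ has a unique simple zero $\sigma_0$ close to the unperturbed zero $\sigma = 1$, and no other zeros in $\mathcal{Z}_{-i\tilde\alpha}$ (since $f_0 = 1 - 1/\sigma^2$ is bounded away from $0$ on compact subsets avoiding $\sigma=1$, and $f_0 \to 1$ as $|\sigma|\to\infty$ while $|f_p|$ is uniformly small by Proposition \ref{d7}). Factor $f(\sigma) = (\sigma-\sigma_0)g(\sigma)$ with $g$ analytic and nonvanishing on a disc $\mathcal D$ around $\sigma_0$, so that
\[
\int_{\sigma_0}^\sigma f^{1/2}(s)\,ds = \tfrac{2}{3}(\sigma-\sigma_0)^{3/2}\tilde G(\sigma),
\]
with $\tilde G$ analytic and nonvanishing on $\mathcal D$. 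The defining relation $\tfrac{2}{3}\xi^{3/2} = \int_{\sigma_0}^\sigma f^{1/2}$ then gives $\xi(\sigma) = (\sigma-\sigma_0)\,\tilde G(\sigma)^{2/3}$ on $\mathcal D$, which is single-valued analytic with $\xi(\sigma_0)=0$ and $\xi'(\sigma_0)\neq 0$. On $\mathcal{Z}_{cut}(\sigma_0)\setminus\mathcal D$ the original definition is already analytic, and the two definitions agree on the overlap $\mathcal D \cap \mathcal{Z}_{cut}(\sigma_0)$ by uniqueness of analytic continuation, yielding a single-valued analytic extension on all of $\mathcal{Z}_{-i\tilde\alpha}$.

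For global injectivity, differentiating $\tfrac{2}{3}\xi^{3/2}=\int_{\sigma_0}^\sigma f^{1/2}$ gives $\xi'=(f/\xi)^{1/2}$. The local formula at $\sigma_0$ shows $f/\xi$ is bounded away from $0$ and $\infty$ there, while elsewhere both $f$ and $\xi$ are nonzero, so $\xi_f$ is a local biholomorphism on $\mathcal{Z}_{-i\tilde\alpha}$. To upgrade to a global biholomorphism I would use the homotopy $f_s = f_0 + s f_p$, $s\in[0,1]$: at $s=0$, the explicit formula \eqref{f3} together with the analysis of Chapter 11 of \cite{O} shows that $\xi_{f_0}$ maps $\mathcal{Z}_{-i\tilde\alpha}$ biholomorphically onto its image, and $\xi_{f_0}(\partial\mathcal{Z}_{-i\tilde\alpha})$ is a Jordan curve. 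For small $N_p$ the perturbation estimates carry through for every $s\in[0,1]$, so $\xi_{f_s}(\partial\mathcal{Z}_{-i\tilde\alpha})$ remains a Jordan curve depending continuously on $s$; the argument principle then shows every interior point of the image has exactly one preimage, giving global injectivity at $s=1$.

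The main obstacle is controlling the perturbation \emph{uniformly} over $\mathcal{Z}_{-i\tilde\alpha}$, which can be unbounded as $h\to 0$ (since $|\tilde\alpha|$ may be as small as $O(h)$). The redeeming feature is that $\tilde\alpha\sigma\in\mathcal W$ is forced to lie in the bounded wedge $\mathcal W$, so the estimates of Proposition \ref{estimates} give control of $f_p(\sigma)$ uniformly for $\sigma$ in the unbounded tail of the wedge; this is what lets the Jordan-curve/argument-principle step go through. A subtler technical point is the behavior of $\xi_f$ near the vertex $\sigma=0$ of the wedge, where $f_0$ has a pole and $\xi_f \to \infty$; one must verify that the image of the boundary remains a Jordan curve in a suitable compactification and that the asymptotics of $\xi_f$ and $\xi_{f_0}$ match closely enough near both $\sigma=0$ and $|\sigma|=\infty$ to preserve the global topological picture under the homotopy.
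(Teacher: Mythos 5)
Your local analysis at $\sigma_0$ (factor the simple zero, take a $2/3$ power of a nonvanishing analytic factor) is the right idea and matches the paper's, but your patching argument only establishes the extension across the portion of the cut lying inside your disc $\mathcal D$: the cut $[0,\sigma_0]$ runs all the way down to the vertex of the wedge, and you have said nothing about whether the boundary values of $\xi_f$ from the two sides of the remaining segment agree. That matching is not automatic — $\Xi_f$ genuinely jumps across the cut (it flips sign, because $\sqrt{f}$ does), and continuity of $\xi_f$ there depends on a compatible choice of branch of the $2/3$ power on the two sides, exactly the bookkeeping recorded around \eqref{f4}. The gap is fixable (e.g.\ show $\Xi_f(a_+)=-\Xi_f(a_-)$ exactly and check branches, or propagate the matching you do get inside $\mathcal D$ along the open segment by the identity theorem applied to the difference of the two one-sided extensions), but the cleanest route is the paper's: the substitutions $t=\sqrt{s-\sigma_0}$, $t=\sqrt{\sigma-\sigma_0}\,u$ in \eqref{f13} produce the single global formula \eqref{f15}, which is manifestly analytic on all of $\cZ_{-i\tilde\alpha}$ once the $u$-integral is shown nonvanishing; no cut-crossing argument is needed at all.

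For global injectivity your homotopy/Jordan-curve/argument-principle scheme front-loads the whole difficulty into the unproved assertion that $\xi_{f_s}(\partial\cZ_{-i\tilde\alpha})$ is a Jordan curve for every $s\in[0,1]$, i.e.\ boundary injectivity of the perturbed map — and that is essentially as hard as the statement being proved. Sup-norm closeness of $\xi_{f_s}$ to $\xi_{f_0}$ does not give injectivity of the boundary trace unless you also have uniform lower bounds on the separation $|\xi_{f_0}(\sigma_1)-\xi_{f_0}(\sigma_2)|$, and those require care precisely where the paper works hardest: near the vertex $\sigma=0$, where $|\xi|\sim|\ln|\sigma||^{2/3}\to\infty$ (so the boundary curve passes through infinity in your compactification and the admissible perturbation must decay there, cf.\ \eqref{f11b}(a)), and on the outer arc of radius $\eps_2/|\tilde\alpha|$, which is unbounded as $h\to 0$ (the paper handles this regime through $\Xi_f$, the relative estimate \eqref{f11b}(c), and the bound $|\Xi_f''|\le\eps/|\sigma|$). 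Moreover the smallness threshold on $N_p$ in the Proposition must be uniform in $(\zeta,h)$, so "the perturbation estimates carry through" has to mean precisely the quantitative comparisons of Lemma \ref{f11a} together with derivative estimates of the type \eqref{f18}--\eqref{f19}; once those are in hand, the paper's direct three-region argument (small, intermediate, and large $|\sigma|$, plus the adjacent-region step) yields injectivity without the additional continuity-up-to-the-boundary, boundary-injectivity, and compactification hypotheses that a Darboux--Picard/degree argument would also demand. So your route is not unreasonable, but as written it assumes the key estimates rather than proving them, and it is those estimates that constitute the actual content of the paper's proof.
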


In the next Proposition we use the notation
\begin{align}\label{f6a}
Ai_0(z)=Ai(z),\;Ai_1(z)=Ai(ze^{-2\pi i/3}), \;Ai_{-1}(z)=Ai(ze^{2\pi i/3}).
\end{align}
We denote by $\mathcal{Z}_{-i\tilde\alpha,s}\subset \mathcal{Z}_{-i\tilde\alpha}$ an open subdomain, chosen as explained in the proof, and containing the image of the segment of the $x-$axis, $[M,\infty)$, under the map $x\to \sigma$.  We denote by $\Delta_\xi$ the image of $\mathcal{Z}_{-i\tilde\alpha,s}$ under the map $\sigma\to\xi(\sigma)$.

\begin{prop}\label{f6}
Suppose $\tilde\beta$  as  defined in \eqref{hr3} lies in Regime II, and set $\tilde\beta^2=-\tilde\gamma^2$, where $\arg\tilde\gamma$ is near $0$.
For  $f_p$ as in \eqref{d4} taken sufficiently small (by the choices explained in Remark \eqref{d3a}),  the perturbed Bessel problem \eqref{d3}  has exact solutions
\begin{align}\label{f7}
\begin{split}
&v_j(\sigma)=\xi^{-1/2}_\sigma(\sigma)\left(Ai_j(\tilde\gamma^{2/3}\xi(\sigma))+\eta_j(\tilde\gamma,\xi(\sigma))\right), \;j=0,1,-1
\end{split}
\end{align}
on $\mathcal{Z}_{-i\tilde\alpha,s}$, where the error term $\eta_j$ satisfies
\begin{align}\label{f6z}
\begin{split}
&|\eta_j(\tilde\gamma,\xi)|\leq \frac{C}{|\tilde\gamma|} |Ai_j(\tilde\gamma^{2/3}\xi)|\\
&|\partial_\xi\eta_j(\tilde\gamma,\xi)|\leq \frac{C}{|\tilde\gamma|} |\partial_\xi\left(Ai_j(\tilde\gamma^{2/3}\xi)\right)| \end{split}
\end{align}
 for $\xi\in\Delta_\xi$  with $|\xi|>>1$ and $\Re\xi>0$.

\end{prop}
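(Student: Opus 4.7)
The plan is to follow the Olver Chapter 11 template for the Airy-function normal form (corresponding to $m=1$ in \eqref{b2}), adapted to our perturbed setting with the crucial inputs being Proposition \ref{f12} (which guarantees $\xi$ is a global biholomorphism despite the turning point $\sigma_0$), Proposition \ref{estimates} (which controls the perturbation $\psi$), and Proposition \ref{d7} (which makes $f_p$ small on the relevant domain). The structure will closely parallel Proposition \ref{e3} in Regime I, with the essential difference that $f = f_0+f_p$ has a zero at $\sigma_0 \in \cZ_{-i\tilde\alpha}$ inside the domain, forcing the ``elementary'' solutions to be Airy functions rather than exponentials.

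First, I would apply the Liouville--Green transformation $\frac{2}{3}\xi^{3/2}=\int_{\sigma_0}^\sigma f^{1/2}(r)\,dr$ and $W = \xi_\sigma^{-1/2} v$ to convert \eqref{d3} into the normal form
\begin{equation}
W_{\xi\xi} = (\tg^2 \xi + \psi(\xi))W, \qquad \psi(\xi) = \frac{g(\sigma)}{\hat f(\sigma)} - \hat f^{-3/4}\frac{d^2}{d\sigma^2}(\hat f^{-1/4}),
\end{equation}
with $\hat f = f/\xi$ as in \eqref{c9}. By Proposition \ref{f12} this is a global analytic diffeomorphism of $\cZ_{-i\tilde\alpha}$ onto its image; although $\xi_\sigma$ vanishes at $\sigma_0$, the ratio $\hat f$ remains nonzero and analytic there, so $\psi$ is in fact analytic at $\xi=0$. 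I would then identify $\cZ_{-i\tilde\alpha,s} := \xi^{-1}(\Delta_\xi)$, where $\Delta_\xi$ is a simply-connected subregion of $\xi$-space containing the image of $[M,\infty)$ under $x\mapsto\xi(\sigma(x))$ and admitting, for each $j\in\{0,1,-1\}$, a family of progressive paths from a reference point $\alpha_j$ along which $\Re\bigl(\tfrac{2}{3}\tg \xi^{3/2}\bigr)$ is monotone in the appropriate direction (these are the standard three Airy sectors, perturbed slightly by $f_p$).

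Next, for each $j$, I would apply variation of parameters to seek a solution of the form $W_j(\xi) = Ai_j(\tg^{2/3}\xi) + \hat\eta_j(\tg,\xi)$, where $\hat\eta_j$ satisfies the integral equation
\begin{equation}
\hat\eta_j(\tg,\xi) = \int_{\alpha_j}^\xi K_j(\xi,v)\,\psi(v)\bigl[Ai_j(\tg^{2/3}v) + \hat\eta_j(\tg,v)\bigr]\,dv
\end{equation}
with kernel $K_j$ built from the Wronskian pair $(Ai_j, Bi_j)$ (as in Chapter 11, §3 of \cite{O}) and taken along progressive paths in $\Delta_\xi$. Convergence of Picard iteration reduces, by standard Airy asymptotics yielding a pointwise bound of the form $|K_j(\xi,v)\,Ai_j(\tg^{2/3}v)\,Ai_j(\tg^{2/3}\xi)^{-1}| \leq C|\tg|^{-1}|v|^{-1/2}$, to showing that the weighted integral $\int_{\alpha_j}^\xi |\psi(v)|\,|v|^{-1/2}\,|dv|$ is uniformly bounded on progressive paths. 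Transferring back via $v_j(\sigma) = \xi_\sigma^{-1/2}(\sigma)W_j(\xi(\sigma))$ and setting $\eta_j := \hat\eta_j$ yields \eqref{f7} with the bounds \eqref{f6z}; the derivative estimate follows from differentiating the integral equation and using $K_j(\xi,\xi) = 0$.

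The main obstacle is verifying the finiteness of $\int|\psi(v)|\,|v|^{-1/2}\,|dv|$ uniformly as $h \to 0$, since $\cZ_{-i\tilde\alpha}$ grows like $1/h$ and $\psi$ must be controlled at three delicate locations simultaneously: near the singularities at $\sigma = 0$ and at the ``outer'' reach of the domain, and near the turning point $\sigma_0$ where $\hat f$ remains bounded but $\psi$ has nontrivial structure. This is precisely where the estimates of Proposition \ref{estimates} on $h_1,h_2,h_3$ enter: they guarantee that the perturbation $f_p$ in \eqref{d4} contributes only higher-order corrections to the unperturbed $\psi$ (computable explicitly from $f_0 = 1 - 1/\sigma^2$), and that these corrections are integrable against the Airy weight. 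A secondary subtlety is that the reference points $\alpha_j$ and the precise shape of $\Delta_\xi$ must be chosen to accommodate the perturbed location of $\sigma_0$; Proposition \ref{d7} ensures this perturbation is small enough that the unperturbed sector geometry survives.
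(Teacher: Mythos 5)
Your plan is correct and follows essentially the same route as the paper: Liouville--Green reduction to the Airy normal form, use of Proposition \ref{f12} (with the comparison estimates behind it) to get the perturbed $\xi$-domain and progressive paths, and verification that $\int|\psi(v)|\,|v|^{-1/2}\,d|v|$ is bounded uniformly in $(\zeta,h)$ via Proposition \ref{estimates}, which is exactly the hypothesis set of Theorem 9.1 of \cite{O}, Chapter 11. The only difference is presentational: you write out the variation-of-parameters integral equation and Picard iteration explicitly, whereas the paper invokes Olver's theorem (whose proof is that same integral-equation argument, with the modulus/weight functions replacing your naive kernel bound near the zeros of $Ai_j$ --- which is also why the stated error bounds are restricted to $\Re\xi>0$, $|\xi|\gg 1$).
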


\begin{rem}\label{f6za}
Information about the error terms $\eta_j$ for $\xi$ near negative infinity is more complicated to state, but is implicit in Proposition \ref{f9} in the case of $\eta_1$.  For explicit estimates of the $\eta_j$ we refer to Theorem 9.1 of Chapter 11 of \cite{O}.

\end{rem}



The next three propositions are analogues of the last three propositions in section \ref{onelead}.

\begin{prop}[Choice of decaying solution]\label{f9}
After shrinking $\omega$ and reducing $h_0$ if necessary, we have, for $\zeta\in \omega$ and $0<h<h_0$ such that $\tilde\beta(\zeta,h)$ lies in Regime II,  that the bounded (resp. decaying) solution of \eqref{e4yy} on $[M,\infty)$ for $\Re\zeta=0$ (resp. $\Re\zeta>0$) is given by
\begin{align}\label{f9zy}
w(x)=z(x)^{-1/2}v_1(\sigma(x))
\end{align}
Here $v_1$ is defined in  \eqref{f7} and the maps  $x\to z(x)$ and $x\to \sigma(x)$ are defined  by \eqref{f9zz}.   The corresponding decaying solution of Erpenbeck's $5\times 5$ system \eqref{f43} is thus given by the formula in \eqref{f47}.

\end{prop}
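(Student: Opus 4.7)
The plan mirrors that of Proposition \ref{e4y} for Regime I, with exponentials replaced by Airy functions. Proposition \ref{f6} produces three distinguished exact solutions $v_{-1}$, $v_0$, $v_1$ of \eqref{d3} on $\mathcal{Z}_{-i\tilde\alpha,s}$; any two of them form a basis of the two-dimensional solution space, since the three are linked by the Airy connection relation. Tracing back through the transformations that led from the scalar equation $h^2w''=(C+hr)w$ to \eqref{d3}, each $v_j$ gives an exact solution $w_j(x)=c(\zeta,h)z(x)^{-1/2}v_j(\sigma(x))$ of \eqref{a1}, and hence of the first-order system \eqref{e4yy}, on $[M,\infty)$. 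For $\Re\zeta>0$, Proposition \ref{MZ} applied to \eqref{e4yy} yields a one-dimensional subspace of solutions decaying exponentially as $x\to\infty$; for $\Re\zeta=0$ the distinguished bounded solution is specified as the analytic continuation of this decaying solution from $\Re\zeta>0$. The task reduces to showing that $w_1$ spans this distinguished line.

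First I would track $\xi(\sigma(x))$ as $x\to\infty$. When $\zeta=i|\zeta|$ is purely imaginary and close to $\zeta_\infty$, the quantity $\tilde\gamma=-i\tilde\beta$ is real positive, so $\sigma(x)=z(x)/\tilde\gamma>0$ is real and decreases monotonically from a large value at $x=M$ to $0$ as $x\to\infty$, passing through the turning point $\sigma_0\approx 1$ at some intermediate $x$. Using the local expansion $\xi\approx (f'(\sigma_0))^{1/3}(\sigma-\sigma_0)$ near $\sigma_0$ together with the global analyticity of $\xi$ established in Proposition \ref{f12}, one verifies that $\xi$ is real on $(0,\infty)$ with $\xi>0$ for $\sigma>\sigma_0$, $\xi<0$ for $0<\sigma<\sigma_0$, and $\xi\to -\infty$ as $\sigma\to 0^+$. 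For $\Re\zeta>0$ small, $\tilde\gamma$ acquires a slightly negative argument, so $\tilde\gamma^{2/3}\xi(\sigma(x))$ traces a curve in the open sector $\pi/3<\arg w<\pi$ as $x\to\infty$; for $\Re\zeta=0$ exactly, this curve lies on the Stokes ray $\arg w=\pi$.

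Next I would apply the standard Airy asymptotics of Chapter 11 of \cite{O}. Each $Ai_j(w)=Ai(we^{-2\pi ij/3})$ is subdominant in the sector $\arg w\in(-\pi/3+2\pi j/3,\pi/3+2\pi j/3)$ of angular width $2\pi/3$ and dominant (exponentially large) outside its closure; these three sectors tile the plane. For $\Re\zeta>0$ the leading term $Ai_1(\tilde\gamma^{2/3}\xi)$ decays exponentially in $|\tilde\gamma|$, while the leading terms of $v_0$ and $v_{-1}$ grow exponentially. Combined with the error bounds \eqref{f6z} on the remainders $\eta_j$ and with the algebraic prefactor $z(x)^{-1/2}\xi_\sigma^{-1/2}$, which grows only like $e^{\mu x/4}$ times a bounded factor, this shows that $w_1$ decays exponentially as $x\to\infty$ while $w_0$ and $w_{-1}$ are unbounded. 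Hence $w_1$ spans the one-dimensional decaying subspace, and its limit as $\Re\zeta\to 0^+$ furnishes the distinguished bounded solution for $\Re\zeta=0$. The formula $w(x)=z(x)^{-1/2}v_1(\sigma(x))$ follows (up to the inessential scalar $c(\zeta,h)$), and \eqref{f47} then produces the corresponding decaying solution of the $5\times 5$ system \eqref{f43}.

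The main obstacle will be the Stokes-line case $\Re\zeta=0$. On $\arg w=\pi$ all three leading Airy terms are bounded and oscillatory, so boundedness of $v_1$ alone does not single it out among $v_{-1}$, $v_0$, $v_1$; one must instead invoke analyticity in $\zeta$ and continuity of the decaying solution as $\Re\zeta\to 0^+$ to identify $v_1$ as the correct choice. Moreover, the error estimates \eqref{f6z} are stated in the region $\Re\xi>0$ with $|\xi|\gg 1$, whereas the trajectory $\xi(\sigma(x))$ passes through $\xi=0$ (the turning point) and continues to $\xi\to -\infty$. Controlling $\eta_j$ in the oscillatory region $\xi<0$ will require extending these bounds along the progressive paths of Remark \ref{f6za}, appealing to the uniform estimates of Theorem 9.1 of Chapter 11 of \cite{O}.
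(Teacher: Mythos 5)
Your route is the paper's route: invoke Proposition \ref{f6}, note that for $\Re\zeta>0$ the curve $x\mapsto\tilde\gamma^{2/3}\xi(\sigma(x))$ eventually enters the interior of $\mathbf{S}_1$ where $Ai_1$ is recessive, control the remainders via Theorem 9.1 of Chapter 11 of \cite{O}, use the one-dimensionality of the decaying subspace (already established via the \cite{MZ} conjugator in section \ref{conj}) to conclude that $w_1$ spans it, and identify the bounded solution at $\Re\zeta=0$ by continuity from $\Re\zeta>0$. (The growth of $v_0,v_{-1}$ is not needed once one nonzero decaying solution is exhibited, and the connection-formula remark about exact solutions is inessential.)

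There is, however, one step that as written would fail, and it fails exactly in the delicate near-Stokes region that this proposition must cover. You claim the prefactor $z(x)^{-1/2}\xi_\sigma^{-1/2}$ ``grows like $e^{\mu x/4}$ times a bounded factor'' and that the exponential smallness of $Ai_1$ beats it. But the decay rate in $x$ of $Ai_1(\tilde\gamma^{2/3}\xi(\sigma(x)))$ along the trajectory is of order $\tfrac{\mu}{2}\,|\Im\tilde\gamma|$ (since $\Xi\sim i\,\mu x/2$ as $\sigma\to 0$, the exponent has real part $\approx -|\Im\tilde\gamma|\,\mu x/2$), and $|\Im\tilde\gamma|\to 0$ as $\Re\zeta\to 0^+$ while $|\tilde\gamma|\geq K_1$; so for $(\zeta,h)$ in Regime II with $\Re\zeta$ small this rate is far below $\mu/4$, and ``Airy decay beats $e^{\mu x/4}$'' is simply false there. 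The resolution is that your estimate of the prefactor is too crude: from $\xi^{1/2}\xi_\sigma=f^{1/2}$ one has $\xi_\sigma^{-1/2}=f^{-1/4}\xi^{1/4}$, and since $f\sim -1/\sigma^2$ as $\sigma\to 0$ this gives $z^{-1/2}\xi_\sigma^{-1/2}\sim \tilde\gamma^{-1/2}\,\xi^{1/4}$, i.e.\ the $e^{\mu x/4}$ growth of $z^{-1/2}$ is exactly cancelled by the $|\sigma|^{1/2}$ hidden in $f^{-1/4}$, leaving only growth like $x^{1/6}$. With this correction (and the same remark applied to the $\eta_1$, $\partial_\xi\eta_1$ contributions via Olver's explicit bounds), $w_1$ does decay for every fixed $\Re\zeta>0$ — though only at the rate $\sim|\Re(s\,\ub)|/h$, which degenerates as $\Re\zeta\to 0^+$, so you should claim decay rather than a fixed exponential rate. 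Finally, when passing to the $5\times5$ system via \eqref{f47}, note that $K$ carries the factor $e^{\varphi_0/h}$, so Remark \ref{f47a} is needed to see that decay (resp.\ boundedness) survives; the paper records this explicitly.
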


\begin{prop}[Decaying solution is of type $\theta_1$ at $x=M$]\label{f9z}
Let $\theta(x,\zeta,h)$ be the exact decaying solution of \eqref{f43} identified in Proposition \ref{f9}, and let $\theta_1(x,\zeta,h)$ be the approximate solution defined in \eqref{t10}.     There exist $h_0>0$, a  neighborhood $\omega\ni\zeta_\infty$, and a nonvanishing scalar function $H(x,\zeta,h)$ defined for $x$ near $M$ such that
\begin{align}\label{f9za}
|H(x,\zeta,h)\theta(x,\zeta,h)-\theta_1(x,\zeta,h)|\leq\frac{C}{|\tilde\beta(\zeta,h)|}|\theta_1(x,\zeta,h)|\text{ for }x\text{ near }M,
\end{align}
where $C$ is independent of $x$ near $M$ and of $\zeta\in\omega$, $0<h\leq h_0$ such that $\tilde\beta$ lies in Regime II.
\end{prop}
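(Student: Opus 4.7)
The strategy mirrors that for the analogous Regime I result (Proposition \ref{e4w}), with the exponentials of \eqref{e3q} replaced by Airy functions in their subdominant sectors. The plan is to unravel the chain
\begin{equation*}
\theta = Y_1Y_2\begin{pmatrix}K(x,\zeta,h)\begin{pmatrix}w\\ hw_x\end{pmatrix}\\ 0\end{pmatrix},\qquad w(x)=z(x)^{-1/2}v_1(\sigma(x)),
\end{equation*}
use the asymptotic behavior of $v_1$ furnished by Proposition \ref{f6} and the classical expansion of $Ai_1$ in its subdominant sector, and then compare the result term by term with the WKB approximate solution $\theta_1$ from \eqref{t10}--\eqref{t11}.

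For $\zeta\in\omega$ and $0<h\leq h_0$ with $|\tilde\beta|\geq K_1$, the point $\xi(\sigma(x))$ for $x$ in a small real neighborhood of $M$ is large with $\Re\xi>0$, which places $\tilde\gamma^{2/3}\xi$ deep in the subdominant sector of $Ai_1$. The classical asymptotic expansion (\cite{O}, Chapter 11)
\begin{equation*}
Ai_1(\tilde\gamma^{2/3}\xi) = C_*\,(\tilde\gamma^{2/3}\xi)^{-1/4}\,e^{-\tilde\gamma\,\Xi(\sigma)}\bigl[1+O(|\tilde\gamma|^{-1})\bigr],\qquad \Xi=\tfrac{2}{3}\xi^{3/2},
\end{equation*}
combined with the error bound \eqref{f6z} and differentiation in $x$ (using $\Xi_\sigma = f^{1/2}(\sigma)$ and $\sigma_x = -\mu\sigma/2$), yields representations $w = H_w\,e^{-\tilde\gamma\Xi}\bigl[1+O(|\tilde\beta|^{-1})\bigr]$ and $hw_x = H_w\,(\mu t/2)\,f^{1/2}(\sigma)\,e^{-\tilde\gamma\Xi}\bigl[1+O(|\tilde\beta|^{-1})\bigr]$ with a common nonvanishing prefactor $H_w(x,\zeta,h)$.

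Substituting into $\phi_1 = K(w,hw_x)^t$ and then into $\theta = Y_1Y_2(\phi_1,0)^t$, I will use that $Y_2=I+O(h)$ by Proposition \ref{h10z}, that $(d-a)/2=O(h)$ by \eqref{h6}, and the explicit form of $K$ from \eqref{f46}. A short computation shows $K(w,hw_x)^t = U(1,r(x,\zeta,h))^t$ for a scalar $U$, with $r=b^{-1}(\mu t/2)f^{1/2}(\sigma)+O(h)$. Using Remark \ref{hr4z}, the identity $t^2+\tilde\alpha^2 = \tfrac{4D(\infty,\zeta)}{\mu^2}(e(x)+\alpha^2-m(x)e^{-\mu x})$, and $(e+\alpha^2)D=s^2\ub^2$, this simplifies to $r=s(x,\zeta)+O(h)+O(|\tilde\beta|^{-1})$. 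Hence
\begin{equation*}
Y_1(U,U r,0,0,0)^t = U(P_0+sQ_0)+O(hU) = U\,T_1(x,\zeta)+O(hU),
\end{equation*}
which matches the leading vector $f_{10}=T_1$ in \eqref{t11}.

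The remaining ingredient is phase identification. With $\varphi_0$ as in \eqref{f46a}, the exponent of $\theta$ is $h^{-1}\varphi_0(x,\zeta,h)-\tilde\gamma\Xi(\sigma(x))$, and the claim is that this differs from $h^{-1}h_1(x,\zeta)+k_1(x,\zeta)$ only by an $x$-independent $c(\zeta,h)$ modulo $O(h)$, absorbable into the scalar $H(x,\zeta,h)$. This reduces to $\varphi_0'=(\mu_1+\mu_2)/2+O(h)$ (immediate from \eqref{h6} and the trace formula for $A_{11}^0$) together with $\tilde\gamma\,\Xi_\sigma\cdot\sigma_x = (\mu_2-\mu_1)/2+O(h)$; the latter follows from $\mu_2-\mu_1=-2\kappa s/(\eta u)$ and the simplification $(\mu t/2)f^{1/2}(\sigma)\to s\,\ub$ already used in the vector matching. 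The main obstacle is controlling the perturbation $f_p$ in $\Xi$ uniformly: the integral defining $\Xi$ starts at the perturbed turning point $\sigma_0$ rather than at infinity, and its difference from the unperturbed integral decomposes into an $x$-independent constant (absorbed into $H$) plus a remainder that must be shown to be $O(|\tilde\beta|^{-1})$ uniformly for $\zeta\in\omega$ in Regime II. This uniformity, delicate near the endpoint $|\tilde\beta|\to\infty$ in the Class III direction where the turning point enters the physical half-line, is handled via Proposition \ref{d7} together with the exponential decay $|P(x)-P(\infty)|\leq Ce^{-\mu\Re x}$ from Proposition \ref{o5}.
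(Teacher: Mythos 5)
Your architecture coincides with the paper's: unravel $\theta$ through $Y_1$, $Y_2$ and $K$, insert $w=z^{-1/2}\xi_\sigma^{-1/2}\bigl(Ai_1(\tilde\gamma^{2/3}\xi)+\eta_1\bigr)$ from Proposition \ref{f6}, apply the classical Airy expansion \eqref{f57}, match the vector part to $T_1=P_0+sQ_0$ and the phase to $h_1/h$ modulo factors absorbed into $H$. But as written the proposal has branch/sign errors exactly at the step that distinguishes type $\theta_1$ from type $\theta_2$, which is the whole content of the proposition. For $x$ near $M$ one has $\arg\bigl(\tilde\gamma^{2/3}\xi(\sigma(x))\bigr)\approx 0$, i.e.\ the argument lies in $\mathbf{S_0}$, where $Ai_1$ is \emph{dominant}, not subdominant; applying \eqref{f57} to $Ai(e^{-2\pi i/3}\tilde\gamma^{2/3}\xi)$ produces the exponentially growing factor $e^{+\frac{2}{3}\tilde\gamma\xi^{3/2}}=e^{+\tilde\gamma\Xi}$, not $e^{-\tilde\gamma\Xi}$. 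Moreover, with your choice $w\sim H_w e^{-\tilde\gamma\Xi}$, your own formula $hw_x\sim H_w(\mu t/2)f^{1/2}e^{-\tilde\gamma\Xi}$ gives $r=b^{-1}(\mu t/2)f^{1/2}+O(h)$, and the branch determination (the quantity $(\mu t/2)\sqrt f$ has argument near zero for $x$ near $M$, cf.\ \eqref{f54}--\eqref{f56}) yields $(\mu t/2)\sqrt f=-s\,\ub+O(h)$, hence $r=-s+O(h)$: the leading vector would be $P_0-sQ_0=T_2$, not $T_1$. Your asserted simplification $r=s+O(h)$ is therefore inconsistent with your chosen exponent; the identities you invoke fix $r$ only up to sign, and pinning the sign down is precisely the bookkeeping of \eqref{f53}--\eqref{f56}. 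With the correct factor $e^{+\tilde\gamma\Xi}$ one gets $hw_x/w=-(\mu t/2)\sqrt f=s\,\ub+O(h)$, hence $T_1$, and the phase identity must then read $d_x\bigl(\tfrac{2}{3}\tilde\gamma\xi^{3/2}\bigr)=s\,\ub/h+O(1)=(\mu_1-\mu_2)/(2h)+O(1)$ as in \eqref{f64}--\eqref{f65} — note both the sign and the factor $1/h$, which is absent from your ``$\tilde\gamma\,\Xi_\sigma\sigma_x=(\mu_2-\mu_1)/2+O(h)$''. In short, you have two compensating sign slips in the exponent and a non-compensating one in the vector identification, so the write-up does not actually establish the type-$\theta_1$ conclusion.

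A secondary point: your closing worry that the perturbation's effect on the phase must be shown to be $O(|\tilde\beta|^{-1})$ is unnecessary. The proposition allows $H$ to depend on $x$ near $M$, so any $O(1)$ analytic $x$-dependent factor (the paper's $e^{g_2(x,\zeta,h)}$) is simply absorbed into $H$, along with the $x$-independent $e^{g_1(\zeta,h)/h}$ and $e^{-k_1}$; only the relative errors coming from $\eta_1$, from the higher-order Airy terms, and from the $O(h)$ entries of $Y_2$ and $K$ need to be $O(1/|\tilde\beta|)$ (and $O(h)\le C/|\tilde\beta|$ in Regime II since $h|\tilde\beta|=|\tilde\alpha|$ is bounded). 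The genuine uniformity issues — injectivity of $\sigma\mapsto\xi_f$ and finiteness of the $\psi$-integrals uniformly in $(\zeta,h)$ — are the content of Propositions \ref{f12} and \ref{f6}, which you are entitled to take as given here via Proposition \ref{f9}.
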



\begin{prop}[Decaying solution is of type $\theta_1$ at $x=0$]\label{f9w}
Let $\theta(x,\zeta,h)$ be the exact decaying solution of \eqref{f43} identified in Proposition \ref{f9},  and let $H(x,\zeta,h)$ be the function referred to in Proposition \ref{f9z}.    There exist $h_0>0$ and a  neighborhood $\omega\ni\zeta_\infty$ such that
\begin{align}\label{f9wa}
|H(M,\zeta,h)\theta(x,\zeta,h)-\theta_1(x,\zeta,h)|\leq\frac{C}{|\tilde\beta(\zeta,h)|}|\theta_1(x,\zeta,h)|\text{ on }[0,M],
\end{align}
where $C$ is independent of $x\in [0,M]$ and of $\zeta\in\omega$, $0<h\leq h_0$ such that $\tilde\beta$ lies in Regime II.
\end{prop}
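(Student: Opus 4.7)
The plan is to mirror the proof of Proposition \ref{e4u} essentially verbatim, because the argument on the bounded interval $[0,M]$ depends only on the structure of the eigenvalues $\mu_j(x,\zeta)$ of $\Phi_0$ for $\zeta$ near $\zeta_\infty$ and on the existence of a WKB basis of exact solutions there; neither of these uses the specific regime of $\tilde\beta(\zeta,h)$. The only input from the Regime II turning-point analysis is the pointwise estimate \eqref{f9za} at $x=M$, which substitutes for the analogous bound \eqref{e4v} used in Regime I. Thus Proposition \ref{f9z} plays here exactly the role that Proposition \ref{e4w} played in the proof of Proposition \ref{e4u}.

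First, after shrinking $\omega\ni\zeta_\infty$ if necessary, the interval $[0,M]$ contains no turning points of $T(x,\zeta)$: at $\zeta=\zeta_\infty$ the unique turning point lies at $+\infty$, and turning points vary continuously in $\zeta$. Hence $T(x,\zeta)$ is uniformly invertible on $[0,M]\times\omega$, and by the classical WKB construction (e.g.\ Theorem 3.1 of \cite{LWZ1}) there exist exact solutions $\overline\theta_j(x,\zeta,h)$, $j=1,\dots,5$, of \eqref{f43} on $[0,M]$ satisfying
\begin{align}
|\overline\theta_j-\theta_j|\leq Ch\,|\theta_j|\text{ on }[0,M],
\end{align}
with $\theta_j$ as in \eqref{t10}. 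The formulas \eqref{t4} together with $0<\kappa<1$ yield
\begin{align}
\Re\mu_1<0,\quad \Re\mu_2>0,\quad \Re\mu_j\geq 0,\ j=3,4,5,
\end{align}
uniformly on $[0,M]\times\omega$, so that $h_j(x,\zeta)=\int_0^x\mu_j(x',\zeta)\,dx'$ satisfies
\begin{align}
-\Re h_1(M,\zeta)=:a>0,\quad \Re h_2(M,\zeta)=:b>0,\quad \Re h_j(M,\zeta)=:c\geq 0,\ j=3,4,5,
\end{align}
with $a,b$ bounded below uniformly on $\omega$.

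Next, expand
\begin{align}
H(M,\zeta,h)\,\theta(x,\zeta,h)=\sum_{j=1}^{5}c_j(\zeta,h)\,\overline\theta_j(x,\zeta,h)\text{ on }[0,M],
\end{align}
evaluate at $x=M$, and feed in the Regime II bound \eqref{f9za} of Proposition \ref{f9z} (applied at $x=M$) together with the relative error estimate for $\overline\theta_j$. Cramer's rule then gives, exactly as in the Regime I argument,
\begin{align}
c_1(\zeta,h)=1+O\!\left(\tfrac{1}{|\tilde\beta|}\right),\ c_2=O\!\left(\tfrac{e^{-(a+b)/h}}{|\tilde\beta|}\right),\ c_j=O\!\left(\tfrac{e^{-(a+c)/h}}{|\tilde\beta|}\right),\ j=3,4,5.
\end{align}
The explicit WKB form \eqref{t10} of the $\theta_j$, combined with the relative closeness of $\overline\theta_j$ to $\theta_j$, shows that on $[0,M]$ the term $c_1\overline\theta_1$ dominates all others by a factor with exponentially small prefactor: the difference $H(M,\zeta,h)\theta-\overline\theta_1$ is thus $O(|\tilde\beta|^{-1})$ relative to $|\theta_1|$. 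Replacing $\overline\theta_1$ by $\theta_1$ costs another $O(h)=O(|\tilde\beta|^{-1})$ (since in Regime II $h\leq |\tilde\beta|^{-1}K_1^{-1}$), which yields \eqref{f9wa}.

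The only real work is bookkeeping to ensure compatibility of the parameter choices: $\omega$ and $h_0$ must simultaneously be small enough for Proposition \ref{f9z} to apply, for the WKB basis $\overline\theta_j$ to be constructible on $[0,M]$ with the relative error $Ch$, and for the uniform positivity of $a$ and $b$. Since each of these requirements is satisfied by taking $\omega$ a sufficiently small neighborhood of $\zeta_\infty$ and $h_0$ sufficiently small, the proof is a direct transcription of the Regime I argument and presents no further obstacle. Indeed, the only structural difference from Proposition \ref{e4u} is the provenance of the input estimate at $x=M$: Regime I uses Proposition \ref{e4w} based on the exponential normal form, Regime II uses Proposition \ref{f9z} based on the Airy normal form, but the matching step from $x=M$ back to $x=0$ is identical.
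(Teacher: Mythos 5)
Your proposal is correct and follows essentially the same route as the paper: the paper treats Proposition \ref{f9w} exactly as Proposition \ref{e4u}, i.e.\ expansion of $H(M,\zeta,h)\theta$ in a basis of exact WKB-type solutions $\overline\theta_j$ on the turning-point-free interval $[0,M]$, Cramer's rule with the growth rates $\Re h_j(M,\zeta)$, and the input estimate at $x=M$ now supplied by Proposition \ref{f9z} in place of Proposition \ref{e4w}. No gaps; your bookkeeping of the parameter choices and the observation $h\lesssim 1/|\tilde\beta|$ in Regime II match the paper's intent.
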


\subsection{Regime III}\label{threelead}
Recall the definitions of the variables
\begin{align}\label{g8z}
t=\frac{2}{\mu}\sqrt{aD(\infty,\zeta)}e^{-\mu x/2}, \;s=t^2/4.
\end{align}
When $f_p(s)$ in \eqref{d6} is neglected, the integral defining $\xi(s)$ is
\begin{align}\label{g8}
\xi^{1/2}=\int^s_0 \frac{1}{2t^{1/2}}dt=s^{1/2}, \text{ so }\xi=s,
\end{align}
and the relevant domain in the $\xi$-plane is the bounded wedge $\mathcal{W}^2/4$.  Progressive paths in the $\xi$-plane are now either those along which both $\Re \xi^{1/2}$ and $|\xi|$ are nondecreasing, or those along which both $\Re\xi^{1/2}$ and $|\xi|$ are nonincreasing.   The image of $\mathcal{W}^2/4$ under the map $s\to\xi^{1/2}$ is just $\mathcal{W}/2$, and progressive paths are easy to choose in the $\xi^{1/2}$-plane.

When $f_p$ as in \eqref{d6} is included in the integral defining $\xi$
\begin{align}\label{g9}
2\xi^{1/2}=\int^s_0 f^{1/2}(t)dt,
\end{align}
the image of $\mathcal{W}^2/4$ under the map $s\to\xi^{1/2}$ is a small perturbation of $\mathcal{W}/2$ when $f_p$ is small, and  progressive paths satisfying the above conditions  are again not hard to choose.

\begin{prop}\label{g8t}
For perturbations $f_p$ with $N_p$ sufficiently small (recall Proposition \ref{d7}) the function $\xi=\xi_f(s)$ is a globally one-to-one analytic map of $\mathcal{W}^2/4$  onto its image.

\end{prop}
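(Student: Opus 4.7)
The plan is to eliminate the $s^{-1/2}$ singularity of $f_0$ by the substitution $t=v^2$, reducing the study of $\xi_f$ to that of a map with a nonvanishing derivative at the origin on the star-shaped wedge $\mathcal{W}/2$. Concretely, write
\[
f(t)=\frac{1}{t}\bigl(1+tf_p(t)\bigr).
\]
By Proposition \ref{d7}, if $N_p$ is small enough then $|sf_p|_{L^\infty(\mathcal{W}^2/4)}<\delta_1$ with $\delta_1<1$, so $1+tf_p(t)$ lies in a disk around $1$ that does not meet the negative real axis, and the principal branch $(1+tf_p(t))^{1/2}$ is analytic on $\mathcal{W}^2/4$ with value $1$ at $t=0$. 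Fixing a branch of $t^{-1/2}$ on $\mathcal{W}^2/4$ and integrating along the ray from $0$ to $s$ (which lies in $\mathcal{W}^2/4$ since this wedge is star-shaped at the origin) yields
\[
2\xi^{1/2}(s)=\int_0^s t^{-1/2}\bigl(1+tf_p(t)\bigr)^{1/2}\,dt.
\]

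Next I would substitute $t=v^2$, noting that $\mathcal{W}^2/4$ is precisely the image of $\mathcal{W}/2$ under $v\mapsto v^2$ (so this change of variables is bijective and analytic). Writing $u=s^{1/2}$ and $\eta(u)=\xi^{1/2}(u^2)$, the defining relation becomes
\[
\eta(u)=\int_0^u\bigl(1+v^2f_p(v^2)\bigr)^{1/2}\,dv,\qquad u\in\mathcal{W}/2,
\]
so $\eta$ is analytic on $\mathcal{W}/2$ with $\eta'(u)=(1+u^2f_p(u^2))^{1/2}$. By Proposition \ref{d7}, $|u^2f_p(u^2)|=|sf_p(s)|$ is uniformly small, hence
\[
\sup_{u\in\mathcal{W}/2}|\eta'(u)-1|\le\tfrac{1}{2}
\]
once $N_p$ is small. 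I would then obtain injectivity of $\eta$ by the standard convexity argument: for $u_1,u_2\in\mathcal{W}/2$, the segment $[u_2,u_1]\subset\mathcal{W}/2$ (since the wedge is convex), so integration of $\eta'-1$ gives
\[
\bigl|\eta(u_1)-\eta(u_2)-(u_1-u_2)\bigr|\le\tfrac{1}{2}|u_1-u_2|,
\]
whence $|\eta(u_1)-\eta(u_2)|\ge\tfrac12|u_1-u_2|$ and $\eta$ is one-to-one and analytic from $\mathcal{W}/2$ onto $\eta(\mathcal{W}/2)$.

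Finally I would assemble the three factorizations $\xi_f=(\,\cdot\,)^2\circ\eta\circ(\,\cdot\,)^{1/2}$. The principal square root is analytic and bijective from $\mathcal{W}^2/4$ onto $\mathcal{W}/2$, and $\eta$ is analytic and bijective from $\mathcal{W}/2$ onto its image, which, since $\eta$ is a small perturbation of the identity, is contained in a sector of total opening less than $\pi$ about $\mathbb{R}_{>0}$; on such a sector the squaring map is one-to-one. Composing, $\xi_f$ is analytic and globally one-to-one from $\mathcal{W}^2/4$ onto its image, as claimed.

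The only genuine subtlety is the behavior at the singular point $s=0$, where $f$ itself blows up; the substitution $t=v^2$ is the crux, turning the $s^{-1/2}$ weight into a smooth integrand and converting the ``near-identity'' statement $\xi\sim s$ (for $f_p=0$) into the cleaner statement $\eta'\sim 1$. Once this reduction is in place, everything else is a routine smallness-of-perturbation argument, parallel in spirit to (but simpler than) the analogous Propositions \ref{e4z} and \ref{f12} for Regimes I and II.
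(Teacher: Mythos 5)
Your proof is correct, and it rests on the same two ingredients as the paper's own argument: the smallness of $sf_p$ furnished by Proposition \ref{d7}, and a near-identity-plus-convexity estimate. The route differs mildly in where that estimate is applied. The paper stays in the $s$ variable: it writes $\sqrt{f}=s^{-1/2}(1+\eps_1)$ with $\eps_1$ small, integrates to get $\sqrt{\xi}=\sqrt{s}(1+\eps_2)$, then uses $\xi^{-1/2}\xi_s=f^{1/2}$ to conclude $\xi_s=1+\eps_3$ with $\eps_3$ small, and obtains injectivity directly by integrating $\xi_s$ along segments of the convex wedge $\mathcal{W}^2/4$. You instead desingularize via $t=v^2$, prove the near-identity statement for $\eta(u)=\xi^{1/2}(u^2)$ on $\mathcal{W}/2$, and recover injectivity of $\xi_f$ by composing with the square-root and squaring maps; this costs you the extra (easy, and correctly supplied) observation that $\eta(\mathcal{W}/2)$ stays in a sector of opening less than $\pi$, where squaring is injective. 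The two arguments are of essentially equal length and difficulty; the paper's version has the side benefit that its intermediate estimates \eqref{j3} and \eqref{j4} ($\sqrt{\xi}\sim\sqrt{s}$ and $\xi_s\sim 1$) are quoted verbatim in the later proofs of Propositions \ref{g12} and \ref{g13z}, whereas in your setup they would have to be extracted afterwards, though they do follow immediately from $\eta'\sim 1$.
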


In the next Proposition we denote by $\mathcal{W}_s\subset \mathcal{W}^2/4$ an open subdomain,  chosen as explained in the proof, and  containing the image of the segment of the $x-$axis, $[M,\infty)$ under the map $x\to s$ given by \eqref{g8z}.  We let $\Delta_\xi$ denote the image of $\mathcal{W}_s$ under the map $s\to \xi(s)$.
With $\hat v(s)$ as in \eqref{d5} and $W(\xi)$ defined by $\hat v=(\frac{d\xi}{ds})^{-1/2}W$, the problem satisfied by $W$ has the form
\begin{align}\label{g10}
W_{\xi\xi}=\left(\frac{1}{h^2\xi}+\psi(\xi)\right)W=\left(\frac{1}{h^2\xi}+\frac{\tilde\beta^2-1}{4\xi^2}+\frac{\phi(\xi)}{\xi}\right)W\text{ on }\bW_\xi,
\end{align}
where (with $g(s)$ as in \eqref{d5})\footnote{Observe that when $f(s)=1/s$ and $\xi=s$, we have $\phi(\xi)=0$.}
\begin{align}
\phi(\xi)=\frac{1-4\tilde\beta^2}{16\xi}+\frac{g(s)}{f(s)}+\frac{4f(s)f''(s)-5f^{'2}(s)}{16 f^3(s)}.
\end{align}
Using the estimates of Proposition \ref{estimates}, one checks the finiteness of the integrals required for the error analysis of Theorem 9.1 of \cite{O}, Chapter 12:
\begin{align}\label{g11}
\int_{\alpha_j}^\xi |\phi(t)t^{-1/2}|\;d|t| \text{ (on progressive paths)}.
\end{align}
In this case there is no singularity at infinity, since $\Delta_\xi$ is bounded independent of $h$.

\begin{prop}\label{g12}
Suppose $\tilde\beta$  as  defined in \eqref{hr3} lies in Regime III.
For  $sf_p(s)$ as in \eqref{d6} taken sufficiently small (by the choices explained in Remark \eqref{d3a}),  the perturbed Bessel problem \eqref{d5} has exact solutions on $\mathcal{W}_s$ given by
\begin{align}\label{g12z}
\hat v_j(s)=\xi_s^{-1/2}(s)W_j(\xi(s)),\;j=1,2,
\end{align}
where the $W_j(\xi)$ are exact solutions of \eqref{g10} of the form
\begin{align}\label{g12a}
\begin{split}
&(a) W_1(\xi)=\xi^{1/2}I_{\tilde\beta}(2\xi^{1/2}/h)+\eta_1(h,\xi)\\
&(b) W_2(\xi)=\xi^{1/2}K_{\tilde\beta}(2\xi^{1/2}/h)+\eta_2(h,\xi).
\end{split}
\end{align}
Here the error term $\eta_1$ satisfies
\begin{align}\label{g12b}
\begin{split}
&|\eta_1(h,\xi)|\leq Ch |\xi^{1/2}I_{\tilde\beta}(2\xi^{1/2}/h)|\\
&|\partial_\xi\eta_1(h,\xi)|\leq Ch \left|\partial_\xi\left(\xi^{1/2}I_{\tilde\beta}(2\xi^{1/2}/h)\right)\right|
\end{split}
\end{align}
for $\xi\in\Delta_\xi$  with $|\xi^{1/2}/h|$ large.   The error $\eta_2$ satisfies analogous estimates.

\end{prop}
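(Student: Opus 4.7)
The plan is to apply the framework of \cite{O}, Chapter 12, Theorem 9.1, with the modified Bessel functions $I_{\tilde\beta}$ and $K_{\tilde\beta}$ as the comparison solutions. First I would invoke Proposition \ref{g8t}, which guarantees that the Liouville transformation $2\xi^{1/2}=\int_0^s f^{1/2}(t)\,dt$, combined with $\hat v=(d\xi/ds)^{-1/2}W$, is a good global change of variable on $\mathcal{W}_s$ and reduces \eqref{d5} to the normal form \eqref{g10}. A direct computation (plugging $z=2\xi^{1/2}/h$ into the modified Bessel equation $z^2U''+zU'=(z^2+\tilde\beta^2)U$) then verifies that $W_1^0(\xi):=\xi^{1/2}I_{\tilde\beta}(2\xi^{1/2}/h)$ and $W_2^0(\xi):=\xi^{1/2}K_{\tilde\beta}(2\xi^{1/2}/h)$ solve the comparison equation $W_{\xi\xi}=\bigl[\tfrac{1}{h^2\xi}+\tfrac{\tilde\beta^2-1}{4\xi^2}\bigr]W$ exactly, so these are the correct ``elementary'' objects against which to perturb. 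Note that when $f_p\equiv 0$ one has $\xi=s$ and the formula for $\phi$ displayed below \eqref{g10} collapses to $\phi\equiv 0$, so the entire perturbation in \eqref{g10} is driven by $f_p$.

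Treating $\phi(\xi)/\xi$ as the inhomogeneity, variation of parameters yields the integral equation
\begin{equation*}
\eta_1(h,\xi)=\int_{0}^{\xi}K(\xi,v)\,\frac{\phi(v)}{v}\bigl[W_1^0(v)+\eta_1(h,v)\bigr]\,dv,
\end{equation*}
where $K(\xi,v)=\bigl(W_1^0(\xi)W_2^0(v)-W_2^0(\xi)W_1^0(v)\bigr)/\mathrm{Wr}(W_1^0,W_2^0)$, and an analogous equation (integrated from the far end of $\Delta_\xi$) gives $\eta_2$. Progressive paths along which $|W_1^0|$ is nondecreasing (resp.\ $|W_2^0|$ is nonincreasing) can be constructed on $\Delta_\xi$ using the monotonicity of $\Re \xi^{1/2}$ and $|\xi|$ described in section \ref{threelead}; the standard asymptotic behavior of $I_{\tilde\beta},K_{\tilde\beta}$ then provides the pointwise kernel bound underlying the iteration.

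The main technical obstacle is to verify that the error-control integral $\int_{\alpha_j}^\xi |\phi(t)\,t^{-1/2}|\,d|t|$ in \eqref{g11} is finite and indeed $O(h)$, uniformly in $\zeta$ near $\zeta_\infty$ and $h$ small. Expanding $\phi$ in terms of $f=1/s+f_p$, $f',f''$ and $g$, and exploiting the cancellation that makes $\phi$ vanish identically when $f_p=0$, one sees that every surviving term is linear or higher in $f_p,f_p',f_p''$. Proposition \ref{estimates} gives the required smallness and derivative control of $h_1,h_2,h_3$, so that in Regime III (where $\alpha^2=O(h^2)$ and $hh_3=O(h)$) each contribution to $\phi/\xi$ carries a factor $h$ after integration against $t^{-1/2}$; Proposition \ref{d7} applied to $|sf_p|$ ensures integrability at $t=0$ once $N_p$ is made small. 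This is the delicate bookkeeping step of the proof.

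With this bound in hand, the iteration of \cite{O}, Chapter 12, Theorem 9.1, converges and produces $\eta_1$ with $|\eta_1|\leq Ch|W_1^0|$. Differentiating the integral equation and using $K(\xi,\xi)=0$ yields the matching bound on $\partial_\xi\eta_1$. The same procedure on paths anchored at the other end of $\Delta_\xi$ produces $\eta_2$ and its derivative estimate. Pulling back through the Liouville map via $\hat v_j(s)=\xi_s^{-1/2}(s)W_j(\xi(s))$ gives the exact solutions of \eqref{d5} with the form and estimates asserted in \eqref{g12z}--\eqref{g12b}, completing the proof.
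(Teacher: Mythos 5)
Your proposal misses the issue that the paper's proof spends most of its effort on: in Regime III the order $\tilde\beta=\tilde\alpha/h$ is in general \emph{complex} (for $\zeta$ on the imaginary axis just above $\zeta_\infty$ one has $\alpha^2=i(\zeta-\zeta_\infty)<0$, so $\tilde\beta$ is purely imaginary), whereas Theorem 9.1 of \cite{O}, Chapter 12 is stated and proved only for real $\nu\geq 0$, and its proof genuinely fails for nonreal order. The weight function $\mathfrak{E}_\nu$ of (8.08) there is constructed using the nonvanishing of $K_\nu(z)$ in $|\arg z|\leq \pi/2$, which is false when $\nu=i|\nu|\neq 0$: $K_\nu$ then has infinitely many zeros on the positive real axis \cite{FS}. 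So the step where you say that ``the standard asymptotic behavior of $I_{\tilde\beta},K_{\tilde\beta}$ provides the pointwise kernel bound underlying the iteration'' is precisely what is not available off the shelf; without a uniform bound on the kernel $K(\xi,v)$ in terms of weight/modulus functions valid for $\Re\tilde\beta\geq 0$, the contraction does not close. The paper (see Remark \ref{g12c} and the first half of section \ref{Three}) repairs this by redefining $\mathfrak{E}_\nu$ through the bounds of \cite{O2} for $I_\nu,K_\nu$ with $\nu$ in a bounded subset of $\Re\nu\geq 0$, rebuilding the modulus function and the constants $\mu_j$ uniformly in $\nu$, and only then running Olver's iteration. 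Some such extension is an indispensable part of any proof of Proposition \ref{g12}.

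Your ``delicate bookkeeping step'' is also not correct as described. First, the control integral \eqref{g11} is not $O(h)$, only uniformly bounded: $f_p$ in \eqref{d6} contains $4sh_1+2s^{1/2}h_2$, which carries no factor of $h$ and is controlled by the smallness of the wedge (the parameter $\eps_2$), not of $h$; the factor $h$ in \eqref{g12b} comes from the $1/u$ in Olver's error bound with $u=2/h$, not from smallness of the integral. Second, smallness of $|sf_p|$ (Proposition \ref{d7}) does not give integrability of $|\phi(t)t^{-1/2}|$ at $t=0$: near $s=0$ one has $f_p(s)=\bigl(\alpha^2h_1(0,\zeta)+hh_3(0,\zeta,h)\bigr)/s+O(1)$, so the terms of $\phi$ that are literally linear in $f_p$ (e.g.\ $-gf_p/f_0^2$) contain $1/s$ singularities with small but nonzero coefficients, and $\int_0 s^{-3/2}\,ds$ diverges no matter how small the prefactor. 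The cancellation you invoke ($\phi\equiv 0$ when $f_p\equiv 0$, i.e.\ against $f_0=1/s$ with $\xi=s$) does not remove these pieces. The paper's proof instead writes $f=a/s+f_2$ with $a=1+\alpha^2h_1(0,\zeta)+hh_3(0,\zeta,h)$, shows $\xi(s)=as+O(s^2)$, uses the exact cancellation of the three singular terms against $\tilde f_0=a/s$, and then bounds the remainder through the difference quotients $\bigl(h_j(2s^{1/2})-h_j(0)\bigr)/s$ via Proposition \ref{estimates}, which is what actually makes \eqref{g11} finite. Your use of Proposition \ref{g8t} for the Liouville map and the variation-of-parameters setup are fine, but without the complex-order extension and the refined decomposition at $s=0$ the argument has genuine gaps.
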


\begin{rem}\label{g12c}
Information about the error terms $\eta_j$ for $\xi$ near $0$ is more complicated to state, but is implicit in Proposition \ref{g13z} in the case of $\eta_1$.  For explicit estimates of the $\eta_j$ we refer to Theorem 9.1 of Chapter 12 of \cite{O}.    That theorem  deals only with real $\tilde\beta$, but we show how the result can be extended to $\Re\tilde\beta\geq 0$ in section \ref{pfinfiniteII}.

\end{rem}

\begin{prop}[Choice of decaying solution]\label{g13z}
After shrinking $\omega$ and reducing $h_0$ if necessary, we have, for $\zeta\in \omega$ and $0<h<h_0$ such that $\tilde\beta(\zeta,h)$ lies in Regime III,  that the bounded (resp. decaying) solution of \eqref{e4yy} on $[M,\infty)$ for $\Re\zeta=0$ (resp. $\Re\zeta>0$) is given by
\begin{align}\label{g14z}
w(x)=\frac{\sqrt{2}}{t(x)}\hat v_1(s(x)),
\end{align}
where $\hat v_1$ is defined in  \eqref{g12z} and the maps  $x\to t(x)$ and $x\to s(x)$ are defined  by \eqref{g8z}.   The corresponding decaying solution $\theta(x,\zeta,h)$ of Erpenbeck's $5\times 5$ system \eqref{f43} is thus given by the formula in \eqref{f47}.

\end{prop}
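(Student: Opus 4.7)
My plan is first to verify that the formula \eqref{g14z} really defines a solution of the scalar second-order equation \eqref{a1} (equivalently of \eqref{e4yy}), then to extract the asymptotic behavior of $w(x)$ as $x\to +\infty$ from the small-argument expansion of $I_{\tilde\beta}$, and finally to invoke the uniqueness of the decaying subspace.  For the first step I would unwind the substitutions of Section \ref{three}: if $\hat v_1$ solves \eqref{d5}, then $v(t):=(2/t)^{1/2}\hat v_1(t^2/4)$ inverts $\hat v(s)=s^{1/4}v(2s^{1/2})$, and $w(t):=v(t)/t^{1/2}=(\sqrt 2/t)\,\hat v_1(t^2/4)$ inverts $w=vt^{-1/2}$, with $t=t(x)$ the fixed analytic map of \eqref{g8z}.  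Since the derivation of Section \ref{three} is reversible, this $w$ solves \eqref{a1} on $[M,+\infty)$.

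The heart of the argument is the $\xi\to 0$ asymptotics of $W_1(\xi)$ from \eqref{g12a}(a).  The classical small-argument expansion gives
\begin{align*}
\xi^{1/2}I_{\tilde\beta}(2\xi^{1/2}/h)=\frac{\xi^{(1+\tilde\beta)/2}}{h^{\tilde\beta}\Gamma(\tilde\beta+1)}\bigl(1+O(\xi/h^2)\bigr).
\end{align*}
Proposition \ref{g8t} combined with the fact that $f_0=1/s$ dominates near $s=0$ gives $\xi(s)=s\bigl(1+O(sf_p(s))\bigr)$, hence $\xi_s(s)\to 1$ as $s\to 0$; together with $s(x)=t(x)^2/4$ and \eqref{g14z}, this yields
\begin{align*}
w(x)=C(\zeta,h)\,t(x)^{\tilde\beta}\bigl(1+o(1)\bigr)\quad\text{as }x\to+\infty,
\end{align*}
for an explicit nonvanishing scalar $C(\zeta,h)$.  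Since $t(x)=O(e^{-\mu x/2})$ with $\mu>0$, and since the principal-branch identity $\tilde\alpha^2=(2/\mu)^2 D(\infty,\zeta)\,i(\zeta-\zeta_\infty)$ together with $\Re\zeta\ge 0$ forces $\Re\tilde\beta\ge 0$ (strictly positive when $\Re\zeta>0$), this gives boundedness when $\Re\zeta=0$ and exponential decay when $\Re\zeta>0$.

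The main obstacle is that the error bound \eqref{g12b} in Proposition \ref{g12} is stated only for $|\xi^{1/2}/h|$ large, whereas $x\to+\infty$ forces $\xi\to 0$ and hence $|\xi^{1/2}/h|\to 0$ throughout Regime III (cf.\ Remark \ref{g12c}).  I would close this gap by revisiting the Volterra integral equation for $\eta_1$ anchored at the regular singular point $\xi=0$, where $\xi^{1/2}I_{\tilde\beta}(2\xi^{1/2}/h)$ is the recessive solution; the finite integrals \eqref{g11}, controlled via the $h_j$ estimates of Proposition \ref{estimates}, yield convergence of the iteration and a relative bound $|\eta_1(h,\xi)|\le Ch\,|\xi^{1/2}I_{\tilde\beta}(2\xi^{1/2}/h)|$ uniformly up to $\xi=0$.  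This is precisely the extension of \cite{O}, Chapter 12, Theorem 9.1, from real $\tilde\beta$ to $\Re\tilde\beta\ge 0$ that Remark \ref{g12c} defers to this proposition.  With the asymptotic in hand, uniqueness follows from the \cite{MZ} conjugator (Proposition \ref{MZ}) applied to \eqref{h11}: the decaying subspace $\mathcal D_1(\zeta,h)$ is one-dimensional for $\Re\zeta>0$, so the solution \eqref{g14z} spans it, and the corresponding decaying (respectively bounded) solution of the full system \eqref{f43} is given by formula \eqref{f47}.
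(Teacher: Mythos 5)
Your proposal follows essentially the same route as the paper: unwind the substitutions of section \ref{three}, use the small-argument behavior of $I_{\tilde\beta}$ together with $\xi(s)=s(1+o(1))$, $\xi_s\to 1$ and $t(x)=O(e^{-\mu x/2})$ to get $|w|\lesssim |t|^{\Re\tilde\beta}$, note that $\Re\tilde\beta\geq 0$ with strict inequality exactly when $\Re\zeta>0$, control the error term via the error estimates of Olver's Theorem 9.1 of Chapter 12 (in the form extended to $\Re\tilde\beta\geq 0$), and conclude by the one-dimensionality of the decaying subspace and \eqref{f47}; the paper's proof is exactly this, only more telegraphic. One caveat: the gap-closing bound you assert, $|\eta_1(h,\xi)|\leq Ch\,|\xi^{1/2}I_{\tilde\beta}(2\xi^{1/2}/h)|$ \emph{uniformly up to} $\xi=0$, is stronger than what the Volterra iteration can deliver for complex order, since $I_{\tilde\beta}$ with $\Re\tilde\beta=0$, $\tilde\beta\neq 0$ real-imaginary may vanish at interior points of $|\arg z|\leq\pi/2$, and a pointwise relative bound would force $\eta_1$ to vanish there. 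What the iteration actually yields near the regular singular point (and what the paper's extension of Olver's theorem, via the modified weight and modulus functions $\mathfrak{E}_\nu$, $\mathfrak{M}_\nu$, provides) is a bound of $\eta_1$ by $h$ times the \emph{envelope} of $\xi^{1/2}I_{\tilde\beta}(2\xi^{1/2}/h)$, i.e.\ by $Ch\,|\xi|^{1/2}|2\xi^{1/2}/h|^{\Re\tilde\beta}$ up to logarithmic factors when $|\tilde\beta|$ is small; this weaker bound is all your argument needs, so you should state it in that form. Finally, since the object to be identified is the decaying solution of the first-order system \eqref{e4yy} (and then \eqref{f47}), you should also record, as the paper does in one line, that differentiating \eqref{g14z} and repeating the same estimates shows $hw_x$ is bounded (resp.\ decays) as well.
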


The next step is to show that this solution is of type $\theta_1$ at $x=M$.


\begin{prop}[Decaying solution is of type $\theta_1$ at $x=M$]\label{g17z}
Let $\theta(x,\zeta,h)$ be the exact decaying solution of \eqref{f43} identified in Proposition \ref{g13z}, and let $\theta_1(x,\zeta,h)$ be the approximate solution defined in \eqref{t10}.      There exist $h_0>0$, a  neighborhood $\omega\ni\zeta_\infty$, and a nonvanishing scalar function $H(x,\zeta,h)$ defined for $x$ near $M$ such that
\begin{align}\label{g18z}
|H(x,\zeta,h)\theta(x,\zeta,h)-\theta_1(x,\zeta,h)|\leq Ch|\theta_1(x,\zeta,h)|\text{ for }x\text{ near }M,
\end{align}
where $C$ is independent of $x$ near $M$ and of $\zeta\in\omega$, $0<h\leq h_0$ such that $\tilde\beta$ lies in Regime III.
\end{prop}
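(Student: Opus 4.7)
The plan is to compute the leading asymptotic form at $x=M$ of the exact decaying solution from Proposition \ref{g13z}, using the large-argument expansion of the modified Bessel function $I_{\tilde\beta}$, and to match it explicitly against the WKB approximation $\theta_1$ defined by \eqref{t10}--\eqref{t11}. For $\zeta$ in a small neighborhood $\omega$ of $\zeta_\infty$ and $|\tilde\beta|\leq K$ in Regime III, the turning point $x(\zeta)$ lies far to the right of $M$, so $T(x,\zeta)$ is invertible on a complex neighborhood of $M$ and $\theta_1$ is well defined there; at $x=M$ the Bessel argument $2\xi^{1/2}(s(M))/h\sim t(M)/h$ is large as $h\to 0$, which puts us firmly in the large-argument regime.

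First I would substitute the classical uniform expansion
\[
I_\nu(z)=(2\pi z)^{-1/2}e^{z}\bigl(1+O(1/|z|)\bigr),\qquad |z|\to\infty,\ |\arg z|<\tfrac{\pi}{2}-\delta',
\]
valid uniformly for $|\nu|$ bounded, into the formula \eqref{g12a} for $W_1$. Combined with the error bound \eqref{g12b} on $\eta_1$, this yields an explicit leading form for $W_1(\xi)$ and $hW_{1,\xi}(\xi)$ modulo $O(h)$ relative error. Unwinding $\hat v_1=\xi_s^{-1/2}W_1$ and $w(x)=(\sqrt{2}/t(x))\hat v_1(s(x))$ then produces closed forms for $w(x)$ and $hw_x(x)$ throughout a complex neighborhood of $x=M$, with uniform $O(h)$ relative error for $(\zeta,h)$ in Regime III.

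Second I would identify the phase. Applying $K$ from \eqref{f46} to $(w,hw_x)^t$ and multiplying by $e^{\varphi_0/h}$ produces $\phi_1$, whose exponential factor is $\exp\{h^{-1}(\varphi_0(x,\zeta,h)+2\xi^{1/2}(s(x)))\}$. Differentiating $2\xi^{1/2}=\int_0^s f^{1/2}(\sigma)\,d\sigma$ with $f=1/s+f_p$ from \eqref{d6}, and using $ds/dx=-\mu s$ together with $s=t^2/4$, the singular $s^{-1/2}$ factor cancels against the Jacobian to give $(2\xi^{1/2})_x=-\mu t/2+O(|f_p|+h)$, with the correction controlled by Proposition \ref{estimates}. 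A direct calculation identifies $\mu t/2$ with $\sqrt{C(x,\zeta)}$ up to corrections of order $e^{-\mu x}+|\zeta-\zeta_\infty|$, so combining with $\varphi_0'=(a+d)/2=(\mu_1+\mu_2)/2+O(h)$ (by \eqref{f45} and Lemma \ref{h10}) and the identity $(\mu_1+\mu_2)/2-\sqrt{C}=\mu_1$, the total phase derivative equals $\mu_1(x,\zeta)+O(h)$. Integration shows that the exponent agrees with $h^{-1}h_1(x,\zeta)+k_1(x,\zeta)$ up to an $x$-independent additive constant and an $O(h)$ relative remainder, and the non-exponential $x$-dependent prefactors accumulate into $e^{k_1(x,\zeta)}$ to leading order.

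Third I would compare the vector amplitudes. For the decaying branch $hw_x/w=-\sqrt{C}+O(h)$, so \eqref{f46} gives $K(w,hw_x)^t=b^{-1/2}w\,(b,\,\alpha-\sqrt{C})^t+O(h)$. Using \eqref{f45} together with $\ub=-\kappa/(\eta u)<0$ and $\sqrt{C}=|\ub|\cdot(\zeta^2+c_0^2\eta)^{1/2}$, we compute $(\alpha-\sqrt{C})/b=-\sqrt{C}/\ub+O(h)=(\zeta^2+c_0^2\eta)^{1/2}+O(h)$. Hence applying $Y_1$ to $(\phi_1,0)^t$ yields a nonvanishing scalar multiple of $P_0+(\zeta^2+c_0^2\eta)^{1/2}Q_0$, which by \eqref{h4} equals $T_1(x,\zeta)$; this is precisely the leading vector $f_{1,0}$ in \eqref{t11}. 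By Proposition \ref{h10z}, $Y_2=I+O(h)$ does not spoil this identification. Setting $H(x,\zeta,h)$ equal to the reciprocal of the accumulated scalar prefactor then yields \eqref{g18z}. The principal obstacle is the careful bookkeeping across the nested changes of variable $x\to t\to s\to\xi$ and the conjugations $Y_1,Y_2$, verifying that the error bounds \eqref{f45c}, \eqref{g12b}, and Proposition \ref{estimates} combine into a uniform $O(h)$ relative error on a complex neighborhood of $x=M$, for $\zeta\in\omega$ and $(\zeta,h)$ in Regime III.
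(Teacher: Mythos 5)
Your route is the same as the paper's proof of Proposition \ref{g17z}: expand $I_{\tilde\beta}(2\xi^{1/2}/h)$ for large argument (legitimate since $|\tilde\beta|\leq K$ and $2\xi^{1/2}/h\sim t(M)/h\to\infty$), unwind $w$ and $hw_x$ through $K$, $e^{\varphi_0/h}$, $Y_1$, $Y_2$, identify the vector direction with $P_0+s(x,\zeta)Q_0=T_1$, and absorb all scalar prefactors into $H=G^{-1}$. The one step that does not hold up as written is your quantitative identification of the phase and of the ratio $hw_x/w$, where you pass through the intermediate quantity $\mu t/2$: first $(2\xi^{1/2})_x=-\mu s\sqrt{f}=-\mu t/2+O(|sf_p|)$, then $\mu t/2=\sqrt{C}+O(e^{-2\mu x}+|\zeta-\zeta_\infty|)$. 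Neither error is $O(h)$: by \eqref{d6}, $sf_p=(4s+\alpha^2)h_1+2s^{1/2}h_2+hh_3$ contains $4s\,h_1(0,\zeta)+\dots$, which at $s=s(M)$ is a fixed small constant of size $e^{-\mu M}$ (controlled by $N_p$, not by $h$), and $C-\mu^2t^2/4$ contains $O(e^{-2\mu x})$ contributions from $D(x,\zeta)-D(\infty,\zeta)$ and $m(x)$; only the $|\zeta-\zeta_\infty|$ piece is harmless, since in Regime III $|\zeta-\zeta_\infty|=|\alpha|^2=O(h^2)$. So your asserted conclusions ``total phase derivative $=\mu_1+O(h)$'' and ``$hw_x/w=-\sqrt{C}+O(h)$'' do not follow from the bounds you state. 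An $x$-dependent phase error could in principle be swallowed by the scalar $H(x,\zeta,h)$, but the error in $hw_x/w$ is directional (it shifts the coefficient of $Q_0$ relative to $P_0$) and cannot be absorbed, so with only your stated bounds the $Ch$ in \eqref{g18z} would degrade to a fixed constant independent of $h$.

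The missing ingredient is exactly the identity the paper's proof pivots on (Remark \ref{hr4z}, used in \eqref{j19}--\eqref{j20}): by the very construction of $f_p$, one has the exact relation $\tilde\alpha^2+4s^2f(s)=\tfrac{4}{\mu^2}\bigl(C(x,\zeta)+hr(x,\zeta,h)\bigr)$, hence $\mu s\,\xi^{-1/2}\xi_s=\mu s\sqrt{f}=\bigl(C+hr-\tfrac{\mu^2\tilde\alpha^2}{4}\bigr)^{1/2}=\sqrt{C}+O(h)$, using $|C|\geq k>0$ with $\arg C\sim 0$ for $x$ near $M$, $r=O(1)$, and $\tilde\alpha=O(h)$ in Regime III. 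In other words, the two lossy errors you introduce are the same perturbation counted twice with opposite signs and cancel up to $O(h)$; you must either invoke this cancellation explicitly or avoid the detour through $\mu t/2$ altogether. With that one-line repair, your argument coincides with the paper's, and the rest of your bookkeeping (Bessel error bound \eqref{g12b}, $Y_2=I+O(h)$, the $(2,1)$ entry of $K$ being $O(h)$, and the identification $(\alpha-\sqrt{C})/b=s(x,\zeta)+O(h)$ since $\ua=\ud$) is sound.
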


The proof of the next Proposition is exactly like that of Proposition \ref{f9w}.


\begin{prop}[Decaying solution is of type $\theta_1$
 at $x=0$]\label{g19z}
Let $\theta(x,\zeta,h)$ be the exact decaying solution of \eqref{f43} identified in Proposition \ref{g13z}, and let $\theta_1(x,\zeta,h)$ and $H(x,\zeta,h)$ be as in Proposition \ref{g17z}.    There exist $h_0>0$ and a  neighborhood $\omega\ni\zeta_\infty$ such that
\begin{align}\label{g20z}
|H(M,\zeta,h)\theta(x,\zeta,h)-\theta_1(x,\zeta,h)|\leq Ch|\theta_1(x,\zeta,h)|\text{ on }[0,M],
\end{align}
where $C$ is independent of $x\in [0,M]$ and of $\zeta\in\omega$, $0<h\leq h_0$ such that $\tilde\beta$ lies in Regime III.
\end{prop}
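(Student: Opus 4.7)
The plan is to transcribe the proof of Proposition \ref{e4u} (which the author points to, via Proposition \ref{f9w}), substituting the Regime III matching bound $Ch$ of Proposition \ref{g17z} in place of the Regime I bound $C/|\tilde\beta|$. The two enabling facts are: the interval $[0,M]$ contains no turning points for $\zeta$ in a small neighborhood $\omega\ni\zeta_\infty$ (the turning point $x(\zeta_\infty)$ lives at $+\infty$, so after shrinking $\omega$ all $x(\zeta)$ lie to the right of $M$); and the approximate WKB basis $\{\theta_j\}_{j=1}^5$ from \eqref{t10}–\eqref{t11} can be converted into an exact basis on $[0,M]$ with uniformly small relative error.

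First I would construct exact solutions $\overline\theta_j(x,\zeta,h)$, $j=1,\dots,5$, of \eqref{f43} on $[0,M]$ satisfying $|\overline\theta_j-\theta_j|\leq Ch|\theta_j|$, using Theorem 3.1 of \cite{LWZ1}. From \eqref{t4} and $0<\kappa<1$, together with $\zeta$ close to $\zeta_\infty=ic_0\eta^{1/2}(\infty)$, I extract the sign information \eqref{f9u}, and hence constants $a,b>0$ and $c\geq 0$ with
\[
-\Re h_1(M,\zeta)=a,\quad \Re h_2(M,\zeta)=b,\quad \Re h_j(M,\zeta)=c\geq 0\ (j=3,4,5),
\]
exactly as in \eqref{f9t}. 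Next I would expand
\[
H(M,\zeta,h)\theta(x,\zeta,h)=\sum_{j=1}^{5} c_j(\zeta,h)\,\overline\theta_j(x,\zeta,h)\quad\text{on }[0,M],
\]
evaluate at $x=M$, and apply Proposition \ref{g17z}, which gives $|H(M,\zeta,h)\theta(M,\zeta,h)-\theta_1(M,\zeta,h)|\leq Ch|\theta_1(M,\zeta,h)|$. Factoring the diagonal $\mathrm{diag}(e^{h_j(M,\zeta)/h})$ out of the matrix with columns $\overline\theta_j(M,\zeta,h)$ and using Cramer's rule (the remaining factor being a small perturbation of the order-one matrix $[f_{10},\dots,f_{50}](M,\zeta)$), I would obtain
\[
c_1=1+O(h),\qquad c_2=O\!\left(h\,e^{-(a+b)/h}\right),\qquad c_j=O\!\left(h\,e^{-(a+c)/h}\right)\ (j=3,4,5).
\]

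Finally I would estimate the residual $H(M,\zeta,h)\theta(x,\zeta,h)-\theta_1(x,\zeta,h)$ on $[0,M]$ term by term. The on-diagonal part $(c_1-1)\overline\theta_1+c_1(\overline\theta_1-\theta_1)$ is $O(h)|\theta_1(x)|$ from the bound on $c_1-1$ and the relative error estimate for $\overline\theta_1$. For $j\geq 2$, the explicit WKB form gives $|\overline\theta_j(x)|\lesssim e^{\Re h_j(x,\zeta)/h}$, and since $\Re\mu_j\geq 0$ forces $\Re h_j(x,\zeta)\leq \Re h_j(M,\zeta)$, one gets $|c_j\overline\theta_j(x)|\leq Ch\,e^{-a/h}$ uniformly on $[0,M]$. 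On the other hand $\Re\mu_1<0$ implies $|\theta_1(x)|\gtrsim e^{-a/h}$ throughout $[0,M]$, so each off-diagonal contribution is $O(h)|\theta_1(x)|$. Combining these yields \eqref{g20z}. There is no essential obstacle: by this point in the paper the Regime III decaying solution (Proposition \ref{g13z}), the matching at $x=M$ (Proposition \ref{g17z}), the absence of turning points in $[0,M]$, and the WKB basis are all in place; the only point requiring mild care is that the exponentially small off-diagonal coefficients $c_j$ carry an extra factor $h$ from the Cramer step, so that $c_j\overline\theta_j$ remains controlled by $h|\theta_1|$ uniformly in $x\in[0,M]$ and in $\zeta\in\omega$ with $\tilde\beta(\zeta,h)$ in Regime III.
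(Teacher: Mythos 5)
Your proposal is correct and is essentially the paper's own argument: the paper proves Proposition \ref{g19z} simply by declaring it ``exactly like'' Proposition \ref{f9w}, i.e. a verbatim repetition of the proof of Proposition \ref{e4u} with the Regime III relative error $Ch$ from Proposition \ref{g17z} in place of $C/|\tilde\beta|$. Your expansion in the exact basis $\overline\theta_j$ on the turning-point-free interval $[0,M]$, the Cramer's-rule bounds on the coefficients $c_j$, and the exponential dominance of the $\overline\theta_1$ term are precisely that argument.
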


\section{Multi-step reactions}\label{multistep}

The treatment of the turning point at infinity in the case of a scalar reaction equation works verbatim for type D multi-step reactions provided the reactant $k$-vector $\lambda(x)$ is analytic and  has the structure
\begin{align}\label{structure}
\lambda(x)=Ae^{-\mu x}+ m(x)e^{-\mu x}, \;\text{where }A \text{ is constant},\; \mu>0, \text{ and }|m(x)|\leq Ce^{-\mu\Re x}
\end{align}
on a wedge $\bW(M_0,\theta)$ for some $\theta>0$.  With \eqref{structure} the function $e(x)$ again has the structure in \eqref{ac2} and the proof of Proposition \ref{estimates}, giving the estimates of the functions $h_j$, $j=1,2,3$ that appear in the perturbation of Bessel's equation \eqref{hr4}, goes through unchanged.
We now show that the eigenvalue separation condition \eqref{cond} implies \eqref{structure}; thus,  Example \ref{shorteg} satisfies \eqref{structure}.

We write the equation satisfied by $\lambda$ as 
\begin{align}\label{prob}
\lambda_x=f(\lambda)=B\lambda+N(\lambda), \text{ where }f(0)=0\text{ and }B=df(0),
\end{align}
and denote by $\Pi_{ws}$, $\Pi_{ss}$ the projections of $\bC^k$ onto, respectively, the weakly stable subspace corresponding to the  eigenvalue $-\mu_1$ of $B$,  and the complementary strongly stable subspace.  We can suppose $\lambda$ is already given as an $\bR^k$-valued decaying solution of \eqref{prob} on $\bR$.
For $M_0$ sufficiently large and $\theta$ small enough, the problem \ref{prob} with initial condition $\lambda|_{x=M_0}=\lambda(M_0)$ can be solved on the wedge $\bW(M_0,\theta)$
by a classical contraction argument applied to the integral equation
\begin{align}\label{ms1}
\lambda(x)= e^{Bx}\lambda(0) + \int_{0}^x e^{B(x-s)} N( \lambda(s))\,ds.
\end{align}
Here by a translation we have replaced $M_0$ by $0$.
By \eqref{cond}  the weakly stable subspace is simple with eigenvalue $-\mu:=-\mu_1$, so
we can rearrange \eqref{ms1} as
$$
\lambda(x)= e^{-\mu x}\left(\lambda_{ws}(0) + \int_{0}^x e^{\mu s} \Pi_{ws}N( \lambda(s))\,ds\right)
+\left(e^{Bx} \lambda_{ss}(0) + \int_{0}^x e^{B(x-s)} \Pi_{ss} N( \lambda(s))\,ds\right)
=:I + II.
$$

Using $|\lambda(x)|\leq C|\lambda(0)|e^{-\mu \Re x}$,  $|N|\leq C_2|\lambda(x)|^2$, and the estimate
\begin{align}
|e^{B(x-s)}\Pi_{ss}|\leq Ce^{-2\tilde\mu(x-s)}, \text{ where }\tilde\mu>2\mu,
\end{align}
which follows from the separation condition \eqref{cond}, we find
that $II$ is bounded in modulus by $C_3 |\lambda(0)|^2 e^{-2\mu \Re x}$, and so can be viewed as part of the second term on the right in \eqref{structure}.
Splitting $I$ now as
$$
I= e^{-\mu x}\left(\lambda_{ws}(0) + \int_{0}^\infty e^{\mu s} \Pi_{ws}N( \lambda(s))\,ds\right)
- \int_{x}^\infty e^{-\mu(x- s)} \Pi_{ws}N( \lambda(s))\,ds=: I_1 + I_2,
$$
we see  that $|I_2|\leq C_4 \int e^{-\mu \Re(x-s)}e^{-2\mu \Re s}d(\Re s)
\leq C_5 e^{-2\mu \Re x}$, and so $I_2$ can be treated like $II$ above.
Setting
$$
A:=
\lambda_{ws}(0) + \int_{0}^\infty e^{\mu s} \Pi_{ws}N( \lambda(s))\,ds,
$$
we obtain \eqref{structure}.

The treatment of  frequencies $\zeta\neq \zeta_\infty$ with $\Re\zeta\geq 0$ does not require \eqref{structure}, so for such frequencies the proofs in the scalar case work for multistep reactions as long as the assumptions of section \ref{assumptions} hold.   Thus, our main result, Theorem \ref{t14}, holds for also for type D multi-step reactions under the additional separation condition \eqref{cond}.


\part{Finite turning point and non-turning point frequencies}\label{finite}

In this part we treat non-turning point frequencies as well as frequencies $\zeta\in \mathrm{III}_+^o$, for each of which there exists a turning point $x(\zeta)\in (0,\infty)$.  We also study the upper endpoint frequency $\zeta_0$ for which the corresponding turning point is the endpoint $x(\zeta_0)=0$ of the reaction zone $[0,\infty)$.

First we give a lemma that  extends the map $\zeta\to x(\zeta)$ to a neighborhood of a turning point frequency.

\begin{lem}\label{q11}Fix a basepoint  $\uzeta\in \mathrm{III}_+^0$.  There exist neighborhoods $\omega\ni\uzeta$ and $\mathcal O\ni x(\uzeta)$ and  an analytic homeomorphism
 $x:\omega\to \mathcal{O}$, where $x(\zeta)$ is defined to be the unique root of
 \begin{align}\label{q11a}
 f(x,\zeta):=\zeta^2+c_0^2\eta(x)=0.
 \end{align}
 Moreover,
 \begin{align}\label{q12}
 \Im x(\zeta)\geq 0 \text{ for }\Re\zeta\geq 0\text{ and }\Im x(\zeta)= 0 \Leftrightarrow \Re\zeta= 0.
 \end{align}

\end{lem}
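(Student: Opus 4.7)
The plan is to invoke the analytic implicit function theorem on the equation $f(x,\zeta) = \zeta^2 + c_0^2\eta(x) = 0$ at the basepoint $(x(\uzeta), \uzeta)$. Since the profile $P(x)$ is real-analytic on $[0,\infty)$ by Assumption \ref{profile}, both $c_0$ and $\eta$ extend analytically to some complex neighborhood of the real point $x(\uzeta) \in (0,\infty)$. The nondegeneracy $\partial_x f(x(\uzeta), \uzeta) = d_x(c_0^2\eta)(x(\uzeta)) < 0$ is immediate from the type D hypothesis (Definition \ref{t7a}), so the IFT produces an analytic solution $x = x(\zeta)$ of $f(x,\zeta) = 0$ on a neighborhood $\omega \ni \uzeta$, unique in some $\mathcal{O} \ni x(\uzeta)$. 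Implicit differentiation gives $x'(\zeta) = -2\zeta/d_x(c_0^2\eta)(x(\zeta))$; because $\uzeta \in \mathrm{III}_+^o$ forces $|\uzeta| \geq c_0\eta^{1/2}(\infty) > 0$, the derivative $x'(\uzeta) = i\kappa$ with $\kappa := 2|\uzeta|/|d_x(c_0^2\eta)(x(\uzeta))| > 0$ is nonzero. The inverse function theorem then upgrades $x$ to a biholomorphism (hence analytic homeomorphism) of $\omega$ onto $\mathcal{O} := x(\omega)$ after possibly shrinking $\omega$.

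For the sign statement, I would first observe that when $\zeta = i\eta$ lies on the positive imaginary axis near $\uzeta$, the equation $c_0^2\eta(x) = \eta^2 > 0$ has, by strict monotonicity of $c_0^2\eta$ on the real axis (type D), a unique real root close to $x(\uzeta)$; uniqueness in the IFT then forces this root to coincide with $x(i\eta)$. Hence the imaginary-axis segment $\omega \cap \{\Re\zeta = 0\}$ is mapped into $\mathbb{R}$. Coupled with the first-order expansion
\begin{align*}
\Im x(\zeta) = \kappa\, \Re\zeta + O(|\zeta-\uzeta|^2),
\end{align*}
this already suggests the full picture: the imaginary axis maps to the real axis, while the two half-discs $\omega \cap \{\Re\zeta > 0\}$ and $\omega \cap \{\Re\zeta < 0\}$ map respectively to $\{\Im x > 0\}$ and $\{\Im x < 0\}$.

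To make that rigorous, I would use that since $x:\omega \to \mathcal{O}$ is a biholomorphism, the preimage $L := x^{-1}(\mathbb{R} \cap \mathcal{O})$ is a smooth analytic curve through $\uzeta$ that contains the smooth curve $\omega \cap \{\Re\zeta = 0\}$; by a dimension count (two analytic curves through $\uzeta$ sharing a smooth branch must coincide locally), after shrinking $\omega$ we obtain $L = \omega \cap \{\Re\zeta = 0\}$. This yields $\Im x(\zeta) = 0 \Leftrightarrow \Re\zeta = 0$ and partitions $\omega \setminus L$ into two connected components, which the biholomorphism sends bijectively onto the two components of $\mathcal{O} \setminus \mathbb{R}$. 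The first-order expansion identifies the correspondence: $\omega \cap \{\Re\zeta > 0\}$ lands in $\{\Im x > 0\}$, giving $\Im x(\zeta) \geq 0$ on $\{\Re\zeta \geq 0\}$. The only mildly nonroutine step is this preimage-of-the-real-axis argument; everything else is a textbook IFT/inverse function application.
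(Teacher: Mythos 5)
Your proposal is correct and follows essentially the same route as the paper's proof: the analytic implicit function theorem with nondegeneracy $f_x(x(\uzeta),\uzeta)=d_x(c_0^2\eta)<0$ from the type D hypothesis, then implicit differentiation (the paper's \eqref{q13}) giving $x'(\uzeta)=i\kappa$ with $\kappa>0$, from which \eqref{q12} follows; your preimage-of-the-real-axis/component argument simply fills in the final step the paper states tersely. Note that step can be shortened: if $\Im x(\zeta)=0$ then $\zeta^2=-c_0^2\eta(x(\zeta))<0$, so $\zeta$ is purely imaginary, giving the equivalence directly.
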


\begin{proof}
The profile $P(x)$ is of type D, so $f_x(x(\uzeta),\uzeta)< 0$.  The fact that $x(\zeta)$ is analytic thus follows from the implicit function theorem. We have
\begin{align}\label{q13}
f_x(x(\zeta),\zeta)x_\zeta(\zeta)+2\zeta=0,
\end{align}
so $x_\zeta(\uzeta)\neq 0$ since $\uzeta\neq 0$. Hence we have an analytic homeomorphism of some neighborhoods $\omega$ and $\mathcal O$.  Since $\Im\uzeta>0$, \eqref{q13} implies $\Im x_\zeta(\uzeta)>0$, which yields \eqref{q12}.

\end{proof}

The frequencies $\zeta\in \{\Re\zeta\geq 0\}\setminus \mathrm{III}:=\cN$, for which are no associated turning points, are divided into two sets:
\begin{align}
\cN=(\cN\cap\{|\zeta|\geq M\})\cup(\cN\cap\{|\zeta|\leq M\}),
\end{align}
for some sufficiently large $M$.  The unbounded set is studied in section \ref{unbounded}.  The bounded set was treated in \cite{LWZ1} using the following theorem, which we reproduce here since it is needed for the analysis of finite turning points.  In this Theorem the $\mu_j(x,\zeta)$, $j=1,\dots,5$  are the eigenvalues of $\Phi_0(x,\zeta)$ given in \eqref{t4}, and $\mu>0$ is the constant determining the rate of profile decay in \eqref{o6a}.

\begin{thm}[\cite{LWZ1}, Theorem 2.1]\label{conjugation2}
\textbf{1. } Consider the system \eqref{f43}
\begin{align}\label{q1}
\theta'=\frac{1}{h}\left[\Phi_0(x,\zeta)+h\Phi_1(x)\right]\theta
\end{align}
on an interval $[a,\infty)$, $a\geq 0$, and for values of $\zeta$ such that
\begin{align}\label{q2}
|\mu_1(x,\zeta)-\mu_j(x,\zeta)|\geq C_\zeta>0,\;j=2,\dots,5 \text{ for }0<h\leq h(\zeta) \text{ small enough}.
\end{align}
Then there exists an exact solution $\theta(x,\zeta,h)$ such that for any $0<\delta_*<\mu$
\begin{align}\label{q3}
\left|\theta-e^{\frac{1}{h}\int^{ x}_0 \mu_1^\sharp(s,\zeta,h)ds}\left[T_1(x,\zeta)+O(h)\right]\right|\leq C_\zeta h e^{-\delta_* x} |e^{\frac{1}{h}\int^{x}_0 \mu_1^\sharp( s,\zeta,h)d s}|\;\;\text{ on }[a,\infty),
\end{align}
where $T_1(x,\zeta)$ as in \eqref{t6}, and
\begin{align}\label{q4}
\mu_1^\sharp=\mu_1(x,\zeta)+O(he^{-\mu x}).
\end{align}

\textbf{2. }Let $K\subset \{\Re\zeta\geq 0\}\setminus \mathrm{III}$ be compact.  Then \eqref{q2} and \eqref{q3} hold on $[0,\infty)$ with constants $h(\zeta)$, $C_\zeta$ that can be taken independent of $\zeta\in K$.

\textbf{3. }Let $\uzeta\in \mathrm{III}_+^o$ and $\delta>0$.  There exists a neighborhood $\omega_1\ni\uzeta$ in $\Re\zeta\geq 0$ such that
\begin{align}\label{q4a}
x(\uzeta)-\delta<\Re x(\zeta)<x(\uzeta)+\delta\text{ for all }\zeta\in\omega_1
\end{align}
and such that
\eqref{q2} and \eqref{q3} hold on $[x(\uzeta)+\delta,\infty)$ with constants $h(\zeta)$, $C_\zeta$ that can be taken independent of $\zeta\in \omega_1$.
\end{thm}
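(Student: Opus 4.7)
The plan is to combine a block-diagonalization conjugation on $[a,\infty)$ with a contraction-mapping (Gap-Lemma-style) argument to produce an exact decaying solution whose leading term matches the WKB approximation $\theta_1$ of \eqref{t10}. The separation hypothesis \eqref{q2} provides the spectral gap between $\mu_1$ and $\{\mu_j\}_{j=2}^5$ that is essential for this conjugation.

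For Part 1, I would first build an approximate conjugator $Y_0(x,\zeta)$ whose columns are $T_1(x,\zeta),T_2,\dots,T_5$ (so $Y_0^{-1}\Phi_0 Y_0$ is diagonal), leaving $O(h)$ errors from $\Phi_1$ and $\partial_x Y_0$. An iteration in the spirit of Proposition \ref{h10z} and Proposition \ref{MZ}, exploiting the exponential approach of the profile to its endstate \eqref{o6a}, then yields an exact conjugator $Y(x,\zeta,h)=Y_0(x,\zeta)+O(he^{-\mu x})$ that puts \eqref{q1} into block form with a scalar block $A_{11}(x,\zeta,h)=\mu_1(x,\zeta)+O(he^{-\mu x})=:\mu_1^\sharp(x,\zeta,h)$ (as in \eqref{q4}) and a $4\times 4$ block $A_{22}$ with eigenvalues close to $\mu_2,\dots,\mu_5$. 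The gap condition \eqref{q2} is precisely what allows the Sylvester-type equations defining $Y-Y_0$ to be inverted. The decaying-mode problem then reduces to the scalar ODE $h\phi_1'=\mu_1^\sharp\phi_1$, whose distinguished solution is
\[
\phi_1(x,\zeta,h)=\exp\!\Bigl(\tfrac{1}{h}\int_0^x \mu_1^\sharp(s,\zeta,h)\,ds\Bigr).
\]
Setting $\theta:=Y\begin{pmatrix}\phi_1\\0\end{pmatrix}$ and expanding $Y=Y_0+(Y-Y_0)$ gives $\theta=e^{\frac{1}{h}\int_0^x\mu_1^\sharp}[T_1(x,\zeta)+O(h)+O(he^{-\mu x})]$, which is the desired formula \eqref{q3} since the $O(he^{-\mu x})$ term absorbs into the stated $O(he^{-\delta_* x})$ error for any $0<\delta_*<\mu$.

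For Part 2, compactness of $K\subset \{\Re\zeta\geq 0\}\setminus \mathrm{III}$ together with the definition \eqref{t7} of Class III and formulas \eqref{t4} yield a uniform positive lower bound on the gap $|\mu_1-\mu_j|$ for $j=2,\dots,5$ and all $x\in[0,\infty)$; hence the constants in the contraction of Step 1 can be chosen independent of $\zeta\in K$. For Part 3, I invoke Lemma \ref{q11} to obtain the analytic parametrization $\zeta\mapsto x(\zeta)$ and then shrink $\omega_1\ni\uzeta$ to enforce \eqref{q4a}. On $[x(\uzeta)+\delta,\infty)$, the type-D monotonicity of $c_0^2\eta$ together with \eqref{q4a} forces $s(x,\zeta)=\sqrt{\zeta^2+c_0^2\eta(x)}$ to remain uniformly bounded away from $0$, giving a uniform gap between $\mu_1$ and $\mu_2$; the gap between $\mu_1$ and $\mu_3=\mu_4=\mu_5=\zeta/u$ is automatic since $\uzeta\notin\mathrm{II}$. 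With these uniform gaps Part 1 applies with uniform constants, proving Part 3.

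The main obstacle is to achieve the sharp decay rate $O(he^{-\delta_* x})$ (with $\delta_*$ arbitrarily close to $\mu$) in the error of \eqref{q3}. This requires running the fixed-point construction of $Y-Y_0$ in a norm weighted by $e^{\delta_* x}$, delicately balancing the contribution of $\Phi_1$ (bounded but not decaying) against the exponentially small perturbation $\Phi_0(x,\zeta)-\Phi_0(\infty,\zeta)$ coming from the profile, and requires that $\delta_*<\mu$ stay strictly below the profile decay rate so that the off-diagonal blocks contract. A secondary subtlety in Part 3 is ensuring that the neighborhood $\omega_1$ can be chosen small enough that the uniform gap on $[x(\uzeta)+\delta,\infty)$ does not degenerate as $\zeta\to\uzeta$, which ultimately reduces to continuity of $s(x,\zeta)$ jointly in $(x,\zeta)$ on the closed region $\{x\geq x(\uzeta)+\delta\}\times\overline{\omega_1}$.
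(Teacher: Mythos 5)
First, note that this paper does not actually prove Theorem \ref{conjugation2}: it is reproduced verbatim from \cite{LWZ1} (Theorem 2.1), and the only indication of its proof here is the footnote to Proposition \ref{k1}, which says the argument rests on a variable-coefficient gap lemma, i.e.\ exactly the kind of weighted contraction argument you outline. So your overall strategy (split off the $\mu_1$-mode, conjugate, and run a fixed point in an exponentially weighted norm, with uniformity in parts 2--3 coming from compactness and continuity of $s(x,\zeta)$ up to $x=\infty$ via \eqref{o6a}) is the same route as the cited proof. However, as written your sketch has two genuine gaps. (i) Your approximate conjugator $Y_0=(T_1,\dots,T_5)$, claimed to diagonalize $\Phi_0$, need not be invertible under the stated hypothesis: \eqref{q2} separates only $\mu_1$ from $\mu_2,\dots,\mu_5$, and parts 1--2 explicitly allow Class II frequencies (see \eqref{t7}), where $\mu_2=\mu_3$ at some $x$ and the second and third rows of $T$ in \eqref{t6} are parallel, so $T(x,\zeta)$ is singular there. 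The construction must therefore split only the $\mu_1$-eigendirection from the complementary $4$-dimensional invariant subspace (e.g.\ via the spectral projector associated with $\mu_1$), giving a $1\oplus 4$ block form rather than a full diagonalization; this is precisely why the hypothesis is stated as separation of $\mu_1$ alone, and why the paper stresses that Classes I and II are handled by one argument.

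(ii) Your decay claims are not justified by profile decay alone. You assert $Y=Y_0+O(he^{-\mu x})$ and a scalar block equal to $\mu_1+O(he^{-\mu x})$ "exploiting the exponential approach of the profile to its endstate," but $\Phi_1(x)=(A_x^{-1}B)^t$ does \emph{not} tend to zero: by \eqref{k6} its endstate is a nonzero matrix supported in row $5$ (the $-r_\lambda/u$ reaction term). A generic $O(h)$ perturbation of $\Phi_0$ that persists as $x\to\infty$ would shift both the conjugator and the scalar block by non-decaying $O(h)$ amounts, i.e.\ you would only get $\mu_1^\sharp=\mu_1+O(h)$, which is strictly weaker than \eqref{q4} and changes the exponent in \eqref{q3} by an $O(x)$ term. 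The sharpened statement \eqref{q4} depends on the structural fact, recorded in \eqref{h9} and resting on Assumption \ref{rate}, that the non-decaying part of $\Phi_1$ perturbs only the $\mu_5$ mode at leading order, so that the eigenvalue (and the relevant block entry) associated with $\mu_1$ is corrected only by $O(he^{-\mu x})$; the non-decaying $O(h)$ correction to the conjugator is then harmless because it is absorbed into the bracket $T_1+O(h)$ in \eqref{q3}. Your weighted-norm remark about keeping $\delta_*<\mu$ is correct, but without point (ii) the weight argument has nothing exponentially small to contract against on the diagonal, so the stated form of \eqref{q3}--\eqref{q4} does not follow from your sketch as it stands.
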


As an immediate corollary of part \textbf{2} we have
\begin{cor}[Non-turning point frequencies]\label{noturn}
Let $K\subset \{\Re\zeta\geq 0\}\setminus \mathrm{III}$ be compact. The exact bounded solution $\theta(x,\zeta)$ of \eqref{q1} given by Theorem \ref{conjugation2} satisfies
\begin{align}\label{q5}
|\theta(0,\zeta,h)-T_1(0,\zeta)|\leq C_Kh \text{ for }0\leq h\leq h_K,
\end{align}
where $C_K$ and $h_K$ can be taken independent of $\zeta\in K$.
\end{cor}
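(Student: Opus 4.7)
The plan is to read off the conclusion directly by specializing part 2 of Theorem \ref{conjugation2} to the single point $x = 0$, since all the heavy lifting has already been carried out there.

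First, I would invoke part 2 of Theorem \ref{conjugation2}: since $K$ is a compact subset of $\{\Re\zeta \geq 0\} \setminus \mathrm{III}$, the spectral gap condition \eqref{q2} holds with a uniform lower bound $C_K > 0$ for the separation of $\mu_1$ from $\mu_2,\ldots,\mu_5$, and the theorem furnishes an exact bounded solution $\theta(x,\zeta,h)$ of \eqref{q1} on $[0,\infty)$ satisfying the estimate \eqref{q3} with constants $C_K$, $h_K$ independent of $\zeta \in K$.

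Next, I would specialize \eqref{q3} to $x = 0$. The integral $\int_0^0 \mu_1^\sharp(s,\zeta,h)\,ds$ vanishes, so the exponential prefactor $e^{\frac{1}{h}\int_0^{x} \mu_1^\sharp(s,\zeta,h)\,ds}$ reduces to $1$ there, as does the weight $e^{-\delta_* x}$. Writing the bracketed quantity in \eqref{q3} as $T_1(x,\zeta) + h R(x,\zeta,h)$, where $R$ is the uniformly bounded remainder supplied by the $O(h)$ notation, the estimate at $x = 0$ reads
\begin{align*}
\bigl|\theta(0,\zeta,h) - T_1(0,\zeta) - h R(0,\zeta,h)\bigr| \leq C_K h,
\end{align*}
and a single application of the triangle inequality, absorbing $\sup_{\zeta \in K,\,h \in (0,h_K]} |R(0,\zeta,h)|$ into the constant, yields the desired bound with a (possibly enlarged) $C_K$.

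I expect no substantive obstacle: the corollary is essentially a repackaging of Theorem \ref{conjugation2}, part 2, at the single point $x = 0$, phrased in the language of type $\theta_1$ at $x = 0$ with normalization $T_1(0,\zeta)$. The only point requiring any verification is that the $O(h)$ coefficient inside the brackets of \eqref{q3} is uniformly controlled on $K$, which is already built into the uniformity assertion of Theorem \ref{conjugation2}, part 2, on compact subsets of $\{\Re\zeta \geq 0\}\setminus\mathrm{III}$.
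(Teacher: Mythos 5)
Your proposal is correct and is exactly how the paper obtains this result: the corollary is stated as an immediate consequence of part 2 of Theorem \ref{conjugation2}, obtained by evaluating the uniform estimate \eqref{q3} at $x=0$, where the exponential and weight factors equal $1$, and absorbing the uniformly bounded $O(h)$ term into the constant. No further argument is needed.
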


The Corollary implies that for $\zeta\in K$, $0<h\leq h_K$, the solution  $\theta$ is of type $\theta_1$ at $x=0$ and thus the stability function $V(\zeta,h)$ is nonvanishing.  In view of \eqref{q3} and \eqref{q4}, part \textbf{3} of Theorem \ref{conjugation2} yields:
\begin{cor}\label{q5a}
Let $\omega_1\ni\uzeta$ and $\delta>0$ be as in \eqref{q4a}.  For $\zeta\in\omega_1$ there is a bounded, nonvanishing function $H(x,\zeta,h)$ and an $h_0>0$ such that
\begin{align}\label{q5aa}
|H(x,\zeta,h)\theta(x,\zeta,h)-\theta_1(x,\zeta,h)|\leq Ch|\theta_1(x,\zeta,h)|\text{ on }[x(\uzeta)+\delta,\infty)
\end{align}
for $0<h\leq h_0$.
\end{cor}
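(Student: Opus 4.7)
The strategy is to compare the asymptotic expansion of the exact decaying solution $\theta$ furnished by Theorem \ref{conjugation2} with the formal WKB Ansatz for $\theta_1$ from \eqref{t10}, and to absorb the discrepancy between the two leading exponentials into a bounded, nonvanishing scalar factor $H(x,\zeta,h)$.

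Concretely, I would set
\[
H(x,\zeta,h):=\exp\!\left(k_1(x,\zeta)+\frac{1}{h}\int_0^x\!\bigl[\mu_1(s,\zeta)-\mu_1^\sharp(s,\zeta,h)\bigr]\,ds\right),
\]
which is precisely the ratio of the leading exponential $e^{h_1/h+k_1}$ of $\theta_1$ to the exponential $e^{\frac{1}{h}\int_0^x\mu_1^\sharp\,ds}$ appearing in \eqref{q3}. The first step is to show that $H$ is bounded above and bounded away from zero, uniformly for $x\in[x(\uzeta)+\delta,\infty)$ and $\zeta\in\omega_1$. For the integral, \eqref{q4} gives $\mu_1-\mu_1^\sharp=O(he^{-\mu s})$ uniformly in $\zeta\in\omega_1$, so $\frac{1}{h}\int_0^x(\mu_1-\mu_1^\sharp)\,ds$ is majorized by $\int_0^\infty Ce^{-\mu s}\,ds$ and is $O(1)$ with a finite limit at $x=\infty$. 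For the WKB corrector $k_1$, part 3 of Theorem \ref{conjugation2} forces $x(\zeta)$ to lie strictly to the left of $x(\uzeta)+\delta$ for $\zeta\in\omega_1$, and no class~II crossing of $\mu_2,\mu_3$ occurs for $\zeta$ near $\uzeta\in\mathrm{III}^o_+$; hence $T(x,\zeta)$ stays uniformly invertible on $[x(\uzeta)+\delta,\infty)$, and the standard WKB formulas make $k_1(x,\zeta)$ a bounded analytic function with a finite limit as $x\to\infty$.

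The second step simply combines the two expansions. Rewrite \eqref{q3} as $\theta=e^{\frac{1}{h}\int_0^x\mu_1^\sharp\,ds}\bigl[T_1(x,\zeta)+hR(x,\zeta,h)\bigr]$ with $R$ uniformly bounded (absorbing both the internal $O(h)$ and the additive $Ch e^{-\delta_* x}|e^{\frac{1}{h}\int\mu_1^\sharp}|$ error after dividing out the exponential), and similarly $\theta_1=e^{\frac{1}{h}h_1(x,\zeta)+k_1(x,\zeta)}\bigl[T_1(x,\zeta)+hR'(x,\zeta,h)\bigr]$ with $R'$ uniformly bounded (from the higher-order WKB terms $f_{1j}$). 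By construction of $H$ the exponentials then agree and one obtains
\[
H(x,\zeta,h)\theta(x,\zeta,h)-\theta_1(x,\zeta,h)\;=\;e^{\frac{1}{h}h_1(x,\zeta)+k_1(x,\zeta)}\cdot h\bigl(R(x,\zeta,h)-R'(x,\zeta,h)\bigr).
\]
The fixed nonzero component $-i$ in the formula \eqref{t6} for $T_1$ forces $|T_1(x,\zeta)|\geq 1$ uniformly, so $|\theta_1|\geq \tfrac12\,|e^{\frac{1}{h}h_1+k_1}|$ for $h$ small enough, and dividing yields the required relative bound $|H\theta-\theta_1|\leq Ch\,|\theta_1|$.

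I expect no deeper obstacle than checking uniformity. The most subtle point is uniform control of $k_1$ (and of $R'$) as $\zeta$ varies in $\omega_1$, since by Lemma \ref{q11} the analytically continued turning point $x(\zeta)$ can drift in the upper half-plane and approach $x(\uzeta)+\delta$; but part 3 of Theorem \ref{conjugation2} is arranged precisely to keep $x(\zeta)$ separated from this endpoint after a suitable shrinking of $\omega_1$, whereupon the WKB constants inherit uniformity and the corollary follows by the bookkeeping of exponentials just sketched.
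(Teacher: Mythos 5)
Your proposal is correct and follows essentially the same route as the paper, which states the corollary as an immediate consequence of \eqref{q3}, \eqref{q4} and part \textbf{3} of Theorem \ref{conjugation2}; your choice $H=\exp\bigl(k_1+\tfrac1h\int_0^x(\mu_1-\mu_1^\sharp)\,ds\bigr)$ is exactly the bookkeeping the paper leaves implicit, with the $O(he^{-\mu s})$ bound in \eqref{q4} giving boundedness of $H$ and the fixed component $-i$ of $T_1$ giving the lower bound on $|\theta_1|$. The uniformity issues you flag (invertibility of $T$ and boundedness of $k_1$, $f_{1j}$ on $[x(\uzeta)+\delta,\infty)$) are precisely what part \textbf{3} and the exponential convergence of the profile are set up to guarantee, so nothing further is needed.
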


In the next section we show  that there is a nonvanishing scalar function $s(\zeta,h)$ such that
$s(\zeta,h)\theta(x,\zeta,h)$ is of type $\theta_1$ at $x(\uzeta)-2\delta$ for $\zeta\in\omega_1$.  This is done by matching arguments that use Airy functions to represent  exact solutions on a full neighborhood of the turning points.
It then follows as in Proposition \ref{f9w} that $s(\zeta,h)\theta(x,\zeta,h)$ is of type $\theta_1$ at $x=0$.

\section{Turning points in $(0,\infty)$.}\label{finitetp}

We now fix a basepoint $\uzeta\in \mathrm{III}_+^o$ with associated turning point $x(\uzeta)\in (0,\infty)$.  The goal is to find a neighborhood $\omega\ni\uzeta$ and a constant $h_0>0$ such that the stability function $V(\zeta,h)\neq 0$ for
$\zeta\in\omega$ and $0<h\leq h_0$.  The first step is to conjugate the system \eqref{q1} to the block form \eqref{h7}:

\begin{align}\label{q6}
h\phi_x=\begin{pmatrix}A^0_{11}+hd_{11}+h^2\beta_{11}&0\\0&A^0_{22}+hd_{22}+h^2\beta_{22}\end{pmatrix}\phi:=\begin{pmatrix}A_{11}(x,\zeta,h)&0\\0&A_{22}(x,\zeta,h)\end{pmatrix}\phi,
\end{align}
for $\zeta$ near $\uzeta$ and $x$ in a complex neighborhood of $x(\uzeta)$.

\begin{prop}\label{q7}
 Let $\uzeta\in \mathrm{III}_+^o$ and let $x(\uzeta)\in (0,\infty)$ be the corresponding turning point.  There exists a constant $h_0>0$ and simply connected open neighborhoods  $\omega\ni\uzeta$, $\mathcal O\ni x(\uzeta)$   such that for $\zeta\in\omega$ and $0<h\leq h_0$ a conjugator $Y(x,\zeta,h)$ can be constructed on $\mathcal O$ with the property that
$\theta(x,\zeta,h)$ satisfies the Erpenbeck system \eqref{q1} on $\mathcal O$ if and only if $\phi$ defined by $\theta=Y\phi$
satisfies \eqref{q6}.  The conjugator $Y(x,\zeta,h)$ is bounded and analytic in its arguments.  The entries of the $2\times 2$ block $A_{11}(x,\zeta,h)$ in \eqref{q6} again have the form given in \eqref{f45}.

\end{prop}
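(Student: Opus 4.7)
The plan is to mirror the two-step conjugation developed in Section~\ref{conj} for the turning point at infinity, substantially simplified by the fact that we now work on a bounded complex neighborhood rather than an infinite wedge. First I would choose simply connected $\mathcal{O}\ni x(\uzeta)$ and $\omega\ni\uzeta$ small enough that, by Assumption~\ref{profile}, the profile $P(x)$ extends analytically to $\mathcal{O}$ and the map $x(\zeta)$ from Lemma~\ref{q11} sends $\omega$ into $\mathcal{O}$; all entries of $\Phi_0(x,\zeta)$ and $\Phi_1(x)$ are then analytic on $\mathcal{O}\times\omega$.

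The first conjugator is $Y_1=(P_0,Q_0,T_3,T_4,T_5)$ exactly as in \eqref{h2}--\eqref{h3}. Even though the eigenvector matrix $T(x,\zeta)$ from \eqref{t6} degenerates at $x(\uzeta)$ because $T_1=P_0+sQ_0$ and $T_2=P_0-sQ_0$ coalesce as $s\to 0$, the replacement pair $(P_0,Q_0)$ does not, so $\det Y_1$ is analytic and nonvanishing on $\mathcal{O}\times\omega$ after possibly shrinking. The algebra producing \eqref{h5}--\eqref{h6} in the wedge case applies here verbatim and yields an approximately block-diagonal system with leading blocks $A^0_{11}$, $A^0_{22}$ and $O(h)$ off-diagonal coupling. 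The crucial quantitative input is spectral separation of the two blocks at every point of $\mathcal{O}\times\omega$, including the turning point itself: when $s=0$ the two eigenvalues of $A^0_{11}$ coalesce to $-\kappa^2\uzeta/(\eta u)$, while the triple eigenvalue of $A^0_{22}$ is $\uzeta/u$, and their difference reduces using $\kappa^2+\eta=1$ to $\uzeta/(u\eta)$, which is nonzero since $\uzeta\ne 0$. A short computation shows that away from $s=0$ the separation $\spec(A_{11}^0)\cap\spec(A_{22}^0)=\emptyset$ is equivalent to $\zeta^2\ne u^2$, which holds throughout a neighborhood of $\uzeta\in\mathrm{III}_+^o$ since $\uzeta^2<0<u^2$. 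Hence the Sylvester operator $L\alpha:=A^0_{22}\alpha-\alpha A^0_{11}$ is uniformly invertible on $L^\infty(\mathcal{O})$.

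The second conjugator $Y_2=I+O(h)$ of the form \eqref{h6a} is then produced by solving the Sylvester-type ODE \eqref{f45b} and its analogue for $\alpha_{12}$ on $\mathcal{O}$. Writing $\alpha_{21}=L^{-1}d_{21}+h\tilde\alpha_{21}$ recasts the equation as a fixed-point problem for $\tilde\alpha_{21}$ whose nonlinearity is small in $h$; a contraction argument in $L^\infty(\mathcal{O})$ gives a unique analytic solution for $0<h\leq h_0$, with $\alpha_{12}$, $\alpha_{21}$ bounded and analytic in their arguments. Setting $Y=Y_1Y_2$ produces the desired conjugator, and the identity $Y_2=I+O(h)$ forces $A_{11}=A^0_{11}+hd_{11}+h^2\beta_{11}$, preserving the structure stated in \eqref{f45}.

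The main technical obstacle is the spectral-separation verification at the crossing $s=0$, which is what allows us to diagonalize the $\{\mu_1,\mu_2\}$ block \emph{off} from the $\{\mu_3,\mu_4,\mu_5\}$ block even though $\mu_1$ and $\mu_2$ themselves merge at the turning point. Once that separation is in hand, the compactness of $\mathcal{O}$ replaces the delicate integral-equation argument used on the infinite wedge in Proposition~\ref{h10z} by a routine contraction, and no exponential-decay estimates at infinity are needed.
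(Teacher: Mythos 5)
Your overall route is the same as the paper's: $Y=Y_1Y_2$ with $Y_1=(P_0,Q_0,T_3,T_4,T_5)$ as in \eqref{h2}--\eqref{h3} (invertible at the turning point even though $T_1,T_2$ coalesce), analytic extension of the profile to a complex neighborhood $\mathcal{O}\ni x(\uzeta)$, and spectral separation of $A^0_{11}$ from $A^0_{22}$ on $\mathcal{O}\times\omega$ as the key input. Your explicit verification of that separation, both at the crossing (difference $\uzeta/(\eta u)$, nonzero since $\uzeta\neq 0$) and away from it (failure only when $\zeta^2=u^2$, excluded because $\uzeta^2<0<u^2$ on $\mathrm{III}_+^o$), is correct and usefully fills in what the paper merely asserts.

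The one step that does not work as you describe it is the construction of $Y_2$. Equation \eqref{f45b} is singularly perturbed: after substituting $\alpha_{21}=-L^{-1}d_{21}+h\tilde\alpha_{21}$ (note the sign; setting $h=0$ gives $\alpha_{21}=-L^{-1}d_{21}$), the corrector $\tilde\alpha_{21}$ satisfies an equation of exactly the same form, $h\,d_x\tilde\alpha_{21}=L\tilde\alpha_{21}+F_0+h(\cdots)$, so the troublesome term $h\,d_x$ has not been made small: on $L^\infty(\mathcal{O})$ the map $\tilde\alpha\mapsto L^{-1}\bigl(h\,d_x\tilde\alpha-\cdots\bigr)$ is not a contraction because $d_x$ is unbounded, and Cauchy estimates for analytic functions lose domain, so a sup-norm iteration does not close uniformly in $h$. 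What actually closes the argument is the integral-equation formulation with kernel $e^{h^{-1}\mathcal{A}(\zeta,h)(x-y)}$, where $\mathcal{A}$ is the Sylvester operator (plus $O(h)$ corrections) whose spectrum consists of the differences $\mu_j-\mu_k$, $j\ge 3$, $k\le 2$; near $(x(\uzeta),\uzeta)$ these lie near the nonzero imaginary axis, so one can choose integration paths in $\mathcal{O}$ along which the real part of the exponent has a fixed sign and the kernel stays bounded, exactly as in the proof of Proposition \ref{h10z} but without any decay-at-infinity considerations. Alternatively, one simply invokes the classical block-diagonalization theorem near a turning point, Theorem 6.1-1 of \cite{Wa}, which is what the paper does. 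With that repair (and the harmless sign fix) your argument coincides with the paper's, and $Y_2=I+O(h)$ indeed yields the form \eqref{f45} for $A_{11}$ in \eqref{q6}.
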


\begin{proof}
The conjugator is constructed as $Y=Y_1Y_2$, where $Y_1(x,\zeta)$ is  given by \eqref{h2} and $Y_2(x,\zeta,h)$  has the form \eqref{h6a}.  The entries of $Y_{2}$ satisfy equations like \eqref{f45b} and are constructed by  a classical contraction argument;  see, for example, Theorem 6.1-1 of \cite{Wa}.   The analyticity of the $Y_j$ in $x$ is a consequence of the fact that the profile $P(x)$ extends analytically to a complex neighborhood of $x(\uzeta)$.
As in Proposition \ref{h10z}, the argument uses the fact that the blocks $A^0_{11}(x,\zeta)$ and $A^0_{22}(x,\zeta)$ in \eqref{h5} have no eigenvalues in common for $(x,\zeta)\in \mathcal O\times \omega$ for small enough $\mathcal O$, $\omega$.

\end{proof}

Writing $\phi=(\phi_1,\phi_2)$ and letting $\varphi_0(x,\zeta,h)$ denote any function such that $d_x\varphi_0=\frac{a+d}{2}$, we obtain by the same calculations that produced \eqref{f46}  that
solutions of $hd_x\phi_1=A_{11}(x,\zeta,h)\phi_1$ in $\mathcal O$ are given by

\begin{align}\label{q8}
\phi_1=e^\frac{\varphi_0}{h}\begin{pmatrix}b^{1/2}&0\\\alpha b^{-1/2}-h(b^{-1/2})_x&b^{-1/2}\end{pmatrix}\begin{pmatrix}w\\hw_x\end{pmatrix}:=K(x,\zeta,h)\begin{pmatrix}w\\hw_x\end{pmatrix},
\end{align}
where $(w,hw_x)$ satisfies
\begin{align}\label{q9}
h\begin{pmatrix}w\\hw_x\end{pmatrix}_x=\begin{pmatrix}0&1\\C(x,\zeta)+hr(x,\zeta,h)&0\end{pmatrix}\begin{pmatrix}w\\hw_x\end{pmatrix}.
\end{align}
Thus, we can construct two independent solutions on $\mathcal{O}$ of the original system \eqref{q1} of the form
\begin{align}\label{q10}
\theta=Y(x,\zeta,h)\begin{pmatrix}K(x,\zeta,h)\begin{pmatrix}w\\hw_x\end{pmatrix}\\0\end{pmatrix},
\end{align}
using independent solutions of \eqref{q9}.

\begin{rem}\label{q15}
In the remainder of this section the simply connected neighborhoods $\omega\ni \uzeta$ and $\mathcal O\ni x(\uzeta)$ may need to be reduced in size a finite number of times.  These reductions will often be performed without comment.
\end{rem}

 The following propositions will allow us to construct solutions of \eqref{q9} in terms of Airy functions.
 Recall that for $\zeta\in \mathrm{III}_+^0$ the number $x(\zeta)\in (0,\infty)$ is the unique root of $\zeta^2+c_0^2\eta(x)=0$.

Let us write the function $C(x,\zeta)$ in \eqref{q9} as
\begin{align}\label{q14}
\begin{split}
&C(x,\zeta)=(\zeta^2+c_0^2\eta(x))\ub^2(x)=(x-x(\zeta))d(x,\zeta),\\
 &\qquad \text{ where }d(x,\zeta)=\int^1_0 C_x(x(\zeta)+t(x-x(\zeta)),\zeta)dt.
\end{split}
\end{align}

\begin{prop}\label{q16}
The equation
\begin{align}\label{q17}
\rho^2_x\rho=C(x,\zeta)
\end{align}
has a solution for $(x,\zeta)\in\mathcal O\times \omega$ given by
\begin{align}\label{q18}
\rho(x,\zeta)=(x(\zeta)-x)\left(\int^1_0 3u^2\sqrt{-d(x(\zeta)+u^2(x-x(\zeta)), \zeta)} \;du\right)^{2/3},
\end{align}
where the expression inside the square root and the square root itself are positive when $x$ is real and $\Re\zeta=0$.   The function $\rho$ is analytic in both $x$ and $\zeta$ and satisfies:
\begin{align}\label{q19}
\begin{split}
&a)\;\rho(x(\zeta),\zeta)=0 \text{ for }\zeta\in\omega;\\
&b)\;\text{for }x\text{ real and }\Re\zeta=0, \;\rho(x,\zeta)\text{ is real  and }\rho_x(x,\zeta)<0;\\
&c)\;\text{for each }\zeta\in\omega,\; \rho(\cdot,\zeta) \text{ is an analytic homeomorphism of }\mathcal{O}\text{ onto a neighborhood }\mathcal{O}_\zeta\ni 0;\\
&d)\;\text{ for }\Re\zeta=0, \text{ we have }(-i\rho_\zeta)(x(\zeta),\zeta)>0.
\end{split}
\end{align}
\end{prop}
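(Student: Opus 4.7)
The plan is to recognize $\rho$ as the image of $C$ under a classical Liouville-type substitution. Formally, setting $\Psi(x,\zeta):=\tfrac{2}{3}\rho(x,\zeta)^{3/2}$ turns \eqref{q17} into $\Psi_x^2=C(x,\zeta)$, which suggests
\begin{align*}
\Psi(x,\zeta)=\int_x^{x(\zeta)}\sqrt{C(s,\zeta)}\,ds,
\end{align*}
with sign chosen so that $\Psi(x,\zeta)>0$ when $x<x(\zeta)$ is real and $\Re\zeta=0$. The integrand has a branch point at $s=x(\zeta)$, which is removed by the quadratic change of variables $s=x(\zeta)+u^2(x-x(\zeta))$, $u\in[0,1]$. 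A direct calculation then gives
\begin{align*}
\Psi(x,\zeta)=2(x(\zeta)-x)^{3/2}\int_0^1 u^2\sqrt{-d(x(\zeta)+u^2(x-x(\zeta)),\zeta)}\,du,
\end{align*}
and raising $\tfrac{3}{2}\Psi=\rho^{3/2}$ to the power $2/3$ recovers the proposed formula \eqref{q18}.

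Next I establish analyticity of $\rho$. Since $P(x)$ is of type D and $\uzeta\in\mathrm{III}_+^o$, one has $d(x(\uzeta),\uzeta)=C_x(x(\uzeta),\uzeta)=(c_0^2\eta)_x(x(\uzeta))\,\ub^2(x(\uzeta))<0$, so $-d(x(\uzeta),\uzeta)$ is a positive real number. After shrinking $\mathcal O$ and $\omega$ (cf.\ Remark \ref{q15}), $-d(s,\zeta)$ stays in a small disk about this positive value along the integration path, away from the negative real axis, so the principal branch of $\sqrt{-d(s,\zeta)}$ is analytic. Differentiation under the integral sign makes
\begin{align*}
G(x,\zeta):=\int_0^1 3u^2\sqrt{-d(x(\zeta)+u^2(x-x(\zeta)),\zeta)}\,du
\end{align*}
analytic on $\mathcal O\times\omega$, with $G(x(\zeta),\zeta)=\sqrt{-d(x(\zeta),\zeta)}\neq 0$. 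After a further shrinkage, the principal branch of $G^{2/3}$ is well defined, so $\rho(x,\zeta)=(x(\zeta)-x)\,G(x,\zeta)^{2/3}$ is analytic. To verify \eqref{q17}, I differentiate $\Psi$ in the original integral form to obtain $\Psi_x=-\sqrt{C(x,\zeta)}$ with the compatible branch, hence $\Psi_x^2=C$; combining with the chain-rule identity $\Psi_x^2=\rho\,\rho_x^2$ proves \eqref{q17} wherever $\rho\neq 0$, and then everywhere by analytic continuation.

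Finally I turn to the four properties. (a) is immediate from the factor $(x(\zeta)-x)$. For (b), when $\Re\zeta=0$ and $x$ is real, $x(\zeta)\in\mathbb R$ by Lemma \ref{q11} and $-d(x(\zeta),\zeta)>0$, hence by continuity $-d>0$ along the integration range, $G$ is real positive, and $\rho$ is real. Computing $\rho_x=-G^{2/3}+\tfrac{2}{3}(x(\zeta)-x)G^{-1/3}G_x$ and evaluating at $x=x(\zeta)$ gives $\rho_x(x(\zeta),\zeta)=-(-d(x(\zeta),\zeta))^{1/3}<0$, and the sign is preserved on a (possibly smaller) $\mathcal O\cap\mathbb R$. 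Statement (c) follows from $\rho_x\neq 0$ throughout $\mathcal O\times\omega$---guaranteed either by \eqref{q17} at points where $C\neq 0$ or by the explicit leading value at $x=x(\zeta)$---together with the analytic inverse function theorem and a final shrinkage. For (d), differentiating the identity $\rho(x(\zeta),\zeta)\equiv 0$ in $\zeta$ yields
\begin{align*}
\rho_\zeta(x(\zeta),\zeta)=-\rho_x(x(\zeta),\zeta)\,x_\zeta(\zeta)=(-d(x(\zeta),\zeta))^{1/3}\,x_\zeta(\zeta),
\end{align*}
which for $\Re\zeta=0$ is purely imaginary with positive imaginary part, since $(-d)^{1/3}$ is positive real and $x_\zeta(\zeta)\in i\mathbb R_{>0}$ by the computation in the proof of Lemma \ref{q11}.

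The main obstacle I anticipate is tracking branch choices consistently: ensuring that $\sqrt{-d}$, $G^{2/3}$, and the factor $(x(\zeta)-x)$ are all defined by compatible principal branches so that $\rho$ is analytic \emph{and} \eqref{q17} holds with the correct sign, rather than, say, $\rho_x^2\rho=-C$ or a constant multiple thereof. The quadratic substitution $s=x(\zeta)+u^2(x-x(\zeta))$ is precisely what resolves this, by absorbing the branch point at the turning point into an entire factor of $(x-x(\zeta))$ before any fractional power is applied to a quantity that vanishes.
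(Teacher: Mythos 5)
Your proof is correct and follows essentially the same route as the paper: define $\rho^{3/2}$ (your $\Psi$) as the integral of $\sqrt{C}$ from the turning point, remove the branch point with the quadratic substitution $s=x(\zeta)+u^2(x-x(\zeta))$ to obtain \eqref{q18}, read off analyticity and (a)--(c), and recover \eqref{q17} on all of $\mathcal{O}\times\omega$ by analyticity of both sides. The only (minor) difference is in (d), where the paper differentiates $\rho_x^2\rho=C$ in $\zeta$ at $x=x(\zeta)$ while you differentiate the identity $\rho(x(\zeta),\zeta)\equiv 0$ and invoke Lemma \ref{q11}; both computations yield the same positivity of $-i\rho_\zeta$.
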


\begin{rem}\label{q19a}
The inequality \eqref{q19}(d) implies that the map $\zeta\to\rho(x(\uzeta),\zeta)$ is an analytic homeomorphism of a neighborhood of $\uzeta$ onto a neighborhood of $0$.  From \eqref{q19}(b) and (d) we see that for $\zeta$ near $\uzeta$, when $\Re\zeta>0$ we have $\Im \rho(x(\uzeta),\zeta)>0$.   After  shrinking $\omega$ if necessary, we conclude that for real $x$  near $x(\uzeta)$ and $\zeta\in\omega$ with $\Re\zeta>0$, we have $\Im\rho(x,\zeta)>0$.

\end{rem}

The system \eqref{q9} is equivalent to the scalar equation
\begin{align}\label{q20}
h^2w_{xx}=(C(x,\zeta)+hr(x,\zeta,h))w.
\end{align}
Using \eqref{q17}, the  property \eqref{q19}(c), and  for each $\zeta\in\omega$ making the changes of variables
\begin{align}\label{q20a}
y=\rho(x,\zeta),\; W(y,\zeta):=w(x(y,\zeta)),
\end{align}
we find that \eqref{q20} takes the form (suppressing some $\zeta$ arguments)
\begin{align}\label{q21}
h^2\rho_x(x(y))d_y\left(\rho_x(x(y))W_y\right)=[y\rho_x^2(x(y))+hr(x(y),\zeta,h)]W.
\end{align}
The transformation
\begin{align}
f(y)=(\rho_x(x(y))^{1/2}W(y)
\end{align}
leads to
\begin{align}\label{q22}
h^2f_{yy}=(y+hq(y,h))f, \text{ where }q(y,h)=r\rho_x^{-2}+h\rho_x^{-1/2}d^2_y(\rho_x^{1/2}).
\end{align}
This is a perturbation of Airy's equation that we can rewrite as
\begin{align}\label{q23}
h\begin{pmatrix}f\\hf_y\end{pmatrix}_y=\begin{pmatrix}0&1\\y+hq(y,h)&0\end{pmatrix}\begin{pmatrix}f\\hf_y\end{pmatrix}.
\end{align}

The following Proposition is a classical result of turning point theory.  A reference for the proof is \cite{Wa}, Theorem 6.5-1.
\begin{prop}[Exact conjugation to Airy's equation]\label{q23a}
There exists $h_0>0$ and a conjugator $P(y,\zeta,h)=I+hQ(y,\zeta,h)$, with $Q$ bounded and analytic in its arguments $x\in \mathcal{O}$, $\zeta\in\omega$, $0<h\leq h_0$,\footnote{Recall Remark \ref{q15}.}such that the transformation $(f,hf_y)=PZ$ takes \eqref{q23} into the equation
\begin{align}\label{q24}
hZ_y=\begin{pmatrix}0&1\\y&0\end{pmatrix}Z.
\end{align}
\end{prop}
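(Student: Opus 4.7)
The plan is to derive the matrix ODE that $Q$ must satisfy, verify that it admits an analytic and uniformly bounded solution via a contraction argument adapted to the presence of the turning point, and invoke the explicit structure of Airy's equation to control the resulting integral operator.

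Writing $A_0(y,h)=\begin{pmatrix}0&1\\y+hq(y,h)&0\end{pmatrix}=A_1+hB$ with $B=\begin{pmatrix}0&0\\q&0\end{pmatrix}$, and requiring $(f,hf_y)=PZ$ to satisfy \eqref{q23} when $Z$ satisfies \eqref{q24}, one computes $hP_y=A_0P-PA_1$. Substituting $P=I+hQ$ and cancelling the $h^0$ term (which is $A_1-A_1=0$), this becomes
\begin{equation}\label{plan:Qeq}
hQ_y=[A_1,Q]+B+hBQ.
\end{equation}
Thus the proposition reduces to showing that \eqref{plan:Qeq} has a unique analytic solution $Q(y,\zeta,h)$ that is uniformly bounded on $\mathcal{O}\times\omega\times(0,h_0]$.

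The main obstacle is the turning point at $y=0$: the eigenvalues $\pm\sqrt{y}$ of $A_1$ coalesce there, so the naive formal series $Q=Q_0+hQ_1+\cdots$ stalls at leading order (the equation $[A_1,Q_0]=-B$ is not solvable for $Q_0$ analytic at $y=0$ when $q(0,h)\neq 0$), and Levinson-type diagonalization is unavailable on any neighborhood of the turning point. The remedy is to use the fundamental matrix $\Psi(y,h)$ of the homogeneous Airy system $hZ_y=A_1Z$, whose columns are built from $\mathrm{Ai}(h^{-2/3}y)$, $\mathrm{Bi}(h^{-2/3}y)$ and their derivatives, and to rewrite \eqref{plan:Qeq} via $\tilde Q:=\Psi^{-1}Q\Psi$: the commutator term is absorbed into the derivative, yielding
\begin{equation*}
h\tilde Q_y=\Psi^{-1}B\Psi+h\Psi^{-1}B\Psi\tilde Q,
\end{equation*}
which can be integrated along a path based at a convenient point to produce an integral equation for $\tilde Q$. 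Using the classical asymptotics for Airy functions one verifies that $\Psi^{-1}B\Psi$, restricted to paths chosen to respect the Stokes structure of Airy's equation, remains bounded independently of $h$ on compact subsets of $\mathcal{O}$; the Picard iteration then converges for $h$ sufficiently small via the Banach fixed point theorem, giving a unique analytic $\tilde Q$ and hence $Q=\Psi\tilde Q\Psi^{-1}$ analytic and uniformly bounded.

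Analyticity in $(\zeta,h)$ follows from analytic dependence of $A_0$ and $q$ on these parameters (guaranteed by Propositions \ref{q7} and \ref{q16} and the formula for $q$ in \eqref{q22}), together with uniform convergence of the iterates. The constant $h_0$ is determined by the size required to make the contraction constant less than one. The argument is classical turning-point theory; the version used here is carried out in full detail as Theorem~6.5-1 of \cite{Wa}, and the content of our proof is simply to check that the hypotheses of that theorem hold after the reductions \eqref{q20}--\eqref{q23}, in particular that the perturbation $q(y,h)$ inherits the required analyticity from the analytic extension of the profile to $\mathcal{O}$.
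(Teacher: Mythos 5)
Your derivation of the exact equation $hQ_y=[A_1,Q]+B+hBQ$ (with $A_1=\begin{pmatrix}0&1\\y&0\end{pmatrix}$, $B=\begin{pmatrix}0&0\\q&0\end{pmatrix}$) is correct, and your closing fallback — that the proposition is Theorem 6.5-1 of \cite{Wa} and that the proof consists of checking its hypotheses (analyticity of $q$ in $y$, $\zeta$, $h$ after the reductions \eqref{q20}--\eqref{q23}, the simple turning point already normalized by the Langer variable $\rho$, and uniformity for $\zeta\in\omega$) — is in fact exactly what the paper does: the proposition is quoted as a classical result with precisely that citation and no further argument. Your observation that no expansion $Q=Q_0+hQ_1+\cdots$ with $y$-analytic coefficients can exist is also sound (indeed $[A_1,Q_0]=-B$ is unsolvable at \emph{every} point where $q\neq 0$, since solvability forces the trace pairing $\mathrm{tr}(A_1B)=q$ to vanish, not just at $y=0$).

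The self-contained argument you sketch in between, however, has a genuine gap at its analytic core. With $\Psi(y,h)=\begin{pmatrix}\mathrm{Ai}(\eta)&\mathrm{Bi}(\eta)\\h^{1/3}\mathrm{Ai}'(\eta)&h^{1/3}\mathrm{Bi}'(\eta)\end{pmatrix}$, $\eta=h^{-2/3}y$, one has $\det\Psi=h^{1/3}/\pi$ and $\Psi^{-1}B\Psi=\pi q\,h^{-1/3}\begin{pmatrix}-\mathrm{Ai}\,\mathrm{Bi}&-\mathrm{Bi}^2\\ \mathrm{Ai}^2&\mathrm{Ai}\,\mathrm{Bi}\end{pmatrix}$ evaluated at $\eta$. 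On a full, $h$-independent neighborhood $\mathcal O$ of the turning point the $\mathrm{Bi}^2$ entry is of order $|y|^{-1/2}e^{\frac{4}{3}\Re(y^{3/2})/h}$, i.e.\ exponentially large in $1/h$ wherever $\Re\,y^{3/2}>0$, and even at $y=0$ all entries are of size $h^{-1/3}$; so the claim that $\Psi^{-1}B\Psi$ is bounded independently of $h$ is false, and no choice of paths ``respecting the Stokes structure'' repairs it, because the obstruction is the pointwise size of the conjugated coefficient, not the direction of integration. Moreover, even a bounded $\tilde Q$ would not yield a bounded $Q=\Psi\tilde Q\Psi^{-1}$, since the condition number of $\Psi$ is itself exponentially large in $1/h$ away from the anti-Stokes set. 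Volterra/Picard arguments of this kind do construct \emph{individual} solutions with prescribed asymptotics in sectors (this is what the Olver-type theorems used elsewhere in the paper provide), but producing a bounded exact \emph{conjugator} on a full neighborhood of a turning point is precisely the nontrivial content of the Sibuya--Wasow uniform simplification theorem, whose proof is considerably more delicate than a contraction on a single integral equation. The correct course — and the paper's — is to invoke \cite{Wa}, Theorem 6.5-1 directly and limit the proof to verifying its hypotheses, including analytic and uniform dependence on the parameter $\zeta\in\omega$.
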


For any $\zeta\in\omega$ two independent solutions of \eqref{q24} on $\mathcal O_\zeta$ (as in \eqref{q19}(c)) are given by
\begin{align}\label{q25}
Z_\pm(y)=\begin{pmatrix}Ai(h^{-2/3}e^{\pm 2\pi i/3}y)\\h^{1/3}e^{\pm 2\pi i/3}Ai(h^{-2/3}e^{\pm 2\pi i/3}y)\end{pmatrix}.
\end{align}

We recall that the phase function $\varphi_0$ in \eqref{q8} is required to be a primitive of $\frac{a+d}{2}$,
where $a=\ua+O(h)$, $d=\ud+O(h)$ (see \eqref{h7} and \eqref{f45}). Since $\ua=\ud$ is defined for all $x\geq 0$ and for all $\zeta$, and morever extends analytically to a complex neighborhood of the positive real axis, we may (and do)  henceforth take $\varphi_0$ of the form
\begin{align}\label{q25a}
\varphi_0(x,\zeta,h)=\int^x_0\ua(s,\zeta)ds+O(h).
\end{align}
Using the formula \eqref{q10} for $\theta$ and retracing through the changes of variables, we obtain

\begin{prop}[Exact solutions of \eqref{q1}] \label{q26}
For each $\zeta\in\omega$ two exact independent solutions $\theta_\pm(x,\zeta,h)$ on $\mathcal{O}$ of the original $5\times 5$ system \eqref{q1} are given by formula \eqref{q10} with
\begin{align}\label{q27}
\begin{pmatrix}w\\hw_x\end{pmatrix}_\pm=\begin{pmatrix}\rho_x^{-1/2}&0\\h\rho_x d_y(\rho_x^{-1/2})&\rho_x^{1/2}\end{pmatrix}\begin{pmatrix}f\\hf_y\end{pmatrix}_\pm=\begin{pmatrix}\rho_x^{-1/2}&0\\h\rho_x d_y(\rho_x^{-1/2})&\rho_x^{1/2}\end{pmatrix}P(\rho,\zeta,h)Z_\pm(\rho)
\end{align}
\end{prop}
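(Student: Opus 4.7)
The plan is to simply unwind the chain of exact transformations built up in the section, verifying at each step that the transformation is invertible on $\mathcal O$ (after the shrinkings allowed by Remark \ref{q15}) so that both linear independence and the precise form of the solution are preserved.

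First, I would start at the innermost level. By construction, $Z_\pm(y)$ defined in \eqref{q25} are two linearly independent exact solutions of the Airy system \eqref{q24}, since $\mathrm{Ai}(h^{-2/3}e^{\pm 2\pi i/3}y)$ are two linearly independent Airy functions. Applying Proposition \ref{q23a}, the vectors $(f,hf_y)_\pm := P(y,\zeta,h) Z_\pm(y)$ are exact solutions of the perturbed Airy system \eqref{q23} (equivalently, of the scalar equation \eqref{q22}), and they remain linearly independent because $P=I+hQ$ is invertible for $h_0$ small.

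Next, I would invert the transformation $f(y)=\rho_x(x(y))^{1/2}W(y)$ followed by $w(x):=W(\rho(x,\zeta))$. A direct chain-rule computation gives $hW_y= h(\rho_x^{-1/2})_y f+\rho_x^{-1/2}hf_y$, and then $hw_x=\rho_x\cdot hW_y$; combining these yields exactly the $2\times 2$ matrix appearing in \eqref{q27}. Property \eqref{q19}(b,c) guarantees $\rho_x\neq 0$ throughout $\mathcal O$ after shrinking, so that matrix is invertible and the two columns $(w,hw_x)_\pm$ produce independent exact solutions of \eqref{q20}, equivalently of the first-order system \eqref{q9}. Applying \eqref{q8} then gives $\phi_1=K(x,\zeta,h)(w,hw_x)^t$ satisfying $hd_x\phi_1=A_{11}\phi_1$; here $K$ is invertible because $\det K=1$ once a consistent branch of $b^{1/2}$ is chosen on the simply connected set $\mathcal O$ (with $b$ nonvanishing by \eqref{f45} for $\omega,\mathcal O$ small), and $\varphi_0$ is unambiguously defined by \eqref{q25a}. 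Finally, Proposition \ref{q7} yields that $\theta=Y(x,\zeta,h)\begin{pmatrix}\phi_1\\0\end{pmatrix}$ is an exact solution of \eqref{q1}, which is precisely the formula \eqref{q10}. Linear independence of the two resulting $\theta_\pm$ is immediate from the invertibility of $Y$, $K$, and the matrix in \eqref{q27}.

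I do not expect a substantive obstacle in this proposition; the content is really a bookkeeping verification that the concatenation of the exact conjugations produced in Propositions \ref{q7} and \ref{q23a} and of the explicit algebraic transformations leading from \eqref{q9} to \eqref{q23} matches the displayed formula. The only mild delicacy is the repeated shrinking of $\mathcal O$ and $\omega$ to ensure simultaneously that $\rho_x$, $b$, and $P$ are invertible and that all conjugators remain analytic in their arguments; each of these nonvanishing conditions has already been secured by the cited propositions, so the shrinkings can be performed once and for all at the outset.
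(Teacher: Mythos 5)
Your proposal is correct and follows essentially the same route as the paper, which obtains Proposition \ref{q26} simply by "retracing through the changes of variables" from \eqref{q10} via \eqref{q8}, \eqref{q20a}, the substitution $f=\rho_x^{1/2}W$, and Proposition \ref{q23a}, exactly as you do. The only nit is that $\det K=e^{2\varphi_0/h}$ rather than $1$ (because of the scalar factor $e^{\varphi_0/h}$), but all that is needed is its nonvanishing, so your argument stands.
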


Ignoring relative $O(h)$ errors in \eqref{q10}, we obtain by these substitutions $\theta_\pm(x,\zeta,h)\sim$
\begin{align}\label{q28}
\begin{split}
\quad e^{\varphi_0/h}\left[b^{1/2}(\rho_x)^{-1/2} Ai(h^{-2/3}\rho e^{\pm2\pi i/3})P_0+b^{-1/2}h^{1/3}(\rho_x)^{1/2}e^{\pm2\pi i/3} Ai'(h^{-2/3}\rho e^{\pm 2\pi i/3})Q_0\right].
\end{split}
\end{align}

We now choose $\delta>0$ and a neighborhood $\omega_1\ni \uzeta$ satisfying \eqref{q4a} as in Corollary \ref{q5a}, and so that for $\mathcal{O}\ni x(\uzeta)$ as  Proposition \ref{q26} we have
\begin{align}\label{q29}
x(\omega_1)\cup [x(\uzeta)-2\delta,x(\uzeta)+2\delta]\subset \mathcal{O}.
\end{align}
The next Proposition shows that for $H$ as in  Corollary \ref{q5a}, a nonvanishing multiple of the exact decaying solution $H(x(\uzeta)+2\delta,\zeta,h)\theta$  is of type $\theta_1$ at $x(\uzeta)-2\delta$.

\begin{prop}\label{q30}
Let $x_L=x(\uzeta)-2\delta$ and $x_R=x(\uzeta)+2\delta$. For $\zeta\in\omega_1$ there is an $h_0>0$ and a nonvanishing scalar function $\alpha(\zeta,h)$ such that
\begin{align}\label{q30a}
|\alpha(\zeta,h)H(x_R,\zeta,h)\theta(x_L,\zeta,h)-\theta_1(x_L,\zeta,h)|\leq Ch|\theta_1(x_L,\zeta,h)|
\end{align}
for $0<h\leq h_0$.
\end{prop}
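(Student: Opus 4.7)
The plan is to identify $\theta$ on the neighborhood $\mathcal{O}$ as an explicit linear combination of the Airy-based exact solutions $\theta_+,\theta_-$ from Proposition \ref{q26}, fix the coefficients by matching against $\theta_1$ on the overlap with $[x(\uzeta)+\delta,\infty)$ via Corollary \ref{q5a}, and then transport the identification through the turning point to $x_L$ using the classical connection formula for Airy functions. Since $\theta$ comes from a solution of the reduced scalar equation \eqref{q20} through \eqref{q8}--\eqref{q10}, and since $\theta_+,\theta_-$ span the corresponding two-dimensional block on $\mathcal{O}$, we may write $\theta = c_+(\zeta,h)\theta_+ + c_-(\zeta,h)\theta_-$ on $\mathcal{O}$.

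First I would evaluate $\theta_\pm$ at $x_R$, where $\rho(x_R,\zeta)<0$ for $\zeta=\uzeta$ and where, by Remark \ref{q19a}, perturbing to $\zeta\in\omega_1$ keeps the Airy arguments $h^{-2/3}e^{\pm 2\pi i/3}\rho(x_R,\zeta)$ in the oscillatory Stokes sectors of $Ai$. Substituting the standard large-argument asymptotic $Ai(z)\sim\frac{1}{2\sqrt\pi}z^{-1/4}e^{-\frac{2}{3}z^{3/2}}$ (and its derivative) into \eqref{q28}, and using the defining identity $\rho_x^2\rho=C$ from \eqref{q17} to identify $\frac{2}{3}\rho^{3/2}$ with the primitive of $\sqrt{C}$ that appears in $h_1$ and $h_2$, one verifies that $\theta_\pm(x_R,\zeta,h)$ decompose as linear combinations of the WKB approximations $\theta_1(x_R)$ and $\theta_2(x_R)$, with a connection matrix whose determinant is bounded away from zero uniformly in $\zeta\in\omega_1$ and $0<h\leq h_0$. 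Combining this with Corollary \ref{q5a}, which gives $H(x_R,\zeta,h)\theta(x_R)=\theta_1(x_R)(1+O(h))$, and using that $\theta_1,\theta_2$ are linearly independent as $5$-vectors at $x_R$ (since $T(x_R,\zeta)$ is invertible away from turning points), the coefficients $c_+(\zeta,h)$ and $c_-(\zeta,h)$ are determined to relative $O(h)$ error.

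I would then evaluate $c_+\theta_+(x_L)+c_-\theta_-(x_L)$ at $x_L$, where $\rho(x_L,\zeta)>0$, so that both $Ai(h^{-2/3}e^{\pm 2\pi i/3}\rho)$ grow exponentially like $e^{+\frac{2}{3}h^{-1}\rho^{3/2}}$, matching the growing WKB exponent $\mu_2$. The decaying branch corresponding to $\theta_1(x_L,\zeta,h)$ is carried by the subdominant $A_0(\rho):=Ai(h^{-2/3}\rho)$, recovered through the Airy identity $A_0(\rho)+\omega A_1(\rho)+\omega^{-1}A_{-1}(\rho)=0$, where $A_{\pm 1}(\rho)=Ai(h^{-2/3}e^{\pm 2\pi i/3}\rho)$ and $\omega=e^{2\pi i/3}$. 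The key calculation is that the particular $(c_+,c_-)$ forced in the previous step is precisely the combination that cancels the leading $e^{+\frac{2}{3}h^{-1}\rho^{3/2}}$ contributions at $x_L$ and leaves, to relative $O(h)$ error, a nonvanishing scalar multiple $\kappa(\zeta,h)$ of $\theta_1(x_L,\zeta,h)$. Setting $\alpha(\zeta,h):=\left(\kappa(\zeta,h)H(x_R,\zeta,h)\right)^{-1}$, which is nonvanishing since both factors are, then yields \eqref{q30a}.

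The main obstacle I anticipate is verifying uniformity in $\zeta\in\omega_1$ and $0<h\leq h_0$. Because $\rho(x,\zeta)$ depends analytically on $\zeta$ with $\Im\rho$ changing sign as $\Re\zeta$ crosses $0$, the Airy arguments $h^{-2/3}e^{\pm 2\pi i/3}\rho(x_{R/L},\zeta)$ drift within their Stokes sectors as $\zeta$ varies. After shrinking $\omega_1$ and $\mathcal{O}$ as permitted by Remark \ref{q15}, these arguments stay in fixed closed subsectors where the dominant/subdominant Airy expansions hold with uniform remainder, and the $P=I+hQ$ correction from Proposition \ref{q23a} is uniform as well; the cancellation producing $\theta_1(x_L)$ with relative error $O(h)$ is then uniform, giving \eqref{q30a}.
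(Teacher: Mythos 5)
Your starting decomposition $\theta=c_+\theta_++c_-\theta_-$ on $\mathcal{O}$ is not justified, and this is the central gap. The representation of the decaying solution through the reduced scalar equation (as in \eqref{f47}) is tied to the block conjugation constructed near $x=+\infty$ for $\zeta$ near $\zeta_\infty$; the conjugator of Proposition \ref{q7} is a different, purely local object on $\mathcal{O}\ni x(\uzeta)$, and the exact decaying solution, continued back to $\mathcal{O}$, has in general nonzero components along the three remaining modes. The paper therefore expands $H(x_R,\zeta,h)\theta$ in a full basis $\{\theta_-,\theta_+,\overline\theta_3,\overline\theta_4,\overline\theta_5\}$, where the $\overline\theta_j$, $j=3,4,5$, are exact solutions of type $\theta_j$ obtained by analytic continuation through the region $\cS$ of \eqref{r11}, which avoids the moving turning point $x(\zeta)$, so that the relative bound \eqref{r12} is available at both $x_R$ and $x_L$. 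It then shows $c_1=1+O(h)$ while $c_2,\dots,c_5$ carry explicit exponential factors, and -- this is where the real work lies -- compares exponential growth rates at $x_L$ in \eqref{r17}--\eqref{r19}, using $\Im(-\rho)^{3/2}(x_R-\delta,\zeta)\le 0$ (Remark \ref{q19a}, estimate \eqref{r18a}) to handle $\Re\zeta>0$. Since $\theta_1(x_L)$ is exponentially small, the $j\ge 3$ contributions cannot simply be dropped; without these comparisons the bound \eqref{q30a} is not established.

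The connection step at $x_L$ is also misidentified. Because $\rho$ is normalized at the turning point and $\rho_x<0$, the dominant Airy behavior at $x_L$, namely $e^{\varphi_0/h+\frac{2}{3}h^{-1}\rho^{3/2}}$, has exponent derivative $\ua+s\ub+O(h)=\mu_1$, and the vector part in \eqref{r9} is $P_0+sQ_0=T_1$: both $\theta_-$ and $\theta_+$ are individually (exponentially large) multiples of a $\theta_1$-type solution at $x_L$, so no cancellation of dominant exponentials is needed to recover a subdominant $Ai(h^{-2/3}\rho)$ branch; your identification of the dominant growth at $x_L$ with $\mu_2$ is backwards. Moreover, if such a cancellation were required, your scheme could not deliver it: the coefficients $c_\pm$ are determined at $x_R$ only to relative accuracy $O(h)$, which is far too crude to force cancellation of terms of size $e^{c/h}$. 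The correct mechanism is the paper's: $\theta_-$ is of type $\theta_1$ at $x_R$ and again, after rescaling by $K_-$, at $x_L$ (see \eqref{r9a}), while the other four terms in the expansion are shown to be relatively $O(h)$ at $x_L$ by the growth-rate comparison, leading to $\alpha(\zeta,h)=G_-(x_R,\zeta,h)K_-^{-1}(x_L,\zeta,h)$.
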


The proof, given in section \ref{pffinite}, is based on expanding $H(x_R,\zeta,h)\theta(x,\zeta,h)$ in a basis of local exact solutions of \eqref{q1}, $\cB=\{\theta_-,\theta_+,\overline\theta_3,\overline\theta_4,\overline\theta_5\}$, where the $\overline\theta_j$, $j=3,4,5$ are of type $\theta_j$ for approximate solutions $\theta_j$ as in \eqref{t10}.
Using the expansions \eqref{f57} for the Airy function, we show first that appropriate multiples of $\theta_-$ and $\theta_+$ are, respectively, of type $\theta_1$ and $\theta_2$ at $x_R$.  Corollary \ref{q5a} and the explicitly known asymptotic behavior (in $h$) of the elements of $\cB$ at both $x_R$ and $x_L$ then allow us to conclude that \eqref{q30a} holds.

Finally, the proof of Proposition \ref{e4u} yields

\begin{prop}\label{q31}
There is a neighborhood $\omega_1\ni\uzeta$ and an $h_0>0$ such that for $\zeta\in\omega_1$ and $\alpha$ as in Proposition \ref{q30}:
\begin{align}\label{q32}
|\alpha(\zeta,h)H(x_R,\zeta,h)\theta(x,\zeta,h)-\theta_1(x,\zeta,h)|\leq Ch|\theta_1(x,\zeta,h)|\text{ on }[0,x_L]
\end{align}
for $0<h\leq h_0$.
\end{prop}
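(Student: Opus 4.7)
The plan is to follow the proof of Proposition \ref{e4u} verbatim, replacing the right endpoint $M$ by $x_L=x(\uzeta)-2\delta$, with Proposition \ref{q30} playing the role of Proposition \ref{e4w}. After shrinking $\omega_1$ if necessary so that \eqref{q4a} together with $x(\uzeta)-\delta<\Re x(\zeta)$ places every turning point $x(\zeta)$, $\zeta\in\omega_1$, strictly to the right of $x_L$, the interval $[0,x_L]$ contains no turning point for the Erpenbeck system \eqref{f43}. Consequently, Theorem 3.1 of \cite{LWZ1} produces, for each $j=1,\dots,5$, an exact solution $\overline\theta_j(x,\zeta,h)$ on $[0,x_L]$ satisfying
\begin{equation}
|\overline\theta_j(x,\zeta,h)-\theta_j(x,\zeta,h)|\leq Ch\,|\theta_j(x,\zeta,h)|\text{ on }[0,x_L],
\end{equation}
where the $\theta_j$ are the WKB approximate solutions of \eqref{t10}. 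I would then expand the exact decaying solution in this basis,
\begin{equation}
\alpha(\zeta,h)H(x_R,\zeta,h)\theta(x,\zeta,h)=\sum_{j=1}^{5}c_j(\zeta,h)\,\overline\theta_j(x,\zeta,h)\text{ on }[0,x_L].
\end{equation}

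Next I would establish the sign pattern of $\Re\mu_j$ on $[0,x_L]$. For $\zeta=i|\zeta|$ near $\uzeta$, one has $\zeta^2+c_0^2\eta(x)>0$ for $x\in[0,x_L]$ by the type D hypothesis and the choice of $x_L$, so $s(x,\zeta)>0$ is real. From \eqref{t4} this gives $\Re\mu_1=-\kappa s/(\eta u)<0$, $\Re\mu_2=+\kappa s/(\eta u)>0$, and $\Re\mu_j=0$ for $j=3,4,5$. By continuity these inequalities persist on $\omega_1$ for $\Re\zeta\geq 0$ (with $\Re\mu_j\geq 0$, $j=3,4,5$), so setting $h_j(x,\zeta)=\int_0^x\mu_j(x',\zeta)\,dx'$ we obtain
\begin{equation}
-\Re h_1(x_L,\zeta)=:a>0,\quad \Re h_2(x_L,\zeta)=:b>0,\quad \Re h_j(x_L,\zeta)=:c\geq 0,\;\;j=3,4,5.
\end{equation}

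Evaluating the expansion at $x=x_L$ and applying Proposition \ref{q30}, namely
\[
\alpha(\zeta,h)H(x_R,\zeta,h)\theta(x_L,\zeta,h)=\theta_1(x_L,\zeta,h)+O(h)\,|\theta_1(x_L,\zeta,h)|,
\]
together with the explicit leading forms of the $\overline\theta_j(x_L,\zeta,h)$ (governed by the phases $e^{h_j(x_L,\zeta)/h}$ and the linearly independent leading vectors $T_j(x_L,\zeta)$), I would invert the resulting linear system by Cramer's rule to obtain
\begin{equation}
c_1=1+O(h),\quad c_2=O(h)e^{-(a+b)/h},\quad c_j=O(h)e^{-(a+c)/h}\;\;(j=3,4,5).
\end{equation}
For any $x\in[0,x_L]$, the monotonicity $\Re(h_j-h_1)$ nondecreasing in $x$ (which follows from $\Re\mu_j\geq\Re\mu_1$ on $[0,x_L]$, with strict inequality for $j=2$) gives $|\theta_j(x)|/|\theta_1(x)|\leq |\theta_j(x_L)|/|\theta_1(x_L)|$; combined with the exponentially small bounds on $c_j$ for $j\neq 1$, and the relative $O(h)$ closeness of $\overline\theta_1$ to $\theta_1$, this yields \eqref{q32}.

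The only mildly delicate point is the uniformity in $\zeta\in\omega_1$ of the separation $\Re\mu_j-\Re\mu_1\geq\eta_0>0$ near the turning point (where $s(x,\zeta)\to 0$ as $x\to x(\zeta)$ for $\Re\zeta=0$). This is handled by noting that on $[0,x_L]$ the function $s(x,\zeta)$ remains bounded below by a positive constant uniform in $\zeta\in\omega_1$, since $x_L$ is bounded away from every turning point $x(\zeta)$. Thus the argument is a routine adaptation of the proof of Proposition \ref{e4u}; no genuinely new difficulty arises.
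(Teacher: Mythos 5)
Your proposal is correct and follows essentially the paper's own route: the paper proves Proposition \ref{q31} by invoking the proof of Proposition \ref{e4u} verbatim with $M$ replaced by $x_L$ and Proposition \ref{q30} playing the role of the type-$\theta_1$ statement at the right endpoint, which is exactly what you do (basis of exact solutions $\overline\theta_j$ of type $\theta_j$ on the turning-point-free interval $[0,x_L]$, expansion of $\alpha H\theta$, evaluation at $x_L$, Cramer's rule, and dominance of the $\overline\theta_1$ term from the sign pattern $\Re\mu_1<0<\Re\mu_2$, $\Re\mu_{3,4,5}\geq 0$). Your added remarks on the uniform lower bound for $s$ on $[0,x_L]$ and the $O(h)$ (rather than $O(1/|\tilde\beta|)$) relative error are consistent with the paper's setup and fill in the same details the paper leaves implicit.
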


\section{The turning point at $0$.}\label{tpzero}

 For the boundary point frequency $\zeta_0\in \mathrm{III}_+$ the point $x=0$, where we need explicit information about the exact decaying solution $\theta$ in order to evaluate the stability function $V(\zeta,h)$, coincides with the turning point.   This fact presents some new difficulties that we sketch after stating Proposition \ref{q34}.

The first step in treating the turning point at $x=0$ is to extend the detonation profile $p(x)$ analytically to a complex neighborhood of $x=0$; this allows us to study the Erpenbeck system \eqref{q1} on a neighborhood of $x=0$.  We can then immediately extend Lemma \ref{q11} to obtain an analytic homeomorphism $x:\omega\to \mathcal{O}$, where now $x(\zeta_0)=0$, $\omega\ni\zeta_0$ and $\mathcal{O}\ni 0$.  Similarly, with no changes in the proofs we obtain extensions of Theorem \ref{conjugation2} part \textbf{3}, Corollary \ref{q5a}, and Propositions \ref{q7}, \ref{q16}, \ref{q23a}, and \ref{q26} to the case where $\uzeta$ is now $\zeta_0$.  In particular, we obtain exact solutions
$\theta_\pm$ on $\mathcal{O}$ satisfying \eqref{q28}.

We now choose $\delta>0$ and a neighborhood $\omega_1\ni \zeta_0$ satisfying \eqref{q4a} as in Corollary \ref{q5a}, and so that for $\mathcal{O}\ni 0$ as  Proposition \ref{q26} we have
\begin{align}\label{q33}
x(\omega_1)\cup [-2\delta,2\delta]\subset \mathcal{O}.
\end{align}
The next Proposition shows that for $H$ as in  Corollary \ref{q5a}, a nonvanishing multiple of the exact decaying solution $H(2\delta,\zeta,h)\theta$  is of type $\theta_1$ at $0$.

\begin{prop}\label{q34}
Fix $\kappa>0$. There exists a neighborhood $\omega_2\ni\zeta_0$ with $\omega_2\subset\omega_1$, an $h_0>0$, and a nonvanishing scalar function $\alpha(\zeta,h)$ such that
\begin{align}\label{q35}
|\alpha(\zeta,h)H(2\delta,\zeta,h)\theta(0,\zeta,h)-\theta_1(0,\zeta,h)|\leq \kappa|\theta_1(0,\zeta,h)|
\end{align}
for $\zeta\in\omega_2$ and $0<h\leq h_0$.  Both $\omega_2$ and $h_0$ depend on $\kappa$.
\end{prop}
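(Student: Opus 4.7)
The proof adapts the matching strategy of Proposition \ref{q30}, with the essential new feature that the evaluation point $x=0$ coincides with the turning point $x(\zeta_0)=0$, so the WKB representation of the exact decaying solution breaks down at $x=0$ and the Airy representation of Proposition \ref{q26} must be used throughout.

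As preparation I would, using the real-analyticity of $P(x)$ to extend it to a complex neighborhood of $0$, note that Lemma \ref{q11}, Corollary \ref{q5a}, and Propositions \ref{q7}, \ref{q16}, \ref{q23a}, \ref{q26} apply verbatim with basepoint $\uzeta=\zeta_0$. This yields a simply connected complex neighborhood $\mathcal O\ni 0$ containing $[-2\delta,2\delta]$, exact local solutions $\theta_\pm$ on $\mathcal O$ with leading behavior \eqref{q28}, and a nonvanishing scalar $H(x,\zeta,h)$ near $x=2\delta$ making $H(2\delta)\theta$ of type $\theta_1$ there. I would adjoin exact solutions $\overline\theta_j(x,\zeta,h)$ of type $\theta_j$ for $j=3,4,5$ on $[\delta,2\delta]$ via \cite{LWZ1}, Theorem 3.1, and expand
\[
H(2\delta,\zeta,h)\theta(x,\zeta,h)=c_-\theta_-(x,\zeta,h)+c_+\theta_+(x,\zeta,h)+\sum_{j=3}^{5}c_j\overline\theta_j(x,\zeta,h)
\]
on $[\delta,2\delta]$. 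Matching at $x=2\delta$ via the large-argument Airy asymptotics (valid since $\rho(2\delta,\zeta)$ stays bounded away from $0$) together with Cramer's rule and the separations $\Re\mu_1<0$, $\Re\mu_j>0$ for $j\geq 2$ on $[\delta,2\delta]$, I would identify $\theta_-$ (after a possible relabelling) as the WKB-$\theta_1$ mode at $2\delta$ and obtain
\[
c_-=\alpha(\zeta,h)^{-1}(1+O(h)),\qquad c_+,\ c_j=O(\alpha(\zeta,h)^{-1}e^{-c/h}),
\]
for some $c>0$ and a nonvanishing scalar $\alpha(\zeta,h)$.

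Evaluating $\alpha H(2\delta)\theta$ at $x=0$, the exponentially small contributions from $\theta_+$ and $\overline\theta_j$ are negligible, leaving the leading term
\[
\alpha c_-\,\theta_-(0,\zeta,h)=e^{\varphi_0(0,\zeta,h)/h}\bigl[A_P(\zeta,h)P_0(0,\zeta)+A_Q(\zeta,h)Q_0(0,\zeta)\bigr]+O(h),
\]
with $A_P$ proportional to $Ai(\rho_A)$ and $A_Q$ proportional to $h^{1/3}Ai'(\rho_A)$, where $\rho_A:=h^{-2/3}\rho(0,\zeta)e^{-2\pi i/3}$. The scalar $\alpha(\zeta,h)$ is then normalized so that $A_P(\zeta,h)=1$, which is possible because $Ai$ has no zeros on the ray $\arg=-2\pi/3$. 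Since $\theta_1(0,\zeta,h)=T_1(0,\zeta)+O(h)=P_0(0,\zeta)+s(0,\zeta)Q_0(0,\zeta)+O(h)$ and $|P_0(0,\zeta)|$ is bounded below near $\zeta_0$, proving \eqref{q35} reduces to showing
\[
|A_Q(\zeta,h)-s(0,\zeta)|\leq\kappa'\quad\text{uniformly on }\omega_2\times(0,h_0]
\]
for a suitable $\kappa'=\kappa'(\kappa)$.

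This last uniform bound is the main obstacle and is the reason the bound in \eqref{q35} has the form $\kappa$ rather than $O(h)$ as in Proposition \ref{q30}. I would split into the complementary regimes $|\rho_A|\leq 1$ and $|\rho_A|\geq 1$. In the first, $Ai(\rho_A)$ and $Ai'(\rho_A)$ are bounded away from $0$ and $\infty$, so $A_Q=O(h^{1/3})$, while simultaneously $|s(0,\zeta)|=O(|\zeta-\zeta_0|^{1/2})$, and the required bound follows by choosing $\omega_2$ and $h_0$ sufficiently small. In the second, the Airy asymptotic $Ai'(z)/Ai(z)\sim -z^{1/2}$ combines with $\rho(0,\zeta)\sim c(\zeta-\zeta_0)$ (from \eqref{q11a}--\eqref{q18} together with $x(\zeta_0)=0$) and $s^2(0,\zeta)=\zeta^2-\zeta_0^2\sim 2\zeta_0(\zeta-\zeta_0)$ to give $A_Q(\zeta,h)=s(0,\zeta)+O(h^{1/3}|s|)$, with the coefficient on $s$ automatically matching the WKB prediction; the remainder is controlled by shrinking $h_0$. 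Combining the two regimes and absorbing all small factors completes the proof of \eqref{q35}.
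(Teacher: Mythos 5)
Your setup (extension of the local theory to $\uzeta=\zeta_0$, expansion of $H(2\delta,\zeta,h)\theta$ in the basis $\{\theta_-,\theta_+,\overline\theta_3,\overline\theta_4,\overline\theta_5\}$, Cramer's rule at $x=2\delta$, and the final observation that both $s(0,\zeta)$ and $\rho(0,\zeta)$ are small near $\zeta_0$, which is why one only gets the $\kappa$-bound) agrees with the paper. But there is a genuine gap at the step where you declare the $\theta_+$ and $\overline\theta_j$ contributions ``negligible'' at $x=0$. First, the claimed coefficient bound $c_+,c_j=O(\alpha^{-1}e^{-c/h})$ with a uniform $c>0$ is false on the relevant neighborhood: the exponential separation at $x_R=2\delta$ is governed by $A(\zeta)-B(\zeta)=2\Re\int_0^{x_R}s\,\ub\,dy$ (and similarly $A-C$), which vanishes identically for $\Re\zeta=0$ — and $\zeta_0$ lies on the imaginary axis — so Cramer's rule only gives $c_+=O(h\,e^{(A-B)/h})$, i.e.\ merely $O(h)$ near the axis. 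Second, and more seriously, even where $c_+$ carries an exponentially small factor, $\theta_+(0,\zeta,h)$ can be exponentially \emph{large} relative to $\theta_-(0,\zeta,h)$: writing $\beta=\arg\rho(0,\zeta)$, which sweeps all of $[0,\pi]$ as $\zeta$ varies near $\zeta_0$, the Airy asymptotics give a ratio of size $\exp\bigl(\tfrac{4}{3h}|\rho(0,\zeta)|^{3/2}|\cos(3\beta/2)|\bigr)$ whenever $\beta\in(\pi/3,\pi)$, i.e.\ $\cos(3\beta/2)<0$. So ``small coefficient times value at $0$'' is a competition between two exponentials of comparable size, not an automatic win. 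The paper resolves exactly this point: it shows the net exponent is favorable if and only if the inequality \eqref{s12}, $\Re\bigl(i(-\rho)^{3/2}(\delta,\zeta)\bigr)\geq|\rho(0,\zeta)|^{3/2}|\cos(3\beta/2)|$, holds, and this is arranged (via \eqref{s13}--\eqref{s15a}) only by shrinking $\omega_2$ so that $|\rho(\delta,\zeta)|/|\rho(0,\zeta)|$ is large — a constraint tying $\omega_2$ to $\delta$ that your argument never produces. In addition, when $\beta$ is near $\pi/3$ the argument of $e^{2\pi i/3}\rho(0,\zeta)$ is near $\pi$, where the Poincar\'e expansion of $Ai$ breaks down (zeros on the negative real axis), and the paper must switch to the oscillatory expansions \eqref{s17} to estimate $\theta_+(0,\zeta,h)$; your proposal never confronts this case because it has already discarded $\theta_+$.

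Two smaller points: your $\overline\theta_j$, $j=3,4,5$, are only constructed on $[\delta,2\delta]$, yet you evaluate their contributions at $x=0$; the paper builds them on $[0,3\delta]$ using the exact block-diagonalization of Proposition \ref{q7} (the $3\times 3$ block has no turning point), and then must still verify $|c_j\overline\theta_j(0)|/|c_1\overline\theta_1(0)|\leq Ch$ using the same exponent comparison as above (estimates \eqref{s5}--\eqref{s6}), since for $\Re\zeta=0$ these coefficients are again only $O(h)$, not exponentially small. Your analysis of the surviving $\theta_-$ term (comparing $A_Q$ with $s(0,\zeta)$ in the two regimes $|\rho_A|\lessgtr 1$) is consistent with the paper's Regimes A and B and is fine in spirit, but it addresses the easy half of the proof; the heart of the paper's argument is the control of the discarded modes.
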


As with Proposition \ref{q30} the proof involves working with a local basis of exact solutions
$\cB=\{\theta_-,\theta_+,\overline\theta_3,\overline\theta_4,\overline\theta_5\}$, and again we make use of the expressions \eqref{q28} for $\theta_\pm$ in terms of Airy functions.
However, since $\rho(0,\zeta_0)=0$, we can not use the  Airy function expansions (\eqref{f57}, for example) when $\zeta$ is too close to $\zeta_0$.  Since the arguments of the  Airy functions in \eqref{q28} are
$h^{-2/3}\rho(x,\zeta)e^{\pm 2\pi i/3}$, we see that there are two natural frequency regimes to consider:
\begin{align}\label{q36}
\begin{split}
&\text{Regime A}=\{(\zeta,h):|\rho(0,\zeta)h^{-2/3}|\leq M\\
&\text{Regime B}=\{(\zeta,h):|\rho(0,\zeta)h^{-2/3}|\geq M.
\end{split}
\end{align}
Here $\zeta\in\omega_1$ and $M$ is chosen large enough so that standard expansions of $Ai(z)$ apply in  $|z|\geq M$; thus, we are able to use those expansions in the analysis of Regime B.

 In the   proof of Proposition \ref{q31} for turning points in $(0,\infty)$, it was helpful that the arguments of $\rho(x_R,\zeta)$ and $\rho(x_L,\zeta)$ were always close to $\pi$ and $0$, respectively, for $\zeta$ near $\uzeta$, and thus $e^{\pm 2\pi i/3}\rho(x_{R,L},\zeta)$ stayed away from the negative real axis, where the zeros of $Ai$ and $Ai'$ are located.
  Now, however, the argument of $\rho(0,\zeta)$ can take on all values in $[0,\pi]$ for $\zeta$ near $\zeta_0$.  The analysis of $\theta_+$  is complicated by the fact that for $\arg(\rho(0,\zeta))\sim \pi/3$, we have $\arg  \left(e^{2 \pi i/3}\rho(0,\zeta)\right)\sim \pi$.

The formula \eqref{t10} for the approximate solution $\theta_1$ shows that
\begin{align}\label{q37}
\theta_1(0,\zeta,h)=P_0(0,\zeta)+s(0,\zeta)Q_0+O(h).
\end{align}
The proof of Proposition \ref{q34} makes use of the fact that for $\zeta$ near $\zeta_0$, the functions $s(0,\zeta)$ and $\rho(0,\zeta)$ are both close to zero.  This implies that the terms involving $Q_0$ in both \eqref{q37} and the expression \eqref{q28} for $\theta_-$ are small compared to the terms involving $P_0$.

\section{The case $|\zeta|\geq M$.}\label{unbounded}
We conclude Part \ref{finite} by treating the case $|\zeta|\geq M>>1$.  We must show that there exists $h_0>0$ and $M$ such that  for all $0<h\leq h_0$ and $|\zeta|\geq M$ with $\Re\zeta\geq 0$, the decaying (or bounded when $\Re\zeta=0$) solution $\theta(x,\zeta,h)$ of Erpenbeck's $5\times 5$ system,
\begin{align}\label{k0}
h\theta_x=(\Phi_0(x,\zeta)+h\Phi_1(x))\theta,
\end{align}
is of type $\theta_1$ at $x=0$.  As noted above, this implies nonvanishing of the stability function $V(\zeta,h)$.
Here there are no turning points, but the difficulty is to give a \emph{uniform} treatment of the noncompact set of parameters $\zeta$.   This case was studied on p. 610 of
\cite{E3}, but the choice of $h_0$ there was not uniform with respect to large $\zeta$, and so we are not able to use this result.

\begin{prop}\label{k1}
Let $\theta(x,\zeta,h)$ be as just described and let $h_0=1$.  There exists $M>0$ such that for $|\zeta|\geq M$ and $0<h\leq h_0$ we have
\begin{align}\label{k3}
|\theta(0,\zeta,h)-T_1(0,\zeta)|\leq Ch/|\zeta|,
\end{align}
where $C>0$ is independent of $(\zeta,h)$.

\end{prop}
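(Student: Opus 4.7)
The plan is to reduce, via a rescaling, to the setting already handled by Theorem~\ref{conjugation2}, applied uniformly on a \emph{compact} parameter set. First, the explicit formulas \eqref{t4} together with the expansion $s=\zeta\sqrt{1+c_0^2\eta/\zeta^2}=\zeta+O(|\zeta|^{-1})$ and the bounds of Assumption~\ref{profile} furnish constants $c,M_0>0$ such that
\[
|\mu_i(x,\zeta)-\mu_j(x,\zeta)|\geq c|\zeta|,\quad i\neq j,\qquad \Re\mu_1\leq 0,\quad \Re\mu_j\geq 0\;\;(j=2,\dots,5),
\]
uniformly for $x\in[0,\infty)$ and $\Re\zeta\geq 0$ with $|\zeta|\geq M_0$. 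In particular the decaying subspace remains one-dimensional and all gaps involving $\mu_1$ scale like $|\zeta|$.

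Set $\hat\zeta:=\zeta/|\zeta|$, $\tilde h:=h/|\zeta|$, and $\tilde\Phi_0(x,\hat\zeta,|\zeta|^{-1}):=|\zeta|^{-1}\Phi_0(x,\zeta)$. Since $\Phi_0=\zeta(A_x^{-1})^t+i(A_x^{-1}A_y)^t$ is affine in $\zeta$, the rescaled matrix $\tilde\Phi_0$ extends analytically to $|\zeta|^{-1}=0$ and depends analytically on $(\hat\zeta,|\zeta|^{-1})$ in the compact set $K_\star:=\{|\hat\zeta|=1,\Re\hat\zeta\geq 0\}\times[0,M_0^{-1}]$. Dividing \eqref{k0} by $|\zeta|$ yields the equivalent system
\[
\tilde h\,\theta_x=\bigl[\tilde\Phi_0(x,\hat\zeta,|\zeta|^{-1})+\tilde h\,\Phi_1(x)\bigr]\theta,
\]
whose eigenvalues $\tilde\mu_j=\mu_j/|\zeta|$ are $O(1)$ and, by the previous paragraph, separated by a uniform constant on $K_\star$; the eigenvectors (the columns of $T(x,\zeta)$) are unchanged by the rescaling.

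The matrix $\tilde\Phi_0(x,\hat\zeta,|\zeta|^{-1})$ converges to its endstate at the exponential rate $e^{-\mu x}$ uniformly on $K_\star$, inherited from \eqref{o6a}, so the \cite{MZ} conjugator underlying Theorem~\ref{conjugation2} can be constructed with constants independent of $(\hat\zeta,|\zeta|^{-1})\in K_\star$. Applying part~\textbf{1} of Theorem~\ref{conjugation2} to the rescaled system (with $\tilde h$ playing the role of $h$ and $(\hat\zeta,|\zeta|^{-1})$ treated as auxiliary parameters in the compact set $K_\star$) produces, for a fixed $0<\delta_*<\mu$, an exact decaying (or, when $\Re\zeta=0$, bounded) solution $\theta(x,\zeta,h)$ on $[0,\infty)$ satisfying
\[
\Bigl|\theta(x,\zeta,h)-e^{\tilde h^{-1}\int_0^x\tilde\mu_1^\sharp\,ds}\bigl[T_1(x,\zeta)+\tilde h\,R(x,\zeta,h)\bigr]\Bigr|\leq C\tilde h\,e^{-\delta_*x}\bigl|e^{\tilde h^{-1}\int_0^x\tilde\mu_1^\sharp\,ds}\bigr|,
\]
with $R$ bounded and $C$ independent of $(\hat\zeta,|\zeta|^{-1})\in K_\star$ and of $0<h\leq 1$. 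Evaluating at $x=0$, where the exponential equals $1$, and unwinding $\tilde h=h/|\zeta|$ then yields $|\theta(0,\zeta,h)-T_1(0,\zeta)|\leq Ch/|\zeta|$, which is \eqref{k3}; it suffices to take $M\geq M_0$ large enough that the WKB constants are valid for all $0<h\leq h_0=1$.

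The main obstacle is verifying that every constant entering the WKB and gap-lemma arguments can be taken uniform across the noncompact $\zeta$-range $|\zeta|\geq M$. This is achieved by the change of parameters $\zeta\mapsto(\hat\zeta,|\zeta|^{-1})$, which compactifies the relevant parameter set and on which $\tilde\Phi_0$ extends analytically up to and including $|\zeta|^{-1}=0$; the eigenvalue separation and the exponential convergence $e^{-\mu x}$ of the coefficients to their endstates both hold uniformly on this compact set, so the proof of Theorem~\ref{conjugation2} goes through verbatim. The improvement from the naive error $O(h)$ to the claimed $O(h/|\zeta|)$ is then automatic from the substitution $\tilde h=h/|\zeta|$ in the standard WKB remainder, and requires no additional analysis.
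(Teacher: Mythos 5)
Your proposal follows essentially the same route as the paper: rescale by $|\zeta|$ so that $\tilde h=h/|\zeta|$ becomes the small parameter and $\tilde\Phi_0=\Phi_0/|\zeta|$ has uniformly separated first eigenvalue, then run the proof of Theorem~\ref{conjugation2} (the gap-lemma/WKB argument of \cite{LWZ1}) with uniform constants and evaluate at $x=0$ to get the relative error $O(\tilde h)=O(h/|\zeta|)$. One small slip: the claim $|\mu_i-\mu_j|\geq c|\zeta|$ for all $i\neq j$ is false since $\mu_3=\mu_4=\mu_5$, but this is harmless because, as you note, only the gaps between $\mu_1$ and the remaining eigenvalues enter the argument.
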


\begin{proof}
\textbf{1. }First we rewrite equation \ref{k0} as
\begin{align}\label{k4}
\theta_x=\frac{|\zeta}{h}|\left(\tilde\Phi_0(x,\zeta)+\frac{h}{|\zeta|}\Phi_1(x)\right),
\end{align}
where $\Phi_1(x)=(A_x^{-1}B(x))^t$ as before, and
\begin{align}\label{k5}
\tilde\Phi_0(x,\zeta)=\frac{1}{|\zeta|}\Phi_0=\frac{\zeta}{|\zeta|}(A_x^{-1})^t+\frac{i}{|\zeta|}(A_x^{-1}A_y)^t.
\end{align}
The eigenvalues of $\tilde\Phi_0(x,\zeta)$ are $\tilde\mu_j(x,\zeta):=\frac{1}{|\zeta|}\mu_j(x,\zeta)$ for $\mu_j$ as in \eqref{t4}.

\textbf{2. }As in section 2 of \cite{LWZ1}, direct computation and the use of Assumption \ref{rate} shows that for $\mu>0$ as in \eqref{f44s}
\begin{align}\label{k6}
(A_x^{-1}B)^t(x)=O(e^{-\mu x})+\begin{pmatrix}0\\\mathrm{row\; 5}\end{pmatrix}, \text{ where row 5 }=(*,*,*,*,-r_\lambda/u).
\end{align}
This implies that the eigenvalues of $\tilde\Phi_0(x,\zeta)+\frac{h}{|\zeta|}\Phi_1(x)$ are
\begin{align}\label{k7}
\begin{split}
&\mu_j^*:=\frac{1}{|\zeta|}\mu_j(x,\zeta)+O(\tilde h e^{-\mu x}), \;j=1,2,3,4\;\;\;\tilde h:=\frac{h}{|\zeta|}\\
&\mu_5^*=\frac{1}{|\zeta|}\mu_3(x,\zeta)-\tilde h \frac{r_\lambda}{u}+O(\tilde h e^{-\mu x}),\text{ where }r_\lambda<0.
\end{split}
\end{align}

\textbf{3. Uniform separation of eigenvalues. }Using the fact that
\begin{align}\label{k8}
\begin{split}
&\mu_2(x,\zeta)-\mu_1(x,\zeta)=\frac{2\kappa s}{\eta u},\;\;s(x,\zeta)=\sqrt{\zeta^2+c_0^2\eta(x)}\\
&\mu_3(x,\zeta)-\mu_1(x,\zeta)=\frac{\zeta+\kappa s}{\eta u},
\end{split}
\end{align}
and noting that $s(x,\zeta)\sim \zeta$ for $|\zeta|>>1$, we obtain for large enough $M$
\begin{align}\label{k9}
|\tilde\mu_1(x,\zeta)-\tilde \mu_j(x,\zeta)|\geq C>0, \;j=2,\dots,5,
\end{align}
where $C$ is independent of $x\in [0,\infty)$ and $|\zeta|\geq M$.  Moreover, since $\Re\mu_j\geq \Re\mu_1$, $j=2,\dots,5$, we find from \eqref{k7} that
\begin{align}\label{k10}
\Re\mu_j^*(x,\zeta,\tilde h)-\Re\mu_1^*(x,\zeta,\tilde h)\geq O(\tilde he^{-\mu  x}),\;j=2,\dots,5
\end{align}
uniformly for $x\in [0,\infty)$ and $|\zeta|\geq M$.

\textbf{4. Conclusion. }As a consequence of the separation inequalities \eqref{k9} and \eqref{k10}, we are in a position to apply (verbatim) the proof of Theorem 2.1 of \cite{LWZ1} to the system
\begin{align}\label{k11}
\theta_x=\frac{1}{\tilde h}\left(\tilde\Phi_0(x,\zeta)+\tilde h\Phi_1(x)\right),
\end{align}
where $\tilde\Phi_0$ and $\tilde h$ play the roles of, respectively, $\Phi_0$ and $h$ in the earlier proof.  For a possibly larger choice of $M$, the argument there\footnote{This argument is based on the ``Variable Coefficient Gap Lemma" stated in Appendix A of \cite{LWZ1}, and first proved in \cite{Z1}.} shows that $\theta$ satisfies
\begin{align}\label{k12}
\left|\theta(x,\zeta,h)-e^{\frac{1}{\tilde h}\int^{ x}_0 \mu_1^\sharp(s,\zeta,\tilde h)ds}\left[T_1(x,\zeta)+O(\tilde h)\right]\right|\leq C \tilde h e^{-\delta x} |e^{\frac{1}{\tilde h}\int^{x}_0 \mu_1^\sharp( s,\zeta,\tilde h)d s}|\;\;\text{ on }[0,\infty),
\end{align}
where $0<\delta<\mu$, $C$ is independent of $|\zeta|\geq M$, and
\begin{align}
\mu_1^\sharp(s,\zeta,\tilde h)=\mu_1^*(s,\zeta,\tilde h)+O(\tilde h e^{-\mu x}).
\end{align}
Evaluating \eqref{k12} at $x=0$ we obtain \eqref{k3}.

\end{proof}

\part{Proofs for part \ref{infinite}}\label{pfinfinite}

\section{Conjugation to block form}\label{conjugation}

In this section we prove Lemma \ref{h10} and Proposition \ref{h10z}.

\begin{proof}[Proof of Lemma \ref{h10}]
 We give the proof for $d_{12}$; the proof for $d_{11}$ is quite similar.  The assertion for $\beta_{11}$ then follows from $\beta_{11}=d_{12}\alpha_{21}$ and the boundedness of $\alpha_{21}$.

Since the $x-$dependence of $d_{12}$ enters only through the profile, it suffices to show $d_{12}(\infty)=0$.
  We have
\begin{align}
D:=\begin{pmatrix}d_{11}&d_{12}\\d_{21}&d_{22}\end{pmatrix}=Y_1^{-1}\Phi_1Y_1-Y_1^{-1}d_xY_1,
\end{align}
so we need only show that that $(1,2)$ entry of $Y_1^{-1}\Phi_1Y_1$ is $0$ at $x=\infty$.  Here $d_{12}$ is a $2\times 3$ submatrix of the $5\times 5$ matrix $D$.

On pages 116-117 of \cite{E2} Erpenbeck writes $\Phi_1=W_{10}+W_{11}$, where $W_{10}(\infty)=0$ and $W_{11}$ is a matrix whose first four rows vanish at $\infty$\footnote{Here we use Assumption \ref{rate}.}.  So it suffices to consider $Y_1^{-1}W_{11}Y_1(\infty)$.    Suppressing  evaluations at $\infty$, we write
\begin{align}
W_{11}=\begin{pmatrix}w_a&w_b\\w_c&w_d\end{pmatrix}
\end{align}
where $w_a=0$ ($w_a$ is $2\times 2$), $w_b=0$ and
\begin{align}
w_c=\begin{pmatrix}0&0\\0&0\\ *&*\end{pmatrix},\;w_d=\begin{pmatrix}0&0&0\\0&0&0\\ *&*&*\end{pmatrix}.
\end{align}
Writing
\begin{align}
Y_1=\begin{pmatrix}t_a&t_b\\t_c&t_d\end{pmatrix}\text{ and }Y_1^{-1}=\begin{pmatrix}s_a&s_b\\s_c&s_d\end{pmatrix}
\end{align}
The $(1,2)$ submatrix of $Y_1^{-1}W_{11}Y_1$ is then $s_b(w_ct_b+w_dt_d)$.  From \eqref{h3} we see that
\begin{align}
t_b=\begin{pmatrix}*&0&0\\ *&0&0\end{pmatrix},\;t_d=\begin{pmatrix}*&0&0\\0&*&0\\0&0&*\end{pmatrix},\;s_b=\begin{pmatrix} *&0&0\\ *&0&0\end{pmatrix}.
\end{align}
Computing $s_b(w_ct_b+w_dt_d)$ we find that all entries vanish.

\end{proof}

\begin{proof}[Proof of Proposition \ref{h10z}]

\textbf{1. Integral equation for $\alpha_{21}$. }In order for $Y_2$ to conjugate solutions of \eqref{h5} to solutions of \eqref{h6a}, we must have
\begin{align}\label{p1}
hY_2'=\mathbb AY_2-Y_2\mathbb B\text{ on the wedge }\mathbb W=\mathbb{W}(M_0,\theta),
\end{align}
where $\mathbb A$ and $\mathbb B$ are the coefficient matrices of \eqref{h5} and \eqref{h6a} respectively. Direct computation shows that the functions $\alpha_{12}$, and $\alpha_{21}$ must therefore satisfy the equations
\begin{align}\label{p2}
\begin{split}
&hd_x\alpha_{12}=A^0_{11}\alpha_{12}-\alpha_{12}A_{22}^0+h(d_{11}\alpha_{12}-\alpha_{12}d_{22})+d_{12}-h^2\alpha_{12}d_{21}\alpha_{12}\\
&hd_x\alpha_{21}=A^0_{22}\alpha_{21}-\alpha_{21}A_{11}^0+h(d_{22}\alpha_{21}-\alpha_{21}d_{11})+d_{21}-h^2\alpha_{21}d_{12}\alpha_{21}.
\end{split}
\end{align}
Thinking of  the $3\times 2$ matrix $\alpha_{21}$ as an element of $\mathbb{C}^6$ and using obvious notation, we rewrite the second equation as
\begin{align}\label{p3}
hd_x\alpha_{21}=\left(\mathcal A(\zeta,h)+O(e^{-\mu \Re x})\right)\alpha_{21}+\left(d_{21}(\infty,\zeta)+O(e^{-\mu \Re x})\right)+O(h^2e^{-\mu \Re x})(\alpha_{21},\alpha_{21}),
\end{align}
where (with slight abuse)
\begin{align}\label{p4}
\mathcal A(\zeta,h)\alpha_{21}=A^0_{22}(\infty,\zeta)\alpha_{21}-\alpha_{21}A_{11}^0(\infty,\zeta)+h\left(d_{22}(\infty,\zeta)\alpha_{21}-\alpha_{21}d_{11}(\infty,\zeta)\right)
\end{align}
Here we have used \eqref{f44s} and the fact that $d_{12}(\infty,\zeta)=0$.

The eigenvalues of $A^0_{11}(\infty,\zeta)$ (resp., $A^0_{22}(\infty,\zeta)$) are $\mu_j(\infty,\zeta)$, $j=1,2$ (resp. $\mu_j(\infty,\zeta)$, $j=3,4,5$). The six eigenvalues $a_j(\zeta,h)$ of $\mathcal{A}(\zeta,h)$ are differences $\lambda_2-\lambda_1$, where $\lambda_j(\zeta,h)$ is an eigenvalue of $A^0_{jj}(\infty,\zeta)+hd_{jj}(\infty,\zeta)$.
From the formulas \eqref{t4} for the $\mu_j$ we see that there exist  constants $\ua$ and $h_0$ and a neighborhood
$\omega\ni\zeta_\infty$ such that all the eigenvalues $a_j(\zeta,h)$ satisfy
\begin{align}\label{p5}
\Im a_j(\zeta,h)>\ua>0\text{ for }0<h\leq h_0,\;\zeta\in\omega.
\end{align}
Given any $\eps_0>0$, after reducing $h_0$ and shrinking $\omega$ if necessary, we will also have for all $j$:
\begin{align}\label{p6}
|\Re a_j(\zeta,h)|<\eps_0 \text{ for }0<h\leq h_0,\;\zeta\in\omega.
\end{align}
In view of \eqref{p3}, we will construct $\alpha_{21}$ as a fixed point of the map (analyzed below)
\begin{align}\label{p7}
\begin{split}
&T\alpha_{21}(x)=\\
&\quad h^{-1}\int^x_{\infty_-}e^{h^{-1}\mathcal A(\zeta,h)(x-y)}[O(e^{-\mu \Re y})\alpha_{21}+\left(d_{21}(\infty,\zeta)+O(e^{-\mu \Re y})\right)+O(h^2e^{-\mu \Re y})(\alpha_{21},\alpha_{21})]\;dy,
\end{split}
\end{align}
where $x\in\mathbb{W}(M_0,\theta)$, $\infty_-$ is the point at $\infty$ on the lower boundary of the wedge $\mathbb{W}$, and the path of integration is a straight segment.

\textbf{2. Estimate of $e^{h^{-1}\mathcal A(\zeta,h)(x-y)}$. }Write $x=x_r+ix_i$ and let $y(s)=s+iy_i(s)$ be a parametrization of the segment from $\infty_-$ to $x$.  If $a(\zeta,h)=a_r+ia_{i}$ denotes any eigenvalue of $\mathcal A(\zeta,h)$, we have
\begin{align}\label{p8}
\Re (a(x-y(s)))=a_r(x_r-s)-a_i(x_i-y_i(s)):=a_r\Delta_r-a_i\Delta_i.
\end{align}
Choosing $\eps_0$ in \eqref{p6} such that $0<\eps_0<\ua\tan \theta$ and noting that $\left|\frac{\Delta_i}{\Delta_r}\right|\geq \tan\theta$, we estimate
\begin{align}\label{p9}
a_r\Delta_r-a_i\Delta_i\leq\eps_0|\Delta_r|-\ua|\Delta_i|\leq \eps_0|\Delta_r|-\ua |\Delta_r|\tan\theta=-\kappa|\Delta_r|, \text{ where }\kappa=\ua\tan\theta-\eps_0>0.
\end{align}
The Jordan form of the matrix $\cA(\zeta,h)$ can have nontrivial blocks, but the semisimplicity of the eigenvalues $\mu_j(\infty)$  $j=3,4,5$, of  $A^0_{22}(\infty,\zeta)$ implies that such a  block can be at most of size $4\times 4$.\footnote{This is because $\cA(\zeta,h)$   has at least 3 independent eigenvectors.}
Thus, \eqref{p8} and \eqref{p9} yield the estimate
\begin{align}\label{p10}
\left|e^{h^{-1}\mathcal A(\zeta,h)(x-y)}\right|\leq C\left(1+\frac{|x_r-s|^3}{h}\right)e^{-\kappa \frac{|x_r-s|}{h}}
\end{align}
on the path $y(s)$.

\textbf{3. Contraction. }Using the estimate \eqref{p10} we can choose $K>0$ such that
\begin{align}\label{p11}
\left|\quad h^{-1}\int^x_{\infty_-}e^{h^{-1}\mathcal A(\zeta,h)(x-y)}d_{21}(\infty,\zeta)dy\right|\leq K \text{ for }0<h\leq h_0,\;\zeta\in\omega,\; x\in\mathbb{W}(M_0,\theta).
\end{align}
In fact the integral in \eqref{p11} is independent of $x$, so we call it $D(\zeta,h)$.   For later use we note that for $x=x_r+ix_i\in \mathbb{W}(M_0,\theta)$,
\begin{align}\label{p12}
h^{-1}\int_\infty^{x_r}\left(1+\frac{|x_r-s|^3}{h}\right)e^{-\kappa \frac{|x_r-s|}{h}}e^{-\mu s}ds\leq Ce^{-\mu x_r}.
\end{align}
Denoting the set of analytic functions on $\mathbb{W}$ by $H(\mathbb{W})$, we let
\begin{align}\label{p13}
B=\{\alpha_{21}\in H(\mathbb{W}):|\alpha_{21}|_{L^\infty(\mathbb{W})}\leq K+1\}.
\end{align}
After increasing $M_0$ if necessary, we see that \eqref{p7}, \eqref{p10}, and \eqref{p12} imply that $T:B\to B$.  Using the same facts and again increasing $M_0$ if necessary, we see that $T$ gives a contraction on $B$.  So we now have a solution
$\alpha_{21}$ to \eqref{p3} satisfying
\begin{align}
|\alpha_{21}|_{L^\infty(\mathbb{W})}\leq K+1.
\end{align}
The contraction argument for $\alpha_{12}$ is  similar, and we leave it to the reader.

\textbf{4. Decay of $\alpha_{21}$ to its endstate. }Recall that $D(\zeta,h)$ is the ($x$-independent) integral in \eqref{p11}.  From \eqref{p7} and \eqref{p12} we see that
\begin{align}
|T\alpha_{21}(x)-D(\zeta,h)|=|\alpha_{21}(x)-D(\zeta,h)|\leq Ce^{-\mu x_r}\text{ for }x\in\mathbb{W}(M_0,\theta).
\end{align}
so $D(\zeta,h)=\alpha_{21}(\infty,\zeta,h)$.

\textbf{5. Derivative estimates. }The estimates \eqref{f45c} are obtained by differentiating \eqref{p7} and again applying \eqref{p12}.

\end{proof}

\section{Regimes I and II}\label{pfinfiniteII}

Regime II is the most difficult of the regimes to treat.  We give the proofs for this regime first; the proofs for Regime I are generally similar, but much simpler.
\subsection{Proofs for Regime II}

After establishing some notation, we give the proofs of Propositions \ref{f12}, \ref{f6}, \ref{f9}, and \ref{f9z}.

\subsubsection{The change of variable $\sigma\to \xi(\sigma)$.}An application of Rouch\'e's Theorem shows that for $|f_p|_{L^\infty(\cZ_{-i\tilde\alpha})}$ sufficiently small, the function $f(\sigma)=(1-\frac{1}{\sigma^2})+f_p(\sigma)$ has a unique simple zero $\sigma_0$ on $\cZ_{-i\tilde \alpha}$, and that $\sigma_0\to 1$ as $|f_p|_{L^\infty}\to 0$.  We set $f_0(\sigma):=1-\frac{1}{\sigma^2}$ and  introduce subscripts to distinguish
\begin{align}\label{f8}
\begin{split}
&\Xi_{f}(\sigma)=\frac{2}{3}\xi^{\frac{3}{2}}_{f}(\sigma)=\int^\sigma_{\sigma_0}\sqrt{f_0+f_p}\;(\sigma)d\sigma,\;\sigma\in\cZ_{cut}(\sigma_0)\text{ and }\\
&\Xi_{f_0}(\sigma)=\frac{2}{3}\xi^{\frac{3}{2}}_{f_0}(\sigma)=\int^\sigma_1 \sqrt{f_0}\;(\sigma)d\sigma, \;\;\sigma\in\cZ_{cut}(1).
\end{split}
\end{align}
Our analysis of $\xi_{f}(\sigma)$ is based on comparison with $\xi_{f_0}(\frac{\sigma}{\sigma_0})$, and for this we must carefully choose the branches of the square roots in \eqref{f8}.   Write
\begin{align}\label{f9a}
\begin{split}
& f_0(\sigma)=(\sigma-1)d_1(\sigma), \text{ where }d_1(\sigma)=\frac{\sigma+1}{\sigma^2},\text{ and }\\
& f(\sigma)=(\sigma-\sigma_0)d_{\sigma_0}(\sigma) \text{ where } d_{\sigma_0}(\sigma)=\frac{\sigma^2-1+\sigma^2f_p(\sigma)}{(\sigma-\sigma_0)\sigma^2}.
\end{split}
\end{align}
Now define
\begin{align}\label{f10}
\sqrt{f_0(\sigma)}=\sqrt{\sigma-1}\sqrt{d_1(\sigma)} \text{ on }\cZ_{cut}(1)
\end{align}
where $\sqrt{\sigma-1}$ is the branch on $\cZ_{cut}(1)$ that is positive for $\sigma>1$, and $\sqrt{d_1(\sigma)}=\frac{\sqrt{\sigma+1}}{\sigma}$, defined on $\Re \sigma>0$, is positive for $\sigma>0$\footnote{This definition yields the branch of $\sqrt{\sigma^2-1}$ used in \eqref{f3}.} .  Similarly, define
\begin{align}\label{f11}
\sqrt{f}(\sigma)=\sqrt{\sigma-\sigma_0}\sqrt{d_{\sigma_0}(\sigma)}\text{ on }\cZ_{cut}(\sigma_0),
\end{align}
where $\sqrt{\sigma-\sigma_0}$ on $\cZ_{cut}(\sigma_0)$ is given by
\begin{align}
\sqrt{\sigma-\sigma_0}=\sqrt{\sigma_0}\sqrt{\frac{\sigma}{\sigma_0}-1}\text{ with }\sqrt{\sigma_0}\sim 1\text{ and }\sqrt{\cdot -1}\text{ as above},
\end{align}
and $\sqrt{d_{\sigma_0}(\sigma)}$ is close to $\sqrt{d_1(\sigma)}$ for $f_p$ small.   Other powers of $\sigma-\sigma_0$ on
$\mathcal Z_{cut}(\sigma_0)$ are defined similarly.

\begin{proof}[Proof of Proposition \ref{f12}.]
\textbf{1. Analyticity on $\cZ_{-i\tilde\alpha}$.}  In the integral
\begin{align}\label{f13}
\frac{3}{2}\Xi_{f}(\sigma)=\frac{3}{2}\int^\sigma_{\sigma_0}\sqrt{s-\sigma_0}\sqrt{d_{\sigma_0}(s)}ds
\end{align}
we make the changes of variable $t=\sqrt{s-\sigma_0}$ and then $t=\sqrt{\sigma-\sigma_0}\;u$ to obtain
\begin{align}\label{f14}
\frac{3}{2}\Xi_{f}(\sigma)=\int_0^{\sqrt{\sigma-\sigma_0}}3t^2\sqrt{d_{\sigma_0}(t^2+\sigma_0)}\; dt=(\sigma-\sigma_0)^{\frac{3}{2}}\int^1_03u^2\sqrt{d_{\sigma_0}((\sigma-\sigma_0)u^2+\sigma_0)} \;du
\end{align}
Estimates given below (see step \textbf{3}) imply that the second integral in \eqref{f14} is nonvanishing on $\cZ_{-i\tilde\alpha}$ for $f_p$ sufficiently small.  Thus, this integral has a well-defined analytic logarithm, which we use to define roots of the integral.\footnote{The logarithm is chosen so that its argument is close to zero for $z$ large and positive.}  In particular, we obtain\footnote{Of course, we have a formula for $\xi_{f_0}(\sigma)$ similar to \eqref{f15} in which $\sigma_0$ is replaced by $1$.}
\begin{align}\label{f15}
\xi_{f}(\sigma)=(\sigma-\sigma_0) \left(\int^1_0 3u^2 \sqrt{d_{\sigma_0}((\sigma-\sigma_0)u^2+\sigma_0)} \;du\right)^{2/3}.
\end{align}
Using the above logarithm we define $\sqrt{\xi_{f}}$ on $\mathcal{Z}_{cut}(\sigma_0)$ and we have
\begin{align}\label{f15z}
\sqrt{\xi_{f}}\;\xi_{f}'=\sqrt{f}\text{ on }\mathcal{Z}_{cut}(\sigma_0).
\end{align}
From \eqref{f15} it is clear that $\xi_{f}$ is analytic  on $\cZ_{-i\tilde\alpha}$.  With \eqref{f15z} it follows that $\xi_{f}'$ is nonvanishing on $\cZ_{-i\tilde\alpha}$.  Thus $\xi_{f}$ is a locally one-to-one, conformal map of $\cZ_{-i\tilde\alpha}$ onto its range.

\textbf{2. Global injectivity. }For some sufficiently small $\delta>0$ and sufficiently large $K>0$ to be chosen, we divide $\cZ_{-i\tilde\alpha}$ into regions $A$, $B$, and $C$ where, respectively, $|\sigma|<\delta$, $\delta\leq |\sigma|\leq K$, and $|\sigma|> K$.  The first and main step is to prove injectivity of $\xi_{f}$ restricted to  each of these subsets.  The proof relies on the global injectivity of $\xi_{f_0}$, which follows from direct analysis of \eqref{f3}.

The next Lemma is essential for proving the injectivity and mapping properties of $\xi_{f}$.

\begin{lem}\label{f11a}
There exist constants $\eps_j=\eps_j(f_p)>0$, $j=A,B,C$, which approach zero as  
$N_p\to 0$,\footnote{$N_p$ occurs in Prop. \ref{d7}.}such that
\begin{align}\label{f11b}
\begin{split}
&(a)\;|\xi_{f}(\sigma)-\xi_{f_0}(\frac{\sigma}{\sigma_0})|\leq \eps_A/|\xi_{f_0}(\frac{\sigma}{\sigma_0})|^{\frac{1}{2}}, \text{ for }\sigma\in A\\
&(b)\;|\xi_{f}(\sigma)-\xi_{f_0}(\frac{\sigma}{\sigma_0})|\leq \eps_B, \text{ for }\sigma\in B\\
&(c)|\Xi_{f}(\sigma)-\Xi_{f_0}(\frac{\sigma}{\sigma_0})|\leq \eps_C|\Xi_{f_0}(\frac{\sigma}{\sigma_0})|, \text{ for }\sigma\in C.
\end{split}
\end{align}

\end{lem}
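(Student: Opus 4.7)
The plan is to prove the three estimates by direct comparison of the integrals representing $\xi_f$ and $\xi_{f_0}(\cdot/\sigma_0)$, exploiting the fact that $f$ and the rescaled function $f_0(\cdot/\sigma_0) = 1 - \sigma_0^2/s^2$ share a common simple zero at $s = \sigma_0$. Specifically, I would factor
\[
f(s) = f_0(s/\sigma_0)\,g(s), \qquad g(s) = \frac{s^2 - 1 + s^2 f_p(s)}{s^2 - \sigma_0^2},
\]
where $g$ is analytic and nonvanishing on $\cZ_{-i\tilde\alpha}$ (vanishing of the numerator at $s=\sigma_0$ is precisely the defining condition of $\sigma_0$). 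A direct calculation yields
\[
g(s) - \sigma_0^{-2} = \frac{s^2\bigl[(\sigma_0^2 - 1) + \sigma_0^2 f_p(s)\bigr]}{\sigma_0^2(s^2 - \sigma_0^2)},
\]
so that $|\sqrt{g(s)} - \sigma_0^{-1}|$ is $O(N_p)$ uniformly on $\cZ_{-i\tilde\alpha}$, with enhanced smallness $O(N_p|s|^2)$ as $s\to 0$.

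I would then use the substitution $s \mapsto s/\sigma_0$ in the integral for $\xi_{f_0}^{3/2}(\sigma/\sigma_0)$ to put both $\xi_f^{3/2}(\sigma)$ and $\xi_{f_0}^{3/2}(\sigma/\sigma_0)$ on a common contour from $\sigma_0$ to $\sigma$, producing the master identity
\[
\tfrac{2}{3}\bigl[\xi_f^{3/2}(\sigma) - \xi_{f_0}^{3/2}(\sigma/\sigma_0)\bigr] = \int_{\sigma_0}^{\sigma}\sqrt{f_0(s/\sigma_0)}\bigl(\sqrt{g(s)}-\sigma_0^{-1}\bigr)\,ds,
\]
valid on $\cZ_{cut}(\sigma_0)$ and extendable, via the analytic continuation of $\xi_f$ and $\xi_{f_0}$ noted in Proposition \ref{f12}, to all of $\cZ_{-i\tilde\alpha}$ after suitable deformation of the path. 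Each of the three estimates would then be obtained by bounding the right-hand side region-by-region and, where necessary, applying the mean-value identity $a^{2/3} - b^{2/3} = \tfrac{2}{3}(a-b)\,c^{-1/3}$ (with $c$ between $a$ and $b$) to convert a bound on $\xi_f^{3/2}-\xi_{f_0}^{3/2}(\sigma/\sigma_0)$ into one on $\xi_f - \xi_{f_0}(\sigma/\sigma_0)$.

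For region $B$ the integrand is $O(N_p)$ along a bounded-length path, so both sides of the master identity are $O(N_p)$ and (b) follows immediately. For region $C$ we have $|\sqrt{f_0(s/\sigma_0)}|=O(1)$ and $|\sqrt{g(s)}-\sigma_0^{-1}|=O(N_p)$, so the right-hand side is $O(N_p|\sigma|)$; since $|\Xi_{f_0}(\sigma/\sigma_0)|\sim|\sigma|$ for large $|\sigma|$, this gives (c) with $\eps_C=O(N_p)$. For region $A$ the $|s|^{-1}$ singularity of $\sqrt{f_0(s/\sigma_0)}$ near $s=0$ is exactly cancelled by the $|s|^2$ decay of $\sqrt{g(s)}-\sigma_0^{-1}$, so the integrand is $O(N_p|s|)$ and the right-hand side is uniformly $O(N_p)$; meanwhile $|\xi_{f_0}(\sigma/\sigma_0)|^{3/2}\sim|\log|\sigma||$ is large, so the mean-value identity with $|c|^{-1/3}\sim|\xi_{f_0}(\sigma/\sigma_0)|^{-1/2}$ delivers (a) with $\eps_A=O(N_p)$.

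The main obstacle I anticipate is the extraction of the $O(|s|^2)$ cancellation needed for (a); without it one only recovers a relative bound of the form $|\xi_f - \xi_{f_0}(\sigma/\sigma_0)| \leq CN_p|\xi_{f_0}(\sigma/\sigma_0)|$, which is far too weak when $|\sigma|$ is small and $|\xi_{f_0}|$ is correspondingly large. A secondary bookkeeping issue is maintaining consistent choices of the square-root and $2/3$-power branches along paths that approach or cross the cut $[0,\sigma_0]$, but this is dictated by the branch conventions \eqref{f10}--\eqref{f11} already fixed in the preceding proof and by the analytic extension property from Proposition \ref{f12}.
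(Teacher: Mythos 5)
Your route is genuinely different in organization from the paper's: the paper substitutes $t=\sqrt{s-\sigma_0}$ and then $t=\sqrt{\sigma-\sigma_0}\,u$ to obtain the representation $\xi_f(\sigma)=(\sigma-\sigma_0)\bigl(\int_0^1 3u^2\sqrt{d_{\sigma_0}((\sigma-\sigma_0)u^2+\sigma_0)}\,du\bigr)^{2/3}$ (and its analogue for $\xi_{f_0}(\sigma/\sigma_0)$), so that the turning-point factor is pulled out of \emph{both} functions before the $2/3$ power is taken, and the lemma reduces to a pointwise comparison of the two $u$-integrands, which exhibits the same cancellation $(\sigma_0^2-1)+\sigma_0^2 f_p$ that you isolate. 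Your multiplicative factorization $f=f_0(\cdot/\sigma_0)\,g$ with $\sqrt g-\sigma_0^{-1}$ small, combined with the common-contour identity for $\Xi_f-\Xi_{f_0}(\sigma/\sigma_0)$, is a clean repackaging of the same cancellation, and your treatment of regions $A$ and $C$ is correct (in $A$ the $O(|s|^2)$ vanishing of $g-\sigma_0^{-2}$ kills the $|s|^{-1}$ singularity of $\sqrt{f_0(s/\sigma_0)}$, and the conversion to (a) via the $2/3$-power mean value inequality works because $|\Xi_{f_0}(\sigma/\sigma_0)|\gtrsim 1$ there; in $C$ the bound $O(N_p|\sigma|)$ against $|\Xi_{f_0}(\sigma/\sigma_0)|\sim|\sigma|$ gives exactly (c)). One small point you should make explicit: uniform smallness of $\sqrt{g}-\sigma_0^{-1}$ \emph{near} $s=\sigma_0$ requires smallness of $f_p'$, not just of $f_p$, since there the numerator must be written as $\sigma_0^2\bigl(f_p(s)-f_p(\sigma_0)\bigr)$ to cancel the factor $s-\sigma_0$ in the denominator; the paper's proof needs the same hypothesis, and it is available from Proposition \ref{estimates}, but it is not automatic from $N_p$ small as literally stated in Proposition \ref{d7}.

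The genuine gap is in region $B$, which contains the turning point $\sigma_0$ -- the very point the lemma is built to handle. Your master identity controls $\Xi_f-\Xi_{f_0}(\sigma/\sigma_0)$, i.e.\ the difference of the $3/2$ powers, and "(b) follows immediately" is not correct as stated: to pass to $|\xi_f-\xi_{f_0}(\sigma/\sigma_0)|$ you must apply the mean-value identity $a^{2/3}-b^{2/3}=\tfrac23(a-b)c^{-1/3}$, and the factor $|c|^{-1/3}$ blows up precisely as $\sigma\to\sigma_0$, where both $\Xi_f$ and $\Xi_{f_0}(\sigma/\sigma_0)$ vanish. The step can be repaired, but it needs an argument: near $\sigma_0$ the integrand in your identity is $O\bigl(\eps|s-\sigma_0|^{1/2}\bigr)$ because $|\sqrt{f_0(s/\sigma_0)}|\lesssim|s-\sigma_0|^{1/2}$, so $|\Xi_f(\sigma)-\Xi_{f_0}(\sigma/\sigma_0)|\lesssim\eps|\sigma-\sigma_0|^{3/2}$, while the unperturbed formula gives $|\Xi_{f_0}(\sigma/\sigma_0)|\gtrsim|\sigma-\sigma_0|^{3/2}$; hence the \emph{relative} error is $O(\eps)$, and then $|\xi_f-\xi_{f_0}(\sigma/\sigma_0)|=|B|^{2/3}\,|(A/B)^{2/3}-1|\leq C\eps|B|^{2/3}\leq C'\eps$ on $B$ (with $A=\tfrac32\Xi_f$, $B=\tfrac32\Xi_{f_0}(\sigma/\sigma_0)$ and branches kept consistent along the short segment from $B$ to $A$). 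This is exactly the cancellation that the paper's representation builds in automatically by factoring $(\sigma-\sigma_0)$ out before taking the $2/3$ power; without supplying it, your region-$B$ estimate fails at the one point where the lemma is most needed.
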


\begin{proof}

\textbf{. Estimates a and b. }We set $\uw=\uw(\sigma,u)=(\frac{\sigma}{\sigma_0}-1)u^2+1$ and write
\begin{align}\label{f11cc}
\xi_{f}(\sigma)-\xi_{f_0}(\frac{\sigma}{\sigma_0})=(\frac{\sigma}{\sigma_0}-1)\left[ \left(\sigma_0^{\frac{3}{2}}\int^1_0 3u^2\sqrt{d_{\sigma_0}(\sigma_0\uw)} du\right)^{2/3}-\left( \int^1_0 3u^2\sqrt{d_1(\uw)} du\right)^{2/3}\right].
\end{align}
Thus,  estimates \textbf{a} and \textbf{b} follow from the fact that given $\eps>0$, we have (for small $f_p$)
\begin{align}\label{f11c}
|\sigma_0^{3/2}\sqrt{d_{\sigma_0}(\sigma_0\uw)}-\sqrt{d_1(\uw)}|\leq\eps |\uw| \text{ for all }(u,\sigma)\in [0,1]\times (A\cup B).
\end{align}
To see this one computes (observing  important cancellations) the difference in \eqref{f11c} to be
\begin{align}\label{f11d}
\frac{\uw\left(\sigma_0^2-1+\sigma_0^2f_p(\sigma_0\uw)\right)}{\sqrt{\uw-1}\left(\sqrt{\sigma_0^2\uw-1+\sigma_0^2\uw^2f_p(\sigma_0\uw)}+\sqrt{\uw^2-1}\right)}.
\end{align}
 For $\uw$ bounded away from $0$ we can factor $\uw-1$ out of numerator and denominator (since $(f_0+f_p)(\sigma_0)=0$) to obtain \eqref{f11c} when $(f_p,f_p')$ is small.
 When $\uw$ is near $0$, we obtain \eqref{f11c} since
 \begin{align}
|\sigma_0^2-1+\sigma_0^2f_p(\sigma_0\uw)|\leq \eps\text{ when }f_p\text{ is small}.
\end{align}

We remark that the individual integrals in \eqref{f11cc} do blow up since $\uw\to 0$ as $(\sigma,u)\to (0,1)$. In fact it is clear from \eqref{f3} that
\begin{align}
|\xi_{f_0}(\sigma)|\sim C (|\ln |\sigma||)^{2/3} \text{ as }\sigma\to 0.
\end{align}

\textbf{Estimate c. }We use again the formula \eqref{f11d}.  Since $|\uw|\sim |\sigma|$ for $|\sigma|$ large and
$|(\sigma_0^2-1+\sigma_0^2f_p(\sigma_0\uw)|\leq \eps$ for $f_p$ small, we have $|\eqref{f11d}|\leq \eps/|\sigma|^{\frac{1}{2}}$ for $|\sigma|$ large. The estimate follows since $|\Xi_{f_0}(\frac{\sigma}{\sigma_0})|\sim |\sigma|$ for $|\sigma|$ large.
\end{proof}

\textbf{3. }The nonvanishing of the second integral in \eqref{f14} for small $f_p$ can be deduced from the above estimates of \eqref{f11d} together with the nonvanishing of the (computable) integral
\begin{align}
\int^1_0 3u^2\sqrt{d_1(\uw)}du.
\end{align}

\textbf{4. Region $B$. }Writing
\begin{align}
\xi_{f}(\sigma)-\xi_{f}(a)=(\sigma-a)\left(\xi'_{f}(a)+(\sigma-a)h(\sigma,a)\right)  \text{ for }\sigma,a\in B,
\end{align}
and noting that there exist positive constants $C_1$, $C_2$ such that $|\xi_{f}'(\sigma)|\geq C_1$ on $B$ and $|h(\sigma,a)|\leq C_2$ on $B\times B$, we see that there exists $\kappa>0$ such that
\begin{align}\label{f16}
\xi_{f}(\sigma)\neq \xi_{f}(a) \text{ for }(\sigma,a)\in B\times B, \sigma\neq a, |\sigma-a|\leq \kappa.
\end{align}

Since region $B$ is compact, $\xi_{f_0}$ is injective on $B$, and $\xi_{f_0}'$ is everywhere nonvanishing, there exists a constant $C>0$ such that
   \begin{align}
   |\xi_{f_0}(\frac{\sigma_1}{\sigma_0})-\xi_{f_0}(\frac{\sigma_2}{\sigma_0})|\geq C |\sigma_1-\sigma_2| \text{ for all }\sigma_1,\sigma_2\in B.
\end{align}
Suppose now that for all $\mu>0$ injectivity of $\xi_{f}|_B$ fails for some perturbation $f_p$ with $|f_p,f'_p|_{L^\infty(B)}<\mu$. Then estimate \eqref{f11b}(b) implies that there exists a sequence of perturbations $f_{p,k}$, sequences of points $\sigma_{1,k}$, $\sigma_{2,k}$ in $B$, and a sequence of positive constants $\eps_{B,k}\to 0$ such that
\begin{align}
0=|\xi_{f_0+f_{p,k}}(\sigma_{1,k})-\xi_{f_0+f_{p,k}}(\sigma_{2,k})|\geq |\xi_{f_0}(\frac{\sigma_{1,k}}{\sigma_0})-\xi_{f_0}(\frac{\sigma_{2,k}}{\sigma_0})|-\eps_{B,k}\geq C|\sigma_{1,k}-\sigma_{2,k}|-\eps_{B,k}.
\end{align}
So $|\sigma_{1,k}-\sigma_{2,k}|\to 0$, which contradicts \eqref{f16}.

\textbf{5. Region A. }For $\sigma_1, \sigma_2\in A$ we write
\begin{align}\label{f17}
\xi_{f}(\sigma_1)-\xi_{f}(\sigma_2)=(\sigma_1-\sigma_2)\int^1_0\xi'_{f}(\sigma_2+s(\sigma_1-\sigma_2))ds.
\end{align}
We claim that for $\delta$ as in step \textbf{2} small enough, the integral on the right in \eqref{f17} is nonvanishing (and in fact very large) for small $f_p$.   Using the explicit form of $f_0$ one computes directly (e.g., using partial fractions) that the dominant contribution to
$\xi_{f_0}'$ for $\sigma\in A$ is a term of the form
\begin{align}\label{f18a}
\frac{C}{\sigma (\log  \sigma)^{1/3}}.
\end{align}
Since $\arg \sigma\sim 0$ for $\sigma\in A$, we deduce\footnote{This can also be derived from the explicit formula \eqref{f3}.}
\begin{align}\label{f18}
\left|\int^1_0 \xi_{f_0}'\left(\frac{\sigma_2+s(\sigma_1-\sigma_2))}{\sigma_0}\right)ds\right|\geq C \frac{1}{|\sigma_1||\ln|\sigma_1||^{1/3}}, \text{ where }|\sigma_1|\geq|\sigma_2|, \;\sigma_1,\sigma_2\in A.
\end{align}
To see that \eqref{f18} holds with $\xi_{f}$ in place of $\xi_{f_0}$, we use the estimate
\begin{align}\label{f18ak}
\left|\xi'_{f}(\sigma)-\frac{1}{\sigma_0}\xi_{f_0}'(\frac{\sigma}{\sigma_0})\right| \leq \frac{\eps}{|\sigma||\ln|\sigma||^{1/3}},
\end{align}
where $\eps\to 0$ as $N_p\to 0$.
To prove this we write out the derivatives in \eqref{f18a} explicitly, forming two differences $A_1-A_2$ and $B_1-B_2$ in the obvious way, and use Lemma \ref{f11a}(a) to estimate
\begin{align}\label{f18aa}
|A_1-A_2|:=|(\int^1_0 3u^2\sqrt{d_{\sigma_0}}du)^{2/3}-(\int^1_0 3u^2\sqrt{d_{1}}du)^{2/3}|\leq \frac{\eps}{|\ln|\sigma||^{1/3}}.
\end{align}
Here and below $d_{\sigma_0}$ is evaluated at $\sigma_0\uw$, and $d_1$ is evaluated at $\uw$.   Since $\sigma-\sigma_0\sim \sigma_0\sim 1$ we have
$|B_1-B_2|\leq$
\begin{align}\label{f18b}
 \begin{split}
&C \left|\frac{1}{\sigma_0^2}\left(\int^1_0 u^2\sqrt{d_{1}}du\right)^{-\frac{1}{3}}\left(\int^1_0 u^4\frac{d'_{1}}{\sqrt{d_{1}}}du\right)-\left(\int^1_0 u^2\sqrt{d_{\sigma_0}}du\right)^{-\frac{1}{3}}\left(\int^1_0 u^4\frac{d'_{\sigma_0}}{\sqrt{\sigma_{z_0}}}du\right)\right|\lesssim\\
 &\quad\left|\sqrt{\sigma_0}\left(\int^1_0 u^2\sqrt{d_{1}}du\right)^{-\frac{1}{3}}-\left(\int^1_0 u^2\sqrt{d_{\sigma_0}}du\right)^{-\frac{1}{3}}\right|\left|\left(\frac{1}{\sigma_0^{5/2}}\int^1_0 u^4\frac{d'_{1}}{\sqrt{d_{1}}}du\right)\right|+\\
 &\quad\quad\left|\left(\int^1_0 u^2\sqrt{d_{\sigma_0}}du\right)^{-\frac{1}{3}}\right|\left|\left(\frac{1}{\sigma_0^{5/2}}\int^1_0 u^4\frac{d'_{1}}{\sqrt{d_{1}}}du\right)-\left(\int^1_0 u^4\frac{d'_{\sigma_0}}{\sqrt{d_{\sigma_0}}}du\right)\right|
\end{split}
\end{align}
By \eqref{f11c} and the computation that produced \eqref{f18a} we see that the second line of \eqref{f18b} is dominated by the right side of \eqref{f18ak}.

Next we show that the third line of \eqref{f18b} is dominated by the right side of \eqref{f18ak}.  We write
\begin{align}\label{f18c}
\frac{d_{\sigma_0}'}{\sqrt{d_{\sigma_0}}}-\frac{d_{1}'}{\sqrt{d_1}\sigma_0^{5/2}}=\left(d_{\sigma_0}'-\frac{d_1'}{\sigma_0^4}\right)\frac{1}{\sqrt{d_{\sigma_0}}}+\frac{d_1'}{\sigma_0^4}\left(\frac{1}{\sqrt{d_{\sigma_0}}}-\frac{\sigma_0^{3/2}}{\sqrt{d_1}}\right):=C+D.
\end{align}
Using \eqref{f11c} we obtain by the computation that produced \eqref{f18a} that the contribution from the term involving $D$ to the third line of \eqref{f18b} is dominated by the right side of \eqref{f18ak}.  To estimate the contribution from $C$ we write after observing some cancellations
\begin{align}\label{f18p}
C=\frac{(1-\sigma_0^2-\sigma_0^2f_p(\sigma_0\uw))+(\uw-1)\sigma_0^3f_p'(\sigma_0\uw)}{(\uw-1)^2\sigma_0^4}\cdot \frac{\sigma_0^{3/2}\sqrt{\uw-1}\;\uw}{\sqrt{\sigma_0^2\uw^2-1+\sigma_0^2\uw^2f_p(\sigma_0\uw)}}
\end{align}
We claim $|C|\leq \eps$ for $(u,\sigma)\in [0,1]\times A$ when $N_p$ is small, and thus  the contribution from the term involving $C$ to the third line of \eqref{f18b} is dominated by the right side of \eqref{f18aa}.  To estimate $C$ we note that when $\uw$ is bounded away from $0$,  $(\uw-1)^2$ can be factored out of the first factor in \eqref{f18p} yielding
\begin{align}\label{f19}
|d_{\sigma_0}'(\sigma_0\uw)-\frac{1}{\sigma_0^4}d'_1(\uw)|=\left|\frac{1}{2}\int^1_0(1-s)f_p''(\sigma_0+s(\sigma_0\uw-\sigma_0))ds\right|
\leq \eps.
\end{align}
The second factor is treated similarly.  Here we use that $f_p'(\sigma_0\uw)$ and $f_p''(\sigma_0\uw)$ are both small for $\uw$ bounded away from $0$ when $N_p$ is small (recall Proposition \ref{estimates}).    For $\uw$  near $0$ the smallness of $C$ follows from the smallness of $f_p'(\sigma_0\uw)\uw$.

\textbf{6. Region C. }For $\sigma\in \cZ_{-i\tilde\alpha}$ with $|\sigma|$  large the correspondence $\Xi_{f}\leftrightarrow \xi_{f}$ is one-to-one, so it suffices to show $\Xi_{f}$ is one-to-one on region $C$.

Choose $0<\kappa<1$ and for $\sigma,a\in C$ with $|\sigma-a|\leq \kappa |\sigma|$ write
\begin{align}\label{f22}
\Xi_{f}(\sigma)-\Xi_{f}(a)=(\sigma-a)\left[ \Xi'_{f}(a)+\frac{(\sigma-a)}{2}\int^1_0(1-s)\Xi''_{f}(a+s(\sigma-a))ds\right].
\end{align}
For $|\sigma|$ large we claim
\begin{align}\label{f23}
|\Xi''_{f}(\sigma)|\leq \eps/|\sigma|,
\end{align}
where $\eps\to 0$ as $N_p\to 0$.  Since $|\Xi'_{f}(a)|\sim 1$ for $|a|$ large, the modulus of the right side of \eqref{f23} is $\geq \frac{1}{2}|\sigma-a|$ for $\eps$ small enough.   The estimate \eqref{f23} follows by direct computation after noting $|f_p'(\sigma)|\leq \eps/|\sigma|$ for $|\sigma|$ large.  For example, using Proposition \ref{estimates} we estimate the term
\begin{align}\label{f24}
|-\tilde\alpha^2 \sigma h_2'(-i\tilde\alpha\sigma,\zeta)|\leq C|\tilde \alpha| |\tilde\alpha \sigma|\leq \eps/|\sigma| \text{ since }|\tilde\alpha \sigma|\leq 2\eps_2.
\end{align}

     The formula \eqref{f3} shows that there exists $m>0$ such that
     \begin{align}\label{f20}
     \left| \Xi_{f_0}(\frac{\sigma}{\sigma_0})-\Xi_{f_0}(\frac{a}{\sigma_0})\right|\geq m |\sigma-a| \text{ for }\sigma,a\in C.
     \end{align}
For $|\sigma-a|> \kappa|\sigma|$ ($\sigma,a\in C$) use estimate \eqref{f11b}(c) to write
\begin{align}\label{f21}
|\Xi_{f}(\sigma)-\Xi_{f}(a)|\geq \left| \Xi_{f_0}(\frac{\sigma}{\sigma_0})-\Xi_{f_0}(\frac{a}{\sigma_0})\right|-\eps_C\left| \Xi_{f_0}(\frac{\sigma}{\sigma_0})\right|-\eps_C\left|\Xi_{f_0}(\frac{a}{\sigma_0})\right|\geq \frac{m}{2}|\sigma-a|
\end{align}
for $\eps_C$ small enough.

\textbf{7. Adjacent regions. }Recall that the regions $A,B,C$ are determined by the choice of parameters $\delta$ and $K$. The above arguments show that there exist $\delta_0$, $K_0$ such that for $\delta<\delta_0$ and $K>K_0$, $\xi_{f}$ is injective (for $N_p$ small) on each of the regions $A,B,C$ determined by the choice $(\delta,K)$. It is immediate from \eqref{f3} and the estimates \eqref{f11b} that $\xi_{f}(\sigma_1)\neq\xi_{f}(\sigma_2)$ for $\sigma_1\in A$, $\sigma_2\in C$, so it remains to consider $\sigma_j$ in adjacent regions.

   Choose positive constants $\delta_j$ and $K_j$, $j=a,b$, such that $\delta_b<\delta_a<\delta_0$ and $K_b>K_a>K_0$, and which have the following additional property.    There exists $M>0$ such that if $A_j,B_j,C_j$ are the regions determined by the choice $(\delta_j,K_j)$, we have
   \begin{align}\label{f25}
|\xi_{f}(\sigma)|\leq M\text{ for }\sigma\in B_a \text{ and } |\xi_{f}(\sigma)|> M\text{ for }\sigma\in A_b\cup C_b.
\end{align}

Suppose now that $\sigma_1\in A_a$, $\sigma_2\in B_a$.  Considering the two cases $\sigma_1\in A_b$, $\sigma_1\in B_b$ and using  \eqref{f25}
and the above results for single regions, we conclude $\xi_{f}(\sigma_1)\neq \xi_{f}(\sigma_2)$. The case $\sigma_1\in B_a$, $\sigma_2\in C_a$ is treated similarly.
\end{proof}

\begin{proof}[Proof of Proposition \ref{f6}.]
In order to apply Theorem 9.1 of \cite{O}, Chapter 11, we must choose a suitable open subdomain of the $\xi-$plane on which to solve the equation \eqref{f26} below.  There are three requirements:

a)\;The domain should include the image of an interval $[M,\infty)$ under the map $x\to \xi$ (here, $x\in T_{M,R}$ as in \eqref{a1}), where $M$ can chosen independent of the parameters $(\zeta,h)$.

b)\;It must be possible to choose ``progressive paths" (defined below) for all points in the domain.

c)\;The integrals \eqref{f5} should all be finite; more precisely, there should be a finite upper bound independent of the choice of path and of relevant parameters such as $\zeta$ and $h$.

\textbf{1. Choice of $\xi-$domain. }Let $v(\sigma)$ be a solution to the perturbed Bessel problem \eqref{d3} on the dilated wedge $\cZ_{-i\tilde\alpha}$.  In the new variables $\xi=\xi_{f}(\sigma)$, $v=\left(\frac{d\xi}{d\sigma}\right)^{-1/2}W$
the problem \eqref{d3} becomes
\begin{align}\label{f26}
W_{\xi\xi}=(\tilde\gamma^2\xi+\psi(\xi))W.
\end{align}
We are not able to solve \eqref{f26} on the full open set $\xi_{f}(\cZ_{-i\tilde\alpha})$, because of problems choosing progressive paths created by the perturbation $f_p$.  Instead, we explain how to choose a subdomain where such paths can be chosen, and which also contains the image of the segment of the $x-$axis, $[M,\infty)$, under the map $x\to \xi$.   At first we ignore the right boundary segment of $\cZ_{-i\tilde\alpha}$ and treat the wedge as if it were infinite.

  We begin by specifying  a domain in the $\Xi-$plane. For small positive constants $\kappa$, $\eps$ both less than $1$, let $\Delta_{\Xi}(\kappa,\eps)=A_\Xi\cup B_\Xi$ where $A_\Xi$ and $B_\Xi$ are the open subsets of $\bC$ defined as follows.  $A_\Xi$ is the connected open set bounded by the parametrized segments
  \begin{align}
  \{it:t\geq 0\}, \{t:t\geq 0\}, \{t-i(\kappa t+\eps):t\geq -\eps\}, \{-\eps+it:t\geq \kappa\eps-\eps\},
  \end{align}
while $B_\Xi$ is the connected open set bounded by the segments
 \begin{align}
  \{it:t\geq 0\}, \{t:t\geq 0\}, \{t+i(\kappa t+\eps):t\geq -\eps\}, \{-\eps+it:t\leq -\kappa\eps+\eps\},
  \end{align}
Next let $\Delta_\xi(\kappa,\eps)=A_\xi\cup B_\xi\cup \bR$, where $A_\xi$ is the image of $A_{\Xi}$ under the map $\Xi\to \xi$, where the branch of the $2/3$ root is defined by taking $-\frac{3\pi}{2}<\arg \Xi<0$ for $\Xi\in A_\Xi$, and
$0<\arg \Xi<\frac{3\pi}{2}$ for $\Xi\in B_\Xi$. Observe that $\Delta_\xi(\kappa,\eps)$ is an open neighborhood of the real axis whose intersection with $\Re\xi=t$ has width $\sim t^{2/3}$ for $t>0$ large, and width $\sim |t|^{-1/2}$ for $t<0$, $|t|$ large.

 It follows from the formula \eqref{f3}  that the image of $\cZ_{-i\tilde\alpha}$ under $\sigma\to\xi_{f_0}(\frac{\sigma}{\sigma_0})$ contains a subdomain of the form $\Delta_\xi(\kappa,\eps)$ for some choice of $\kappa$, $\eps$.  This is because the image of $\cZ_{cut}(\sigma_0)$ under $\sigma\to \Xi_{f_0}(\frac{\sigma}{\sigma_0})$ contains a set of the form $\Delta_\Xi(\kappa,\eps)$.\footnote{We have $\Xi_{f_0}(\sigma)\sim \sigma$ for $\sigma>0$, $|\sigma|$ large.}
 By Proposition \ref{f12} and the estimates of Lemma \ref{f11a} we deduce, after further reduction of $N_p$ if necessary,  that $\xi_{f}(\cZ_{-i\tilde\alpha})$ must also contain a subdomain of the form $\Delta_\xi(\kappa,\eps)$, for some  smaller $\kappa$ and $\eps$.

Finally, we recall that the dilated wedge $\mathcal{Z}_{-i\tilde\alpha}=\mathcal{W}/(-i\tilde\alpha)$ has a right boundary arc of radius $\eps_2/|\tilde\alpha|>>1$, where $\eps_2$ is the radius of the right boundary arc of $\mathcal{W}$ (Definition \ref{defW}).  Thus, we define $\Delta_{\Xi}(\kappa,\eps,\eps_2)$ to be the bounded open set obtained by cutting off $\Delta_{\Xi}(\kappa,\eps)$ with this boundary arc.  With $\Delta_\xi(\kappa,\eps,\eps_2)$ the corresponding $\xi$ domain, we can repeat the procedure of the previous paragraph to deduce that $\xi_{f}(\mathcal{Z}_{-i\tilde\alpha})$ contains a subdomain of the form $\Delta_\xi(\kappa,\eps,\eps_2')$ for some $\eps'_2<\eps_2$ \footnote{Using estimate \eqref{f11b}(c) we can take $\eps'_2=(1-\eps_C)\eps_2$.}.

We may now define the subdomain $\cZ_{-i\tilde\alpha,s}$ appearing in the statement of Proposition \ref{f6} as
\begin{align}\label{f27}
\cZ_{-i\tilde\alpha,s}:=\xi_{f}^{-1}\left(\Delta_\xi(\kappa,\eps,\eps_2')\right).
\end{align}
This domain contains the image of $[M',\infty)$ under the map $x\to \sigma$, where $M'$ is slightly greater than $M$ (we have
$M'=M+O(|\ln(1-\eps_C)|)).$

\textbf{2. Choice of progressive paths. }Define the sectors $\mathbf{S_0}$, $\mathbf{S_1}$, and $\mathbf{S_{-1}}$ by
$|\arg \sigma|\leq \frac{\pi}{3}$, $\frac{\pi}{3}\leq \arg \sigma\leq \pi$, and $-\pi \leq \arg \sigma\leq -\frac{\pi}{3}$, respectively.
Let $\omega=\arg \tilde\gamma$ (recall $\tilde\gamma=-i\tilde \beta$ for $\tilde \beta$ as in \eqref{hr3}).
From the definition of Regime II we have for some small $\delta>0$,
\begin{align}\label{f27a}
-\delta\leq \arg\tilde\gamma\leq 0.
\end{align}


\begin{defn}\label{f27aa}
 Let $\Delta\subset\bC$
be a connected open set,
let $\partial\Delta$ denote its boundary, and take $j\in\{0,1,-1\}$. We say that progressive $j-$paths can be chosen in $\Delta$ provided there exists a point $\alpha_j\in \partial\Delta\cap e^{-2i\omega/3}\mathbf{S_j}$, possibly at infinity, such that for all $\xi\in\Delta$ there is a path $\mathcal P_j$ from $\xi$ to $\alpha_j$ in $\Delta$ with the properties:

a)\;As $v$ traverses $\mathcal P_j$ from $\xi$ to $\alpha_j$, the real part of $(\tilde\gamma^{2/3} v)^{3/2}$ is nondecreasing. The branch of $(\tilde\gamma^{2/3} v)^{3/2}$ is chosen so that $\Re(\tilde\gamma^{2/3} v)^{3/2}\geq 0$ in $e^{-2i\omega/3}\mathbf{S_j}$, and so that this real part is $\leq 0$ in $e^{-2i\omega/3}\mathbf{S_k}$, $k\neq j$.

b)\;The path $\mathcal P_j$ has a parametrization $v(\tau)$ such that $v''$ is continuous and $v'$ always nonvanishing, or consists of a finite chain of such paths.\footnote{This definition corrects an ambiguity in the definition given in section 9.1 of \cite{O}, Chapter 11.}

 \end{defn}

\begin{rem}\label{f27b}
For example, in the case $j=1$ the correct branch of $(\tilde\gamma^{2/3} v)^{3/2}$ is the one for which
\begin{align}
-\frac{\pi}{3}+2\pi-\frac{2\omega}{3}\leq \arg v\leq \frac{5\pi}{3}+2\pi-\frac{2\omega}{3}.
\end{align}
The condition in part (a) of the definition is linked to the choice of weight functions $E_j(z)$ defined in section 8.3 of \cite{O}, Chapter 11.   The definition of $E_j$ and $\mathbf{S_j}$ reflects the fact that $Ai_j(z)$ is recessive in $\mathbf{S_j}$ and dominant in $\mathbf{S_k}$, $k\neq j$.
\end{rem}

For a given $j$ it is easy to draw level curves of the correct branch of $\Re(\tilde\gamma^{2/3} \xi)^{3/2}$. A picture in the case $j=0$, $\arg\tilde  \gamma=0$ is given in Figure 9.1 of \cite{O}, Chapter 11.  Aided by such a picture together with
the explicit description of the (drawable) region $\Delta=\Delta_{\xi}(\kappa,\eps,\eps'_2)$ given in step \textbf{1}, ones sees  that progressive $j-$paths can be chosen in $\Delta$ for $j=0,1,-1$.
The point $\alpha_j\in \partial\Delta\cap e^{-2i\omega/3}\mathbf{S_j}$ is chosen to be a point where $\Re(\tilde\gamma^{2/3} \xi)^{3/2}> 0$ is maximized on $\partial\Delta\cap e^{-2i\omega/3}\mathbf{S_j}$.   Depending on the value of $\omega=\arg\tilde\gamma$, the point $\alpha_1$ or $\alpha_{-1}$ may need to be taken at infinity.

\textbf{3. Finiteness of the integrals $
\int_{\alpha_j}^\xi |\psi(s)s^{-1/2}|\;d|s|$. }
By ``finiteness" we mean here a finite bound that can be taken independent of the choice of $j-$progressive path and of the parameters $\eps_1$, $\eps_2$, $\zeta$ and $h$ appearing in the definitions of $\tilde\gamma=\tilde\gamma(\zeta,h)$ and $f_p(\sigma)=f_p(\sigma,\eps_1,\eps_2,\zeta,h)$ (recall Prop. \ref{d7}).  Here $\zeta\in\omega_\infty$, a neighborhood of $\zeta_\infty$, and $0\leq h\leq h_0$, where $\eps_2$, $\omega_\infty$ and $h_0$ were chosen in step \textbf{1} above and in the proof of Proposition \ref{f12} to make $f_p$ sufficiently small; moreover, $|\tilde\gamma|\geq K_1$  (Regime II).

  Clearly, for a given fixed $N>0$ we need only check the finiteness when at least one of $|\alpha_j|$, $|\xi|$ is $\geq N$.
Observe that $\Delta$ is unbounded on the left ($\Re\xi<0$) and, although $\Delta$ is bounded on the right for fixed $h$, there are choices of $(\zeta,h)$ in regime II for which the right boundary moves to infinity as $h\to 0$. Thus, $\Delta$ is effectively unbounded in both directions.

With  $f=f_0+f_p$ for $f_0(\sigma)=\frac{\sigma^2-1}{\sigma^2}$ and $f_p$ as in \eqref{d2}, function $\psi(\xi)$ in \eqref{c10} may be rewritten
\begin{align}\label{f28}
\psi(\xi)=\frac{5}{16\xi^2}+[4f(\sigma)f''(\sigma)-5f'(\sigma)^2]\frac{\xi}{16f^3(\sigma)}+\frac{\xi g(\sigma)}{f(\sigma)},\text{ where }g(\sigma)=-\frac{1}{4\sigma^2}.
\end{align}
Here and in the remainder of this step $\xi=\xi_f$.
Letting $\psi_0(\xi)$ denote the function obtained  by replacing $f$ by $f_0$ in \eqref{f28}, but leaving $\xi=\xi_f$,\footnote{The definition of $\xi_f$ involves $f_p$.}we compute
\begin{align}\label{f29}
\psi_0(\xi)=\frac{5}{16\xi^2}-\frac{\xi \sigma^2 (\sigma^2+4)}{4(\sigma^2-1)^3},
\end{align}
 observing an important cancellation.
We can write
\begin{align}\label{f31}
\psi(\xi)=\frac{5}{16\xi^2}+[4(f_0+f_p)(f_0''+f_p'')-5(f_0'+f_p')^2]\frac{\xi}{16(f_0+f_p)^3}+\frac{\xi g(z)}{f_0+f_p}=\psi_0(\xi)+\psi_1(\xi),
\end{align}
which defines $\psi_1$.  First we check the finiteness of the integral
\begin{align}\label{f32}
\int_{\alpha_j}^\xi |\psi_k(s)s^{-1/2}|\;d|s|
\end{align}
when $k=0$.

Observe that when $|\sigma|$ is small or large, we have $|\xi|$ large with $\Re\xi<0$ or $>0$ respectively, and $|f_p|/|f_0|<<1$.
For $|\sigma|$ large by \eqref{f3} we have $\sigma^2\sim \frac{4}{9}\xi^3$, so \eqref{f29} implies
$\psi_0(\xi)\sim \frac{-1}{4\xi^2}$.   For $|\sigma|$ small we have $|\xi|$ large   and \eqref{f3} implies
\begin{align}\label{f33}
\sigma\sim 2 \exp(-\frac{2}{3}|\xi|^{3/2}-1).
\end{align}
In this case \eqref{f29} implies $\psi_0(\xi)\sim \frac{5}{16\xi^2}$, so the finiteness is again clear.

Next consider \eqref{f32} when $k=1$.   First we write
\begin{align}\label{f34}
\frac{\xi}{16(f_0+f_p)^3}\sim \frac{\xi}{16f_0^3}(1-3\frac{f_p}{f_0})\text{ and }\frac{\xi g}{f_0+f_p}\sim \frac{\xi g}{f_0}(1-\frac{f_p}{f_0}).
\end{align}
Now we can read off the (largest) terms appearing in $\psi_1$ and estimate them one by one.  For example, the terms involving second derivatives are (ignoring some constant factors)
\begin{align}\label{f35}
f_p(f_0''+f_p'')(\frac{\xi}{f_0^3}-3\frac{\xi f_p}{f_0^4}),\;\;\;\;
f_0f''_p(\frac{\xi}{f_0^3}-3\frac{\xi f_p}{f_0^4}),\;\;\;\;
f_0f_0''\frac{\xi f_p}{f_0^4}
\end{align}
Now $f_p$ is given by \eqref{d4}, so we can list the terms appearing in $f_p''$ (ignoring some constant factors):
\begin{align}\label{f36}
\begin{split}
&(\alpha^2-\tilde\alpha^2\sigma^2)\tilde\alpha^2 h_1'',\;\;\sigma\tilde\alpha^3h_1',\;\;\tilde\alpha^2h_1\\
&\sigma\tilde\alpha^3 h_2'',\;\;\tilde\alpha^2 h_2'\\
&\tilde\alpha^2 h h_3'',
\end{split}
\end{align}
where the $h_j$ derivatives are $d/dt$ derivatives ($t$ as in Prop. \ref{estimates}).
Using Proposition \ref{estimates} we obtain
\begin{align}\label{f37}
\begin{split}
&h_1'=O(|\tilde\alpha \sigma|),\; h_1''=O(1)\\
&h_2'=O(1),\; h_2''=O(|\tilde\alpha \sigma|^{-1})\\
&h_3'=O(|\tilde\alpha \sigma|),\; h_3''=O(\frac{1}{h})
\end{split}
\end{align}
and  recall that we have for $\sigma\in \mathcal Z_{-i\tilde\alpha}\supset \xi_{f}^{-1}(\Delta)$:
\begin{align}\label{f38}
|\tilde\alpha \sigma|\leq \eps_2 \text{ (for }\eps_2 \text{ as in Definition }\ref{defW}).
\end{align}

We now estimate two typical terms from \eqref{f35}:
\begin{align}\label{f39}
|f_0f''\xi\frac{f_p}{f_0^4}|=6|\frac{\xi \sigma^2 f_p}{(\sigma^2-1)^3}|
\end{align}
When $|\sigma|$ is large, the right side of \eqref{f39} is $\leq C|\xi|/|\sigma|^4$, so the finiteness of the corresponding terms in \eqref{f32} is clear from $|\sigma|^2\sim \frac{4}{9}|\xi|^3$.   When $|\sigma|$ is small the finiteness follows from \eqref{f33}.

Next consider one of the ``worst" terms appearing in $\frac{\xi f_p''}{f_0^2}$, namely the one corresponding to the term
$\sigma\tilde\alpha^3h_2''$ from \eqref{f36}.   When $|\sigma|$ is large we have, using \eqref{f37},
\begin{align}\label{f40}
|\xi\frac{\sigma^4}{(\sigma^2-1)^2} \sigma\tilde\alpha^3 h_2''|\leq C|\xi \sigma \tilde \alpha^3 (\tilde \alpha \sigma)^{-1}|=C|\xi||\tilde\alpha|^{2}\leq C\frac{|\xi|}{|\sigma|^2}\leq \frac{C}{|\xi|^2},
\end{align}
since $|\tilde\alpha|\leq \eps_2/|\sigma|$.  This gives the finiteness of the corresponding term in \eqref{f32} at right infinity.
When $|\sigma|$ is small, we write
\begin{align}\label{f41}
|\xi\frac{\sigma^4}{(\sigma^2-1)^2} \sigma\tilde\alpha^3 h_2''|\leq C|\xi||\sigma^5||\tilde\alpha^3||\tilde\alpha \sigma|^{-1}= C|\xi||\tilde\alpha|^{2}|\sigma|^{4}
\end{align}
so the finiteness at left infinity follows from \eqref{f33}.

Next consider the term in $\frac{\xi f_p''}{f_0^2}$ corresponding to the term $\tilde\alpha^2 h h_3''$ in \eqref{f36}.  When $|\sigma|$ is large we have
\begin{align}
|\xi\frac{\sigma^4}{(\sigma^2-1)^2}\tilde\alpha^2 hh_3''|\leq C|\xi \tilde\alpha^2 h \frac{1}{h}|\leq C|\xi|/|\sigma|^2\leq C/|\xi|^2.
\end{align}
When $|\sigma|$ is small,
\begin{align}
|\xi\frac{\sigma^4}{(\sigma^2-1)^2}\tilde\alpha^2 hh_3''|\leq C|\xi \sigma^4\tilde\alpha^2 h \frac{1}{h}|,
\end{align}
so finiteness at left infinity follows again from \eqref{f33}.

The estimates corresponding to the remaining terms in $\psi_1$ are entirely similar to those above.

\textbf{4. Conclusion. }We have now checked that all the requirements for an application of Theorem 9.1 of \cite{O}, Chapter 11 are satisfied, so this concludes the proof of Proposition \ref{f6}.

\end{proof}

Proposition \ref{f9} describes the decaying solutions of \eqref{e4yy} on $[M,\infty)$.  In the proof we will of course use the fact that $Ai_{\pm 1}(z)$ is recessive in the sector $\mathbf{S}_{\pm 1}$.

 \begin{proof}[Proof of Proposition \ref{f9}]
 The explicit formulas for $\tilde\beta$ and $\tilde\gamma=-i\tilde\beta$ show that for $\tilde \beta$ in Regime II, we have
 \begin{align}
 \arg\tilde\gamma\leq 0 \text{ and }\arg\tilde \gamma =0\Leftrightarrow \Re\zeta =0.
 \end{align}
  The image of $[M,\infty)$ under the map $x\to \xi$ is a curve that approaches left infinity in $\Delta_\xi$ \eqref{f27} as $x\to \infty$.  The image of $[M,\infty)$ under $x\to \tilde\gamma ^{2/3}\xi$ thus lies for $x$ large enough in the interior of $\mathbf{S}_{1}$ when $\arg\tilde\gamma<0$.  So Proposition \ref{f6}  implies that
  $w(x)=z(x)^{-1/2}v_1(\sigma(x))$
  gives a decaying solution of \eqref{e4yy} on $[M,\infty)$, and thus
  \begin{align}\label{f50}
  \theta(x,\zeta,h)=Y(x,\zeta,h)\begin{pmatrix}K(x,\zeta,h)\begin{pmatrix}w\\hw_x\end{pmatrix}\\0\end{pmatrix}
\end{align}
is a decaying solution of \eqref{f43}.  Here we use the fact that the explicit estimates of $\eta_1$ and $\partial_\xi\eta_1$ given in
Theorem 9.1 of Chapter 11 of \cite{O} imply their contributions to $w(x)$ and $w_x(x)$ decay as well.
The matrix $K$ involves a factor of $e^{\varphi_0/h}$, so here we have used Remark \ref{f47a}.  When $\Re\zeta=0$, Remark \ref{f47a} and the formula for $w$ imply that $\theta$ is the desired bounded and  oscillating, but not decaying, solution of \eqref{f43}.

 \end{proof}

In the proof of Proposition \ref{f9z} we will sometimes speak of ``relative errors of size $O(p)$" defined as follows.
\begin{defn}[Relative error]\label{f6y}
When a term $\eta(p)$ depending on a small parameter $p$ (and possibly other variables) in an expression $A=B+\eta$ satisfies for some positive constant $C$,
\begin{align}\label{f6x}
|\eta|\leq C|p| |B|,
\end{align}
uniformly with respect to all the variables on which $A$, $B$, and $\eta$ depend,
we say that $\eta$ is a \emph{relative error of size $O(p)$}. When $\eta=\eta_1+\dots+\eta_N$ and
$\eta_j$ satisfies $|\eta_j|\leq C|p| |B|$, we say that  $\eta_j$ \emph{contributes  a relative error of size $O(p)$}.
\end{defn}

\begin{proof}[Proof of Proposition \ref{f9z}]
\textbf{1. }The proof is based on the formula \eqref{f50}, the expression for $w(x)$ given by Proposition \ref{f6}, and a standard expansion of the Airy function.

  Recall the definitions of the variables
\begin{align}\label{f51}
t=\frac{2}{\mu}\sqrt{aD(\infty,\zeta)}e^{-\mu x/2},\;z=\frac{t}{h},\;\tilde\gamma=-i\tilde\beta,\;\sigma=\frac{z}{\tilde\gamma}.
\end{align}
The variable $z$ occurs in equation \eqref{hr4}(b), but now instead of $W(z)$ we write $w(z)$ and we will abuse notation by writing, for example, $w(z)=w(x)$ to mean $W(z(x))=w(x)$.  The variable $\sigma$ occurs in equation \eqref{d3} and \eqref{d4}.  Recalling the tranformations that relate the dependent variables $w(z)$ of \eqref{hr4}(b) and $v(\sigma)$ of \eqref{d3}, we have
\begin{align}\label{f52}
\begin{split}
&v(\sigma)=\hat w(\tilde \gamma \sigma)=w(\tilde\gamma\sigma)(\tilde\gamma\sigma)^{1/2}=w(z)z^{1/2}, \;z\in\mathcal W/h, \text{ so }\\
&w(z)=z^{-\frac{1}{2}}v(\sigma)=z^{-\frac{1}{2}}\xi_\sigma^{-1/2}(\sigma)\left(Ai_1(\tilde\gamma^{2/3}\xi(\sigma))+\eta_1(\tilde\gamma,\xi(\sigma))\right),\;\sigma\in\mathcal{Z}_{-i\tilde\alpha}.
\end{split}
\end{align}

\textbf{2. }We first express the factor $C(x,\zeta)+hr(x,\zeta,h)$ appearing in the equation for $w(x)$ in terms of $\xi=\xi_{f}(\sigma)$, where $f=f_0+f_p$.  Using Remark \ref{hr4z} and \eqref{f15z}, we obtain
\begin{align}\label{f53}
\begin{split}
&\frac{4}{\mu^2}(C(x,\zeta)+hr(x,\zeta,h))=h^2z^2\left[(1-\frac{\tilde\gamma^2}{z^2})+(h^2z^2+\alpha^2)h_1(hz,\zeta)+hzh_2+hh_3\right]=\\
&\quad-\tilde\alpha^2\sigma^2\left[(1-\frac{1}{\sigma^2})+(\alpha^2-\tilde\alpha^2\sigma^2)h_1(-i\tilde\alpha \sigma,\zeta)-i\tilde\alpha \sigma h_2+hh_3\right]=\\
&\quad\quad-\tilde\alpha^2\sigma^2 f(\sigma)=-\tilde\alpha^2\sigma^2\xi(\xi_\sigma)^2.
\end{split}
\end{align}
The function $\sqrt{\xi}$ was defined on $\mathcal{Z}_{cut}(z_0)$ just below \eqref{f15}, so we can use the equation
\begin{align}\label{f54}
-\frac{\mu}{2}i\tilde\alpha\sigma\sqrt{\xi}\xi_\sigma=\sqrt{C(x,\zeta)+hr(x,\zeta,h)}
\end{align}
to define a branch of $\sqrt{C+hr}$ on the corresponding $x-$domain.  Since the argument of $-i\tilde\alpha\sigma\sqrt{\xi}\xi'$ is close to zero for $x$ near $M$,\footnote{This is because $z$ is large with $\arg z\sim 0$ for $x$ near $M$.} we have
\begin{align}\label{f55}
\sqrt{C+hr}=-\sqrt{\zeta^2+c_0^2\eta(x)}\;\ub(x)+O(h)=-s(x,\zeta)\ub(x)+O(h)\text{ for }x\text{ near }M
\end{align}
and thus
\begin{align}\label{f56}
-\frac{\mu}{2}i\tilde\alpha\sigma\sqrt{\xi}\xi_\sigma=\frac{\mu}{2}hz\sqrt{\xi}\xi_\sigma=-s(x,\zeta)\ub(x)+O(h)\text{ for }x\text{ near }M.
\end{align}

\textbf{3. Preliminaries. }We will use the standard asymptotic expansions valid for $|z|$ large on $|\arg z|\leq \pi-\delta$:
\begin{align}\label{f57}
\begin{split}
&Ai(z)\sim \frac{e^{-\chi}}{2\sqrt{\pi}z^{1/4}}\sum^\infty_0(-1)^s\frac{u_s}{\chi^s}, \\
&Ai'(z)\sim -\frac{z^{1/4}e^{-\chi}}{2\sqrt{\pi}}\sum^\infty_0(-1)^s\frac{v_s}{\chi^s},\text{ where } \;\chi=\frac{2}{3}z^{3/2},\;\;u_0=v_0=1.
\end{split}
\end{align}

In the expression for the approximate solution $\theta_1$,
\begin{align}\label{f58z}
\theta_1(x,\zeta,h)=e^{\frac{1}{h} h_1(x,\zeta)+k_1(x,\zeta)}T_1(x,\zeta),
\end{align}
we have
\begin{align}\label{f58}
\begin{split}
&T_1=P_0+sQ_0\text{ and, with }\mu_1(x,\zeta)=\ua+s\ub, \text{ where }\ua=-\frac{\kappa^2\zeta}{\eta u},\;\ub=-\frac{\kappa}{\eta u},\\
&h_1(x,\zeta)=\int^x_0\mu_1(x',\zeta)dx'=\int^x_0 \ua(x',\zeta)dx'+\int^x_0 s(x',\zeta)\ub(x') dx':= h_{1a}+h_{1b}.
\end{split}
\end{align}
Since $d_x\varphi_0(x,\zeta,h)=\frac{a+d}{2}=\frac{\ua+\ud}{2}+O(h)=\ua+O(h)$, we obtain
\begin{align}\label{f59}
\varphi_0-h_{1a}=O(h)+C_a(\zeta,h) \text{ near } x=M, \text{ where }C_a(\zeta,h)=O(1).
\end{align}

\textbf{4. Approximations. }Using the formula \eqref{f52} for $w(z)$ and the expansions \eqref{f57}, and setting $\psi=e^{-\frac{2\pi i}{3}}\tilde\gamma^{2/3}\xi$, for $x$ near $M$ we approximate\footnote{Here the roots of $\psi$ are defined for $|\arg\psi|\leq \pi-\delta$.}
\begin{align}\label{f60}
\begin{split}
&(a)\; w(z)\sim z^{-1/2}\xi_\sigma^{-1/2}Ai_1(\tilde\gamma^{2/3}\xi)\sim\frac{1}{2\sqrt\pi}\;z^{-1/2}\xi_\sigma^{-1/2}e^{-\frac{2}{3}\psi^{3/2}}\psi^{-1/4}\\
&(b)\; w_z(z)\sim z^{-1/2}\xi_\sigma^{-1/2}Ai_1'(\tilde\gamma^{2/3}\xi)\tilde\gamma^{2/3}\xi_\sigma\frac{1}{\tilde\gamma}\sim -\frac{1}{2\sqrt\pi}\;z^{-1/2}\xi_\sigma^{-1/2}e^{-\frac{2\pi i}{3}}e^{-\frac{2}{3}\psi^{3/2}}\psi^{1/4}\tilde\gamma^{-1/3}\xi_\sigma.
\end{split}
\end{align}
In the first ``$\sim$" of \eqref{f60}(a) we have ignored the $\eta_1$ contribution to $w$, while in the second ``$\sim$"  we have ignored contributions from terms in the expansion of $Ai(z)$ corresponding to $s\geq 1$.  The computations below will make it clear that these approximations contribute relative errors of size $O(1/\tilde\beta)$  in our approximation of $\theta(x,\zeta,h)$.  In the approximation \eqref{f60}(b) we have ignored similar terms contributing relative errors of the same size.  In addition, we have ignored the term $d_z\left(z^{-1/2}\xi_\sigma^{-1/2}\right)Ai_1(\tilde\gamma^{2/3}\xi)$, which contributes a relative error of size $O(h)$. Thus, we obtain,
\begin{align}\label{f61}
hw_x=-\frac{\mu}{2}hzw_z \sim \frac{\mu}{2}h\frac{1}{2\sqrt\pi}\;z^{1/2}\xi_\sigma^{-1/2}e^{-\frac{2\pi i}{3}}e^{-\frac{2}{3}\psi^{3/2}}\psi^{1/4}\tilde\gamma^{-1/3}\xi_\sigma
\end{align}
for $x$ near $M$.

\textbf{5. }Using the formula \eqref{f50} for the exact decaying solution $\theta$, we find
\begin{align}\label{f62}
\theta(x,\zeta,h)\sim e^{\frac{\varphi_0}{h}}[b^{1/2}wP_0+b^{-1/2}(hw_x)Q_0].
\end{align}
Here we have ignored relative errors of size $O(h)$ by ignoring the $O(h)$ entries in $Y_2$ (recall $Y=Y_1Y_2$) and the (2,1) entry of $K$, which is of size $O(h)$.  Plugging in \eqref{f60}(a) and \eqref{f61} we obtain\footnote{Here we use $\sqrt{\psi}=e^{-\frac{\pi i}{3}}\tilde\gamma^{1/3}\sqrt{\xi}$.}
\begin{align}\label{f63}
\begin{split}
&\theta\sim e^{\frac{\varphi_0}{h}-\frac{2}{3}\psi^{3/2}}\left(\frac{1}{2\sqrt{\pi}}b^{1/2}z^{-1/2}\xi_\sigma^{-1/2}\psi^{-1/4}\right)\left[P_0+\frac{\mu}{2}b^{-1}hz\psi^{1/2}e^{-\frac{2\pi i}{3}}\tilde\gamma^{-1/3}\xi_\sigma Q_0\right]=\\
&\quad e^{\frac{\varphi_0}{h}+\frac{2}{3}\tilde\gamma\xi^{3/2}}\left(\frac{1}{2\sqrt{\pi}}b^{1/2}z^{-1/2}\xi_\sigma^{-1/2}\psi^{-1/4}\right)\left[P_0-\frac{\mu}{2}b^{-1}hz\sqrt{\xi}\xi_\sigma Q_0\right].
\end{split}
\end{align}
From \eqref{f56} and $b=\ub+O(h)$ we find
\begin{align}\label{f64}
\begin{split}
&-\frac{\mu}{2}b^{-1}hz\sqrt{\xi}\xi_\sigma=s(x,\zeta)+O(h),\\
&d_x(\frac{2}{3}\tilde\gamma\xi^{3/2})=-\sqrt{\xi}\xi_\sigma\frac{\mu}{2}z=\frac{s\ub}{h}+O(1)=\frac{1}{h}d_x h_{1b}+O(1)\Rightarrow \frac{2}{3}\tilde\gamma\xi^{3/2}=\frac{h_{1b}}{h}+\frac{C_b(\zeta,h)}{h}+O(1)
\end{split}
\end{align}
near $x=M$.  With \eqref{f59} we obtain
\begin{align}\label{f65}
\frac{\varphi_0}{h}+\frac{2}{3}\tilde\gamma\xi^{3/2}=\frac{h_1(x,\zeta)}{h}+g(x,\zeta,h);
\end{align}
here $g=\frac{g_1(\zeta,h)}{h}+g_2(x,\zeta,h)$ with $g_1=O(1)$ and $g_2=O(1)$ near $x=M$.
Using \eqref{f64} and ignoring another $O(h)$ relative error, we can now rewrite \eqref{f63}
\begin{align}\label{f66}
\theta\sim e^{\frac{h_1(x,\zeta)}{h}+g}\left(\frac{1}{2\sqrt{\pi}}b^{1/2}z^{-1/2}\xi_\sigma^{-1/2}\psi^{-1/4}\right)T_1=G(x,\zeta,h)\theta_1(x,\zeta,h)\text{ near }x=M,
\end{align}
where the nonvanishing scalar function
\begin{align}\label{f67}
G(x,\zeta,h)=e^ge^{-k_1}\left(\frac{1}{2\sqrt{\pi}}b^{1/2}z^{-1/2}\xi_\sigma^{-1/2}\psi^{-1/4}\right).
\end{align}
Setting
\begin{align}\label{f68}
H(x,\zeta,h)=G^{-1}(x,\zeta,h),
\end{align}
we obtain the estimate of Proposition \ref{f9z}.

\end{proof}


\subsection{Proofs for Regime I}
This subsection gives the proofs of Propositions \ref{e4z}, \ref{e3}, \ref{e4y}, and \eqref{e4w}.
We begin by examining the change of variable $\sigma\to\xi_f(\sigma)$.

\begin{proof}[Proof of Proposition \ref{e4z}]
The proof is parallel to that of Proposition \ref{f12} for Regime II, but simpler.

\textbf{1. }The analyticity of $\xi_f$ follows immediately from the fact that $f+f_p$ is nonvanishing on $\mathcal{Z}_{\tilde\alpha}$ for $N_p$ sufficiently small.  This nonvanishing makes Regime I much easier to treat than Regime II.

\textbf{2. Estimates of $\xi_f-\xi_{f_0}$. }Here we provide the analogue of Lemma \ref{f11a} for Regime I.  For $N_p$ small we have
\begin{align}\label{m1}
\sqrt{f_0+f_p}=\sqrt{f_0}+O(f_p/\sqrt{f_0}) \text{ on }\mathcal{Z}_{\tilde\alpha}.
\end{align}
Thus, given  $K>>1$, there exists a positive constant $\eps =\eps(N_p)$, which can be taken to approach $0$ as $N_p\to 0$, such that
\begin{align}\label{m2}
\begin{split}
&|\xi_f(\sigma)-\xi_{f_0}(\sigma)|\leq \eps\text{ for }|\sigma|\leq K\\
&|\xi_f(\sigma)-\xi_{f_0}(\sigma)|\leq \eps |\xi_{f_0}(\sigma)|\text{ for }|\sigma|\geq  K.
\end{split}
\end{align}

\textbf{3. Injectivity. }Parallel to the proof of Proposition \ref{f12}, we divide $\mathcal{Z}_{\tilde\alpha}$ into subregions $A$, $B$, and $C$ consisting of $\sigma$ with respectively small, medium, and large modulus, and first prove injectivity on each subregion.   The arguments used to treat regions $B$ and $C$ in the case of Regime II can be repeated (almost) verbatim here.  The treatment of Region A is much the same as before, but easier.  Again, one starts with \eqref{f17} and shows that the integral has large modulus.   The case of adjacent regions can be treated as in Regime II to finish the proof.

\end{proof}

\begin{proof}[Proof of Proposition \ref{e3}]
In order to apply Theorem 3.1 in \cite{O}, Chapter 10, there are three requirements:

a)\;We must choose a suitable subdomain $\Delta_\xi$ of the $\xi$ plane on which to solve \eqref{e3ww}.  The domain should include the image of an interval $[M,\infty)$ under the map $x\to \xi$ (here, $x\in T_{M,R}$ as in \eqref{a1}), where $M$ can be chosen independent of the parameters $(\zeta,h)$.

b)\;It must be possible to choose ``progressive paths" (defined below) for all points in the domain.

c)\;The integrals \eqref{e2} should all be finite, with bounds independent of the choice of path and the parameters  $\zeta$ and $h$.

\textbf{1.  Definition of progressive paths. }Let $\Delta\subset \bC$ be an open, connected set  and let $\partial\Delta$ denote its boundary.

a)\;We say that progressive $1-$paths can be chosen in $\Delta$ provided there exists a point $\alpha_1\in \partial\Delta$, possibly at infinity, such that any point $\xi\in\Delta$ can be linked to $\alpha_1$ by a path $\mathcal{P}_1$ in $\Delta$ such that as $v$ traverses $\mathcal{P}_1$ from $\alpha_1$ to $\xi$,  the quantity $\Re (\tilde\beta v)$ is nondecreasing.

b)\;We say that progressive $2-$paths can be chosen in $\Delta$ provided there exists a point $\alpha_2\in \partial\Delta$, possibly at infinity, such that any point $\xi\in\Delta$ can be linked to $\alpha_2$ by a path $\mathcal{P}_2$ in $\Delta$ such that as $v$ traverses $\mathcal{P}_2$ from $\alpha_2$ to $\xi$,  the quantity $\Re (\tilde\beta v)$ is nonincreasing.

The paths are assumed to have a parametrization with the same regularity as described in Definition \ref{f27aa}(b).

\textbf{2. Choice of the domain $\Delta_\xi$. }At first we ignore the right boundary segment of $\mathcal{Z}_{\tilde\alpha}$ and treat this wedge as if it were infinite.

For small positive constants $\kappa$, $\eps$ define a domain $\Delta_\xi(\kappa,\eps)$ to be the open set whose boundary consists of the segments
\begin{align}\label{m4}
\{t+i\eps:t\leq 0\},\;\{t-i\eps:t\leq 0\}, \;\{t+i(\kappa t+\eps):t\geq 0\},\;\{t-i(\kappa t+\eps):t\geq 0\}.
\end{align}
Recall that we have
\begin{align}\label{m3}
\xi_{f_0}(\sigma)=\begin{cases}\log(\frac{\sigma}{2})+1+o(1)\text{ for }|\sigma|\text{ small}\\\sigma+o(1)\text{ for }|\sigma|\text{ large}\end{cases}.
\end{align}
Together with the formula \eqref{e1} for $\xi_{f_0}$, this implies that when $\arg\tilde\alpha\sim 0$,  the open set
$\xi_{f_0}(\mathcal{Z}_{\tilde\alpha})$ contains a set of the form $\Delta_\xi(\kappa,\eps)$ for some choice of $\kappa$, $\eps$.  Proposition \ref{e4z} and the estimates \eqref{m2} then imply, after further reduction of $N_p$ if necessary, that the perturbed domain
$\xi_{f}(\mathcal{Z}_{\tilde\alpha})$ also contains a subdomain of the form $\Delta_\xi(\kappa,\eps)$ for some smaller $\kappa$ and $\eps$.\footnote{Helpful drawings of the range of $\xi_{f_0}$ are given in figures 7.1 and 7.2 of Chapter 10 of \cite{O}.}

Recall that the dilated wedge $\mathcal{Z}_{\tilde\alpha}$ has a right boundary arc of radius $\eps_2/|\tilde\alpha|>>1$ for $\eps_2$ as in Definition \ref{defW}.  We define $\Delta_\xi(\kappa,\eps,\eps_2)$ to be the bounded open set obtained by cutting off $\Delta_\xi(\kappa,\eps)$ with this boundary arc.  We then repeat the procedure above to conclude that $\xi_f(\mathcal{Z}_{\tilde\alpha})$ contains a subdomain of the form $\Delta_\xi=\Delta_\xi(\kappa,\eps,\eps_2')$ for some $\eps_2'<\eps_2$ (but close to $\eps_2$).   Finally, we define the subdomain $\mathcal{Z}_{\tilde\alpha,s}$ appearing in the statement of Proposition \ref{e3} as
\begin{align}
\mathcal{Z}_{\tilde\alpha,s}:=\xi_f^{-1}(\Delta_\xi(\kappa,\eps,\eps_2')).
\end{align}
Provided $N_p$ is small enough,
this domain contains the image of $[M',\infty)$ under the map $x\to \sigma$, where $M'$ is slightly greater than $M$.

Next consider the other extreme case where $\arg\tilde\alpha = \frac{\pi}{2}-\delta$. The wedge $\mathcal{Z}_{\tilde\alpha}=\mathcal {W}/\tilde\alpha$ then consists of points $\sigma$ with
\begin{align}\label{m5}
-\eps_1-\frac{\pi}{2}+\delta<\arg \sigma <\eps_1-\frac{\pi}{2}+\delta,\;0<|\sigma|<\eps_2/|\tilde\alpha|
\end{align}
for $\eps_1<\delta$ as in Definition \ref{defW}.   Using \eqref{m3} and the formula \eqref{e1} for $\xi_{f_0}$, we see that $\xi_{f_0}(\mathcal{Z}_{\tilde\alpha})$
contains a domain, call it $\Delta_\xi(\rho_1,\rho_2,\eps_2)$, similar to $\Delta_\xi(\kappa,\eps,\eps_2)$ above, \emph{except} that the part of $\Delta_\xi(\rho_1,\rho_2,\eps_2)$ corresponding to small (respectively, large) $|\sigma|$ consists of points satisfying\footnote{There is a sharp bend in the domain, downward and to the right, which occurs near points $\xi_{f_0}(\sigma)$ for $\sigma$ close to $-i$, since $f_0(-i)=0$. However, note that $-i\notin \mathcal{Z}_{\tilde\alpha}$.}
\begin{align}
\rho_1<\Im \xi<\rho_2,\;\;\mathrm{respectively,}\;\; \rho_1<\arg \xi<\rho_2,
\end{align}
for constants $\rho_j$ such that
\begin{align}
-\eps_1-\frac{\pi}{2}+\delta<\rho_1<\rho_2<\eps_1-\frac{\pi}{2}+\delta.
\end{align}
As above the estimates \eqref{m2} imply that for $N_p$ small the perturbed domain $\xi_{f}(\mathcal{Z}_{\tilde\alpha})$ contains a set $\Delta_\xi=\Delta_\xi(\rho_1,\rho_2,\eps_2)$ of the same form for a slightly different choice of $(\rho_1,\rho_2,\eps_2
)$, and we define
\begin{align}
\mathcal{Z}_{\tilde\alpha,s}:=\xi_f^{-1}(\Delta_\xi(\rho_1,\rho_2,\eps_2')).
\end{align}
As before this set can be chosen to include the  image of $[M',\infty)$ under the map $x\to \sigma$, where $M'$ is slightly greater than $M$.

Domains $\Delta_\xi$ corresponding to other choices of $\tilde\beta$ in Regime I are chosen by the method just described.  If we write $\tilde\alpha=(a_1+ia_2)$, a progressive $1-$path is characterized by the property that its tangent vector $v_1+iv_2$ at any given point satisfies $v_1a_1-v_2a_2\geq 0$; that is, the vector $(v_1,v_2)$ makes an angle $\leq \frac{\pi}{2}$ with $(a_1,-a_2)$.   A sketch of the range of admissible tangent vectors  shows that progressive $1-$paths can be chosen in the domain $\Delta_\xi$ described above, if we take $\alpha_1$ to be any point at left infinity in $\Delta_\xi$. Similar considerations show that progressive $2-$paths can be chosen if $\alpha_2$ is taken to be a point on the right boundary arc of $\Delta_\xi$ where $\Re(\tilde\beta\xi)$ is maximized.

\textbf{3. Finiteness of the integrals $\int^\xi_{\alpha_j}|\psi(r)|\;d|r|$. }The argument is much like that for Regime II, so here we focus on the main differences.  First observe that since $\xi(\sigma)=\int^\sigma_{\sigma_0}\sqrt{f(s)}ds$,
\begin{align}\label{m6a}
\int_\mathcal P |\psi(\xi)|\;d|\xi|=\int_{\xi^-1(\mathcal P)}|\psi(\xi(\sigma))\sqrt{f(\sigma)}|\;d|\sigma|,
\end{align}
for a given path $\mathcal P$ in $\Delta_\xi$.  So we must check the finiteness of the integral on the right at $0$ and $\infty$.

We have $f=f_0+f_p$ where $f_0$ and $f_p$ are now defined in \eqref{d2}, and
\begin{align}\label{m6}
\psi(\xi_f(\sigma))=\frac{g(\sigma)}{f(\sigma)}+\frac{4f(\sigma)f''-5f'^2}{16f^3},\;\text{ where }g(\sigma)=-\frac{1}{4\sigma^2}\text{ and }f'=d_\sigma f.
\end{align}
Observe that for $N_p$ small,
\begin{align}\label{m7a}
\sqrt{f}(\sigma)\sim\sqrt{f_0}(\sigma)\sim\begin{cases}\frac{1}{\sigma}\text{ for }|\sigma|\text{ small }\\1\text{ for }|\sigma|\text{ large }\end{cases}.
\end{align}
Letting $\psi_0(\sigma)$ denote the function obtained by setting $f_p=0$ on the right in \eqref{m6}, we have
\begin{align}\label{m7}
\psi_0(\sigma)=\frac{1}{4}\frac{\sigma^2(4-\sigma^2)}{(1+\sigma^2)^3},
\end{align}
so the integral on the right in  \eqref{m6a}, with $\psi(\xi(\sigma))$ replaced by $\psi_0(\sigma)$, is integrable at $0$ and at $\infty$. We note that in the computation of $\psi_0(\sigma)$, a bad term of order $O(1)$ near $\sigma=0$ cancels out.

Next define $\psi_1(\sigma)$ by
\begin{align}\label{m8}
\psi(\xi_f(\sigma))=\psi_0(\sigma)+\psi_1(\sigma).
\end{align}
Writing
\begin{align}\label{m9}
\frac{1}{f_0+f_p}= \frac{1}{f_0}(1-\frac{f_p}{f_0}+...), \; \frac{1}{(f_0+f_p)^3}=\frac{1}{f_0^3}(1-3\frac{f_p}{f_0}+...),
\end{align}
we see that the main contribution of $g/f$ to $\psi_1$ is
\begin{align}
-\frac{g f_p}{f_0^2}= \begin{cases}O(\sigma^2)\text{ near }\sigma =0\\O(\frac{1}{\sigma^2})\text{ near }\infty\end{cases},
\end{align}
so the corresponding contributions to \eqref{m6a} are finite.

It remains to consider the contribution of $(4ff''-5f'^2)/16f^3$ to $\psi_1$. The terms involving second derivatives have the same form as the terms in \eqref{f35} \emph{after} setting the factor of $\xi$ there equal to one.   The terms in $f_p''$ have the same form as \eqref{f36}, and the estimates \eqref{f37} still apply.  We estimate the contribution of one of the ``worst terms" appearing in $\frac{f_p''}{f_0^2}$, namely the one corresponding to the term $\tilde\alpha^2 h h_3''$ in \eqref{f36}.  When $|\sigma|$ is large we have
\begin{align}\label{m10}
|\frac{\sigma^4}{(\sigma^2+1)^2}\tilde\alpha^2 hh_3''|\leq C| \tilde\alpha^2 h \frac{1}{h}|\leq C/|\sigma|^2.
\end{align}
The corresponding contribution of \eqref{m10} to \eqref{m6a} is thus integrable near  infinity.  When $|\sigma|$ is small,
\begin{align}\label{m11}
|\frac{\sigma^4}{(\sigma^2+1)^2}\tilde\alpha^2 hh_3''|\leq C| \sigma^4\tilde\alpha^2 h \frac{1}{h}|=| \sigma^{4}\tilde\alpha^{2}|\leq |\sigma|^{4},
\end{align}
so the corresponding contribution to \eqref{m6a} is integrable near $\sigma=0$.

The estimates corresponding to the remaining terms in $\psi_1$ are similar to those above.

\textbf{4. Conclusion. }We have now checked that all the requirements for an application of Theorem 3.1 of \cite{O}, Chapter 10 are satisfied, so this concludes the proof of Proposition \ref{e3}.
\end{proof}

\begin{proof}[Proof of Proposition \ref{e4y}]
The image of $[M,+\infty)$ under the map $x\to \xi(\sigma(x)))$ is a curve that remains close to the real  axis and approaches left infinity in $\Delta_\xi$ as $x\to \infty$.  Thus, $\Re(\tilde\beta\xi(\sigma(x)))\to -\infty$ as $x\to\infty$ for $\tilde\beta$ in Regime I.   Since $\xi_\sigma(\sigma)=O(\frac{1}{\sigma})$ for $\sigma$ near $0$ and $z=\sigma\tilde\beta$, we have
\begin{align}\label{m12}
z^{-1/2}(x)\xi_\sigma^{-1/2}(\sigma(x))=O(1/|\tilde\beta|^{1/2})\text{ for large }|x|.
\end{align}
Together with the estimates for $\eta_1$ in Proposition \ref{e3}, the above statements imply that for $w(x)$ given by \eqref{e4x},  $(w,hw_x)$ is a decaying solution of \eqref{e4yy}.

\end{proof}

\begin{proof}[Proof of Proposition \ref{e4w}]
The proof is parallel to that of Proposition \ref{f9z}, so we focus on the main differences.
Recall the definitions of the variables
\begin{align}\label{n1}
t=\frac{2}{\mu}\sqrt{aD(\infty,\zeta)}e^{-\mu x/2},\;z=\frac{t}{h},\;\sigma=\frac{z}{\tilde\beta}.
\end{align}
With notation similar to \eqref{f52} we write $w(z)$ for the unknown function $W(z)$  in \eqref{hr4}(b) and
\begin{align}\label{n2}
w(z)=z^{-\frac{1}{2}}v_1(\sigma)=z^{-\frac{1}{2}}\xi_\sigma^{-1/2}(\sigma)\left(e^{\tilde\beta \xi(\sigma)}+\eta_1(\tilde\beta,\xi(\sigma))\right),\;\sigma\in\mathcal{Z}_{\tilde\alpha}.
\end{align}

\textbf{2. } Using Remark \ref{hr4z} and $\xi_\sigma=\sqrt{f}$, we obtain
\begin{align}\label{n3}
\begin{split}
&\frac{4}{\mu^2}(C(x,\zeta)+hr(x,\zeta,h))=
\tilde\alpha^2\sigma^2\left[(1+\frac{1}{\sigma^2})+(\alpha^2+\tilde\alpha^2\sigma^2)h_1(\tilde\alpha \sigma,\zeta)+\tilde\alpha \sigma h_2+hh_3\right]=\\
&\quad\quad\tilde\alpha^2\sigma^2 f(\sigma)=\tilde\alpha^2\sigma^2\xi_\sigma^2.
\end{split}
\end{align}
Thus,
\begin{align}\label{n4}
\frac{\mu}{2}\tilde\alpha\sigma\xi_\sigma=\frac{\mu}{2}hz\xi_\sigma=\sqrt{C(x,\zeta)+hr(x,\zeta,h)}=-s(x,\zeta)\ub(x)+O(h)\text{ for }x\text{ near }M.
\end{align}

\textbf{3. Approximations. }Using the formula \eqref{n2}, for $x$ near $M$ we approximate
\begin{align}\label{n5}
\begin{split}
&(a)\; w(z)\sim z^{-1/2}\xi_\sigma^{-1/2}e^{\tilde\beta\xi}\\
&(b)\; w_z(z)\sim z^{-1/2}\xi_\sigma^{-1/2}e^{\tilde\beta\xi}\tilde\beta\xi_\sigma\frac{1}{\tilde\beta}=z^{-1/2}\xi_\sigma^{1/2}e^{\tilde\beta\xi}
\end{split}
\end{align}
In \eqref{n5}(a) we have ignored an $O(1/|\tilde\beta|)$ relative error coming from the $\eta_1$ contribution to $w$.  In the approximation \eqref{n5}(b) we have ignored a similar term contributing a relative error of the same size.  In addition, we have ignored the term $d_z\left(z^{-1/2}\xi_\sigma^{-1/2}\right)e^{\tilde\beta\xi}$, which contributes a relative error of size $O(h)$. Thus, we obtain,
\begin{align}\label{n6}
hw_x=-\frac{\mu}{2}hzw_z \sim -\frac{\mu}{2}h z^{1/2}\xi_\sigma^{1/2}e^{\tilde\beta\xi}
\end{align}
for $x$ near $M$.

\textbf{4. }Using the formula \eqref{f50} for the exact decaying solution $\theta$, we find as before
\begin{align}\label{n7}
\theta(x,\zeta,h)\sim e^{\frac{\varphi_0}{h}}[b^{1/2}wP_0+b^{-1/2}(hw_x)Q_0].
\end{align}
Plugging in \eqref{n5}(a) and \eqref{n6} we obtain
\begin{align}\label{n8}
\theta\sim e^{\frac{\varphi_0}{h}+\tilde\beta\xi}\left(b^{1/2}z^{-1/2}\xi_\sigma^{-1/2}\right)\left[P_0-\frac{\mu}{2}b^{-1}hz\xi_\sigma Q_0\right].
\end{align}
From \eqref{n4} and $b=\ub+O(h)$ we find
\begin{align}\label{n9}
\begin{split}
&-\frac{\mu}{2}b^{-1}hz\xi_\sigma=s(x,\zeta)+O(h),\\
&d_x(\tilde\beta\xi)=-\xi_\sigma\frac{\mu}{2}z=\frac{s\ub}{h}+O(1)=\frac{1}{h}d_x h_{1b}+O(1)\text{ near }x=M,
\end{split}
\end{align}
for $h_{1b}$ as in \eqref{f58}.
As in \eqref{f65} we obtain
\begin{align}\label{n10}
\frac{\varphi_0}{h}+\tilde\beta\xi=\frac{h_1(x,\zeta)}{h}+g(x,\zeta,h),\text{ near }x=M,
\end{align}
for a function $g$ as in \eqref{f65}.
Using \eqref{n9} and ignoring another $O(h)$ relative error, we can now rewrite \eqref{n8}
\begin{align}\label{n11}
\theta\sim e^{\frac{h_1(x,\zeta)}{h}+g}\left(b^{1/2}z^{-1/2}\xi_\sigma^{-1/2}\right)T_1=G(x,\zeta,h)\theta_1(x,\zeta,h)\text{ near }x=M,
\end{align}
where the nonvanishing scalar function
\begin{align}\label{n12}
G(x,\zeta,h)=e^ge^{-k_1}\left(b^{1/2}z^{-1/2}\xi_\sigma^{-1/2}\right).
\end{align}
Setting
\begin{align}\label{n13}
H(x,\zeta,h)=G^{-1}(x,\zeta,h),
\end{align}
we obtain the estimate of Proposition \ref{e4w}.

\end{proof}

\section{Regime III}\label{Three}

In this section we prove Propositions \ref{g8t}, \ref{g12}, \ref{g13z}, and \ref{g17z}.
Recall that  $f=f_0+f_p$, where
\begin{align}\label{j0}
\begin{split}
&f_0(s)=\frac{1}{s}\text{ and }f_p(s)=\frac{1}{s}\left[(4s+\alpha^2)h_1(2s^{1/2},\zeta)+2s^{1/2}h_2(2s^{1/2},\zeta)+hh_3(2s^{1/2},\zeta,h)\right].
\end{split}
\end{align}
First we prove Proposition \ref{g8t}, which concerns the  change of variable defined by
\begin{align}\label{j1}
2\xi^{1/2}(s)=\int^s_0f^{1/2}(r)dr \text{ for }s\in\mathcal{W}^2/4.
\end{align}

\begin{proof}[Proof of Proposition \ref{g8t}]
For $N_p$ small we have
\begin{align}\label{j2}
\sqrt{f}=\frac{1}{\sqrt{s}}(1+\eps_1(s)) \text{ where }|\eps_1(s)|<<1;
\end{align}
thus, $\xi(s)$ is analytic on $\mathcal{W}^2/4$.   From \eqref{j2} and \eqref{j1} we obtain
\begin{align}\label{j3}
\sqrt{\xi(s)}=\sqrt{s}(1+\eps_2(s)),\;\;|\eps_2(s)|<<1,
\end{align}
and thus, since $\xi^{-1/2}\xi_s=\sqrt{f}$, we have
\begin{align}\label{j4}
\xi_s(s)=1+\eps_3(s), \text{ where }|\eps_3(s)|<<1.
\end{align}
This implies injectivity on $\mathcal{W}^2/4$ since
\begin{align}
|\xi(s_1)-\xi(s_2)|=\left|(s_1-s_2)\int^1_0\xi_s(s_2+r(s_1-s_2))dr\right|\geq \frac{1}{2}|s_1-s_2|.
\end{align}

\end{proof}

The proof of Proposition \ref{g12} can be based on Theorem 9.1 of Chapter 12 of \cite{O} in the case where $\tilde\beta\geq 0$. However, the latter theorem  does not treat the case of $\tilde \beta$ nonreal needed here, and the  proof given in \cite{O} fails in that case.\footnote{For example, the properties of the weight function $\mathfrak{E}_\nu(z)$ defined in (8.08) of Chapter 12 of \cite{O} are derived using the fact that when $\nu\geq 0$, the modified Bessel function $K_\nu(z)$ does not vanish in $|\arg z|\leq \pi/2$.  But when $\nu=i|\nu|\neq 0$, for example, $K_\nu$ has infinitely many zeros on the positive real axis \cite{FS}.}
We show next how the proof of this theorem can be modified to treat the  case $\Re\tilde\beta\geq 0$.

\begin{proof}[Proof of Theorem 9.1 of Chapter 12 of \cite{O} for $\Re\tilde\beta\geq 0$.]

\textbf{1. }The modified argument uses the following estimates for the Bessel functions $I_\nu$, $K_\nu$ proved in section 16 of \cite{O2}.  Let $\textbf{M}$ denote a bounded subset of the half-plane $\Re\nu\geq 0$. For $\nu\in \textbf{M}$ and $|\arg z|\leq \pi/2$ we have
\begin{align}\label{g14}
|I_\nu(z)|\leq k V_\nu(z)\quad  |K_\nu(z)|\leq k X_\nu(z),
\end{align}
where
\begin{align}\label{g15}
\begin{split}
&V_\nu(z)=\frac{|z^\alpha e^z|}{1+|z|^{\alpha+\frac{1}{2}}},\quad X_\nu(z)=\ell_\nu(z) \frac{1+|z|^\alpha}{1+|z|^{\frac{1}{2}}}\frac{e^{-z}}{|z|^\alpha},\\
&\ell_\nu(z)=\ln\frac{1+2|z|}{|z|}\;(|\nu|<\delta),\quad \ell_\nu(z)=1 \;(|\nu|\geq \delta),
\end{split}
\end{align}
where $\alpha=\Re \nu\geq 0$ and $\delta$ is an arbitrary number in the range $0<\delta<\frac{1}{2}$.  The constant $k$ is independent of $\mu$ and $z$, but depends on $\delta$.

\textbf{2. }Next, in place of the weight function $\mathfrak{E}_\nu$ defined in (8.08) of \cite{O}, Chapter 12, we redefine $\mathfrak{E}_\nu$ as
\begin{align}\label{g16}
\mathfrak{E}_\nu(z):=\left(\frac{V_\nu(z)}{X_\nu(z)}\right)^{1/2}, \text{ for }\nu\in M, \;|\arg z|\leq \pi/2.
\end{align}
It is easy to check that for $\nu\in\mathbf{M}$
\begin{align}\label{g17}
\begin{split}
&\mathfrak{E}_\nu(z)\sim\begin{cases}|e^z|,\;|z| \text{ large }\\|z|^\alpha,\;|z|\text{ small }\end{cases}\text{ for }|\nu|\geq \delta,\\
&\mathfrak{E}_\nu(z)\sim\begin{cases}\ln 2 \;|e^z|,\;|z| \text{ large }\\(\ln \frac{1}{|z|})^{-\frac{1}{2}}|z|^\alpha,\;|z|\text{ small }\end{cases}\text{ for }|\nu|< \delta.
\end{split}
\end{align}
For $|z|$ of intermediate size $\mathfrak{E}_\nu(z)$ is continuous and bounded away from $0$ for each $\nu\in\textbf{M}$;  positive upper and lower bounds can be chosen independently of $\nu\in\textbf{M}$, $|\arg z|\leq \pi/2$.

Following \cite{O} we next define functions $\mathfrak{M}_\nu(z)$ and $\vartheta(z)$ by the equations
\begin{align}
|I_\nu(z)|=\mathfrak{E}_\nu(z)\mathfrak{M}_\nu(z)\cos\vartheta(z),\;\;|K_\nu(z)|=\mathfrak{E}^{-1}_\nu(z)\mathfrak{M}_\nu(z)\sin\vartheta(z),\text{ for }|\arg z|\leq \pi/2.
\end{align}
Thus,
\begin{align}
\mathfrak{M}_\nu(z)=[\mathfrak{E}_\nu^{-2}(z)|I_\nu(z)|^2+\mathfrak{E}_\nu^2(z)|K_\nu(z)|^2]^{1/2}.
\end{align}
Using \eqref{g14} and \eqref{g17} one readily verifies
\begin{align}
\mathfrak{M}_\nu(z)\leq C\begin{cases}\frac{1}{|z|^{1/2}},\;|z|\text{ large }\\1, \;|z|\text{ small, }|\nu|\geq \delta\\(\ln\frac{1}{|z|})^{1/2}\;,|z|\text{ small, }|\nu|< \delta\end{cases},
\end{align}
where $C$ can be chosen independent of $\nu\in M$. One can now define bounded constants $\mu_j$, $j=1,\dots 4$ as in (8.26), (8.27) of \cite{O}, Chapter 12; they can now be chosen independent of $\nu\in\textbf{M}$.

\textbf{3. }With these definitions the remainder of the proof of Theorem 9.1 in \cite{O}, Chapter 12  goes  essentially as before.  For example, in the error estimate for the solution expressed in terms of  $I_\nu$, progressive paths are those along which both $\Re t^{1/2}$ and $|t|$ are nondecreasing as $t$ passes from $0$ to $\xi$.  It follows from this and the properties of $\mathfrak{E}_\nu$ given in and below \eqref{g17} that  $\mathfrak{E}_\nu^{-1}(u\xi^{1/2})\mathfrak{E}_\nu(u t^{1/2})\leq N$, for some $N$ that can be chosen independently of $t$, $\zeta$ and the particular progressive path being considered.   Here $u>0$ is a large parameter, taken to be $\frac{2}{h}$ in our application to Proposition \ref{g12}. Thus, the key estimate (9.08) of \cite{O}, Chapter 12 of the kernel $K(\xi,v)$ in the integral equation for the error term still holds, but with $2$ replaced by a larger constant.\footnote{The estimate of $K(\xi,v)$ just above (9.08) in \cite{O}, Chapter 12 ($\zeta$ is used in place of $\xi$ there) is incorrect, but a slightly modified estimate of $|K(\xi,v)|$ leading to (9.08) is easily given.}

\end{proof}

\begin{proof}[Proof of Proposition \ref{g12}]In order to apply this version of Theorem 9.1 in \cite{O}, Chapter 12, there are three requirements:

a)\;We must choose a suitable subdomain $\Delta_\xi$ of the $\xi$ plane on which to solve \eqref{g10}.  The domain should include the image of an interval $[M,\infty)$ under the map $x\to \xi$ (here, $x\in T_{M,R}$ as in \eqref{a1}), where $M$ can be chosen independent of the parameters $(\zeta,h)$.

b)\;It must be possible to choose ``progressive paths" (defined below) for all points in the domain.

c)\;The integrals \eqref{g11} should all be finite, with bounds independent of the choice of path and the parameters  $\zeta$ and $h$.

\textbf{1.  Definition of progressive paths. }Let $\Delta$ be an open, connected subset of $\{\xi:|\arg\xi|<\pi/2\}$ and let $\partial\Delta$ denote its boundary.   We suppose $0\in\partial\Delta$.

a)We say that progressive $1-$paths can be chosen in $\Delta$ provided that any point $\xi\in\Delta$ can be linked to the origin by a path $\mathcal{P}_1$ in $\Delta$ such that as $v$ traverses $\mathcal{P}_1$ from $0$ to $\xi$, both $\Re{v^{1/2}}$ and $|v|$ are nondecreasing.

b)We say that  progressive $2-$paths can be chosen in $\Delta$ provided there exists a point $\alpha\in\partial\Delta$ with the following property:   any point $\xi\in\Delta$ can be linked to $\alpha$ by a path $\mathcal{P}_2$ in $\Delta$ such that as $v$ traverses $\mathcal{P}_2$ from $\alpha$ to $\xi$, both $\Re{v^{1/2}}$ and $|v|$ are nonincreasing.

The paths are assumed to have a parametrization with the same regularity as described in Definition \ref{f27aa}(b).

\textbf{2. Choice of the domain $\Delta_\xi$. }
Recall the definition of $\mathcal{W}$ from Definition \ref{defW}, we see that
\begin{align}
\mathcal{W}^2/4=\{s\in\bC:|\arg s|<2\eps_1,\;|s|<\eps^2_2/4\}.
\end{align}
The estimate \eqref{j3} implies
\begin{align}\label{j5}
|\xi(s)-s|\leq \eps_0|s|, \text{ where }\eps_0<<1,
\end{align}
and therefore the image of $\mathcal{W}^2/4$ under the map $s\to\xi(s)$ will contain
\begin{align}\label{j6}
\Delta_\xi:=\{\xi\in\bC: |\arg\xi|<\frac{3}{2}\eps_1,\;\;|\xi|<(1-\eps_0)\frac{\eps_1^2}{4}\}.
\end{align}
If we take $\alpha$ to be the point on the right boundary arc of $\Delta_\xi$ where $\Re\xi^{1/2}$ is maximized, it is obvious that progressive $1-$ and $2-$paths can be chosen in $\Delta_\xi$.  For example, in the $\xi^{1/2}$ plane one can choose these paths to be line segments.   Moreover, the domain $\Delta_\xi$ contains the image of $[M',\infty)$ under the map $x\to\xi$, where $M'$ is slightly greater than $M$ (we have $M'=M+O(|\ln(1-\eps_0)|)$.   We  define the domain $\mathcal{W}_s$ appearing in the statement of Proposition \ref{g12} to be
\begin{align}\label{j7}
\mathcal{W}_s:=\xi^{-1}(\Delta_\xi).
\end{align}

\textbf{3. Finiteness of the integrals $\int^\xi_0|\phi(r)r^{-1/2}|d|r|$. }Since $\Delta_\xi$ is bounded independent of $h$ (and $\zeta$), we need only consider behavior of the integrals near the origin.   Recall that
\begin{align}\label{j8}
\phi(\xi)=\frac{1-4\tilde\beta^2}{16\xi}+\frac{g(s)}{f(s)}+\frac{4f(s)f''(s)-5f^{'2}(s)}{16 f^3(s)}.
\end{align}
where $f=f_0+f_p$ as in \eqref{j0}.  Clearly, we must look for some cancellation of the singularity of $\phi$ due to the vanishing of $\xi$ at $s=0$ and the singularity of $f$ at $s=0$.

Let us first rewrite $f$ as $f(s)=\frac{a}{s}+f_2(s)$, where
\begin{align}\label{j9}
\begin{split}
&a:=1+\alpha^2h_1(0,\zeta)+hh_3(0,\zeta,h)\\
&f_2(s)= \frac{\alpha^2\left(h_1(2s^{1/2},\zeta)-h_1(0,\zeta)\right)}{s}+
\frac{h\left(h_3(2s^{1/2},\zeta,h)-h_3(0,\zeta,h)\right)}{s}+\left(4h_1+\frac{2h_2(2s^{1/2},\zeta)}{s^{1/2}}\right).
\end{split}
\end{align}
The estimates of Proposition \ref{estimates} for the $h_j$ imply that
$f_2(s)=\frac{O(s)}{s}$, and thus
\begin{align}\label{j10}
f(s)=\frac{a}{s}(1+O(s))\Rightarrow\sqrt{f}=\sqrt{\frac{a}{s}}\;(1+O(s))\Rightarrow \xi^{1/2}=\sqrt{as}+O(s^{\frac{3}{2}}).
\end{align}
This gives $\xi(s)=as+O(s^{2})$, and thus
\begin{align}\label{j11}
\frac{1-4\tilde\beta^2}{16\xi}=\frac{1-4\tilde\beta^2}{as}\;(1+O(s))=\frac{1-4\tilde\beta^2}{as}+O(1):=A(s)+B(s).
\end{align}
Set $\tilde f_0(s)=\frac{a}{s}$. A short computation shows
\begin{align}\label{j12}
A(s)+\frac{g(s)}{\tilde f_0(s)}+\frac{4\tilde f_0(s)\tilde f_0''(s)-5\tilde f_0^{'2}(s)}{16 \tilde f_0^3(s)}=0.
\end{align}
Since the contribution of $B(s)$ to
\begin{align}\label{j13}
\int^\xi_0|\phi(r)r^{-1/2}|d|r|
\end{align}
is finite\footnote{Here we use $d\xi=\xi_s ds$ and \eqref{j4}.}, it just remains to examine the contribution of
\begin{align}\label{j14}
\left(\frac{g(s)}{f(s)}+\frac{4f(s)f''(s)-5f^{'2}(s)}{16 f^3(s)}\right)-\left(\frac{g(s)}{\tilde f_0(s)}+\frac{4\tilde f_0(s)\tilde f_0''(s)-5\tilde f_0^{'2}(s)}{16 \tilde f_0^3(s)}\right).
\end{align}
Recall $f=\tilde f_0+f_2$. Thus, the terms in \eqref{j14} involving second derivatives are (ignoring some constant factors)\footnote{Compare \eqref{f35}.}
\begin{align}\label{j15}
f_2(\tilde f_0''+f_2'')(\frac{1}{\tilde f_0^3}-3\frac{ f_2}{\tilde f_0^4}),\;\;\;\;
\tilde f_0f''_2(\frac{1}{\tilde f_0^3}-3\frac{ f_2}{\tilde f_0^4}),\;\;\;\;
\tilde f_0\tilde f_0''\frac{ f_2}{\tilde f_0^4}
\end{align}
Setting $q(2s^{1/2},\zeta,h)=h_3(2s^{1/2},\zeta,h)-h_3(0,\zeta,h)$, we consider for example the contribution of $\tilde f_2:=hq/s$ to
$f_2''/\tilde f_0^2$ (one of the ``worst" terms in \eqref{j15}).  We compute
\begin{align}\label{j16}
\frac{\tilde f_2''}{\tilde f_0^2}=\frac{s^2}{a^2} \tilde f_2''=\frac{s^2}{a^2}h[q_{tt}s^{-2}-\frac{5}{2}q_ts^{-5/2}+2qs^{-3}]=\frac{1}{a^2}h[q_{tt}-\frac{5}{2}q_ts^{-1/2}+2qs^{-1}].
\end{align}
The estimates of Proposition \ref{estimates} show that the right side of \eqref{j16} is $O(1)$, so its contribution to the integrand of \eqref{j13} is $O(s^{-\frac{1}{2}})$, which is integrable near $0$.   The terms in \eqref{j14} involving first derivatives are estimated similarly.

\textbf{4. Conclusion. }We have now checked that all the requirements for an application of Theorem 9.1 of \cite{O}, Chapter 12 are satisfied, so this concludes the proof of Proposition \ref{g12}.

\end{proof}

Next we show that for $\Re\zeta>0$ the decaying solution of \eqref{e4yy} is given by
\begin{align}\label{j17}
w(x)=\frac{\sqrt{2}}{t(x)}\hat v_1(s(x))=\frac{\sqrt{2}}{2s^{1/2}}\xi_s^{-1/2}\left(\xi^{1/2}I_{\tilde\beta}(2\xi^{1/2}/h)+\eta_1(\tilde\beta,\xi)\right).
\end{align}

\begin{proof}[Proof of Proposition \ref{g13z}]
As $x\to \infty$ we have $s\to 0$ and $\xi(s)\to 0$.
Recall from \eqref{j3} and \eqref{j4} that
\begin{align}
\xi(s)=s+\eps_a(s),\;\xi_s(s)=1+\eps_b(s),\;\text{ where }|\eps_j(s)|<<1,
\end{align}
so in estimating $w(x)$ we can ignore the factors multiplying $I_{\tilde\beta}$ and $\eta_1$.  For $|z|$ small with $|\arg z|\leq \frac{\pi}{2}$ we have
\begin{align}
|I_{\tilde\beta}(z)|\leq k|z|^{\Re\tilde\beta}.
\end{align}
Since $\Re\tilde\beta>0$ for $\Re\zeta>0$, this implies decay of the term involving $I_{\tilde\beta}$ as $x\to \infty$.  The estimate of $\eta_1$  in Theorem 9.1, Chapter 12 of \cite{O} implies that this contribution decays to zero as well.
Differentiating \eqref{j17} and arguing as above we obtain that $w_x$ also decays to $0$ as $x\to\infty$.

\end{proof}

We now show that the exact decaying solution $\theta$ of Erpenbeck's system \eqref{f43} identified in Proposition \ref{g13z} is of type $\theta_1$ at $x=M$.

\begin{proof}[Proof of Proposition \ref{g17z}]
The  proof runs parallel to that of Proposition \ref{f9z}.  The variables are
\begin{align}\label{j18}
t=\frac{2}{\mu}\sqrt{aD(\infty,\zeta)}e^{-\mu x/2},\; t=2s^{1/2},\; 2\xi^{1/2}=\int^s_0\sqrt{f(r)}\;dr.
\end{align}
\textbf{1. }We recall from Remark \ref{hr4z} that
\begin{align}
\frac{4}{\mu^2}(C(x,\zeta)+hr(x,\zeta,h))=t^2\left[(1+\frac{\tilde\alpha^2}{t^2})+(t^2+\alpha^2)h_1(t,\zeta)+th_2+hh_2\right].
\end{align}
Using \eqref{j18} and   recalling the definition of $f(s)$ \eqref{j0}, we rewrite this as
\begin{align}\label{j19}
\frac{4}{\mu^2}(C(x,\zeta)+hr(x,\zeta,h))=\tilde\alpha^2+4s^2f(s)=\tilde\alpha^2+4s^2\xi^{-1}\xi_s^2.
\end{align}
For $x$ near $M$ and $\zeta\in\omega$ we have $|C(x,\zeta)|>k>0$, $\arg C(x,\zeta)\sim 0$.  We have $\tilde\alpha=O(h)$ in Regime III, so \eqref{j19} implies
\begin{align}\label{j20}
\mu s\xi^{-1/2}\xi_s=\sqrt{C+hr}+O(h)=-s(x,\zeta)\ub(x)+O(h)\text{ for }x\text{ near }M.
\end{align}

\textbf{2. }For $|z|$ large with $|\arg z|<\frac{\pi}{2}$ we have asymptotic expansions (\cite{AS}, Chapter 9)
\begin{align}\label{j21}
\begin{split}
&I_{\tilde\beta}(z)\sim\frac{e^z}{\sqrt{2\pi z}}(1+O(1/z)),\\
&I'_{\tilde\beta}(z)\sim\frac{e^z}{\sqrt{2\pi z}}(1+O(1/z)),
\end{split}
\end{align}
where $(1+O(1/z))$ can be expanded explicitly in powers of $z^{-1}$.

\textbf{3. Approximations. }Using the formula \eqref{j17} for $w$, the expansions \eqref{j21}, and the fact that
\begin{align}
d_t(\xi^{1/2}(s(t)))=\xi^{-1/2}\xi_s\frac{t}{4},
\end{align}
 we approximate for $x$ near $M$
\begin{align}\label{j22}
\begin{split}
&w\sim \frac{\sqrt{2}}{t}\xi_s^{-1/2}\xi^{1/2}I_{\tilde\beta}(2\xi^{1/2}/h)\sim\frac{1}{\sqrt{2\pi}}\;t^{-1}\xi_s^{-1/2}\xi^{1/4}h^{1/2}e^{2\xi^{1/2}/h}\\
&hw_t\sim h\frac{\sqrt{2}}{t}\xi_s^{-1/2}\xi^{1/2}I_{\tilde\beta}'(2\xi^{1/2}/h)\frac{1}{h}\xi^{-1/2}\xi_s\frac{t}{2}\sim\sqrt{\frac{2}{\pi}}\frac{1}{4}\xi_s^{1/2}h^{1/2}\xi^{-1/4}e^{2\xi^{1/2}/h}.
\end{split}
\end{align}
Here we have ignored relative errors of size $O(h)$ associated with higher order terms in the expansions \eqref{j21}, with $\eta_1$, and with other terms in the expression for $hw_t$.  This gives
\begin{align}\label{j23}
hw_x=-\frac{\mu}{2}htw_t\sim-\sqrt{\frac{2}{\pi}}\frac{\mu}{8}\;t\xi_s^{1/2}h^{1/2}\xi^{-1/4}e^{2\xi^{1/2}/h}.
\end{align}

\textbf{4. }Using the formula \eqref{f50} for $\theta$ and ignoring $O(h)$ relative errors as in \eqref{f62}, we obtain
\begin{align}\label{j24}
\theta(x,\zeta,h)\sim e^{\frac{\varphi_0}{h}}[b^{1/2}wP_0+b^{-1/2}(hw_x)Q_0].
\end{align}
Substituting in the expressions for $w$ and $hw_x$ and using \eqref{j20}, we find
\begin{align}\label{j25}
\begin{split}
&\theta\sim e^{\frac{\varphi_0}{h}+\frac{2\xi^{1/2}}{h}}\left(b^{1/2}\sqrt{\frac{h}{2\pi}}\frac{1}{t}\xi_s^{-1/2}\xi^{1/4}\right)\left[P_0-\frac{\mu}{4}b^{-1}t^2\xi_s\xi^{-1/2}Q_0\right]\sim\\
&\quad\quad e^{\frac{\varphi_0}{h}+\frac{2\xi^{1/2}}{h}}\left(b^{1/2}\sqrt{\frac{h}{2\pi}}\frac{1}{t}\xi_s^{-1/2}\xi^{1/4}\right)\left[P_0+s(x,\zeta)Q_0\right].
\end{split}
\end{align}
We have
\begin{align}
d_x(2\xi^{1/2})=-\xi^{-1/2}\xi_s\mu s=bs(x,\zeta)=\ub s(x,\zeta)+O(h)\text{ near }x=M
\end{align}
for $h_{1b}$ as in \eqref{f58}.   As in \eqref{f65} we obtain
\begin{align}\label{j26}
\frac{\varphi_0}{h}+\frac{2\xi^{1/2}}{h}=\frac{h_1(x,\zeta)}{h}+g(x,\zeta,h) \text{ near }x=M,
\end{align}
for a function $g$ as in \eqref{f65}.
Thus, we can now rewrite \eqref{j25}
\begin{align}\label{j27}
\theta\sim e^{\frac{h_1(x,\zeta)}{h}+g}\left(b^{1/2}\sqrt{\frac{h}{2\pi}}\frac{1}{t}\xi_s^{-1/2}\xi^{1/4}\right)T_1=G(x,\zeta,h)\theta_1(x,\zeta,h)\text{ near }x=M,
\end{align}
where the nonvanishing scalar function
\begin{align}\label{j28}
G(x,\zeta,h)=e^ge^{-k_1}\left(b^{1/2}\sqrt{\frac{h}{2\pi}}\frac{1}{t}\xi_s^{-1/2}\xi^{1/4}\right).
\end{align}
Setting
\begin{align}\label{j29}
H(x,\zeta,h)=G^{-1}(x,\zeta,h),
\end{align}
we obtain the estimate of Proposition \ref{g17z}.

\end{proof}

\part{Proofs for Part \ref{finite}.}\label{pffinite}

\section{Turning points in $(0,\infty)$.}

Here we prove Propositions \ref{q16} and \ref{q30}.

\begin{proof}[Proof of Proposition \ref{q16}]
For $\Re\zeta=0$ and $x<x(\zeta)$ we take
\begin{align}
\rho^{3/2}(x,\zeta)=\frac{3}{2}\int^x_{x(\zeta)}\sqrt{x(\zeta)-y}\sqrt{-d(y,\zeta)} \;dy,
\end{align}
where the square roots are taken to be positive.\footnote{Here we use the fact that $d(y,\zeta  )$ is negative for real $y$ near $x(\zeta)\in\bR$.}   Making the changes of variable $t=\sqrt{x(\zeta)-y}$ and then $t=u\sqrt{x(\zeta)-x} $, we obtain
\begin{align}
\rho^{3/2}(x,\zeta)=-(x(\zeta)-x)^{3/2}\int^1_0 3u^2\sqrt{-d(x(\zeta)+(x-x(\zeta))u^2,\zeta)}\;du,
\end{align}
which implies \eqref{q18}.  The analyticity of $\rho$ in $x$ and $\zeta$ and the properties \eqref{q19}(a)-(c) are evident from the formula \eqref{q18}.  Property \eqref{q19}(d) is proved by differentiating $\rho_x^2\rho=C(x,\zeta)$ with respect to $\zeta$ and evaluating at $x=x(\zeta)$.
The analyticity of both sides of the equation \eqref{q17} implies that $\rho$ is a solution on $\mathcal{O}\times\omega$.

\end{proof}

\begin{proof}[Proof of Proposition \ref{q30}]
\textbf{1. }First we show that appropriate multiples of $\theta_-$ and $\theta_+$ are, respectively, of type $\theta_1$ and $\theta_2$ at $x_R$.   For $\zeta\in\omega_1$ and $x$ near $x_R$, $\rho(x,\zeta)$ takes values near the negative real axis.   Noting that we must take
\begin{align}\label{r1}
-\pi<\arg(h^{-2/3}\rho e^{\pm 2\pi i/3})<\pi
\end{align}
in order to use the expansions to rewrite the expressions in \eqref{q28}, we obtain for $x$ near $x_R$
\begin{align}\label{r2}
\theta_-\sim e^{\frac{\varphi_0}{h}-\frac{2}{3}h^{-1}i(-\rho)^{3/2}} \left(\frac{1}{2\sqrt{\pi}}b^{1/2}\rho_x^{-1/2}(h^{-2/3}\rho e^{-2\pi i/3})^{-1/4}\right) [P_0+s(x,\zeta)Q_0].
\end{align}
Here we have used\footnote{Recall that $\rho_x^2\rho=C(x,\zeta)=s^2\ub^2$ and that for $\zeta=i|\zeta|\in\omega_1$ and real $x$ near $x_R$, we have $s=i|s|=i\sqrt{|\zeta|^2-c_0^2\eta(x)}$.}
\begin{align}\label{r2a}
-\frac{2}{3}(h^{-2/3}\rho e^{-2\pi i/3})^{3/2}=-\frac{2}{3}ih^{-1}(-\rho)^{3/2}\text{ and }i\rho_x b^{-1}(-\rho)^{1/2}=s(x,\zeta)+O(h).
\end{align}
Similarly, we obtain for $x$ near $x_R$
\begin{align}\label{r3}
\theta_+\sim e^{\frac{\varphi_0}{h}+\frac{2}{3}h^{-1}i(-\rho)^{3/2}} \left(\frac{1}{2\sqrt{\pi}}b^{1/2}\rho_x^{-1/2}(h^{-2/3}\rho e^{2\pi i/3})^{-1/4}\right) [P_0-s(x,\zeta)Q_0].
\end{align}

For $x$ near $x_R$ and $\zeta\in\omega_1$ we have
\begin{align}\label{r4}
\begin{split}
&-\frac{2}{3}i(-\rho)^{3/2}(x,\zeta)=\int^x_{x_R-\delta}s\ub(y,\zeta)\;dy-\frac{2}{3}i(-\rho)^{3/2}(x_R-\delta,\zeta)\text{ and so }\\
&\int^x_0 s\ub(y,\zeta)\;dy=-\frac{2}{3}i(-\rho)^{3/2}(x,\zeta)+\int^{x_R-\delta}_0s\ub(y,\zeta)\;dy+\frac{2}{3}i(-\rho)^{3/2}(x_R-\delta,\zeta).
\end{split}
\end{align}
Since $\mu_1(x,\zeta)=\ua+s(x,\zeta)\ub$ and $T_1(x,\zeta)=P_0+sQ_0$, \eqref{r2} implies
\begin{align}\label{r5}
\begin{split}
&\theta_-(x,\zeta,h)\sim\theta_1(x,\zeta,h)G_-(x,\zeta,h)\text{ for $x$ near $x_R$ where }G_-(x,\zeta,h)=\\
&\left(\frac{1}{2\sqrt{\pi}}b^{1/2}\rho_x^{-1/2}(h^{-2/3}\rho e^{-2\pi i/3})^{-1/4}\right)\exp\left[-\frac{1}{h}\left(\int^{x_R-\delta}_0s\ub(y,\zeta)\;dy+\frac{2}{3}i(-\rho)^{3/2}(x_R-\delta,\zeta)\right)\right]e^{k_-(x,\zeta,h)},
\end{split}
\end{align}
for a function $k_-=O(1)$.   Similarly, we obtain from \eqref{r3}
\begin{align}\label{r6}
\begin{split}
&\theta_+(x,\zeta,h)\sim\theta_2(x,\zeta,h)G_+(x,\zeta,h)\text{ for $x$ near $x_R$ where }G_+(x,\zeta,h)=\\
&\left(\frac{1}{2\sqrt{\pi}}b^{1/2}\rho_x^{-1/2}(h^{-2/3}\rho e^{2\pi i/3})^{-1/4}\right)\exp\left[\frac{1}{h}\left(\int^{x_R-\delta}_0s\ub(y,\zeta)\;dy+\frac{2}{3}i(-\rho)^{3/2}(x_R-\delta,\zeta)\right)\right]e^{k_+(x,\zeta,h)},
\end{split}
\end{align}
for a function $k_+=O(1)$.

From \eqref{r5} and \eqref{r6} we see that the functions
\begin{align}\label{r7}
\overline\theta_1:=G_-^{-1}(x_R,\zeta,h)\theta_-(x,\zeta,h)\text{ and }\overline\theta_2:=G_+^{-1}(x_R,\zeta,h)\theta_+(x,\zeta,h)
\end{align}
are exact solutions of \eqref{q1} on $\mathcal{O}$, which are respectively of type $\theta_1$ and $\theta_2$ at $x_R$.\footnote{Caution: It is not necessarily true that $\overline\theta_1$, for example, is of type $\theta_1$ for $x\neq x_R$.}  For later use we note that the growth rates in $h$ of the factors $G_{\mp}^{-1}(x_R,\zeta,h)$ are
\begin{align}\label{r8}
R_\mp(\zeta,h):=h^{-1/6}\exp\left[\pm\frac{1}{h}\Re \left(\int^{x_R-\delta}_0s\ub(y,\zeta)\;dy+\frac{2}{3}i(-\rho)^{3/2}(x_R-\delta,\zeta)\right)\right].
\end{align}

\textbf{2. }Computations like those that produced \eqref{r2} and \eqref{r3} show that for $x$ near $x_L$ we have
\begin{align}\label{r9}
\begin{split}
&\theta_-\sim e^{\frac{\varphi_0}{h}+\frac{2}{3}h^{-1}\rho^{3/2}} \left(\frac{1}{2\sqrt{\pi}}b^{1/2}\rho_x^{-1/2}(h^{-2/3}\rho e^{-2\pi i/3})^{-1/4}\right) [P_0+s(x,\zeta)Q_0]\\
&\theta_+\sim e^{\frac{\varphi_0}{h}+\frac{2}{3}h^{-1}\rho^{3/2}} \left(\frac{1}{2\sqrt{\pi}}b^{1/2}\rho_x^{-1/2}(h^{-2/3}\rho e^{2\pi i/3})^{-1/4}\right) [P_0+s(x,\zeta)Q_0],
\end{split}
\end{align}
since $b^{-1}\rho_x\sqrt{\rho}=s(x,\zeta)+O(h)$ for $x$ near $x_L$ and $\zeta\in\omega_1$.\footnote{Recall that $\rho(x,\zeta)>0$ for real $x$ near $x_L$ and for $\zeta\in\omega_1$ such that $\zeta=i|\zeta|$.}  From \eqref{r9} and the fact that
$T_1=P_0+sQ_)$ it is evident that
\begin{align}\label{r9a}
\theta_-(x,\zeta,h)\sim\theta_1(x,\zeta,h)K_-(x,\zeta,h)\text{ for $x$ near $x_L$}
\end{align}
for a nonvanishing scalar function $K_-$.

\textbf{3. Exact solutions $\overline\theta_i$, $i=3,4,5$. }
 After shrinking the neighborhoods $\mathcal O$ and $\omega_1$ and reducing $\delta>0$  if necessary, we choose an open ball $B(\uzeta,R)$ centered at $x(\uzeta)$ such that
\begin{align}\label{r10}
x(\omega_1)\cup [x_L,x_R] \subset\mathcal{O}\subset B(x(\uzeta),R/2),
\end{align}
and such that the profile $p(x)$ has an analytic extension to $B(\uzeta,R)$.
The exact solutions $\overline\theta_i(x,\zeta,h)$  are constructed for $\zeta\in\omega_1$ from approximate solutions $\theta_i$ of the form
\eqref{t10}, which are defined initially on $[0,x_L]$, and then extended to a simply connected neighborhood of $\{x_L,x_R\}$ by analytic continuation in
\begin{align}\label{r11}
\cS:=B(x(\uzeta),R)\cap \{x:\Im x\geq 0\}\setminus x(\omega_2), \text{ where }\omega_1\subset\subset \omega_2
\end{align}
and $\omega_2$ is a slight enlargement of $\omega_1$.  As explained in section 4.2 of \cite{LWZ1}, the $\overline\theta_i$
are exact solutions of \eqref{q1} and satisfy\footnote{The proof by a contraction argument is based on being able to choose ``progressive paths" in $\cS$; see Theorem 3.1 of \cite{LWZ1}.}
\begin{align}\label{r12}
|\overline\theta_i(x,\zeta,h)-\theta_i(x,\zeta,h)|\leq Ch|\theta_i(x,\zeta,h)| \text{ in }\cS\text{ for }\zeta\in\omega_1.
\end{align}
Like $\overline\theta_i$, $i=1,2$, the functions $\overline\theta_i$, $i=3,4,5$ are  solutions of \eqref{q1} in a full neighborhood of $x(\zeta)$ for $\zeta\in\omega_1$; however, the asymptotic behavior \eqref{r12} is known only in $\mathcal{S}$.

\textbf{4. Growth rates. }From the expressions \eqref{t10} for the $\theta_i$ we can read off the growth rates with respect to $h$ of the $\overline\theta_i(x,\zeta,h)$, $i=1,...,5$ at $x_R$ for $\zeta\in\omega_1$:
\begin{align}\label{r13}
\begin{split}
&\overline\theta_1(x_R,\zeta,h): \;e^{\frac{1}{h}\Re \int^{x_R}_0[\ua(y,\zeta)+s(y,\zeta)\ub(y)]\;dy}:=e^{A(\zeta)/h}\\
&\overline\theta_2(x_R,\zeta,h): \;e^{\frac{1}{h}\Re \int^{x_R}_0[\ua(y,\zeta)-s(y,\zeta)\ub(y)]\;dy}:=e^{B(\zeta)/h}\\
&\overline\theta_i(x_R,\zeta,h), i\geq 3: \;e^{\frac{1}{h}\Re \int^{x_R}_0 \frac{\zeta}{u(y)}\;dy}:=e^{C(\zeta)/h}
\end{split}
\end{align}

\textbf{5. Expand $H(x_R,\zeta,h)\theta(x,\zeta,h)$. }The exact bounded (or decaying) solution $H(x_R,\zeta,h)\theta$ on $[x_R,\infty)$ extends analytically to a complex neighborhood of $[0,\infty]$.  On $\cS$ we can expand it as
\begin{align}\label{r14}
 H(x_R,\zeta,h)\theta(x,\zeta,h)=c_1(\zeta,h)\overline\theta_1+\dots+c_5(\zeta,h)\overline\theta_5, \text{ for }\zeta\in\omega_1.
 \end{align}
 Corollary \ref{q5a} implies that  $H(x_R,\zeta,h)\theta(x,\zeta,h)$ is of type $\theta_1$ at $x_R$.  Evaluating \eqref{r14} at $x_R$ and using Cramer's rule and \eqref{r13}, we determine the growth rates of the coefficients in \eqref{r14}:
\begin{align}\label{r15}
c_1(\zeta,h)=1+O(h),\;c_2=O(he^{(A(\zeta)-B(\zeta))/h}),\;c_i=O(he^{(A(\zeta)-C(\zeta))/h}), i\geq 3,
\end{align}
where
\begin{align}\label{r16}
A(\zeta)-B(\zeta)=2\Re\int^{x_R}_0 s(y,\zeta)\ub(y)\;dy\text{ and }A(\zeta)-C(\zeta)=\Re\int^{x_R}_0\left(s(y,\zeta)\ub(y)-\frac{\zeta}{\eta u(y)}\right)dy.
\end{align}

\textbf{6. Conclusion. }Using \eqref{r7}, \eqref{r8}, \eqref{r9} and \eqref{r15}, we can now read off the growth rates at $x_L$ of the individual terms in the expansion \eqref{r14}:
\begin{align}\label{r17}
\begin{split}
&(a)\;c_1(\zeta,h)\overline\theta_1(x_L,\zeta,h):\; (1+O(h))\cdot R_-(\zeta,h)\cdot h^{1/6}\exp\left[\frac{1}{h}\Re \left(\int^{x_L}_0\ua(y,\zeta)\;dy+\frac{2}{3}\rho^{3/2}(x_L,\zeta)\right)\right],\\
&(b)\;c_2(\zeta,h)\overline\theta_2(x_L,\zeta,h): \;he^{(A(\zeta)-B(\zeta))/h}\cdot R_+(\zeta,h)\cdot h^{1/6}\exp\left[\frac{1}{h}\Re \left(\int^{x_L}_0\ua(y,\zeta)\;dy+\frac{2}{3}\rho^{3/2}(x_L,\zeta)\right)\right],\\
&(c)\; c_i(\zeta,h)\overline\theta_i(x_L,\zeta,h),\;i\geq 3:\; he^{(A(\zeta)-C(\zeta))/h}\cdot e^{\Re\frac{1}{h}\int^{x_L}_0\frac{\zeta}{u(y)}\;dy}.
\end{split}
\end{align}

First we compare the rates in \eqref{r17}(a),(b).  Recalling the expressions \eqref{r8} for $R_\pm$, and noting  from \eqref{r16} that
\begin{align}
e^{(A(\zeta)-B(\zeta))/h}\cdot \exp\left(-\frac{1}{h}\Re\int^{x_R-\delta}_0 s(y,\zeta)\ub\;dy\right)\leq \exp\left(\frac{1}{h}\Re\int^{x_R-\delta}_0 s(y,\zeta)\ub\;dy\right)
\end{align}
and  from  Remark \ref{q19a} that
\begin{align}\label{r18a}
\Im (-\rho)^{3/2}(x_R-\delta,\zeta)\leq 0 \text{ for }\zeta\in\omega_1,
\end{align}
we obtain
\begin{align}\label{r18}
|c_2(\zeta,h)\overline\theta_2(x_L,\zeta,h)|/|c_1(\zeta,h)\overline\theta_1(x_L,\zeta,h)|\leq Ch.
\end{align}

Next we compare the rates in \eqref{r17}(a),(c). From \eqref{r18a}, the fact that $\Re\rho^{3/2}(x_L,\zeta)>0$, and
\begin{align}
e^{\frac{1}{h}\left(A(\zeta)-\Re\int^{x_R}_0\frac{\zeta}{u(y)}\;dy+\Re \int^{x_L}_0\frac{\zeta}{u(y)}\;dy\right)}\leq e^{\frac{1}{h}\Re\int^{x_R-\delta}_0 s(y,\zeta)\ub\;dy}\cdot e^{\frac{1}{h}\Re\int^{x_L}_0 \ua(y,\zeta)\;dy},
\end{align}
we see that
\begin{align}\label{r19}
|c_3(\zeta,h)\overline\theta_3(x_L,\zeta,h)|/|c_1(\zeta,h)\overline\theta_1(x_L,\zeta,h)|\leq Ch.
\end{align}
Thus, $c_1(\zeta,h)\overline\theta_1(x_L,\zeta,h)$ is, for small $h$,  the dominant term in the expansion \eqref{r14} evaluated at $x_L$.  Since $c_1(\zeta,h)=1+O(h)$, we see from \eqref{r9a} and \eqref{r7} that the estimate  of Proposition \ref{q30} holds with $\alpha(\zeta,h):=G_-(x_R,\zeta,h)K_-^{-1}(x_L,\zeta,h)$.

\end{proof}

\section{The turning point at $0$.}

This section gives the proof of Proposition \ref{q34}.

\begin{proof}[Proof of Proposition \ref{q34}]
\textbf{1. Basis of exact solutions near $0$. }As noted before the statement of Proposition \ref{q34}, we have exact solutions $\theta_\pm$ on $\mathcal{O}\ni 0$ satisfying $\theta_\pm(x,\zeta,h)\sim$
\begin{align}\label{s1}
\begin{split}
\quad e^{\varphi_0/h}\left[b^{1/2}(\rho_x)^{-1/2} Ai(h^{-2/3}\rho e^{\pm2\pi i/3})P_0+b^{-1/2}h^{1/3}(\rho_x)^{1/2}e^{\pm2\pi i/3} Ai'(h^{-2/3}\rho e^{\pm 2\pi i/3})Q_0\right]
\end{split}
\end{align}
modulo $O(h)$ errors.  Exact solutions $\overline\theta_1$ and $\overline\theta_2$, which are respectively  of type $\theta_1$ and $\theta_2$ at $x_R=2\delta$, are again given by the formulas \eqref{r7}.  Here the functions $G^{-1}_\mp(2\delta,\zeta,h)$ have growth rates in $h$, $R_\mp(\zeta,h)$, given by \eqref{r8}.

To construct exact solutions $\overline\theta_j$, $j=3,4,5$, near $x=0$, we use the block diagonal form provided by our extension of Proposition \ref{q7} to a neighborhood of $x=0$.   The $3\times 3$ block $A_{22}(x,\zeta,h)$ in \eqref{q6} has semisimple eigenvalues
\begin{align}
\mu_j^*(x,\zeta,h)=\mu_j(x,\zeta)+O(h)=\frac{\zeta}{h}+O(h), \;j=3,4,5.
\end{align}
Since this block has no turning points, we can apply standard results (for example, Theorem 3.1 of \cite{LWZ1}) to construct exact solutions $\phi_{2,j}(x,\zeta,h)$ of $d_x\phi_{2,j}=A_{22}(x,\zeta,h)\phi_{2,j}$ on $[0,3\delta]$ satisfying
\begin{align}\label{s2}
|\phi_{2,j}(x,\zeta,h)-e^{\frac{1}{h}\int^x_0\mu_j(s,\zeta)ds}a_j(x,\zeta,h)|\leq Ch|e^{\frac{1}{h}\int^x_0\mu_j(s,\zeta)ds}|,\;j=3,4,5
\end{align}
for appropriate $a_j=O(1)$. We then obtain exact solutions $\overline\theta_j$ of type $\theta_j$ on $[0,3\delta]$ by setting
\begin{align}
\overline\theta_j=Y(x,\zeta,h)\begin{pmatrix}0\\ \phi_{2,j}\end{pmatrix}, \;j=3,4,5,
\end{align}
where $Y$ is the conjugator of Proposition \ref{q7}.

We note that the elements of the basis $\mathcal B=\{\overline\theta_1\dots,\overline\theta_5\}$ have the growth rates at $x_R=2\delta$ given by \eqref{r13}.

\textbf{2. Expand $H(2\delta,\zeta,h)\theta$. }As in \eqref{r14} we expand the exact solution  $H(2\delta,\zeta,h)\theta$ in the basis $\mathcal{B}$ and, after evaluating at $x_R=2\delta$, we again obtain the growth rates \eqref{r15} for the coefficients $c_j(\zeta,h)$, $j=1,\dots,5$.

\textbf{3. Regime A. }We  show that for $(\zeta,h)$ in Regime A, the term $c_1\overline\theta$ is the dominant term in the expansion \eqref{r14} at $x=0$. Observe that  for \emph{all} $\zeta\in\omega_1$ we have
\begin{align}\label{s3}
\begin{split}
&(a)\;\arg\rho(0,\zeta)\in[0,\pi], \text{ and thus }\\
&(b)\;\arg(e^{-2\pi i/3}\rho(0,\zeta))\in [-2\pi /3,\pi/3], \text{ while }\\
&(c)\;\arg(e^{2\pi i/3}\rho(0,\zeta))\in [2\pi /3,5\pi/3].
\end{split}
\end{align}
In case (b) the zeroes of $Ai(z)$, which all lie on the negative real axis, are avoided; thus, there exist positive constants $A_i$ such that
\begin{align}\label{s4}
A_1\leq |Ai(h^{-2/3}\rho(0,\zeta)e^{-2\pi i/3})|\leq A_2\text{ for }(\zeta,h)\text{ in Regime A.}
\end{align}
Ignoring an error of size $h^{-1/3}$, we have
\begin{align}\label{s4a}
\theta_-(0,\zeta,h)\sim b^{1/2}(\rho_x)^{-1/2} Ai(h^{-2/3}\rho e^{-2\pi i/3})P_0:=q(0,\zeta,h)P_0\text{ in Regime A}.
\end{align}
With \eqref{r7}, \eqref{r8}, and \eqref{r15} this gives for some positive constant $C$,
\begin{align}\label{s5}
|c_1(\zeta,h)\overline \theta_1(0,\zeta,h)|\geq CR_-(\zeta,h)=Ch^{-1/6}\exp\left[\frac{1}{h}\Re \left(\int^{\delta}_0s\ub(y,\zeta)\;dy+\frac{2}{3}i(-\rho)^{3/2}(\delta,\zeta)\right)\right].
\end{align}
Similarly, we obtain
\begin{align}\label{s6}
\begin{split}
&|c_2(\zeta,h)\overline \theta_2(0,\zeta,h)|\leq Che^{\frac{1}{h}2\Re \left(\int^{2\delta}_0s\ub(y,\zeta)\;dy\right)}R_+(\zeta,h), \text{ where }\\
&\qquad
R_+(\zeta,h)=h^{-1/6}\exp\left[-\frac{1}{h}\Re \left(\int^{\delta}_0s\ub(y,\zeta)\;dy+\frac{2}{3}i(-\rho)^{3/2}(\delta,\zeta)\right)\right],\\
&|c_j(\zeta,h)\overline \theta_j(0,\zeta,h)|\leq Che^{\frac{1}{h}\Re\int^{2\delta}_0\left(s(y,\zeta)\ub(y)-\frac{\zeta}{\eta u(y)}\right)dy},\;j=3,4,5,
\end{split}
\end{align}
where in the last estimate we have used $|\overline\theta_j(0,\zeta,h)|=O(1)$, $j=3,4,5$.

Recalling \eqref{r18a}, from \eqref{s5} and \eqref{s6} we see that in Regime A at $x=0$
\begin{align}
|c_2\overline\theta_2/c_1\overline\theta_1|\leq Ch\text{ and }|c_j\overline\theta_j/c_1\overline\theta_1|\leq Ch^{7/6}, \;j=3,4,5,
\end{align}
and thus $c_1\overline\theta_1$ is the dominant term in the expansion \eqref{r14} at $x=0$.  Using \eqref{r7} and \eqref{s4a} we obtain
\begin{align}
\overline\theta_1(0,\zeta,h)=G_-^{-1}(2\delta,\zeta,h)q(0,\zeta,h)P_0.
\end{align}
Since $\theta_1(0,\zeta,h)=P_0+s(0,\zeta)Q_0+O(h)$, for fixed $\kappa>0$ we therefore obtain \eqref{q35} with $\alpha(\zeta,h):=G_-(2\delta,\zeta,h)q^{-1}(0,\zeta,h)$ provided $(\zeta,h)$ lies in Regime A, $\zeta\in\omega_2$, and $0<h\leq h_0$ for small enough $\omega_2\ni\zeta_0$ and $h_0$.

\textbf{4. Regime B with $\arg\rho(0,\zeta)$ away from $\pi/3$. }First we determine the size of $c_1\overline\theta_1(0,\zeta,h)$ in Regime $B$.  By \eqref{s3}(b) we can use the expansions \eqref{f57} of $Ai(z)$ and $Ai'(z)$ for all $(\zeta,h)$ in Regime B to obtain, modulo $O(h)$ relative errors,
\begin{align}\label{s7}
\begin{split}
&c_1\overline\theta_1(0,\zeta,h)\sim k(\zeta,h)R_-(\zeta,h)e^{-\frac{2}{3}(h^{-2/3}\rho(0,\zeta)e^{-2\pi i/3})^{3/2}}\cdot\\
&b^{1/2}\rho_x^{-1/2}(h^{-2/3}\rho(0,\zeta)e^{-2\pi i/3})^{-1/4}\left[P_0-b^{-1}\rho_xe^{-2\pi i/3}h^{1/3}(h^{-2/3}\rho(0,\zeta)e^{-2\pi i/3})^{1/2}Q_0\right],
\end{split}
\end{align}
where $k(\zeta,h)=O(1)$ and is bounded away from $0$.   Letting $\arg\rho(0,\zeta)=\beta\in [0,\pi]$  and noting that for $\zeta$ near $\zeta_0$ the second term inside the brackets is small compared to the first, we obtain
for some positive constant $K$
\begin{align}\label{s8}
c_1\overline\theta_1(0,\zeta,h)\geq Ke^{\frac{1}{h}\Re \left(\int^{\delta}_0s\ub(y,\zeta)\;dy+\frac{2}{3}i(-\rho)^{3/2}(\delta,\zeta)\right)}|\rho(0,\zeta)|^{-1/4}e^{\frac{1}{h}\frac{2}{3}|\rho(0,\zeta)|^{3/2}\cos(\frac{3\beta}{2})}
\end{align}

For a small positive $\eps_0$ we note that for $\beta\in [0,\frac{\pi}{3}-\eps_0]$ (respectively, $\beta\in [\frac{\pi}{3}+\eps_0,\pi]$), $c_2\overline\theta_2$ has an expansion similar to \eqref{s7}, except that $c_1$ is replaced by $c_2$, $R_-$ by $R_+$, and all factors of $e^{-2\pi i/3}$ are replaced by $e^{2\pi i/3}$ (respectively,    $e^{-4\pi i/3}$).  Thus, for $\beta\in [0,\frac{\pi}{3}-\eps_0]$ we obtain
\begin{align}\label{s9}
c_2\overline\theta_2(0,\zeta,h)\leq Che^{\frac{1}{h}2\Re \left(\int^{2\delta}_0s\ub(y,\zeta)\;dy\right)}e^{-\frac{1}{h}\Re \left(\int^{\delta}_0s\ub(y,\zeta)\;dy+\frac{2}{3}i(-\rho)^{3/2}(\delta,\zeta)\right)}|\rho(0,\zeta)|^{-1/4}e^{\frac{1}{h}\frac{2}{3}|\rho(0,\zeta)|^{3/2}\cos(\frac{3\beta}{2})},
\end{align}
while for $\beta\in [\frac{\pi}{3}+\eps_0,\pi]$ we obtain
\begin{align}\label{s10}
c_2\overline\theta_2(0,\zeta,h)\leq Che^{\frac{1}{h}2\Re \left(\int^{2\delta}_0s\ub(y,\zeta)\;dy\right)}e^{-\frac{1}{h}\Re \left(\int^{\delta}_0s\ub(y,\zeta)\;dy+\frac{2}{3}i(-\rho)^{3/2}(\delta,\zeta)\right)}|\rho(0,\zeta)|^{-1/4}e^{-\frac{1}{h}\frac{2}{3}|\rho(0,\zeta)|^{3/2}\cos(\frac{3\beta}{2})},
\end{align}
From \eqref{s9} and \eqref{s8} we see that at $x=0$
\begin{align}\label{s11}
|c_2\overline\theta_2/c_1\overline\theta_1|\leq Ch\text{ for }\beta\in [0,\frac{\pi}{3}-\eps_0].
\end{align}

The same estimate holds for $\beta\in [\frac{\pi}{3}+\eps_0,\pi]$, but this is much less clear since now $\cos(3\beta/2)\leq 0$!   Inspection of \eqref{s8} and \eqref{s10} shows that  the estimate holds for this range of $\beta$ provided
\begin{align}\label{s12}
\Re\left( i(-\rho)^{3/2}(\delta,\zeta)\right)\geq |\rho(0,\zeta)|^{3/2}|\cos(3\beta/2)|\text{ for }\zeta\in\omega_2.
\end{align}
where $\omega_2\subset\omega_1$ is a neighborhood of $\zeta_0$.
Writing $\zeta=\zeta_r+i\zeta_i$ and setting $\gamma=\arg(-\rho(\delta,\zeta))$, we have
\begin{align}\label{s13}
\Im(-\rho^{3/2}(\delta,\zeta))=|\rho(\delta,\zeta)|^{3/2}\sin(3\gamma/2),
\end{align}
and \eqref{q19} implies $\gamma\leq 0$.   Now $\gamma$ is close to zero and, by \eqref{q19}(d), we have  $|\Im\rho(\delta,\zeta)|\geq C|\zeta_r|$, so
\begin{align}\label{s14}
|\sin{3\gamma/2}|\sim |3\gamma/2|\sim \frac{3}{2}|\tan\gamma|=\frac{3}{2}\left|\frac{\Im\rho(\delta,\zeta)}{\Re\rho(\delta,\zeta)}\right|\geq C_1|\zeta_r|/|\rho(\delta,\zeta)|.
\end{align}
With \eqref{s13} this implies
\begin{align}\label{s15}
|\Im(-\rho^{3/2}(\delta,\zeta))|\geq C_1|\zeta_r||\rho(\delta,\zeta)|^{1/2}.
\end{align}
To have \eqref{s12} it now suffices to choose $\omega_2$ so that
\begin{align}\label{s15a}
|\cos(3\beta/2)|\leq C_1\frac{|\zeta_r||\rho(\delta,\zeta)|^{1/2}}{|\rho(0,\zeta)|^{3/2}}\text{ for }\zeta\in\omega_2.
\end{align}

Using \eqref{q19} again, we have
\begin{align}
\rho(0,\zeta)=\rho(0,\zeta_0)+\rho_\zeta(\zeta_0)(\zeta-\zeta_0)+O(|\zeta-\zeta_0|^2)\sim C|\zeta-\zeta_0|=C|\zeta_r,\Im(\zeta-\zeta_0)|.
\end{align}
When $|\rho(0,\zeta)|\sim|\zeta-\zeta_0|\sim |\zeta_r|$, we can choose $\omega_2$ so that $|\rho(\delta,\zeta)|/|\rho(0,\zeta)|$ is large and thereby arrange to have \eqref{s15a}.    When $|\zeta_r|\leq \kappa |\Im(\zeta-\zeta_0)|$ for $\kappa$ small, we must have $\beta$ close to $\pi$. Setting $\alpha=\pi-\beta$ we have
\begin{align}
|\cos(3\beta/2)|=|\sin(3\alpha/2)|\sim|\tan\alpha|\sim\frac{|\Im\rho(0,\zeta)|}{|\Re\rho(0,\zeta)|}\sim\frac{|\zeta_r|}{|\rho(0,\zeta)|}.
\end{align}
We can now shrink $\omega_2$ if necessary, so that $|\rho(\delta,\zeta)|/|\rho(0,\zeta)|$ is large for $\zeta\in\omega_2$, thereby arranging to have \eqref{s15a}.   This establishes \eqref{s12},\footnote{The argument shows that \eqref{s12} holds for $\zeta\in\omega_2$ when $\beta\in [\eps_0,\pi]$.} and thus for $(\zeta,h)$ in Regime $B$ we have at $x=0$,
\begin{align}\label{s16}
|c_2\overline\theta_2/c_1\overline\theta_1|\leq Ch\text{ for }\beta\in [\frac{\pi}{3}+\eps_0,\pi], \;\zeta\in\omega_2.
\end{align}

The estimate \eqref{s6} for $j=3,4,5$ still holds for Regime B, so \eqref{s8} and \eqref{s12} imply that at $x=0$
\begin{align}\label{s16a}
|c_j\overline\theta_j/c_1\overline\theta_1|\leq Ch\text{ for }\;\zeta\in\omega_2
\end{align}
and $(\zeta,h)$ in Regime B, when $\beta\in [0,\frac{\pi}{3}-\eps_0]\cup [\frac{\pi}{3}+\eps_0,\pi]$. With \eqref{s11} and \eqref{s16},
 we obtain \eqref{q35} as before for these $(\zeta,h)$.

\textbf{4. Regime B with $\arg\rho(0,\zeta)$ near $\pi/3$. }To treat $\overline\theta_2$ we now use the fact that  for large $|z|$ with $|\arg z|\leq 2\pi/3$
\begin{align}\label{s17}
\begin{split}
&Ai(-z)\sim\pi^{-1/2}z^{-1/4}\left[\sin(\gamma+\frac{\pi}{4})\sum^\infty_0 a_k\gamma^{-2k}-\cos(\gamma+\frac{\pi}{4})\sum^\infty_0 b_k\gamma^{-2k-1}\right]\\
&Ai'(-z)\sim-\pi^{-1/2}z^{1/4}\left[\cos(\gamma+\frac{\pi}{4})\sum^\infty_0 c_k\gamma^{-2k}+\sin(\gamma+\frac{\pi}{4})\sum^\infty_0 d_k\gamma^{-2k-1}\right],
\end{split}
\end{align}
where $\gamma:=\frac{2}{3}z^{3/2}$ (\cite{AS}, 10.4.60, 10.4.62).   We write, for example,
\begin{align}\label{s18}
Ai(h^{-2/3}\rho(0,\zeta)e^{2\pi i/3})=Ai(-h^{-2/3}e^{-i\pi/3}\rho(0,\zeta)),
\end{align}
where now  $\arg(e^{-i\pi/3}\rho(0,\zeta)):=\theta$ is close to $0$.  We have
\begin{align}\label{s19}
|\cos\left(\frac{2}{3}(h^{-2/3}e^{-\pi i/3}\rho(0,\zeta))^{3/2}+\frac{\pi}{4}\right)|\leq Ce^{\frac{1}{h}\frac{2}{3}|\rho(0,\zeta)|^{3/2}|\sin(\frac{3\theta}{2}|},
\end{align}
and \eqref{s10} now holds with the exponential on the right in \eqref{s19} replacing that on the far right in \eqref{s10}.   Since $\arg(\rho(0,\zeta))$ is near $\pi/3$, we have $|\zeta_r|\sim |\zeta-\zeta_0|\sim |\rho(0,\zeta)|$, so we can arrange to have \eqref{s15a}, with $|\sin(3\theta/2)|$ now in place of
$|\cos(3\beta/2)|$, by choosing $\omega_2$ so that $|\rho(\delta,\zeta)|/|\rho(0,\zeta)|$ is large for $\zeta\in\omega_2$.   Thus, we can obtain the estimates \eqref{s16} and \eqref{s16a} for this range of $\beta$, and the estimate \eqref{q35} is a consequence of these as before.

\end{proof}

\part{Appendices}

\section{Coefficients appearing in the linearized systems}\label{coefficients}

\emph{\quad}The matrix coefficients appearing in the reduced system \eqref{t1} are
\begin{align}\label{A1}
\begin{split}
&A_x=\begin{pmatrix}u&-v&0&0&0\\vp_v&u&0&vp_S&vp_\lambda\\0&0&u&0&0\\0&0&0&u&0\\0&0&0&0&u\end{pmatrix},\;A_y=\begin{pmatrix}0&0&-v&0&0\\0&0&0&0&0\\vp_v&0&0&vp_S&vp_\lambda\\0&0&0&0&0\\0&0&0&0&0\end{pmatrix}\\
&B=\begin{pmatrix}-u'&v'&0&0&0\\p'-v(c_0^2/v^2)'&u'&0&vp_S'&vp_\lambda'\\0&0&0&0&0\\-\Phi_v&S'&0&-\Phi_S&-\Phi_\lambda\\-r_v&\lambda'&0&-r_S&-r_\lambda\end{pmatrix},
\end{split}
\end{align}
where $(')$ denotes differentiation with respect to $x$ and $c_0^2=-v^2p_v(v,S,\lambda)$.

The matrix $\Phi_0(x,\zeta)$ in the system \eqref{t3} is computed in \cite{E3}, p.112 to be
\begin{align}\label{phi0}
\Phi_0(x,\zeta)=\begin{pmatrix}-\frac{(1-\eta)\zeta}{\eta u}&-\frac{m\zeta}{\eta u}&-\frac{im}{1-\eta}&0&0\\-\frac{(1-\eta)\zeta}{\eta mu}&-\frac{(1-\eta)\zeta}{\eta u}&0&0&0\\\frac{i(1-\eta)}{\eta m}&\frac{i}{\eta}&\frac{\zeta}{u}&0&0\\\frac{(1-\eta)p_S\zeta}{\eta m^2 u}&\frac{(1-\eta)p_S\zeta}{\eta m u}&\frac{ip_S}{m}&\frac{\zeta}{u}&0\\\frac{(1-\eta)p_\lambda\zeta}{\eta m^2 u}&\frac{(1-\eta)p_\lambda\zeta}{\eta m u}&\frac{ip_\lambda}{m}&0&\frac{\zeta}{u}\end{pmatrix}.
\end{align}
This computation can be done using \eqref{t3} and \eqref{A1}.

\section{The stability function $V(\zeta,h)$.}\label{stabilityfn}

\emph{\quad}The stability function $V(\zeta,h)$ is given by
\begin{align}\label{u1}
V(\zeta,h)=\theta(0,\zeta,h)\cdot P(0+)-\theta(0,\zeta,h)\cdot\frac{1}{h}(\zeta h_t+i h_y).
\end{align}
Here $m=u/v$, the mass flux, is a constant independent of $x$,
\begin{align}\label{u2}
h_t=\frac{v_--v_+}{v_-T_+\eta_+}\begin{pmatrix}2(1-\eta_+)g_+/m\\T_+\eta_++2(1-\eta_+)g_+\\0\\-m(v_--v_+)\eta_+\\0\end{pmatrix},
\end{align}
and $h_y$ has the single nonzero component $(h_y)_3=m(v_--v_+)$.   By $v_\pm$, for example, we denote  components of the profile states $P_\pm:=P(0\pm)$ just to the right and left of the von Neumann shock, and
\begin{align}\label{V3}
g_+=T_+-\frac{1}{2}(v_--v_+)p_{S+}.
\end{align}
The expression \eqref{u1} for $V(\zeta,h)$, found in \cite{CJLW}, is simpler than the expression derived in \cite{E1} and used in \cite{E2,E3}.  The equality of the two forms of $V$ was proved in section 4 of \cite{CJLW}.

The stability function for the von Neumann shock, $L_1(\zeta)$,
which appears in Assumption \ref{vN}, is given explicitly in \cite{E2} as:
\begin{align}\label{u3}
\begin{split}
&L_1(\zeta)=-\frac{u_-(1-\chi_v)}{\eta_+}\left[\frac{\ell_+\zeta(\zeta+\kappa_+s_+)}{u_+u_-}+\eta_+\left(1-\frac{\zeta^2}{u_+u_-}\right)\right]\\
&\ell=2-(1-\eta)(1-\chi_v)v_-p_S/T,\;\chi_v=v_+/v_-.
\end{split}
\end{align}
From the expression \eqref{u1} and the fact that
\begin{align}
L_1(\zeta)=-T_1(0,\zeta)\cdot (\zeta h_t+ih_y),
\end{align}
it is clear that Assumption \ref{vN} implies that $V(\zeta,h)$ is nonvanishing for small $h$ when $\theta(0,\zeta,h)$ is of type $\theta_1$.

\section{Classical asymptotic ODE results used.}
Here we state the theorems from \cite{O} that are used in this paper.  To keep this section brief, we state the results only in the simplified form that we actually use; also, we refer to earlier parts of this paper for definitions of some terms that appear below.   We note that Theorem \ref{third} below is an extension of Theorem 9.1 of \cite{O}, Chapter 12, to the case where the parameter $\nu$ satisfies $\Re \nu\geq 0$ instead of just $\nu\geq0$. The extension was proved in section \ref{Three}.

For a parameter $u\in\bC$ with $|u|$ large, we consider equations of the form 
\begin{align}\label{basic}
W_{\xi\xi}=(u^2\xi^m+\psi(\xi))W, \text{ where }m=0,1,-1,
\end{align}
on a simply connected, open subset $\Delta$, possibly unbounded,  of the complex $\xi$-plane.   The function $\psi$ is analytic on $\Delta$ but may have singularities at isolated points on its boundary.  The following three theorems deal respectively with the cases $m=0,1,-1$.  

\begin{thm}[Theorem 3.1 of \cite{O}, Chapter 10]\label{first}
Let $m=0$ in \eqref{basic} and suppose $|\arg u|<\pi/2$.  For $j=1,2$ let $\alpha_j\in \partial \Delta$ and suppose that for any $\xi\in\Delta$ a progressive $j-$path can be chosen in $\Delta$ from $\alpha_j$ to $\xi$.\footnote{Such paths are defined in step \textbf{1} of the proof of Proposition  \ref{e3}. }Suppose also that  there is an upper bound for the integrals
\begin{align}\label{ints}
\int^\xi_{\alpha_j}|\psi(s)|d|s| \text{ on progressive $j-$paths},
\end{align}
which is independent of $\xi\in\Delta$.   Then the equation \eqref{basic} has solutions $W_j$ on $\Delta$ satisfying 
\begin{align}
W_j(\xi)=e^{(-1)^{j-1}u\xi}+\eta_j(u,\xi),\;j=1,2,
\end{align}
where the errors $\eta_j$ satisfy the estimates \eqref{e3z}.

\end{thm}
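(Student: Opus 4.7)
The plan is to reduce the second-order equation $W_{\xi\xi} = (u^2 + \psi(\xi))W$ to a Volterra integral equation for the error term $\eta_j$, and then to solve it by Picard iteration, using the progressive $j$-path property precisely to keep the kernel uniformly controlled. Treating $j=1$ first (the case $j=2$ will follow by the symmetry $u \mapsto -u$ together with use of progressive $2$-paths), I would substitute $W_1(\xi) = e^{u\xi} + \eta_1(\xi)$ into the equation to obtain $\eta_1'' - u^2 \eta_1 = \psi(\xi)(e^{u\xi} + \eta_1)$. Since $e^{\pm u\xi}$ form a fundamental pair for the homogeneous equation, variation of parameters (with Green's kernel $K(\xi,v) = u^{-1}\sinh(u(\xi-v)) = (2u)^{-1}(e^{u(\xi-v)} - e^{u(v-\xi)})$, which satisfies $K(\xi,\xi)=0$ and $\partial_\xi K(\xi,\xi)=1$) produces the Volterra equation
\begin{equation*}
\eta_1(\xi) = \int_{\alpha_1}^{\xi} K(\xi,v)\,\psi(v)\bigl[e^{uv} + \eta_1(v)\bigr]\,dv,
\end{equation*}
where the integration path is any progressive $1$-path from $\alpha_1$ to $\xi$ inside $\Delta$. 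Analyticity of $\psi$ in $\Delta$ together with Cauchy's theorem shows that the right-hand side does not depend on the particular progressive path chosen, and if $\alpha_1$ lies at infinity the equation is to be interpreted as a limit along a nested exhaustion of progressive paths.

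The key observation is the kernel estimate along progressive paths. Since a progressive $1$-path is characterized by $\Re(uv)$ being nondecreasing as $v$ traverses from $\alpha_1$ to $\xi$, one has $|e^{u(v-\xi)}| \leq 1 \leq |e^{u(\xi-v)}|$ at every point of the path, and therefore
\begin{equation*}
|K(\xi,v)| \leq \frac{|e^{u(\xi-v)}|}{|u|}, \qquad |\partial_\xi K(\xi,v)| = |\cosh(u(\xi-v))| \leq |e^{u(\xi-v)}|.
\end{equation*}
Setting $M := \sup_{\xi \in \Delta}\int_{\alpha_1}^{\xi}|\psi(s)|\,d|s|$, finite by hypothesis, the iterates defined by $\eta_1^{(0)} \equiv 0$ and $\eta_1^{(n+1)}(\xi) = \int_{\alpha_1}^\xi K(\xi,v)\psi(v)[e^{uv}+\eta_1^{(n)}(v)]\,dv$ then satisfy, by a direct induction,
\begin{equation*}
\bigl|\eta_1^{(n+1)}(\xi) - \eta_1^{(n)}(\xi)\bigr| \leq \frac{1}{(n+1)!}\left(\frac{M}{|u|}\right)^{n+1} |e^{u\xi}|.
\end{equation*}
For $|u|$ sufficiently large (or in fact for any $|u|>0$) this series converges uniformly on $\Delta$ to an analytic limit $\eta_1$ solving the integral equation, and we obtain the claimed bound $|\eta_1(\xi)| \leq (C/|u|)|e^{u\xi}|$ on all of $\Delta$.

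For the derivative estimate I would differentiate the integral equation under the integral sign. Because $K(\xi,\xi)=0$ the boundary term vanishes, and the pointwise bound $|\partial_\xi K| \leq |e^{u(\xi-v)}|$ together with the bound just established on $\eta_1$ gives $|\partial_\xi \eta_1(\xi)| \leq C|e^{u\xi}|$ (and in fact $(C/|u|)|e^{u\xi}|$ if one exploits the extra $1/|u|$ present in the form of the integral equation used in \eqref{e3ww}). The case $j=2$ follows the identical argument with $u$ replaced by $-u$ in the kernel and progressive $2$-paths replacing progressive $1$-paths. The main technical obstacle I expect is not the iteration itself but the verification that the path-independence and integrability hypotheses are strong enough to make the construction uniform in $\xi \in \Delta$ when $\Delta$ is unbounded and $\alpha_j$ is at infinity; in particular, one must argue that the integrals along different progressive paths joining $\alpha_j$ to a given $\xi$ agree (via Cauchy, using analyticity and integrability to close contours at infinity) and that the uniform $L^1$ bound on $|\psi|$ along \emph{all} progressive paths is exactly what permits a single contraction constant to work globally. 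Once these points are addressed, the contraction argument is routine and yields the theorem.
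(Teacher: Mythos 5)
Your proposal is essentially the paper's own route: the paper does not reprove this statement but cites Olver and, in the remark following Proposition \ref{e3}, sketches exactly the argument you give --- variation of parameters producing a Volterra equation for $\eta_j$ along progressive $j$-paths, the pointwise kernel bound coming from the monotonicity of $\Re(uv)$ along such paths (so $|e^{u(v-\xi)}|\le 1\le |e^{u(\xi-v)}|$), Picard iteration controlled by the assumed uniform bound \eqref{ints}, convergence for every $|u|>0$, and the derivative estimate via $K(\xi,\xi)=0$. Your worry about path-independence is easily dispatched: $\Delta$ is simply connected and $\psi$ analytic there, so Cauchy's theorem applies directly, and when $\alpha_j$ lies at infinity the hypothesis \eqref{ints} guarantees convergence of the improper integral along any progressive path.

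One caveat, concerning the derivative half of \eqref{e3z}: your parenthetical claim that $|\partial_\xi\eta_1|\le (C/|u|)\,|e^{u\xi}|$ follows ``by exploiting the extra $1/|u|$'' is not justified and is false in general. Differentiating $K(\xi,v)=u^{-1}\sinh\bigl(u(\xi-v)\bigr)$ in $\xi$ gives $\cosh\bigl(u(\xi-v)\bigr)$: the factor $u$ exactly cancels the $1/u$, so the iteration yields only $|\partial_\xi\eta_1|\le C|e^{u\xi}|$. A constant perturbation $\psi\equiv c$ already shows this is sharp: the solution normalized at $\alpha_1$ has $\eta_1\sim e^{u\xi}\,c(\xi-\alpha_1)/(2u)$, whose $\xi$-derivative is of size $|e^{u\xi}|$, not $|e^{u\xi}|/|u|$. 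This matches Olver's actual bound, and the $O(1)$ bound is all the paper uses downstream: the relative $O(1/\tilde\beta)$ error attributed to $\eta_1$ in \eqref{n5}(b) comes from the chain-rule factor $1/\tilde\beta$ in passing from $\partial_\xi$ to $\partial_z$, not from any gain in $|\partial_\xi\eta_j|$ itself. So your main argument proves what is actually needed; the parenthetical should simply be dropped, and the derivative part of \eqref{e3z} read as $|\partial_\xi\eta_j|\le C\,|e^{(-1)^{j-1}u\xi}|$.
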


With $Ai(z)$ the standard Airy function, we set 
\begin{align}
Ai_0(z)=Ai(z), \;Ai_1(z)=Ai(ze^{-2\pi i/3}),\;\;Ai_{-1}(z)=Ai(ze^{2\pi i/3}).
\end{align}

\begin{thm}[Theorem 9.1 of \cite{O}, Chapter 11]\label{second}
Let $m=1$ in \eqref{basic} and for small $\delta>0$, suppose $|\arg u|<\delta$.  For $j=0,1,-1$ let $\alpha_j\in \partial \Delta$ and suppose that for any $\xi\in\Delta$ a progressive $j-$path can be chosen in $\Delta$ from $\alpha_j$ to $\xi$.\footnote{Such paths are defined in step \textbf{2} of the proof of Proposition  \ref{f6}. }Suppose also that  there is an upper bound for the integrals
\begin{align}\label{ints2}
\int^\xi_{\alpha_j}|\psi(s)s^{-1/2}|d|s| \text{ on progressive $j-$paths},
\end{align}
which is independent of $\xi\in\Delta$.   Then the equation \eqref{basic} has solutions $W_j$ on $\Delta$ satisfying 
\begin{align}
W_j(\xi)=Ai_j(u^{2/3}\xi)+\eta_j(u,\xi),\;j=0,1,-1,
\end{align}
where the errors $\eta_j$ satisfy the estimates \eqref{f6z}.

\end{thm}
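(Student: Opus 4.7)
The plan is to follow the classical variation-of-parameters strategy, reducing the problem to an integral equation for the error term $\eta_j$ and solving by Picard iteration. First I would substitute $W = Ai_j(u^{2/3}\xi) + \eta_j$ into \eqref{basic} with $m=1$; using the fact that $Ai_j(u^{2/3}\xi)$ solves the unperturbed Airy equation $W_{\xi\xi} = u^2 \xi W$, this yields
\begin{align*}
\eta_{j,\xi\xi} - u^2 \xi \eta_j = \psi(\xi)[Ai_j(u^{2/3}\xi) + \eta_j].
\end{align*}
Pairing $Ai_j$ with a second independent Airy solution $Ai_k$ ($k \neq j$, chosen so that $Ai_k$ is dominant along the progressive $j$-path), and using the constancy of the Wronskian of these two Airy functions, I would build a Green's kernel $K_j(\xi,v)$ and obtain the integral equation
\begin{align*}
\eta_j(u,\xi) = \int_{\alpha_j}^{\xi} K_j(\xi,v)\,\psi(v)\,[Ai_j(u^{2/3}v) + \eta_j(u,v)]\,dv,
\end{align*}
with integration along a progressive $j$-path.

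Next I would estimate the kernel using the standard Airy asymptotics \eqref{f57}. Introducing Olver's weight function $E_j(\xi) = |\exp(\tfrac{2}{3}(u^{2/3}\xi)^{3/2})|$, with the branch selected so that $E_j \equiv 1$ on the rotated sector boundary $\partial(e^{-2i\omega/3}\mathbf{S}_j)$ (where $\omega = \arg u$) and $E_j \geq 1$ outside that sector, I would combine the recessive behavior of $Ai_j$ inside $\mathbf{S}_j$ with the dominant behavior of $Ai_k$ there. The monotonicity clause in Definition \ref{f27aa} then guarantees $E_j(\xi)/E_j(v) \geq 1$ as $v$ traverses the progressive $j$-path from $\alpha_j$ to $\xi$, producing a pointwise bound of the form
\begin{align*}
\bigl|K_j(\xi,v)\,Ai_j(u^{2/3}\xi)^{-1}\bigr| \leq \frac{C}{|u|}\,|v|^{-1/2},
\end{align*}
where the $|v|^{-1/2}$ factor originates in the $\xi^{1/4}$ prefactors of \eqref{f57} after the $E_j E_j^{-1}$ cancellation. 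With this bound, iteration $\eta_j^{(n+1)} = \int K_j\psi(Ai_j + \eta_j^{(n)})\,dv$ starting from $\eta_j^{(0)} = 0$ gives inductively
\begin{align*}
\bigl|\eta_j^{(n)}(u,\xi)\bigr| \leq |Ai_j(u^{2/3}\xi)|\cdot \frac{1}{n!}\Bigl(\frac{C}{|u|}\int_{\alpha_j}^{\xi}|\psi(s)s^{-1/2}|\,d|s|\Bigr)^{n},
\end{align*}
whose series converges by the hypothesized finite uniform bound on $\int |\psi(s)s^{-1/2}|\,d|s|$, yielding \eqref{f6z} for $\eta_j$. The estimate on $\partial_\xi \eta_j$ follows by differentiating the integral equation; the boundary term vanishes because $K_j(\xi,\xi)=0$, and the same kernel technology applied to $\partial_\xi K_j$ produces the analogous pointwise bound.

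The main obstacle is constructing the weight function $E_j$ and verifying the kernel bound uniformly on $\Delta$, especially on the rays separating the sectors $\mathbf{S}_0, \mathbf{S}_1, \mathbf{S}_{-1}$ where the recessive/dominant roles of the $Ai_k$ switch. The hypothesis $|\arg u| < \delta$ is essential here: it ensures that the rotated sectors $e^{-2i\omega/3}\mathbf{S}_k$ are only slightly perturbed from the unrotated ones, so that any path admissible in Definition \ref{f27aa} yields the correct monotonicity of $\Re(u^{2/3}\xi)^{3/2}$ with the chosen branch, and hence the right sign for $E_j(\xi)/E_j(v)$ globally. A secondary technicality is that $\psi$ may be singular on $\partial\Delta$ and $\Delta$ itself may be unbounded, so one must check that the iterates are honest analytic functions on $\Delta$ and that the integrals along progressive paths can always be arranged (by deforming within $\Delta$ if necessary) to realize the assumed finite bound; this is where the parametrization regularity in Definition \ref{f27aa}(b) is used to justify integration by parts in the $\partial_\xi$ estimate.
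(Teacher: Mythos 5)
First, a point of orientation: the paper does not prove this statement at all — it is quoted verbatim (in simplified form) as Theorem 9.1 of \cite{O}, Chapter 11, and the only piece of Olver's machinery that the paper actually re-proves is the Chapter 12 (Bessel) analogue, Theorem \ref{third}, whose extension to complex order is carried out in section \ref{Three}. So your proposal is being measured against Olver's classical argument, and in outline you have reconstructed it correctly: variation of parameters against a second Airy solution, a Volterra-type integral equation for $\eta_j$ along progressive $j$-paths, kernel estimates via Airy asymptotics and weight functions, Picard iteration, and differentiation of the integral equation (using $K_j(\xi,\xi)=0$) for the derivative bound.

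However, the quantitative core of your iteration is flawed as written. You bound the kernel and every iterate \emph{relative to} $|Ai_j(u^{2/3}\xi)|$ pointwise on all of $\Delta$, e.g.\ $\bigl|K_j(\xi,v)\,Ai_j(u^{2/3}\xi)^{-1}\bigr|\le C|u|^{-1}|v|^{-1/2}$ and $|\eta_j^{(n)}|\le |Ai_j(u^{2/3}\xi)|\,(\dots)^n/n!$. This cannot hold globally: $Ai_j(u^{2/3}\xi)$ has zeros (on the ray where its argument is rotated onto the negative real axis), and for $|u|$ large these zeros lie inside the domains $\Delta$ actually used in the paper; at such points your bounds would force $\eta_j$ to vanish, which is false in general, and the kernel quotient blows up. This is exactly why Olver introduces the auxiliary modulus and weight functions $M$, $E_j$ (Chapter 11, section 8.3) — which you mention and then collapse back into a relative-to-$Ai_j$ bound — and runs the successive approximations in the weighted norm $\sup_\Delta E_j|\eta_j|/M$, obtaining a uniform bound of the form $|\eta_j|\le (M/E_j)\bigl[\exp\bigl(C|u|^{-1}\int|\psi(s)s^{-1/2}|\,d|s|\bigr)-1\bigr]$. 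The relative-error form \eqref{f6z} is then deduced only where $|Ai_j(u^{2/3}\xi)|$ is comparable to $M/E_j$, i.e.\ away from the zeros — which is precisely why the paper states \eqref{f6z} only for $|\xi|\gg 1$, $\Re\xi>0$, and defers the remaining region to Olver in Remark \ref{f6za}. So the fix is not a new idea but a reorganization: carry out the contraction in Olver's weighted norm and convert to the relative estimate only in the recessive region; as proposed, the induction step fails on the oscillatory part of $\Delta$.
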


\begin{thm}[Theorem 9.1 of \cite{O}, Chapter 12]\label{third}
Let $m=-1$ in \eqref{basic} and suppose $u>0$.   We now assume $\Delta\subset \{\xi:|\arg\xi|<\pi/2\}$, $0\in\partial \Delta$, and that $\psi(\xi)$ has the form
\begin{align}
\psi(\xi)=\frac{\nu^2-1}{4\xi^2}+\frac{\phi(\xi)}{\xi},
\end{align}
where $\phi$ is analytic at $\xi=0$.
Let $\alpha_1=0$, $\alpha_2\in \partial \Delta$ and suppose that for $j=1,2$ and any $\xi\in\Delta$ a progressive $j-$path can be chosen in $\Delta$ from $\alpha_j$ to $\xi$.\footnote{Such paths are defined in step \textbf{1} of the proof of Proposition  \ref{g12}. }Suppose also that  there is an upper bound for the integrals
\begin{align}\label{ints3}
\int^\xi_{\alpha_j}|\phi(s)s^{-1/2}|d|s| \text{ on progressive $j-$paths},
\end{align}
which is independent of $\xi\in\Delta$.   Then the equation \eqref{basic} has solutions $W_j$ on $\Delta$ satisfying 
\begin{align}
\begin{split}
&(a) W_1(\xi)=\xi^{1/2}I_{\nu}(2u\xi^{1/2})+\eta_1(u,\xi)\\
&(b) W_2(\xi)=\xi^{1/2}K_{\nu}(2u\xi^{1/2})+\eta_2(u,\xi).
\end{split}
\end{align}
where the errors $\eta_j$ satisfy the estimates \eqref{g12b}.

\end{thm}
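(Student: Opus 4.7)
The plan is to follow the classical method of variation of parameters to convert \eqref{basic} with $m=-1$ into integral equations for the error terms $\eta_j$, and then solve those integral equations by Picard iteration using the progressive-path structure and the integrability hypothesis on $\phi(\xi)s^{-1/2}$. Write the equation as
\begin{equation}
W_{\xi\xi} = \Bigl(\frac{u^2}{\xi} + \frac{\nu^2-1}{4\xi^2} + \frac{\phi(\xi)}{\xi}\Bigr)W,
\end{equation}
and take as the unperturbed (elementary) equation the one obtained by dropping $\phi(\xi)/\xi$; its linearly independent solutions are $W_1^0(\xi)=\xi^{1/2}I_\nu(2u\xi^{1/2})$ and $W_2^0(\xi)=\xi^{1/2}K_\nu(2u\xi^{1/2})$. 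A direct Wronskian computation gives $W[W_1^0,W_2^0]=-\tfrac12$, which will be the normalization constant appearing in the variation-of-constants kernel.

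Next I would derive the integral equations: writing $W_j = W_j^0 + \eta_j$ and treating $\phi(\xi)\eta_j/\xi$ as a forcing, one finds
\begin{equation}
\eta_j(u,\xi) = \int_{\alpha_j}^\xi K_j(\xi,s)\,\frac{\phi(s)}{s}\bigl(W_j^0(s)+\eta_j(u,s)\bigr)\,ds,
\end{equation}
with kernel $K_j(\xi,s) = 2\bigl(W_1^0(\xi)W_2^0(s) - W_2^0(\xi)W_1^0(s)\bigr)$, integrated along a progressive $j$-path from $\alpha_j$ to $\xi$. The role of the progressivity condition is precisely to ensure that a pointwise bound of the form
\begin{equation}
\bigl|K_j(\xi,s)\bigr| \le C\,\mathfrak{M}_\nu(u s^{1/2})\,|s|^{1/2}\cdot\frac{\mathfrak{E}_\nu(u\xi^{1/2})^{(-1)^{j-1}}}{\mathfrak{E}_\nu(us^{1/2})^{(-1)^{j-1}}}\cdot|\xi|^{1/2}
\end{equation}
holds with the ratio of weight functions uniformly bounded along the path; this is exactly the role of the progressive $1$-paths (on which both $\Re s^{1/2}$ and $|s|$ are nondecreasing) for the $I_\nu$ branch and the progressive $2$-paths for the $K_\nu$ branch. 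The weight functions $\mathfrak{E}_\nu$, $\mathfrak{M}_\nu$ and the estimates \eqref{g14}--\eqref{g17} are the necessary pointwise substitutes for the asymptotic inequalities available in Olver's original case $\nu\ge 0$.

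Picard iteration is then carried out in the Banach space of functions $\eta$ on $\Delta$ weighted by $\mathfrak{E}_\nu(u\xi^{1/2})^{(-1)^{j-1}}\mathfrak{M}_\nu(u\xi^{1/2})|\xi|^{1/2}$, exactly as in \cite{O}, Chapter 12. The contraction constant is controlled by $C|u|^{-1}$ times $\sup_{\xi\in\Delta}\int_{\alpha_j}^\xi |\phi(s)s^{-1/2}|\,d|s|$, which is finite by hypothesis; hence for $|u|$ large (or, in fact, for any $u$ after the elementary constant $C$ is rescaled by the uniform bound on the integral) the iteration converges and yields solutions $\eta_j$ with the stated relative-error estimates \eqref{g12b}. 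Differentiating the integral equation, using $\partial_\xi K_j(\xi,\xi)=0$ and the analogous pointwise bound on $\partial_\xi K_j(\xi,s)$, produces the companion estimates on $\partial_\xi\eta_j$.

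The main obstacle, and the reason the result is nontrivial beyond what is done in \cite{O}, is handling complex $\nu$ with $\Re\nu\ge 0$ (in particular $\nu$ on the imaginary axis, where $K_\nu$ has infinitely many real zeros and the classical weight function $\mathfrak{E}_\nu$ of \cite{O}, Ch.~12, (8.08) is not defined). The whole argument works once one replaces $\mathfrak{E}_\nu$ by the square-root ratio $(V_\nu/X_\nu)^{1/2}$ built from the majorants of \cite{O2}, and checks that the constants $\mu_j$ controlling the contraction can be chosen uniformly on a bounded $\nu$-subset of $\{\Re\nu\ge 0\}$; this is precisely the modification carried out in section \ref{Three}, and with that modification in hand every remaining step of Olver's proof goes through verbatim.
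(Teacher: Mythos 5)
Your proposal follows essentially the same route as the paper's proof in section \ref{Three}: variation of parameters about the modified-Bessel solutions $\xi^{1/2}I_\nu(2u\xi^{1/2})$, $\xi^{1/2}K_\nu(2u\xi^{1/2})$ (with Wronskian $-\tfrac12$), Picard iteration along progressive $j$-paths controlled by the uniform bound on $\int|\phi(s)s^{-1/2}|\,d|s|$, and, crucially, the replacement of Olver's weight $\mathfrak{E}_\nu$ of (8.08) by $(V_\nu/X_\nu)^{1/2}$ built from the majorants of \cite{O2}, with the auxiliary constants $\mu_j$ chosen uniformly for $\nu$ in a bounded subset of $\{\Re\nu\ge 0\}$ so that the kernel estimate (9.08) of \cite{O}, Chapter 12 survives with a larger constant. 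This is the paper's argument; the only small imprecision is that your displayed kernel bound should carry the factor $\mathfrak{M}_\nu$ at both arguments $u\xi^{1/2}$ and $us^{1/2}$, which does not affect the method.
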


\end{document}